\newenvironment{changemargin}[2]{\begin{list}{}{%
\setlength{\topsep}{0pt}%
\setlength{\leftmargin}{0pt}%
\setlength{\rightmargin}{0pt}%
\setlength{\listparindent}{\parindent}%
\setlength{\itemindent}{\parindent}%
\setlength{\parsep}{0pt plus 1pt}%
\addtolength{\leftmargin}{#1}%
\addtolength{\rightmargin}{#2}%
}\item }{\end{list}}
\let\oldtocsection=\tocsection
\let\oldtocsubsection=\tocsubsection
\let\oldtocsubsubsection=\tocsubsubsection
\renewcommand{\tocsection}[2]{\hspace{0em}\oldtocsection{#1}{#2}}
\renewcommand{\tocsubsection}[2]{\hspace{1em}\oldtocsubsection{#1}{#2}}
\renewcommand{\tocsubsubsection}[2]{\hspace{2em}\oldtocsubsubsection{#1}{#2}}
\providecommand{\R}{\mathbb{R}}
\providecommand{\N}{\mathbb{N}}
\providecommand{\Z}{\mathbb{Z}}
\providecommand{\Id}{\mathrm{Id}}
\providecommand{\eps}{\varepsilon}
\providecommand{\de}{\partial}
\providecommand{\dd}{\,\mathrm{d}}
\providecommand{\dt}{\,\mathrm{d}t}
\providecommand{\dx}{\,\mathrm{d}x}
\providecommand{\dy}{\,\mathrm{d}y}
\providecommand{\ds}{\,\mathrm{d}s}
\providecommand{\dz}{\,\mathrm{d}z}
\renewcommand{\leq}{\leqslant}
\renewcommand{\geq}{\geqslant}
\renewcommand{\div}{\operatorname{div}}
\newcommand{\curl}{\operatorname{curl}}
\newcommand{\supp}{\operatorname{Supp}}
\newcommand{\const}{\operatorname{const.}}
\newcommand{\dist}{\operatorname{dist}}
\newcommand{\norm}[1]{\left\|#1\right\|}
\newcommand{\scalar}[2]{\left\langle#1,#2\right\rangle}
\newcommand{\seps}{\tilde{\mathcal{S}}^\eps}
\newcommand{\uref}{u_{\mathrm{ref}}}
\newcommand{\cals}{\mathcal{S}_0}
\newcommand{\Supp}{\mbox{Supp\,}}
\definecolor{green2}{RGB}{0,173,58}
\definecolor{FranckiesPurple}{RGB}{147,101,214}
\definecolor{ogbrown}{RGB}{237, 137, 49}
\newcommand{\David}[1]{{\ \par \noindent \textcolor{green2}{David: #1}}}
\newcommand{\Franck}[1]{{\ \par \noindent \textcolor{FranckiesPurple}{Franck: #1}}}
\newcommand{\mres}{\mathbin{\vrule height 1.6ex depth 0pt width
0.13ex\vrule height 0.13ex depth 0pt width 1.3ex}}
\newtheorem{thm}{Theorem}[section]
\newtheorem{cor}[thm]{Corollary}
\newtheorem{lem}[thm]{Lemma}
\newtheorem{prop}[thm]{Proposition}
\newtheorem{defn}[thm]{Definition}
\newtheorem{rem}[thm]{Remark}
\numberwithin{equation}{section}
\title{Motion of a massive rigid loop in a 3D perfect incompressible flow} 
\author{Olivier Glass, David Meyer, Franck Sueur} 
\date{\today}
\begin{document}
%
%
%
%
%
%
%
%

\begin{abstract}
  We consider the motion of a rigid body immersed in an inviscid incompressible fluid. In 2D, an important physical effect associated with this system is the famous Kutta-Joukowski effect.  
 In the present paper, we identify a similar effect in the 3D case.
 For this, we first recast the  Newtonian dynamics of the rigid body as a first-order nonlinear ODE for the $6$-component body velocity, in the body frame. 
Then, we focus on the particular case where the rigid body occupies a slender tubular domain with a smooth closed curve as the centerline and a circular cross-section, in the limit where the radius goes to zero, with fixed inertia and circulation around the curve. 
 We establish that the dynamics of the limit massive rigid loop are given by a first-order nonlinear ODE with coefficients that depend only on the 
 inertia, on the fluid vorticity, and on the limit curve through two $3$D vectors, which are involved in a  {skewsymmetric} $6 \times 6$ matrix that appears in the limit force and torque, a structure which is reminiscent of the 2D Kutta-Joukowski effect.
We also identify the limit fluid dynamics as, where, as in the case of the Euler equation alone, the vorticity evolves according to the usual transport equation with stretching, but with a velocity field that is due not only to the fluid vorticity but also to a vorticity filament associated with the circulation around the limit rigid loop. 
 This result is in stark contrast with the case where the filament is made of fluid, with non-zero circulation, since in the latter, the filament velocity becomes infinite in the zero-radius limit.
However,  considering the inertia scaling that corresponds to a fixed density, we prove that there are solutions for which the solid velocity and its displacement tend to infinity over a time interval of size $\mathcal{O}(1)$.
 \end{abstract}

\maketitle

\begin{changemargin}{-1cm}{-1cm}
\tableofcontents
\end{changemargin}


\section{Introduction}

In this paper, we consider the \textbf{motion of a rigid body immersed in a 3D perfect incompressible flow}. 
The motion of this rigid body is given by the Newton equations, which involve its inertia, its translation and rotation acceleration, together with force and torque due to the fluid pressure on the solid boundary. On the other hand, the fluid dynamics are given by the incompressible Euler equations.
To simplify, we consider the case of a single rigid body, and we assume that the fluid around it occupies the rest of the Euclidean space. 
We assume the non-penetration condition on the boundary, that is, that at the interface between the fluid and the rigid body, there is continuity of the normal component of the fluid and rigid velocities. 
This setting provides a system coupling an ODE for the $6$ degrees of freedom of the rigid body and a PDE for the fluid. 
This system is precisely described in Section~\ref{initeq}.
It has been the subject of many mathematical studies in recent years.
Let us already say here that it is a conservative system: at least formally, the total kinetic energy of the system is preserved in time, this is recalled in Section~\ref{subsec-en}; and that the existence and uniqueness of classical solutions to this system, for short times, is well understood, this is recalled in Section \ref{Subsec-Cauchy}.
\smallskip

A classical topic, relevant e.g.\ for aeronautics, is the computation of circulation-induced lift forces. In $2$D, such a force is the so-called Kutta-Joukowski force, which was first discovered by  Kutta and   Joukowski at the beginning of the 20th century. In particular, the force is responsible for the lift of an airfoil translating in an inviscid fluid at a constant speed. In this analysis, one considers the section of the airfoil as a two-dimensional body within a two-dimensional fluid.
Another important simplification in this theory is that one ignores the fluid viscosity, as well as the effect of the fluid vorticity.
 Despite these rough assumptions, this analysis turned out to be relevant, to some extent, for applications to aerodynamics. 
We refer here to \cite{Gu,Howe,Lamb,Milne} for more. 
Recently, this analysis has been extended to the case of small rigid bodies moving in a 2D perfect incompressible fluid, see for instance \cite{GMS,Munnier,Sueur17,Sueur-china,GS}. The outcome is that a force similar to the  Kutta-Joukowski force appears in the zero radius limit, that is, the limit dynamics of the point particles involve a force reminiscent of the Kutta-Joukowski force. 
Our goal here is to investigate the role of such a \textbf{Kutta-Joukowski effect in the 3D case}. 
\smallskip

Toward that goal, an intuitive approach is to try to reformulate the Newton equations to decouple the six degrees of freedom of the rigid body as much as possible from the fluid influence. 
 In the present case of a single rigid body, it is convenient to consider the Newton equations \textbf{in the body frame}. It is interesting to recall first what is known in the historical case; it is the case where the fluid is assumed to be potential.  Then, the fluid velocity is only due to the motion of the rigid body and can be decomposed thanks to the so-called Kirchhoff potentials, which only depend on the geometry, with coefficients depending only on the rigid velocity. The latter is given by  ${p}  \in \R^{6}$ (the first three coordinates corresponding to the translation velocity and the three other ones to the rotation velocity) and then satisfies an ordinary differential equation of the form
\begin{equation} \label{anoter1-intro}
({\mathcal M}_{g} + {\mathcal M}_{a} ) p' + \langle \Gamma_g, p , p \rangle + \langle  \Gamma_a ,  p ,  p \rangle 
=  0 ,
\end{equation}
where  $\mathcal M_g $ is a $6 \times 6$ symmetric positive definite matrix encoding the genuine inertia of the rigid body, 
  $\mathcal M_{a}$  is a $6 \times 6$  symmetric and positive-semidefinite 
  encoding the added inertia of the rigid body, 
   $\Gamma_{g}: \mathbb{R}^{6} \times \mathbb{R}^{6} \rightarrow \mathbb{R}^{6}$ is a bilinear symmetric mapping, encoding the Coriolis effect due to the change of frame, and 
 $\Gamma_{a}: \mathbb{R}^{6} \times \mathbb{R}^{6} \rightarrow \mathbb{R}^{6}$ is 
another  bilinear symmetric mapping, which encodes the variation of the added inertia in the original frame, see Section \ref{Subsec:COF}, Section \ref{Subsec:COF2} and \eqref{DefGammaa}.   Thus, a full decoupling of the rigid body dynamics occurs in the case where the fluid is assumed to be potential.
Such a reformulation is well-known, see for example \cite{Munnier}. 
Let us point out that this equation is obtained under the assumption that the circulations around the rigid body are zero and that the fluid vorticity vanishes. 
The impact of nonzero circulation and nonzero fluid vorticity is precisely the subject of Section \ref{Subsec:COF};  the main result is given in Proposition~\ref{reform-macro} where we obtain the counterpart of the equation \eqref{anoter1-intro} around the body: 
\begin{equation} \label{anoter2-intro}
({\mathcal M}_{g} + {\mathcal M}_{a} ) p' + \langle \Gamma_g, p , p \rangle 
=  {\mathcal B}[ u ] p + {\mathcal D} [ p, u, \omega ]   .
\end{equation}
where $\omega := \curl u$ is the fluid vorticity, 
  $u$ is the fluid velocity, 
${\mathcal B}[u]$  is a \textbf{skewsymmetric} $6 \times 6$ matrix, which depends on the trace of $u$ on the rigid body boundary  and 
 ${\mathcal D} \lbrack p, u, \omega \rbrack$ is a vector  in $\R^6$, which  linearly depends on $\omega$. Let us point out that the term $ {\mathcal B}[ u ] p$ reduces to 
  $-\langle  \Gamma_a ,  p ,  p \rangle$ in the potential case. 
The skew-symmetry of this term
 is reminiscent of the Kutta-Joukowski effect. Moreover, this term is linear with respect to the trace of the fluid velocity at the body surface, which is close, yet different, from the circulations, for which, as recalled above, only the tangential component is involved. 
\smallskip

To go further, in Section~\ref{Subsec:toro}, we focus on the case of a simple geometry with a circulation: the case where the rigid body is a \textbf{thick rigid closed simple filament}. In this case, only one circulation, which we call $\mu$, comes into play. 
In particular, we will distinguish various contributions in the fluid velocity $u$: a potential part due to the motion of the rigid body, a part due to the circulation around the filament, and a last part due to the vorticity. 
This allows us to split the term $ {\mathcal B}[ u ] p$ into three parts and  
 to establish in Proposition~\ref{reup} that the dynamics are given by an equation of the form: 
\begin{equation} \label{eq-intro}
\big(  {\mathcal M}_{g} + {\mathcal M}_{a} \big) p' 
  + \langle  \Gamma_g ,  p ,  p \rangle + \langle  \Gamma_a ,  p ,  p \rangle
=  \mu B p + {\mathcal B}  \left[ K_{ \mathcal F_0} [\omega] \right] p
  + {\mathcal D} \lbrack  p, u, \omega \rbrack ,
\end{equation}
where $B$ is a $6 \times 6$ matrix depending only on the geometry, 
  and $K_{ \mathcal F_0}$ is the Biot-Savart law in the fluid domain. This extends to the $3$D case the earlier results obtained in the 2D case in \cite{GMS,Munnier,Sueur17,Sueur-china}.
\smallskip

To simplify yet further the geometry, in Section~\ref{sec-z}, we investigate the \textbf{zero-radius limit} where the cross-section of the domain occupied by the rigid body is assumed to shrink, so that it converges to a \textbf{$3D$ closed simple curve} $\mathcal{C}_0$. To simplify, we consider the case where the cross-section is circular. Regarding the inertia coefficients, we will consider the case of a \textbf{massive filament}, that is, the case where the mass and the rotational inertia matrix  are assumed to be independent of 
  $\varepsilon$. The circulation $\mu$ around the body is considered fixed independently of $\eps$.

\begin{figure}[h]
    \begin{minipage}{0.2\linewidth}
    \centering
    \includegraphics[width=\linewidth]{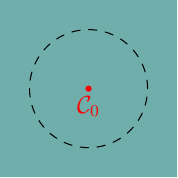}
\end{minipage}
\hfill $\longrightarrow$ \hfill
\begin{minipage}{0.50\linewidth}
    \centering
    \includegraphics[width=\linewidth]{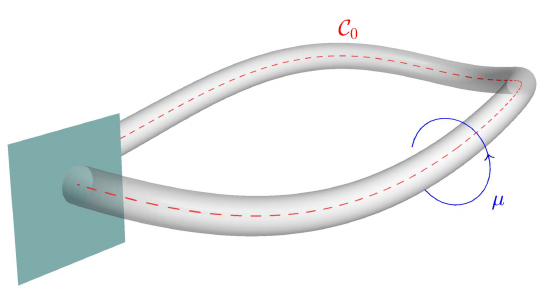}
\end{minipage}
\label{fig:solid}
\caption{Thick rigid filament with circular cross section and non-zero circulation}
\end{figure}
In the case where the fluid is irrotational, the main outcome is to identify the limit dynamic, which is given by the following first-order ordinary differential equation (cf.\ Thm.\ \ref{pasdenom-VO}):
\begin{equation} \label{eql1-intro}
{\mathcal M}_{g}  p' + \langle  \Gamma_g ,  p ,  p \rangle
 = \mu B^* p .
\end{equation}
where $B^*$ is the $6 \times 6$ matrix: 
\begin{equation} \label{defBstar-intro}
B^* :=  
\begin{pmatrix}
   0  &   \mathcal A_0 \wedge \cdot\\ 
   -\mathcal A_0 \wedge \cdot  &  \mathcal V_0 \wedge \cdot
\end{pmatrix} ,
\end{equation}
where $\mathcal A_0$ and $\mathcal V_0$ are respectively the vector area and the volume vector, defined as 
\begin{equation}
\mathcal A_0 := \frac12 \int_{\mathcal C_0} x \wedge \dd\kappa_{\mathcal{C}_0} (x) \in \R^3 
\quad \text{ and } \quad 
\mathcal V_0 :=-\frac{1}{2} \int_{\mathcal C_0} \vert x \vert^2 \dd\kappa_{\mathcal{C}_0} (x)   \in \R^3 , \label{def av}
\end{equation}
where $\kappa_{\mathcal{C}_0}$ is the  vector-valued measure $\kappa_{\mathcal{C}_0}$ 
obtained from the restriction of the one-dimensional Hausdorff measure on ${\mathcal C}_0$: 
\begin{equation} \nonumber
\kappa_{\mathcal{C}_0} := \tau \, \mathcal{H}^1 \, \mres {\mathcal C_0}, 
\end{equation}
where $\tau$ is the unit tangent vector along the curve ${\mathcal C}_0$. 
The structure of the term in the r.h.s. of \eqref{eql1-intro} is very much reminiscent of the one identified in 2D by Kutta and Joukowski, in the sense that it agrees with the force one would obtain from the 2D Kutta-Joukowski effect if one integrates it over the curve $\mathcal{C}_0$.
However, the structure of the force seems to be new, even compared to the physics literature.

Going back to the original laboratory frame, the equations take the following form 
\begin{equation*}
 m  (h^* )''  =  \mu  \mathcal A \wedge R^* , \quad \text{and} \quad 
(\mathcal{J} R^* )' = -\mu  \mathcal A \wedge (h^* )'  + \mu \mathcal V  \wedge R^* ,
\end{equation*}
where $h^*(t)$ and $R^*(t)$ are the position of the center and  the angular velocity of the limiting infinitesimal body, see Section \ref{initeq} and, similarly to \eqref{def av}, we associate with the time-dependent curve $\mathcal C(t)$
a unique vector-valued measure $\kappa_{\mathcal C} $, as well as (time-dependent) area and volume vectors: 
\begin{equation*}
\mathcal A := \frac12 \int_{\mathcal C} (x-h^*(t))\wedge \dd\kappa_{\mathcal C} (x) \in \R^3 
\quad \text{ and } \quad 
\mathcal V :=-\frac{1}{2} \int_{\mathcal C} \vert x-h^*(t) \vert^2 \dd\kappa_{\mathcal C} (x) \in \R^3 .
\end{equation*}

\smallskip 
We then extend the analysis to the case where some vorticity is present in the fluid away from the filament. In this case, in Theorem~\ref{pasdenom-VO2} we identify an extra term in the limit dynamics of the limit loop due to the fluid vorticity. 
When vorticity is present, we also consider the case where the inertia follows a scaling corresponding to a fixed density; in particular, the solid's mass converges to zero. In this case, in Theorem~\ref{thm:div} we show that the solid's velocity and its displacement can diverge to $+\infty$ over a time interval of size $\mathcal{O}(1)$, which is in striking contrast to the $2$D case \cite{GMS,GS}, where such a system converges to the point vortex system.

\smallskip 
It is interesting to compare the results above to the case where the ambient fluid perturbation is driven by the \textbf{steady Stokes system}. 
In particular, in \cite{HPS}, the authors study the motion of several slender rigid filaments in a fluid 
whose velocity and pressure are given as the sum of a background flow and a perturbation part 
given by the steady Stokes system.
The rigid filaments occupy disjoint, well-separated, closed, curved tubes of fixed lengths 
and possibly non-circular cross-sections and of radius converging to zero,  
 while the volumetric mass density is fixed in  \cite{HPS},
so that the limiting filament has zero inertia. 
 The main result of \cite{HPS} establishes that the limit dynamics of the limit filaments are given by decoupled first-order ODEs involving renormalized Stokes resistance tensors and renormalized Faxén-type forces and torques, associated with the limit curves. These effects are of a very different nature compared to the 3D  Kutta-Joukowski effects identified in this paper.

\smallskip 
Our results are also in stark contrast with the \textbf{vortex filament case}, where instead of a body, the thick filament is made of fluid with a concentration of vorticity; 
 oriented in the direction of the tangent
(which generates a non-zero circulation around $\mathcal{C}$), that is,
the case of the so-called vortex filaments. 
The most classical vortex filament is an axially symmetric solution of the 3D incompressible Euler which does not change shape in time and whose vorticity is concentrated inside a solid torus.
These objects were first described by Helmholtz in his celebrated work \cite{He1,He2}, in particular the case where the vorticity field is concentrated in a circular vortex-filament of very small cross section. 
He observed that such a ring moves with constant speed along the symmetry axis with a velocity that becomes infinite in the zero-radius limit.  
Formal asymptotics of thin vortex tubes around curves were first obtained by Da Rios in 1906 \cite{rios}, and suggest that they evolve by their binormal flow (after time-rescaling). 
Recently, many rigorous mathematical investigations have been done; see, for instance, \cite{jss,JS,Da}, and references therein. 

\smallskip 

Our analysis relies on a refined \textbf{asymptotic analysis of the fluid velocity}, which is of interest by itself. 
More precisely, we establish in Section~\ref{sec-Asymptotic} some rigorous estimates of different parts of the velocity field: the Kirchhoff potentials, the Biot-Savart field, and the harmonic field.
These estimates also apply in the case where the fluid occupies the exterior of a filament which is fixed in the laboratory frame, and are already new in this case. Actually, such a case formally corresponds to the limit case where the filament has an infinite inertia. 
Comparable analysis of the asymptotic behaviour in the exterior of a filament is well-known for the harmonic equation, with Dirichlet or Neumann condition, was performed in particular in \cite{Maz}. However, the scale techniques that were applied there do not directly apply to the vector setting here. 
  Let us also mention the recent work \cite{Ohm} for a different approach.
On the other hand, in the case considered in  \cite{HPS},  where the ambient fluid perturbation is driven by the steady Stokes  system,  the authors make use of the Bogovskii operator and of Helmholtz’ minimum dissipation principle 
to establish that the main part of the perturbation flow due to the filaments satisfies, up to a logarithmic renormalization, a modified Stokes system in the full space,  with a source term corresponding to Dirac masses along the limit curves with some amplitudes depending on the limit rigid velocities and on the background velocity. 

\smallskip

In general, the analysis of the vanishing body limit in fluid-solid systems is a very natural problem, but also a very difficult problem, and completely open for 3D Euler aside from the results of this paper. In 2D, it is known that the system converges to the vortex wave system, see, e.g.\ \cite{GMS,GS}. For viscous fluids, on the other hand, there have been many recent results showing the convergence to the classical \textbf{Navier-Stokes equations} in the limit, see for instance 
\cite{BFRZ,FRZ,LT,HI,EMT}.


\section{Motion of a rigid body immersed in a perfect incompressible flow}

%
%

\subsection{Equations for a rigid body immersed in a perfect incompressible flow}
\label{initeq}

We consider the motion of a rigid body immersed in a perfect incompressible flow. 
We denote by $h(t)$ the position of its center of mass at time $t \geq 0$, and we choose the origin of the frame of reference of $\R^3$ such that $h (0) = 0$.
The body rigidly moves so that at time $t \geq 0$ it occupies a domain $\mathcal{S}(t)$, which is obtained 
by means of a rigid movement with respect to its initial location $\mathcal{S}_0$,
which is supposed to be a connected (but not necessarily simply connected) smooth compact subset of $\mathbb{R}^{3}$.
Correspondingly, there exists a rotation matrix $Q (t) \in SO(3)$, with $Q(0) = \text{Id}$, 
such that the position of a point attached to the body with an initial position $x$ is moved to 
$h (t) + Q (t)x $ at the time $t \geq 0$.
Moreover, for any $t \geq 0$, there exists a unique vector $\Omega(t)$ in $\R^3$ such that for any $x \in \R^3$, 
\begin{equation} \label{Qom}
Q^{T} Q' (t)  x = \Omega(t) \wedge x ,
\end{equation}
where $Q^{T}$ denotes the transpose matrix of $Q$.
Accordingly, the solid velocity is given by
\begin{equation} \label{vit-sol}
v_{{\mathcal S}}(t,x) := h'(t) + R(t) \wedge (x-h(t)),   \text{ with } R(t) := Q(t)\Omega(t) .
\end{equation}
We denote by $m >0$ the mass of the body and by ${\mathcal J} (t)$ its rotational inertia matrix at time $t \geq 0$, which evolves in time according to Sylvester's law:
\begin{equation} \label{syl}
\mathcal{J} (t) = Q(t)  \mathcal{J}_0 Q^{T} (t),
\end{equation}
where $\mathcal{J}_0$ is the initial inertia matrix.

We assume that, for any $t\geq0$, the open set  $\mathcal{F}(t) := \R^3 \setminus {\mathcal S} (t) $
is occupied by a fluid driven by the incompressible Euler equations 
so that the fluid velocity vector field $v=(v_1,v_2 ,v_3)$ 
and the fluid pressure scalar field $q$ satisfy in $\cup_{t \geq 0} \, \mathcal{F}(t) $ 
the following PDEs: 
\begin{equation} \label{intrau1}
\displaystyle \frac{\partial v }{\partial t}+(v  \cdot\nabla)v   + \nabla q =0 \quad  
\text{ and } \quad \div v  = 0.
\end{equation}
Above, it is understood that the fluid is supposed to be homogeneous, of density $1$; this does not change the mathematical analysis but simplifies the notations.
We denote by 
${\mathcal F}_{0}:= \R^{3} \setminus {\mathcal S}_{0} $ the initial fluid domain. 
The rigid body  is assumed to be only accelerated, for any $t\geq0$, by the force exerted by the fluid pressure $q$ 
 on its boundary $\partial  \mathcal{S} (t)$,  following  the Newton equations, according to the following ODEs: 
\begin{gather} \label{intrau2}
 m h'' (t) =  \int_{\partial  \mathcal{S} (t)} q n  \dd\sigma 
 \quad \text{ and } \quad 
 (\mathcal{J} R)' (t) =  \int_{\partial  \mathcal{S} (t)} (x-h) \wedge q n  \dd\sigma ,
\end{gather}
where $\dd\sigma$ is the two-dimensional Hausdorff measure. 
Above $n$ denotes the unit normal vector on $\partial \mathcal{S} (t)$ 
pointing outside of the fluid domain $\mathcal{F}(t) $. 
We assume that the boundary of the solid is impermeable so that the 
fluid cannot penetrate into the solid, and we assume that there is no cavitation as well. 
The natural boundary condition at the fluid-solid interface is therefore 
\begin{equation} \label{bc-intro}
v \cdot n = v_{{\mathcal S}}   \cdot n   \quad \text{for}\ \  x\in \partial \mathcal{S}  (t) . 
\end{equation}

\subsection{Reformulation of the Newton equations in the body frame}
\label{Subsec:COF}  
In order to transfer the equations in the body frame, in which the solid is stationary, we apply the following isometric change of variables:
\begin{gather} 
\label{chgedeb1} 
\ell(t) :=  Q(t)^T \,  h' (t),  \\
\label{u_S}
u_{\mathcal{S}} (t,x) := \ell(t) + \Omega(t) \wedge x , \\
\label{chgedeb2}
u(t,x) := Q(t)^T \,  v(t, Q(t) x+h(t)) \ \text{ and } \ \pi(t,x) := q(t, Q(t) x+ h(t)) .
\end{gather}
with 
\begin{equation} \label{defp}
{p} := ( \ell , \Omega ) \in \mathbb{R}^{6}.
\end{equation}

By the change of variables \eqref{chgedeb1}-\eqref{chgedeb2}
the equations \eqref{intrau1}--\eqref{bc-intro} become
\begin{gather}
\label{chE1}
\partial_t u+ (  u - u_\mathcal{S} ) \cdot \nabla u +  \Omega  \wedge u  + \nabla \pi  =  0 
  \ \text{ and } \ \div u = 0   \ \text{ for } \ x \in \mathcal{F}_0 , \\
\label{chE3}
u  \cdot n = u_\mathcal{S}  \cdot n  \ \text{ for } \ x\in \partial \mathcal{S}_0,  \\
\label{chSolide1} 
m  \ell'  =  \int_{ \partial \mathcal{S}_0 } \pi n \dd\sigma  + m \ell \wedge   \Omega ,  \\
\label{chSolide2} 
\mathcal{J}_0 \Omega'  =    \int_{ \partial   \mathcal{S}_0}  \pi ( x \wedge  n) \dd\sigma  
  + ( \mathcal{J}_0   \Omega ) \wedge  \Omega .
\end{gather}
\subsection{Conservation of energy}
\label{subsec-en}
One easily checks that, formally, the total kinetic energy, given by
\begin{equation} \label{NRJ-1}
\mathcal E := \frac12 m  \vert  \ell \vert^2 + \frac12 \mathcal{J}_0 \Omega \cdot \Omega 
+ \frac12 \int_{ \mathcal{F}_0} \vert u \vert^2 \dd x ,
\end{equation}
is conserved. 
In fact, the time derivative of $\mathcal E$ satisfies
\begin{equation*}
\mathcal E' = m  \ell'  \cdot \ell +  \mathcal{J}_0 \Omega' \cdot \Omega 
+ \int_{ \mathcal{F}_0}     u  \cdot  \frac{\partial u }{\partial t} \dd x .
\end{equation*}
Then, using \eqref{u_S}, \eqref{chSolide1} and \eqref{chSolide2} we deduce
\begin{equation*}
m   \ell'  \cdot \ell +  \mathcal{J}_0 \Omega' \cdot \Omega 
= \int_{\partial  \mathcal{S}_0} \pi  u_{{\mathcal S}}   \cdot n  \dd\sigma ,
\end{equation*}
while from \eqref{chE1}, \eqref{chE3} and an integration by parts, we find that
\begin{equation*}
\int_{ \mathcal{F}_0}   
u\cdot\big(-(u-u_\mathcal{S})\cdot\nabla u-\Omega\wedge u-\nabla\pi\big)\dx
= - \int_{\partial  \mathcal{S}_0} \pi u \cdot n  \dd\sigma .
\end{equation*}
Then it follows from \eqref{chE3} that $ \mathcal E' = 0$.

\subsection{Wellposedness} \label{Subsec-Cauchy}
The existence and uniqueness of classical solutions for short times 
is now well-understood thanks to the works \cite{GST,ORT1,ORT2,RR,HSMT,Sueur12}, 
see in particular \cite[Theorem 5]{Sueur12} for a result in the setting considered here
(that is, solutions in Hölder
spaces where the whole fluid-body domain is $\R^3$). 
More precisely, the following general result holds. 
\begin{prop}[\cite{Sueur12}, \cite{GST}] 
\label{loc wellp}
Assume that the initial data $\mathcal{S}(0)$, $h^0, \dot{h}^0$,  
$v^0\in C^{\lambda+1, r}(\mathcal{F}(0))$ with $\lambda \in \N_{>0}$ and $r\in (0,1)$ are given such that $\curl v^0$ has bounded support. 
Then there is a $T>0$ and unique local strong solution on $[0,T)$ to 
the system \eqref{chE1}-\eqref{chSolide2} such that 
\begin{align}
  & u,\pi\in L_{loc}^\infty([0,T),C^{\lambda+1,r}(\mathcal{F}(\cdot)))\\
  &u\in L_{loc}^\infty([0,T), L^2(\mathcal{F}(\cdot)))\\
  & \de_t u \in L_{loc}^\infty([0,T),(C^{\lambda,r}\cap L^2)(\mathcal{F}(\cdot)))\\
  & \ell\in C^2([0,T),\R^3),\quad \Omega\in C^1([0,T),\R^3)
\end{align} 
\end{prop}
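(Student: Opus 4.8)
The plan is to exploit the fact that, in the body frame, the fluid domain $\mathcal F_0$ is fixed, and to recast \eqref{chE1}--\eqref{chSolide2} as a closed quasilinear system for the pair $(u,p)$, with $p=(\ell,\Omega)$, which is then solved by a fixed point scheme in Hölder spaces, along the lines of \cite{Sueur12,GST}. The first task is to close the Newton equations: taking the divergence of the momentum equation in \eqref{chE1} shows that, at each time, $\pi$ solves a Neumann problem on $\mathcal F_0$ whose data is determined by $u$, $\Omega$, and the normal component of the boundary acceleration, the latter being an affine function of $\ell'$ and $\Omega'$ via \eqref{u_S}, \eqref{chE3}. Solving and splitting this Neumann problem gives $\pi=\pi_{\mathrm{acc}}[\ell',\Omega']+\pi_0[p,u]$, where $\pi_{\mathrm{acc}}$ is linear in $(\ell',\Omega')$ and built from geometry-dependent Kirchhoff-type potentials, while $\pi_0$ depends only on $p$ and $u$ (and, through $\curl u$, on the vorticity). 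Substituting into \eqref{chSolide1}--\eqref{chSolide2}, the boundary integrals of $\pi_{\mathrm{acc}}$ produce the symmetric positive-semidefinite added-inertia matrix $\mathcal M_a$, so that the Newton equations become a genuine first-order ODE
\begin{equation*}
(\mathcal M_g+\mathcal M_a)\,p' = F\big(p,\ u|_{\partial\mathcal S_0},\ \curl u\big),
\end{equation*}
with $\mathcal M_g+\mathcal M_a$ symmetric positive definite, hence invertible; this is exactly the reformulation of Section~\ref{Subsec:COF}.

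For the fluid part, I would take the curl of \eqref{chE1}, turning it into a transport equation for $\omega=\curl u$ with a vortex-stretching term and lower-order terms linear in $\Omega$, and note that the circulations of $u$ around the finitely many generators of $H_1(\mathcal F_0)$ are conserved by Kelvin's theorem. The velocity is then recovered from $\omega$ by the div--curl elliptic problem $\div u=0$, $\curl u=\omega$ in $\mathcal F_0$, $u\cdot n=u_{\mathcal S}\cdot n$ on $\partial\mathcal S_0$, with the prescribed circulations and decay at infinity; its solution is a Biot--Savart integral plus a combination of Kirchhoff potentials carrying the dependence on $p$ and harmonic fields carrying the circulations, and it obeys a Schauder bound of the form $\norm{u}_{C^{\lambda+1,r}(\mathcal F_0)}\lesssim\norm{\omega}_{C^{\lambda,r}}+|p|+(\text{circulations})$ together with the corresponding $L^2$ control.

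With these two ingredients, I would run a Picard-type iteration: from $(u^n,p^n)$, first solve the ODE above for $p^{n+1}$; then transport the vorticity along the flow of $u^n-u^{n+1}_{\mathcal S}$, keeping the stretching term, to obtain $\omega^{n+1}$, whose support remains in a fixed compact set on short time intervals by finite propagation speed; finally recover $u^{n+1}$ from the div--curl system. Hölder estimates for the transport equation (the flow map stays bounded in $C^{\lambda+1,r}$ with a constant governed by $\int_0^t\norm{\nabla u^n}_{L^\infty}$), the Schauder estimate above, and the smoothness of $F$ show that this map preserves a suitable ball of $L^\infty_{loc}([0,T);C^{\lambda+1,r})\times C^1([0,T);\R^6)$ for $T$ small enough, uniformly in $n$; convergence follows by showing it is a contraction in a weaker norm (for instance $C^{\lambda,r}$ for $u$ together with the $L^2$ energy structure behind \eqref{NRJ-1}), and passing to the limit yields a solution. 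The regularity $\partial_t u\in C^{\lambda,r}\cap L^2$ and $\ell\in C^2$, $\Omega\in C^1$ is then read off from \eqref{chE1} and the closed ODE, while uniqueness follows from a Gronwall estimate on the difference of two solutions in a low-regularity norm, using the energy identity of Section~\ref{subsec-en}, the Lipschitz stability of the div--curl solution map, and the local Lipschitz character of $F$.

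The hard part is twofold. First, the fluid--solid coupling through the pressure: one must check that the added-mass operator makes $\mathcal M_g+\mathcal M_a$ positive definite and that $\pi_0$, $F$, and the div--curl solution map depend on the unknowns with enough smoothness and Lipschitz control on $\partial\mathcal S_0$; this is the technical heart of Section~\ref{Subsec:COF}. Second, and more fundamentally, the vortex-stretching term $(\omega\cdot\nabla)u$ in the vorticity equation forces us to propagate the full $C^{\lambda+1,r}$ regularity of $u$ rather than merely an $L^\infty\cap L^1$ bound on $\omega$, and it may a priori blow up in finite time; this is why the statement is only local in time, in contrast with the two-dimensional situation.
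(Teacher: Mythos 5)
The paper does not prove Proposition~\ref{loc wellp}; it cites it from \cite{Sueur12,GST}. Your sketch is nonetheless a correct outline of the argument in those references, and it also matches the strategy used in the paper's Appendix to prove the closely related Theorem~\ref{start3} (which restates this Cauchy theory in the filament geometry, with $\varepsilon$-uniform constants). The main stylistic difference is that the Appendix runs the Banach--Picard iteration on the pair $(p,\eta)$, where $\eta$ is the Lagrangian flow map, and recovers $\omega$ at each step by the push-forward formula $\omega(t,x)=(\nabla\eta)(t,\eta^{-1}(t,x))\cdot\omega_0(\eta^{-1}(t,x))$; you instead iterate directly on $(u,p)$, solving the transport-with-stretching equation for $\omega$ at each step. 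These are equivalent (the push-forward is exactly the solution of the transport-with-stretching equation), but the Lagrangian form has the advantage that compact support of $\omega$ and the Hölder bounds propagate trivially through the composition with $\eta$, rather than requiring a separate Hölder estimate for a PDE. Your closing of the Newton ODE via the Neumann problem for $\pi$, the identification of $\mathcal M_a$, and the invertibility of $\mathcal M_g+\mathcal M_a$ are exactly the content of Proposition~\ref{reform-macro} and its surrounding discussion.

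Two minor remarks. First, ``finite propagation speed'' is not quite the right phrase for incompressible Euler (the pressure propagates instantaneously); what you mean, correctly, is that $\supp\omega(t)$ is transported by the flow of $u-u_{\mathcal S}$ and hence stays compact and separated from $\partial\mathcal S_0$ over a short time. Second, to obtain the asymmetric regularity $\ell\in C^2$, $\Omega\in C^1$ as stated, one needs a short bootstrap: the ODE gives $p\in C^1$, and then $\partial_t u\in C^{\lambda,r}$ makes the force in \eqref{chSolide1} $C^1$ in time, upgrading $\ell$ to $C^2$; you allude to this (``read off from \eqref{chE1} and the closed ODE''), but it is worth making the bootstrap explicit since the naive iteration only yields $p\in C^1$.
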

Moreover, one may easily adapt the Beale-Kato-Majda \cite{BKM1984} necessary condition on the $L^1_t L^\infty_x$ norm of the vorticity for a blow-up in finite time from the case of a fluid alone to this system. In particular, this yields the global in-time existence of classical solutions in the case of a potential flow. We remark that the formal energy conservation from \ref{subsec-en} above does indeed hold for strong solutions; all the calculations there can easily be made rigorous using the regularity and the decay estimates in the Appendix \ref{app-IPP}. \par
In the special case of a thick, rigid, closed, simple filament considered in this paper
(see Subsection~\ref{Subsec:toro}), we will use a standalone, more precise result for the Cauchy theory,
see Theorem~\ref{start3}.

\subsection{Reformulation of the Newton equations as an ODE}
\label{Subsec:COF2} 

\noindent To reformulate the Newton equations, we are going to use the following Kirchhoff potentials. 
First, we introduce the elementary rigid velocities: 
\begin{equation*} 
\zeta_{i}  (x) :=   \left\{\begin{array}{ll} 
e_i & \text{if} \ i=1,2,3 ,\\ \relax
 e_{i-3} \wedge x& \text{if} \ i=4,5,6 ,
\end{array}\right.
\end{equation*}
and their normal traces, for $i=1,\ldots,6$, 
\begin{equation} \label{t1.6}
K_i (x) := \zeta_{i} \cdot n ,
\end{equation}
where $e_i $, for $i=1,2,3$, denotes the $i$-th unit vector of the canonical basis of $ \R^{3}$.
The Kirchhoff potentials $\nabla \Phi_{i} $, for $i=1,\ldots,6$,  are then defined as  the unique solutions to
\begin{align}
\label{yi0}
\Delta  \Phi_{i}  &= 0 ,   \ \text{ for } \   x \in \mathcal F_0  ,  \\
\label{yi1}
\partial_n \Phi_{i}  &=  K_{i}  ,  \ \text{ for } x \in \partial \mathcal S_0  ,  \\
\label{yi2}
\lim_{|x|\to \infty} \Phi_{i} (x)  &=0   
\end{align}
for $1 \leqslant i \leqslant 6$ (see e.g.\ \cite[Thm.\ 3.1]{Amrouche} for existence and uniqueness). 
The Kirchhoff potentials $\nabla \Phi_{i} $ are smooth and decay as $1 / \vert x \vert^3$ at infinity, 
since they satisfy the following compatibility condition for the Neumann Problem:
\begin{equation} \label{Stokes0}
\int_{\partial  \mathcal{S}_{0}  } K_i \dd\sigma = 0.
\end{equation}
(This easily follows from the properties of the fundamental solution of the Laplace operator in $3$D, see the Appendix \ref{app-IPP} for more details.)
Note in particular that $\nabla \Phi_i$ is in $L^2 (\mathcal{F}_{0})$.

Let $\mathcal M_g $ be the symmetric positive definite matrix 
\begin{equation*}
\mathcal M_g  :=
\begin{pmatrix}
m \, \Id_3  &  0\\
0  &  \mathcal J_0
\end{pmatrix}  ,
\end{equation*}
where we recall that $m$ and $\mathcal{J}_0$ are, respectively, the mass and
the rotational inertia matrix, while $\Id_3 $ is the identity matrix $3 \times 3$. 
Let $\Gamma_{g}: \mathbb{R}^{6} \times \mathbb{R}^{6} \rightarrow \mathbb{R}^{6}$ 
be the bilinear symmetric mapping defined, for all ${p} = ( \ell , \Omega ) \in \R^{6}$, by
\begin{equation} \label{DefGammag}
\langle \Gamma_{g}  , p, p \rangle := -
  \begin{pmatrix}
    m \ell \wedge \Omega   \\
    (\mathcal{J}_0 \Omega) \wedge \Omega 
  \end{pmatrix} .
\end{equation}
Note that
\begin{equation} \label{AnnulationGamma}
\forall {p} \in \mathbb{R}^{6}, \ \ \ \langle \Gamma_{g}, {p}, p \rangle \cdot p =0   .
\end{equation}
Let $\mathcal M_a $ be the following $6 \times 6$ matrix:
\begin{gather} \label{def M}
\mathcal M_a 
:= \Big(  \int_{\mathcal F_0} \nabla  \Phi_{i}  \cdot \nabla \Phi_{j}\dx  \Big)_{1 \leqslant i,j \leqslant 6}
= \Big(  \int_{\partial \mathcal S_0} \Phi_{i} \, \partial_n \Phi_{j}\dd\sigma \Big)_{1 \leqslant i,j \leqslant 6},
\end{gather}
where the second equality is obtained by integration by parts.
The matrix $\mathcal M_{a} $ encodes the added inertia of the rigid body. 
It is symmetric and positive-semidefinite.

We will use the following notation for the triple scalar product, or mixed product, of three vectors
$a,b,c$ in $\R^3$,
\begin{equation*}
\lbrack  a,b,c \rbrack := \det ( a,b,c) = a \cdot (b \wedge c) 
 = (a  \wedge b ) \cdot  c .
\end{equation*}
For a smooth vector field $u$ defined on $\partial \mathcal{S}_{0}$, let ${\mathcal B}[u]$ be the (antisymmetric) $6 \times 6$ matrix whose coefficients are 
\begin{equation} \label{calBdef}
{\mathcal B} [u]_{i,j} := 
 \int_{\partial  \mathcal{S}_{0}  } \, [ \zeta_{j} , \zeta_i , u\wedge n ] \dd\sigma .
 \end{equation}
 Finally, for any $p \in \R^6$ and for two smooth vector fields $u$ and $\omega$ in $\mathcal{F}_{0}$,
 we consider the vector ${\mathcal D} \lbrack p, u, \omega \rbrack$ in $\R^6$ given by: 
\begin{equation} \label{defD}
{\mathcal D} [ p, u, \omega ]
:= \left(  
\int_{\mathcal  F_0}  [ \zeta_{i}, \omega ,u ]  \dx 
- \int_{ \mathcal{F}_0}   [ \omega, u -  u_{\mathcal{S}}, \nabla \Phi_{i} ] \dx
\right)_{1 \leqslant i \leqslant 6}, 
\end{equation}
where then the dependence on $p$ is through the $u_{\mathcal{S}} := \ell  (t)  + \Omega(t) \wedge x$ term.
With these notations, the system can be recast as follows.
\begin{prop} \label{reform-macro}
For a strong solution in the sense of Prop.\ \ref{loc wellp}, the Equations \eqref{Qom}--\eqref{bc-intro}
are equivalent to \eqref{Qom}--\eqref{intrau1}, \eqref{bc-intro}
and
\begin{equation} \label{anoter2}
({\mathcal M}_{g} + {\mathcal M}_{a} ) p' + \langle \Gamma_g, p , p \rangle 
=  {\mathcal B}[ u ] p + {\mathcal D} [ p, u, \omega ]   .
\end{equation}
where $\omega := \curl u$ denotes the fluid vorticity. 
\end{prop}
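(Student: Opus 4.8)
The plan is to derive \eqref{anoter2} directly from the reformulated Newton equations \eqref{chSolide1}--\eqref{chSolide2} by expressing the boundary integrals of the pressure in terms of the fluid velocity. First I would write the $6$-dimensional system compactly: stacking \eqref{chSolide1} and \eqref{chSolide2}, the left-hand side is $\mathcal M_g p' + \langle \Gamma_g, p, p\rangle$, where the sign conventions of \eqref{DefGammag} exactly absorb the $m\ell\wedge\Omega$ and $(\mathcal J_0\Omega)\wedge\Omega$ Coriolis-type terms. So the whole content reduces to identifying
\begin{equation*}
\left(\int_{\partial\mathcal S_0}\pi\,\zeta_i\cdot n\dd\sigma\right)_{1\le i\le 6} = -\mathcal M_a p' + \mathcal B[u]p + \mathcal D[p,u,\omega],
\end{equation*}
since $\int_{\partial\mathcal S_0}\pi n\dd\sigma$ and $\int_{\partial\mathcal S_0}\pi(x\wedge n)\dd\sigma$ are precisely the first three and last three components of the vector $\bigl(\int_{\partial\mathcal S_0}\pi\,\zeta_i\cdot n\dd\sigma\bigr)_i$, by definition of $\zeta_i$.

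The key tool is to pair the momentum equation \eqref{chE1} against the Kirchhoff fields $\nabla\Phi_i$ and integrate over $\mathcal F_0$. Using $\div(\nabla\Phi_i)=0$ and the Neumann condition $\partial_n\Phi_i = K_i = \zeta_i\cdot n$ together with the decay of $\nabla\Phi_i$ like $1/|x|^3$ (which makes the integration by parts licit — this is where Appendix~\ref{app-IPP} is invoked), one gets
\begin{equation*}
\int_{\mathcal F_0}\nabla\pi\cdot\nabla\Phi_i\dx = -\int_{\partial\mathcal S_0}\pi\,(\zeta_i\cdot n)\dd\sigma,
\end{equation*}
the sign coming from $n$ pointing out of $\mathcal F_0$. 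Thus the pressure integrals are $\int_{\mathcal F_0}\nabla\pi\cdot\nabla\Phi_i$, which by \eqref{chE1} equals $-\int_{\mathcal F_0}\bigl(\partial_t u + (u-u_{\mathcal S})\cdot\nabla u + \Omega\wedge u\bigr)\cdot\nabla\Phi_i\dx$. Next I would handle the three resulting terms. The $\partial_t u$ term: since $\int_{\mathcal F_0}u\cdot\nabla\Phi_i\dx$ can be computed via the boundary condition \eqref{chE3}, $u\cdot n = u_{\mathcal S}\cdot n = \sum_j p_j K_j$, one obtains $\int_{\mathcal F_0}u\cdot\nabla\Phi_i\dx = \sum_j p_j\int_{\partial\mathcal S_0}\Phi_i K_j\dd\sigma = (\mathcal M_a p)_i$ by \eqref{def M}; differentiating in time, the $\partial_t$ term produces $-(\mathcal M_a p')_i$ plus a correction because $\nabla\Phi_i$ is time-independent but the domain's effective boundary data moves — actually since $\mathcal F_0$ and $\Phi_i$ are fixed, $\tfrac{d}{dt}\int u\cdot\nabla\Phi_i = \int\partial_t u\cdot\nabla\Phi_i$, so no correction, giving cleanly $-(\mathcal M_a p')_i$. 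For the convective and Coriolis terms, I would rewrite $(u-u_{\mathcal S})\cdot\nabla u + \Omega\wedge u$ using the vector identity $(a\cdot\nabla)b = \nabla(a\cdot b) - a\wedge\curl b - (\nabla a)^T b$ restricted appropriately; the cleanest route is to split $u = (u - u_{\mathcal S}) + u_{\mathcal S}$ and use $\curl u = \omega$, $\curl u_{\mathcal S} = 2\Omega$, so that $(u-u_{\mathcal S})\cdot\nabla u + \Omega\wedge u$ reorganizes into a gradient (absorbed against $\nabla\Phi_i$ by another integration by parts yielding a boundary term involving $\zeta_i\cdot n$ that must be tracked) plus terms of the form $\omega\wedge(u-u_{\mathcal S})$ and $2\Omega\wedge$ something. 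Matching these against the definitions \eqref{calBdef} of $\mathcal B[u]$ — noting the antisymmetric triple-product structure $[\zeta_j,\zeta_i,u\wedge n]$ naturally arises from boundary terms of the form $\int_{\partial\mathcal S_0}(\text{stuff})(\zeta_i\cdot n)$ after using $u\cdot n = \sum_j p_j\zeta_j\cdot n$ — and \eqref{defD} of $\mathcal D$, should close the identity.

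The main obstacle I anticipate is the bookkeeping of the boundary terms generated when one integrates by parts the gradient pieces against $\nabla\Phi_i$: one must show that the combination of the pressure-gradient term's boundary contribution and the Bernoulli-type gradient terms from the convective/Coriolis part reassembles exactly into $\mathcal B[u]p$, and in particular that the result is the antisymmetric matrix of \eqref{calBdef} applied to $p$ — the antisymmetry being a nontrivial algebraic identity that uses $\int_{\partial\mathcal S_0}[\zeta_i,\zeta_j,u\wedge n]\dd\sigma = -\int_{\partial\mathcal S_0}[\zeta_j,\zeta_i,u\wedge n]\dd\sigma$ transparently but whose appearance requires carefully using $u\cdot n = \sum_j p_j(\zeta_j\cdot n)$ to convert a vector-valued boundary integral into the bilinear form $\mathcal B[u]p$. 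A secondary technical point, already flagged in the excerpt, is justifying every integration by parts on the unbounded domain $\mathcal F_0$: this rests on the decay $\nabla\Phi_i = O(|x|^{-3})$, the $L^2$ and Hölder regularity of $u,\pi,\partial_t u$ from Proposition~\ref{loc wellp}, and the compact support (or sufficient decay) of $\omega$, all of which I would cite from Appendix~\ref{app-IPP} rather than reprove. Finally, I would note that the equivalence is genuinely an ``if and only if'': conversely, given \eqref{anoter2} together with the PDE \eqref{chE1}, one recovers \eqref{chSolide1}--\eqref{chSolide2} by testing against the $6$-dimensional space spanned by the $\nabla\Phi_i$, which exhausts the relevant projection of the pressure force since the $K_i = \zeta_i\cdot n$ span the traces of rigid velocities.
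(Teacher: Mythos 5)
Your overall strategy is the same as the paper's: project the Newton equations onto the Kirchhoff potentials, rewrite the Euler equation in Bernoulli form, identify the $\partial_t u$ contribution with $\mathcal{M}_a p'$, and integrate the gradient terms by parts. However, there is a genuine gap precisely at the point you yourself flag as ``the main obstacle''. After those integrations by parts one is left (cf.\ \eqref{anoter}) with the boundary terms $-\frac12\int_{\partial\mathcal{S}_0}|u|^2K_i\dd\sigma+\int_{\partial\mathcal{S}_0}(u\cdot u_\mathcal{S})K_i\dd\sigma$ and the bulk term $-\int_{\mathcal{F}_0}\bigl(\omega\wedge(u-u_\mathcal{S})\bigr)\cdot\nabla\Phi_i\dx$. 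No amount of bookkeeping of gradient pieces turns the first boundary term into part of $\mathcal{B}[u]p$: the trace $\frac12|u|^2$ involves the full velocity on $\partial\mathcal{S}_0$, not just $u\cdot n$, so the boundary condition \eqref{chE3} alone cannot convert it into the bilinear form \eqref{calBdef}. The paper's mechanism is the Lamb-type identity of Lemma~\ref{lamb} (itself proved by applying Stokes' theorem to $\div\bigl((\tilde u\cdot\tilde v)z\bigr)$ together with \eqref{etlesigne}), applied with $\tilde u=\tilde v=u$ and $z=\zeta_i$: it trades $\frac12\int_{\partial\mathcal{S}_0}|u|^2K_i\dd\sigma$ for $\int_{\partial\mathcal{S}_0}(u\cdot n)(u\cdot\zeta_i)\dd\sigma-\int_{\mathcal{F}_0}\zeta_i\cdot(\omega\wedge u)\dx$. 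Only then does $u\cdot n=u_\mathcal{S}\cdot n$ produce the combination $\int_{\partial\mathcal{S}_0}\bigl((u\cdot u_\mathcal{S})n-(u_\mathcal{S}\cdot n)u\bigr)\cdot\zeta_i\dd\sigma$, which is $(\mathcal{B}[u]p)_i$ by Cauchy-Binet \eqref{CauchyBinet} and $u_\mathcal{S}=\sum_j p_j\zeta_j$; and, crucially, the bulk term $\int_{\mathcal{F}_0}[\zeta_i,\omega,u]\dx$ generated by this lemma is exactly the first half of $\mathcal{D}[p,u,\omega]$ in \eqref{defD}, a term your sketch never accounts for (your ``$\omega\wedge(u-u_\mathcal{S})$ matched against $\mathcal{D}$'' only yields the second half, the one involving $\nabla\Phi_i$). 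Without this identity, or a reproof of it, the identification of the right-hand side with $\mathcal{B}[u]p+\mathcal{D}[p,u,\omega]$ does not close.

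Two smaller points. Your displayed identity $\int_{\mathcal{F}_0}\nabla\pi\cdot\nabla\Phi_i\dx=-\int_{\partial\mathcal{S}_0}\pi\,(\zeta_i\cdot n)\dd\sigma$ has the wrong sign: $n$ is the outward normal of the fluid domain $\mathcal{F}_0$, so the identity holds with a plus sign, which is in fact what the rest of your computation implicitly uses and what matches \eqref{prelooser}. Also, your anticipated leftover ``$2\Omega\wedge$ something'' does not occur: using \eqref{etlesigne} with $u$ and $u_\mathcal{S}$ (note $\curl u_\mathcal{S}=2\Omega$ and $(u\cdot\nabla)u_\mathcal{S}=\Omega\wedge u$), all the $\Omega$-terms cancel and one gets exactly $\nabla\bigl(\tfrac12|u|^2-u\cdot u_\mathcal{S}\bigr)+\omega\wedge(u-u_\mathcal{S})$, i.e.\ \eqref{looser}; this cancellation should be carried out explicitly, as it is what keeps spurious terms out of $\mathcal{D}$.
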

This extends the earlier results obtained in the 2D case in \cite{GMS,Munnier,Sueur17,Sueur-china} 
and in the 3D irrotational case in \cite{Munnier}.
In the irrotational case, we may observe the following rephrasing of the energy conservation: 
\begin{equation} \label{Conserv-set}
\Big( \frac12 ({\mathcal M}_{g} + {\mathcal M}_{a} ) p \cdot p \Big)' = 0. 
\end{equation}
Furthermore, multiplying \eqref{anoter2} by $p$, and using \eqref{AnnulationGamma} and ${\mathcal B} [ u ] p \cdot p = 0$, 
we obtain:  
\begin{gather*}
\Big(  \frac12 ({\mathcal M}_{g} + {\mathcal M}_{a} ) p \cdot p \Big)'
= \left(  \int_{ \mathcal{F}_0} \bigg( [ u_\mathcal{S}, \omega, u ] 
  - \big[ \omega, u - u_\mathcal{S}, \sum_{1 \leqslant i \leqslant 6} p_i \nabla \Phi_{i}\big] \bigg) \dx 
\right)_{1 \leqslant i \leqslant 6} .
  \end{gather*}

A key instrument in the proof of Proposition~\ref{reform-macro} is the following lemma. 
\begin{lem} \label{lamb}
Let $\tilde{u},\tilde{v}$ be two divergence-free vector fields in 
$C^\infty (\overline{\mathcal{F}}_0;\, \R^3)$   with $\curl \tilde u,\curl \tilde v$ in $L^\infty$.
Let $z$ be a vector field in $H^1(\overline{\mathcal{F}}_0 ;\, \R^3)$. 
Assume that we have that
\begin{equation} \label{suff decay}
\lim_{|x|\rightarrow \infty}  |x|^2|\tilde u(x)||\tilde v(x)||z(x)|=0.
\end{equation}
Then we have the following equality:
\begin{equation}\begin{aligned} \label{vrai}
\int_{\de\mathcal{S}_0}{(\tilde u\cdot \tilde v)(z\cdot n)}
  & - (\tilde u\cdot z)(\tilde v\cdot n) - {(\tilde v\cdot z)(\tilde u\cdot n)} \dd\sigma \\ 
  & = \int_{\mathcal{F}_0} (\tilde u \cdot \tilde v)\div z + (\tilde v\wedge \curl \tilde u + \tilde u\wedge\curl \tilde v)\cdot z \dx \\ 
  & - \int_{\mathcal{F}_0} \tilde u \cdot \left( \left( \tilde v \cdot \nabla\right) z \right)
    - \tilde v \cdot \left( \left(\tilde u \cdot \nabla \right) z \right) \dx.
\end{aligned}\end{equation}
 In particular, in the case where $z=\zeta_i$, the equation \eqref{vrai} reduces to 
\begin{multline} \label{vraiaussi}
\int_{\partial \mathcal S_0} (\tilde u\cdot \tilde v) K_{i} \dd\sigma
= \int_{\partial  \mathcal S_0} \zeta_{i}   \cdot \Big(  (\tilde u \cdot n) \tilde v +  (\tilde v \cdot n)  \tilde u  \Big)\dd\sigma \\ 
+  \int_{\mathcal  F_0} \zeta_{i}   \cdot  \Big( \tilde u \wedge  \operatorname{curl} \tilde v +  \tilde v \wedge  \curl \tilde u  \Big)\dx. 
\end{multline}
\end{lem}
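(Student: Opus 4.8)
The plan is to prove \eqref{vrai} first as an integration-by-parts identity and then specialize. The starting observation is that the bilinear expression inside the boundary integral on the left-hand side is the normal component of a well-chosen vector field built from $\tilde u$, $\tilde v$, $z$. Indeed, I would introduce the vector field
\[
W := (\tilde u \cdot \tilde v)\, z - (\tilde u \cdot z)\, \tilde v - (\tilde v \cdot z)\, \tilde u ,
\]
so that the boundary integrand is exactly $W \cdot n$ (recall that $n$ here points out of $\mathcal F_0$, i.e.\ into $\mathcal S_0$, so one has to be careful with the sign; the statement is written with $\int_{\partial \mathcal S_0}$, and I would orient everything consistently). The divergence theorem on $\mathcal F_0$ then gives $\int_{\partial \mathcal S_0} W \cdot n \dd\sigma = -\int_{\mathcal F_0} \div W \dx$ modulo the contribution at infinity, which vanishes thanks to hypothesis \eqref{suff decay} together with the assumed $H^1$ and $C^\infty$ regularity (one truncates on a large ball $B_\rho$, uses that the flux through $\partial B_\rho$ is controlled by $\rho^2 \sup_{\partial B_\rho} |\tilde u||\tilde v||z| \to 0$, and passes to the limit).

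The heart of the computation is then to expand $\div W$ using the product rules. Writing $\div\big((\tilde u \cdot \tilde v) z\big) = (\tilde u \cdot \tilde v)\div z + \nabla(\tilde u \cdot \tilde v)\cdot z$ and similarly for the other two terms, and then using the classical identity $\nabla(a\cdot b) = (a\cdot\nabla)b + (b\cdot\nabla)a + a\wedge\curl b + b\wedge\curl a$ on $\nabla(\tilde u\cdot\tilde v)$, one gets the $(\tilde v\wedge\curl\tilde u + \tilde u\wedge\curl\tilde v)\cdot z$ term. For the remaining two terms one uses $\div(\tilde v \cdot z)\tilde u) = (\tilde v\cdot z)\div\tilde u + \nabla(\tilde v\cdot z)\cdot\tilde u$ and $\div\tilde u = 0$ (divergence-freeness of $\tilde u$, $\tilde v$ is used precisely here), expands $\nabla(\tilde v \cdot z)\cdot\tilde u = \tilde u\cdot((\tilde v\cdot\nabla)z) + \tilde u\cdot((z\cdot\nabla)\tilde v)$, and symmetrically for the last term. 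After collecting, the terms of the form $\tilde u\cdot((z\cdot\nabla)\tilde v) + \tilde v\cdot((z\cdot\nabla)\tilde u)$ must recombine with the convective pieces coming from the first term to reproduce exactly $-\int_{\mathcal F_0}\tilde u\cdot((\tilde v\cdot\nabla)z) - \tilde v\cdot((\tilde u\cdot\nabla)z)\dx$; tracking these cancellations carefully, with attention to signs and to the $\div z$ term that survives from the very first product rule, yields \eqref{vrai}.

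For \eqref{vraiaussi} I would simply set $z = \zeta_i$. Then $\zeta_i \cdot n = K_i$ by definition \eqref{t1.6}, so the first boundary integrand becomes $(\tilde u\cdot\tilde v)K_i$; the other two boundary terms become $-(\tilde u\cdot\zeta_i)(\tilde v\cdot n) - (\tilde v\cdot\zeta_i)(\tilde u\cdot n) = -\zeta_i\cdot\big((\tilde v\cdot n)\tilde u + (\tilde u\cdot n)\tilde v\big)$, which I move to the right-hand side. On the volume side, $\zeta_i$ is either a constant vector ($i=1,2,3$) or $e_{i-3}\wedge x$ ($i=4,5,6$); in both cases $\div\zeta_i = 0$, killing the $(\tilde u\cdot\tilde v)\div z$ term, and — this is the slightly delicate point — $(\tilde v\cdot\nabla)\zeta_i$ and $(\tilde u\cdot\nabla)\zeta_i$ combine so that $\tilde u\cdot((\tilde v\cdot\nabla)\zeta_i) - \tilde v\cdot((\tilde u\cdot\nabla)\zeta_i) = 0$: for constant $\zeta_i$ this is obvious, and for $\zeta_i = e_{i-3}\wedge x$ one computes $(a\cdot\nabla)(e_{i-3}\wedge x) = e_{i-3}\wedge a$, so the bracket becomes $\tilde u\cdot(e_{i-3}\wedge\tilde v) - \tilde v\cdot(e_{i-3}\wedge\tilde u) = [\,\tilde u,e_{i-3},\tilde v\,] - [\,\tilde v,e_{i-3},\tilde u\,]$, which vanishes by antisymmetry of the triple product. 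What remains is precisely the curl term $\int_{\mathcal F_0}\zeta_i\cdot(\tilde u\wedge\curl\tilde v + \tilde v\wedge\curl\tilde u)\dx$, giving \eqref{vraiaussi}.

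I expect the main obstacle to be purely bookkeeping: getting all signs right in the product-rule expansion of $\div W$ (there are three terms, each producing two or three pieces, several of which must cancel in pairs), and correctly handling the boundary orientation convention together with the vanishing of the flux at infinity under the sharp decay hypothesis \eqref{suff decay}. No deep idea is needed beyond recognizing $W$ and the standard vector calculus identity for $\nabla(a\cdot b)$; the regularity hypotheses ($\tilde u,\tilde v\in C^\infty(\overline{\mathcal F}_0)$ with bounded curl, $z\in H^1$) are exactly what is needed to justify each integration by parts and the limiting argument, so I would state clearly at each step which hypothesis is being invoked.
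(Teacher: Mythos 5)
Your strategy is sound and is in substance the same computation as the paper's: the paper applies Stokes' formula to $(\tilde u\cdot \tilde v)z$, uses the identity \eqref{etlesigne}, and then integrates the two convective terms by parts (which is where $\div \tilde u=\div \tilde v=0$ enters); packaging the three boundary terms into the single field $W=(\tilde u\cdot \tilde v)z-(\tilde u\cdot z)\tilde v-(\tilde v\cdot z)\tilde u$ and computing $\div W$ once is an equivalent reorganization, and your treatment of the flux at infinity via \eqref{suff decay} and of the reduction from $z\in H^1$ to smooth $z$ by approximation matches the paper. Two points of orientation/reading: with the paper's convention that $n$ points out of the fluid domain, the divergence theorem on $\mathcal{F}_0$ reads $\int_{\mathcal{F}_0}\div W\dx=\int_{\partial\mathcal{S}_0}W\cdot n\dd\sigma$ (no minus sign), and the last volume term in \eqref{vrai} must be read as $-\int_{\mathcal{F}_0}\bigl[\tilde u\cdot((\tilde v\cdot\nabla)z)+\tilde v\cdot((\tilde u\cdot\nabla)z)\bigr]\dx$, i.e.\ a sum; this is what the computation produces and what the proof of \eqref{vraiaussi} relies on.

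However, two of your displayed algebraic steps are wrong as written and would break the argument if carried out literally. First, $\nabla(\tilde v\cdot z)\cdot\tilde u=(\tilde u\cdot\nabla)(\tilde v\cdot z)=z\cdot\bigl((\tilde u\cdot\nabla)\tilde v\bigr)+\tilde v\cdot\bigl((\tilde u\cdot\nabla)z\bigr)$, not $\tilde u\cdot((\tilde v\cdot\nabla)z)+\tilde u\cdot((z\cdot\nabla)\tilde v)$: it is $\tilde u$ that differentiates. With the correct expansion, the terms $z\cdot((\tilde u\cdot\nabla)\tilde v)$ and $z\cdot((\tilde v\cdot\nabla)\tilde u)$ cancel exactly the convective pieces coming from $\nabla(\tilde u\cdot \tilde v)\cdot z$, leaving $\div W=(\tilde u\cdot\tilde v)\div z+(\tilde u\wedge\curl\tilde v+\tilde v\wedge\curl\tilde u)\cdot z-\tilde u\cdot((\tilde v\cdot\nabla)z)-\tilde v\cdot((\tilde u\cdot\nabla)z)$; with your transposed version the cancellation does not occur. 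Second, in the specialization $z=\zeta_i$ with $i=4,5,6$, the quantity that must vanish is the sum $\tilde u\cdot(e_{i-3}\wedge\tilde v)+\tilde v\cdot(e_{i-3}\wedge\tilde u)=[\tilde u,e_{i-3},\tilde v]+[\tilde v,e_{i-3},\tilde u]$, which is indeed zero by skew-symmetry of the triple product; the difference you wrote, $[\tilde u,e_{i-3},\tilde v]-[\tilde v,e_{i-3},\tilde u]$, equals $2[\tilde u,e_{i-3},\tilde v]$ and does not vanish in general. Once these sign/index slips are corrected, the proof goes through exactly as you outline and coincides with the paper's argument.
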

\begin{proof}[Proof of Lemma~\ref{lamb}]
It is sufficient to prove \eqref{vrai} in the case where the three vector fields $\tilde u$, $\tilde v$ and $z$ are smooth, since then the result follows from an approximation process. 
We start with using Stokes' formula to obtain that
\begin{equation} \label{vrai1}
\int_{\de\cals}{(\tilde u\cdot \tilde v)(z\cdot n)} \dd\sigma
    = \int_{\mathcal{F}_0}{\div((\tilde u\cdot \tilde v)z)}\dx 
    = \int_{\mathcal{F}_0} \left( {\nabla(\tilde u\cdot \tilde v)\cdot z} + {(\tilde u\cdot \tilde v)\div z} \right)\dx,
\end{equation}
where the partial integration is justified by the decay condition \eqref{suff decay}.
Applying the identity \begin{equation} \label{etlesigne}
  \nabla (\tilde u\cdot \tilde v)  =  \tilde u \cdot \nabla   \tilde v + \tilde v\cdot \nabla \tilde u + \tilde u \wedge  \operatorname{curl} \tilde v +   \tilde v \wedge  \operatorname{curl} \tilde u  ,
  \end{equation}
to $\tilde u$ and $\tilde v$ we obtain
\begin{equation}
\begin{aligned} \label{vrai2}
\int_{\de\cals}{( \tilde u\cdot \tilde v)(z\cdot n)} \dd \sigma=
  & \int_{\mathcal{F}_0} (\tilde u\cdot \tilde v)\div  \,z +  (\tilde v\wedge \curl \tilde u + \tilde u\wedge\curl \tilde v)\cdot z \dd x\\  
  & + \int_{\mathcal{F}_0}  {\bigl((\tilde u\cdot \nabla)\tilde v+(\tilde v\cdot \nabla)\tilde u\bigr) \cdot z} \dx.
\end{aligned}\end{equation}
Finally, integrating by parts again, since $\tilde u$ and $\tilde v$ are divergence-free
and still relying on \eqref{suff decay}, we obtain
\begin{equation}\begin{aligned} \label{vrai3}
\int_{\mathcal{F}_0} ( (\tilde u\cdot \nabla) \tilde v ) \cdot z\dx
   & = -\int_{\mathcal{F}_0}{\tilde v \cdot ((\tilde u\cdot \nabla)z)}\dx
     + \int_{\de\cals} (\tilde v \cdot z) (\tilde u \cdot n)\dd\sigma, \quad \text{and} \\
\int_{\mathcal{F}_0} ( (\tilde v \cdot \nabla) \tilde u ) \cdot z\dx
   & = -\int_{\mathcal{F}_0} \tilde u \cdot ((\tilde v \cdot \nabla) z )\dx
     + \int_{\de\cals} (\tilde u \cdot z) (\tilde v \cdot n)\dd\sigma .
\end{aligned}\end{equation}
Gathering \eqref{vrai2} and \eqref{vrai3}, we obtain \eqref{vrai}.

In the special case \eqref{vraiaussi}, we first observe that $\div z=0$ for all $i=1,\ldots,6$.
Moreover, in the case $i=1,2,3$ we have $\nabla z=0$, while in the case $i=4,5,6$ we have 
\begin{equation*}
- \tilde v \cdot ( \tilde u \cdot \nabla_x \zeta_{i} )-  \tilde u \cdot (\tilde v\cdot \nabla_x  \zeta_{i})
 = - \big(\tilde v \cdot  (e_{i-3} \wedge \tilde u ) +  \tilde u \cdot (e_{i-3} \wedge \tilde v ) \big)  = 0, 
\end{equation*}
by skew-symmetry. 
This allows us to conclude.
\end{proof}

\begin{proof}[Proof of Proposition~\ref{reform-macro}]
The proof involves various partial integrations; for the sake of keeping this section as non-technical as possible, we will not concern ourselves with the question of whether the involved functions decay fast enough at $\infty$ here. However, everything here can be properly justified, see Lemma \ref{decay prop} below.

Therefore, by integration by parts,
we obtain that the force and torque acting on the body 
(which occur in the right-hand sides of \eqref{chSolide1}  and \eqref{chSolide2}) satisfy
\begin{equation} \label{prelooser}
\left( \int_{ \partial \mathcal{S}_0}\pi n \dd\sigma, \
\int_{ \partial \mathcal{S}_0}\pi   ( x \wedge  n)  \dd\sigma \right) = 
\left( \int_{ \mathcal{F}_0} \nabla\pi \cdot \nabla \Phi_{i} \dd x \right)_{1 \leqslant i \leqslant 6} .
\end{equation}
Now we observe that thanks to \eqref{etlesigne} 
the equation \eqref{chE1} can be written as
\begin{equation} \label{looser}
\frac{\partial u}{\partial t}
+  \nabla \left( \frac{1}{2} u^2 + \pi \right) + \omega \wedge (u -  u_{\mathcal{S}} )
- \nabla ( u \cdot u_{\mathcal{S}} ) = 0,
\end{equation}
where we recall that $\omega := \curl u$ denotes the fluid vorticity. 
By \eqref{prelooser} and \eqref{looser} the Newton equations \eqref{chSolide1} and \eqref{chSolide2} become
\begin{multline} \label{postlooser}
\mathcal M_g p '  + \langle \Gamma_{g}  , p, p \rangle 
= - \left( \int_{ \mathcal{F}_0}  \partial_{t}  u  \cdot \nabla \Phi_{i} \dd x \right)_{1 \leqslant i \leqslant 6} 
- \frac{1}{2} \left( \int_{ \mathcal{F}_0}
  \nabla ( u^2)  \cdot \nabla \Phi_{i} \dd x \right)_{1 \leqslant i \leqslant 6} \\
+ \left( \int_{ \mathcal{F}_0}
  \nabla (  u \cdot u_{\mathcal{S}} ) \cdot \nabla \Phi_{i} \dd x \right)_{1 \leqslant i \leqslant 6} 
- \left( \int_{ \mathcal{F}_0}
  [ \omega , u -  u_{\mathcal{S}} , \nabla \Phi_{i} ] \dd x \right)_{1 \leqslant i \leqslant 6} .
\end{multline}
Using an integration by parts, the boundary condition \eqref{chE3} and \eqref{def M}, one observes that the first term on the right-hand side of \eqref{postlooser} is
\begin{equation} \label{vieux}
\left( \int_{  \mathcal{F}_{0}  }  \partial_{t}  u  \cdot  \nabla   \Phi_i (x) \dd x  \right)_{1 \leqslant i \leqslant 6}
= {\mathcal M}_{a} \, p' .
\end{equation}
By integration by parts of the second and third terms on the right-hand side of \eqref{postlooser}, we arrive at 
\begin{multline} \label{anoter}
( {\mathcal M}_{g} + {\mathcal M}_{a} ) p ' + \langle  \Gamma_g ,  p ,  p \rangle =  \\
\left(
- \frac{1}{2} \int_{\partial  \mathcal{S}_{0}  } |u|^2  K_i \dd\sigma
+ \int_{\partial  \mathcal{S}_{0}  } (u \cdot u_{\mathcal{S}} ) K_i \dd\sigma
- \int_{ \mathcal{F}_0}  \big(\omega \wedge (u -  u_{\mathcal{S}} ) \big) \cdot \nabla \Phi_{i} \dd x 
\right)_{1 \leqslant i \leqslant 6} .
\end{multline}
\noindent To deal with the first term in the right-hand side of \eqref{anoter}, we apply Lemma~\ref{lamb} with $u=\tilde{v}=\tilde u$ and obtain for $1 \leqslant i \leqslant 6$, 
\begin{eqnarray}
\nonumber
\frac{1}{2} \int_{\partial  \mathcal{S}_{0}  } |u|^2  K_i \dd\sigma 
&=&
\int_{\partial  \mathcal{S}_{0} }  (u \cdot n) (u \cdot \zeta_{i} )  \dd\sigma
-  \int_{\mathcal  F_0} \zeta_{i}   \cdot  ( \omega \wedge  u ) \dd x \\
\label{24}
&=& \int_{\partial  \mathcal{S}_{0}  } \big( u_{\mathcal{S}} \cdot  n \big)  \big( u \cdot \zeta_{i}  \big)  \dd\sigma 
-  \int_{\mathcal  F_0} \zeta_{i}   \cdot ( \omega \wedge  u )  \dd x  ,
\end{eqnarray}
thanks to the boundary conditions \eqref{chE3}.

Therefore, the right-hand side of \eqref{anoter} can be rephrased as 
\begin{equation} \label{anoterLU}
\left( \int_{\partial  \mathcal{S}_{0} } 
\Big(  (u \cdot u_{\mathcal{S}} )  n  - \big( u_{\mathcal{S}} \cdot  n \big) u  \Big) 
  \cdot \zeta_{i} \dd\sigma \right)_{1 \leqslant i \leqslant 6}
 + {\mathcal D} [ p, u, \omega ]   , 
\end{equation}
where ${\mathcal D} [ p, u, \omega ]$ is given by  \eqref{defD}. 
Recalling \eqref{calBdef} and using the Cauchy-Binet identity \begin{align}(a\wedge b)\cdot(c\wedge d)=(a\cdot c)(b\cdot d)-(a\cdot d)(b\cdot c)\label{CauchyBinet}\end{align} as well as $\sum_j p_j\zeta_j=u_\mathcal{S}$, we observe that the first term above is $ {\mathcal B} \lbrack  u \rbrack p$, 
which concludes the proof of Proposition \ref{reform-macro}.
\end{proof}

In an Appendix, see Section \ref{app-IPP}, we establish some decay estimates to justify the integrations by parts done above in the proof of Proposition \ref{reform-macro}.
%
%
%

\subsection{The case of a thick rigid closed simple filament}
\label{Subsec:toro}

In this paper, we will consider the case where the rigid body is topologically a solid torus.
Let us be more specific on this geometric setting and state its first consequences.

We start from a loop ${\mathcal C}_0$ in $\R^3$, that is a closed, oriented, non-self-intersecting, smooth curve, with normalized tangent $\tau$. We further let $\gamma:\R /L\Z\rightarrow \mathcal{C}_0$ be an arc-length parametrization so that \begin{align}
 \mathcal{C}_0=\left\{\gamma(t)\,\big|\, t\in \R/L\Z\right\}.\label{def gamma}   
\end{align}
We will assume that the domain occupied by the rigid body takes the following form: For $\varepsilon \in (0,1]$, we let
\begin{equation} \label{def-solide-scaled}
\cals = \cals^\eps := \{x\,\big|\,\dist(x,\mathcal{C}_0)\leq \eps\},
\end{equation}
 and let $\mathcal{F}_0 = \mathcal{F}_0^\eps = \R^3 \backslash \cals^\eps$ denote the fluid domain. See also Figure \ref{fig:solid} in the introduction for a sketch.

We introduce further notations to parameterize $\mathcal{S}_0$.
We first let $({s_1}(t),{s_2}(t),\tau(t))$ be some smooth orthonormal frame associated 
with some point $\gamma(t)$ on the curve.
In a neighborhood of size $\delta>0$ of the curve, we can write every point $x$ uniquely as
\begin{equation*}
x=x_{s_1}{s_1}(t(x)) + x_{s_2}{s_2}(t(x)) + \gamma(t(x)),
\end{equation*}
as e.g.\ the implicit function theorem shows, which in particular determines an orthogonal projection $\gamma(t(\cdot))$. 
By an abuse of notation, we will sometimes also write these unit vectors as functions of $x$, e.g.\ as in 
$\tau(x)=\tau(t(x))$.

We extend the normal $n$ (naturally defined on $\partial \mathcal{S}_0^{\varepsilon}$) as 
\begin{equation} \label{ext normal}
n = - \frac{1}{x_{s_1}^2 + x_{s_2}^2} \left(x_{s_1}{s_1}(x) + x_{s_2}{s_2}(x) \right),
\end{equation}
in this $\delta$-neighborhood of $\mathcal{C}_0$, deprived of $\mathcal{C}_0$ itself.

We will set
\begin{equation*}
e_\theta:=n\wedge \tau,
\end{equation*}
which, together with $\tau$, is a basis of the tangent space of $\de\cals$. We will also write $\de_\tau,\de_n,\de_\theta$ for the derivatives in these directions.

Let
\begin{equation} \label{Def:CalO}
\mathcal{O} = \mathcal{C}_0 + B_{\delta}(0)
\end{equation}
denote a domain where all these quantities are well-defined. 
Notice that for suitably small $\varepsilon$, all solid domains $\mathcal{S}^\varepsilon_0$ 
are included in $\mathcal{O}$. \par
Now for $t \in \R/L\Z$, we consider the following slice of $\partial \mathcal{S}_0^\varepsilon$
\begin{equation} \label{Def:CTeps}
C_t^\eps := \de \mathcal{S}_0 \cap \Bigl( \gamma(t) + \R e_{s_1}(t) + \R e_{s_2}(t) \Bigr) \cap \mathcal{O}.
\end{equation}
For small enough $\eps$, $C_t^\eps$ is a circle of radius $\eps$ with center $\gamma(t)$, 
and $e_\theta$ and $n$ are its tangent and normal unit vector fields, respectively.
As a last geometric definition, we  let 
\begin{equation} \label{Eq:DefTildeC}
\tilde{C}=C_0^{\tilde{\eps}},
\end{equation}
for such a fixed $\tilde{\varepsilon}\geq \eps$; its unit tangent vector is $e_\theta$ 
and we will define the circulation with respect to this oriented loop. \par
\ \par

In this geometry, the $\div/\curl$ system in $\mathcal{F}^\varepsilon_0$ takes a specific form, 
which we now describe. 
This will allow for the decomposition of the velocity field into elementary fields. \par
\ \par
\noindent
{\it Harmonic field.} To take the velocity circulation around the body into account, we first introduce the following harmonic field:
let $H=H^\eps$ be the unique solution (cf.\ \cite{Hieber} for well-posedness), vanishing at infinity, of 
\begin{gather}
\operatorname{div} H = 0    \text{ and }   \operatorname{curl} H = 0 \,    \text{ in }   \mathcal{F}_{0}, \, 
H \cdot n = 0 \,    \text{ on }  \partial \mathcal{S}_0, \, 
\int_{\tilde{C} } H \cdot e_\theta \dd s = 1 .\label{def H}
\end{gather}
It follows from Stokes' theorem that this definition does not depend on the choice of $\tilde{C}$.

\noindent
{\it Biot-Savart field.}
To account for the vorticity $\omega$, we introduce the Biot-Savart law $K_{\mathcal{F}_0}[\omega]$, defined through 
\begin{align*}
&\curl \Psi= K_{\mathcal{F}_0}[\omega] , \\
&-\Delta \Psi=\omega \text{ in $\mathcal{F}_0$} ,\\
&\curl\Psi\cdot n=0 \text{ on $\de \mathcal{F}_0$} ,\\
&\int_{\tilde{C} } \curl \Psi \cdot e_\theta \dd s = 0,\\
&\lim_{x\rightarrow \infty}\Psi=0.
\end{align*}

\noindent For future reference, we also introduce the Biot-Savart law in $\R^3$, defined as  $K_{\R^3}[\omega] =\linebreak -\curl \Delta^{-1}\omega$, or equivalently
\begin{equation} \label{KR2}
K_{\mathbb{R}^{3}}[\omega](x) := \frac{-1}{4\pi} \int_{\mathbb{R}^{3}} \frac{x-y}{|x-y|^{3}} \wedge \omega(y) \dd y.
\end{equation}
Now we have the following standard decomposition of the velocity field, in the considered geometry.
\begin{lem} \label{vitdec}
For $p=(\ell,\Omega)\in \mathbb{R}^{6}$ and $\mu \in\mathbb{R}$ given, and for any smooth, divergence-free $\omega$, compactly supported in $\overline{\mathcal{F}}_0$, there exists a unique vector field $u$ verifying  the following div/curl type system:
\begin{gather}
\label{r1}
 \operatorname{div} u = 0     \text{ and } 
 \operatorname{curl} u  =  \omega  \text{ in }   \mathcal{F}_{0}  , \, 
 \\ \label{r2}
 u \cdot n = \left(\ell+ \Omega \wedge x \right)\cdot n     \text{ on }  \partial \mathcal{S}_0  , \,    \quad   \int_{ \tilde{C}} u  \cdot  e_\theta \dd s=  \mu ,
 \\ \label{r3}
\lim_{x\to \infty} u = 0 .
 \end{gather}  
and it is given by the  law:
\begin{equation} \label{vdecomp}
u =  \mu   H + \sum_{1 \leqslant i \leqslant 6}   p_i \nabla \Phi_i + K_{ \mathcal F_0} [\omega] .
\end{equation}
\end{lem}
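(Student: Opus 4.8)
The plan is to prove existence, uniqueness, and the explicit formula \eqref{vdecomp} by reducing the full $\mathrm{div}/\mathrm{curl}$ system with circulation to the three elementary building blocks that have already been introduced: the Kirchhoff potentials $\nabla\Phi_i$, the harmonic field $H$, and the Biot--Savart field $K_{\mathcal F_0}[\omega]$. Concretely, set $w := \mu H + \sum_{i} p_i \nabla\Phi_i + K_{\mathcal F_0}[\omega]$ and check directly that $w$ solves \eqref{r1}--\eqref{r3}. For \eqref{r1}: each $\nabla\Phi_i$ and $H$ is divergence-free and curl-free by \eqref{yi0}--\eqref{yi2} and \eqref{def H}, while $\mathrm{div}\,K_{\mathcal F_0}[\omega] = \mathrm{div}\,\mathrm{curl}\,\Psi = 0$ and $\mathrm{curl}\,K_{\mathcal F_0}[\omega] = \mathrm{curl}\,\mathrm{curl}\,\Psi = -\Delta\Psi + \nabla\,\mathrm{div}\,\Psi = \omega$, where I would note that $\mathrm{div}\,\Psi = 0$ follows from the construction of $\Psi$ (it is built from $\Delta^{-1}\omega$ against a divergence-free source, so $\mathrm{div}\,\Psi$ is harmonic and decays, hence vanishes). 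For the boundary condition in \eqref{r2}: $H\cdot n = 0$ and $K_{\mathcal F_0}[\omega]\cdot n = 0$ on $\partial\mathcal S_0$ by their defining conditions, while $\sum_i p_i \nabla\Phi_i\cdot n = \sum_i p_i \partial_n\Phi_i = \sum_i p_i K_i = \sum_i p_i \zeta_i\cdot n = (\ell+\Omega\wedge x)\cdot n$ by \eqref{yi1}, \eqref{t1.6} and the definition of $\zeta_i$. For the circulation in \eqref{r2}: $\int_{\tilde C} H\cdot e_\theta\,\mathrm ds = 1$ and $\int_{\tilde C} K_{\mathcal F_0}[\omega]\cdot e_\theta\,\mathrm ds = 0$ by definition, while $\int_{\tilde C}\nabla\Phi_i\cdot e_\theta\,\mathrm ds = 0$ since $\tilde C$ bounds a disk slice inside $\mathcal S_0$ and $\nabla\Phi_i$ is a gradient on the simply-connected neighborhood where it can be continued — more robustly, since $\nabla\Phi_i$ is an exact form on $\mathcal F_0$ restricted near $\tilde C$, its integral over the contractible-in-$\overline{\mathcal S_0}$ loop $\tilde C$ vanishes; this gives total circulation $\mu$. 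Condition \eqref{r3} holds because all three fields decay at infinity (the Kirchhoff gradients like $|x|^{-3}$, $H$ and $K_{\mathcal F_0}[\omega]$ by their defining decay). This establishes existence.

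For uniqueness, suppose $u_1,u_2$ both solve \eqref{r1}--\eqref{r3} and set $d := u_1 - u_2$. Then $\mathrm{div}\,d = 0$, $\mathrm{curl}\,d = 0$ in $\mathcal F_0$, $d\cdot n = 0$ on $\partial\mathcal S_0$, $\int_{\tilde C} d\cdot e_\theta\,\mathrm ds = 0$, and $d\to 0$ at infinity. The topology of $\mathcal F_0 = \R^3\setminus\mathcal S_0^\eps$ (complement of a solid torus) has first Betti number one, with the single generator of $H_1$ detected precisely by the circulation around $\tilde C$; since this circulation vanishes, $d$ is a gradient, $d = \nabla\varphi$ with $\varphi$ harmonic in $\mathcal F_0$, $\partial_n\varphi = 0$ on $\partial\mathcal S_0$, and $\nabla\varphi\to 0$ at infinity. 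An integration by parts (justified by the decay, as in Appendix~\ref{app-IPP}) gives $\int_{\mathcal F_0}|\nabla\varphi|^2\,\mathrm dx = \int_{\partial\mathcal S_0}\varphi\,\partial_n\varphi\,\mathrm d\sigma - (\text{boundary term at }\infty) = 0$, so $d \equiv 0$.

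The main obstacle — really the only non-routine point — is the circulation bookkeeping: one must be careful that the harmonic field $H$ carries exactly one unit of circulation through the generating cycle of $H_1(\mathcal F_0)$, that $\nabla\Phi_i$ and $K_{\mathcal F_0}[\omega]$ carry none, and that the circulation around $\tilde C$ (a meridian-type slice of the tube boundary) is the right linear functional to pin down the de Rham class of a closed one-form on $\mathcal F_0$. I would handle this by invoking Stokes' theorem together with the already-noted fact (stated just after \eqref{def H}) that the circulation is independent of the choice of $\tilde C$, and the observation that $\tilde C$ is homologous in $\overline{\mathcal F_0}$ to any loop linking $\mathcal C_0$ once; this identifies the single missing scalar and closes the uniqueness argument. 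Everything else is a direct verification using the defining properties of the three fields and standard elliptic theory for the Neumann problem in an exterior domain, for which the relevant references (\cite{Amrouche,Hieber}) have already been cited.
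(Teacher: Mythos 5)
Your proof is correct in substance, and it supplies an argument the paper itself omits: Lemma~\ref{vitdec} is stated there as a ``standard decomposition'' with no proof, so there is nothing to diverge from — your route (verify that $\mu H+\sum_i p_i\nabla\Phi_i+K_{\mathcal F_0}[\omega]$ satisfies \eqref{r1}--\eqref{r3} by linearity and the defining properties of the three elementary fields, then get uniqueness from the fact that $H_1(\mathcal F_0)\cong\Z$ is generated by the meridian $\tilde C$, so a curl-free, divergence-free, tangent, decaying difference with zero circulation is an exact gradient killed by the energy identity) is exactly the standard argument the paper implicitly relies on.

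Two small points deserve tightening. First, your justification that $\operatorname{curl}K_{\mathcal F_0}[\omega]=\omega$ rests on $\div\Psi=0$, and ``$\div\Psi$ is harmonic and decays, hence vanishes'' is not a complete argument in an exterior domain: a harmonic function vanishing at infinity need not vanish without control of its boundary values. Either take the defining properties of $K_{\mathcal F_0}[\omega]$ (divergence-free, curl equal to $\omega$, zero normal trace, zero circulation, decay) as the content of its construction, or note that $\Psi$ is built as a divergence-free vector potential (Coulomb gauge), so that $\operatorname{curl}\operatorname{curl}\Psi=-\Delta\Psi=\omega$ holds by construction. Second, your first justification for $\int_{\tilde C}\nabla\Phi_i\cdot e_\theta\,\mathrm{d}s=0$ (contractibility of $\tilde C$ in $\overline{\mathcal S_0}$) is beside the point, since $\nabla\Phi_i$ lives on $\mathcal F_0$ where $\tilde C$ is \emph{not} contractible; the correct reason is the one you give next, namely that $\Phi_i$ is a single-valued function on $\mathcal F_0$, so the circulation of its gradient along any closed loop vanishes. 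With these repairs the argument is complete; the integration by parts in the uniqueness step is justified by the flux-free harmonic expansion at infinity, in the spirit of Lemma~\ref{decay pot}.
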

The advantage of working with the fluid vorticity $\omega := \curl u$ is that it satisfies 
the following transport-with-stretching equation: 
\begin{equation} \label{omega-eq}
\partial_t \omega + \big((u-  u_{\mathcal{S}}) \cdot\nabla\big)  \omega   +\Omega \wedge \omega  
  =  \big(\omega  \cdot\nabla \big) u
\quad  \text{ in } \quad  \mathcal{F}_0 ,
\end{equation}
which allows us to reformulate the system again as follows.
Recalling the definition of $\mathcal{B}$ in \eqref{calBdef}, we let
\begin{equation}
B := {\mathcal B} [ H ] \in \R^{6 \times 6},\label{def B}
\end{equation}
and we introduce $\Gamma_{a}: \mathbb{R}^{6} \times \mathbb{R}^{6} \rightarrow \mathbb{R}^{6}$ as 
the bilinear symmetric mapping given, for all ${p} = ( \ell , \Omega ) \in \mathbb{R}^{6}$, by
\begin{equation} \label{DefGammaa}
\langle \Gamma_{a}, p, p \rangle := 
  -\sum_{1 \leqslant i \leqslant 6}   p_i {\mathcal B} [\nabla \Phi_i] p .
\end{equation}
It follows from the skew-symmetry of the matrix ${\mathcal B} [ \nabla \Phi_i ]$  that 
\begin{equation} \label{AnnulationGamma2}
\forall p \in \mathbb{R}^{6}, \ \ \ \langle \Gamma_{a}, {p}, p \rangle \cdot p =0.
\end{equation}
In the original frame, such terms can be interpreted as Christoffel symbols associated with the added inertia, see, for instance, \cite{Munnier}. \par
\begin{prop} \label{reup}
In the case where $\mathcal S_0$ is  a thick rigid closed simple filament (i.e.\ \eqref{def-solide-scaled} holds) and the solution is strong in the sense of Prop.\ \ref{loc wellp}, 
the equations \eqref{Qom}--\eqref{bc-intro}
are equivalent to \eqref{Qom}--\eqref{syl}, \eqref{bc-intro}, \eqref{vdecomp}, \eqref{omega-eq} and 
\begin{equation} \label{eq}
\big(  {\mathcal M}_{g} + {\mathcal M}_{a} \big) p' 
  + \langle  \Gamma_g ,  p ,  p \rangle + \langle  \Gamma_a ,  p ,  p \rangle
=  \mu B p + {\mathcal B}  \left[ K_{ \mathcal F_0} [\omega] \right] p
  + {\mathcal D} \lbrack  p, u, \omega \rbrack .
\end{equation}
\end{prop}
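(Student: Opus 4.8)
The plan is to start from Proposition~\ref{reform-macro}, which already gives the equation
\[
({\mathcal M}_{g} + {\mathcal M}_{a} ) p' + \langle \Gamma_g, p , p \rangle
=  {\mathcal B}[ u ] p + {\mathcal D} [ p, u, \omega ],
\]
valid for strong solutions (note that the filament case \eqref{def-solide-scaled} is just a special geometry, so this proposition applies verbatim). It then suffices to show that, in this geometry, the trace of $u$ on $\partial\mathcal S_0$ decomposes so that the term ${\mathcal B}[u]p$ splits into the three announced pieces, and separately that \eqref{vdecomp} and \eqref{omega-eq} are available. For the latter: \eqref{omega-eq} is the standard transport-with-stretching equation obtained by taking the curl of \eqref{chE1} (or \eqref{looser}) and using $\div u = \div u_{\mathcal S} = 0$ together with the identity for $\curl(a\wedge b)$; this is classical and I would simply cite it or include the short computation. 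The decomposition \eqref{vdecomp} is exactly Lemma~\ref{vitdec}: since $u$ solves the div/curl system \eqref{r1}--\eqref{r3} with circulation $\mu$ (the circulation is a conserved quantity here, by Kelvin's theorem in the moving-frame formulation, which should be recorded), uniqueness forces $u = \mu H + \sum_i p_i \nabla\Phi_i + K_{\mathcal F_0}[\omega]$.

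The core of the proof is then purely algebraic: $u\mapsto {\mathcal B}[u]$ is linear in the trace of $u$ on $\partial\mathcal S_0$, directly from the definition \eqref{calBdef}. Hence, substituting \eqref{vdecomp},
\[
{\mathcal B}[u]p = \mu\,{\mathcal B}[H]p + \sum_{1\le i\le 6} p_i\,{\mathcal B}[\nabla\Phi_i]p + {\mathcal B}[K_{\mathcal F_0}[\omega]]p.
\]
By \eqref{def B} the first term is $\mu B p$; by the definition \eqref{DefGammaa} the second term is $-\langle\Gamma_a,p,p\rangle$; the third term is left as is. Moving $\langle\Gamma_a,p,p\rangle$ to the left-hand side yields exactly \eqref{eq}. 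I should be slightly careful that ${\mathcal B}[u]$ is well-defined for each of the three constituent fields — i.e.\ that $H$, $\nabla\Phi_i$ and $K_{\mathcal F_0}[\omega]$ have enough regularity and decay on and near $\partial\mathcal S_0$ for the boundary integral \eqref{calBdef} to make sense — but these are smooth in $\overline{\mathcal F}_0$ by elliptic regularity (for $\omega$ compactly supported away from the body, $K_{\mathcal F_0}[\omega]$ is smooth up to the boundary), so this is routine.

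The only genuinely substantive point is the equivalence of systems rather than just the derivation of one equation from the other: one must check that, conversely, a solution of \eqref{Qom}--\eqref{syl}, \eqref{bc-intro}, \eqref{vdecomp}, \eqref{omega-eq}, \eqref{eq} reconstructs a solution of the original coupled system. For this direction one recovers $\pi$ (up to an additive function of $t$, fixed by the decay at infinity) from \eqref{chE1} written in the form \eqref{looser}, and then one must verify that the Newton equations \eqref{chSolide1}--\eqref{chSolide2} hold; this is done by running the computation in the proof of Proposition~\ref{reform-macro} backwards, using that \eqref{vdecomp} is precisely the field whose circulation is $\mu$ and whose normal trace matches $u_{\mathcal S}\cdot n$, so that all the boundary terms reassemble correctly. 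I expect the main obstacle to be bookkeeping: making sure the circulation constraint $\int_{\tilde C} u\cdot e_\theta\,\mathrm ds=\mu$ is propagated in time (Kelvin's theorem in the body frame, which uses that $\tilde C$ can be transported along the flow or, more simply, that the circulation on $\partial\mathcal S_0$ is constant because $u-u_{\mathcal S}$ is tangent to the boundary), and that the decay hypotheses needed for Lemma~\ref{lamb} and for \eqref{prelooser} are met by each piece of the decomposition — all of which is handled by the estimates quoted from the Appendix and from Lemma~\ref{decay prop}.
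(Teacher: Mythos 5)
Your proposal is correct and follows essentially the same route as the paper: invoke Proposition~\ref{reform-macro}, substitute the decomposition \eqref{vdecomp} of Lemma~\ref{vitdec} (justified by Kelvin's theorem for the conservation of $\mu$), and use the linearity of $\mathcal{B}[\cdot]$ together with \eqref{def B} and \eqref{DefGammaa} to split $\mathcal{B}[u]p$ into the three announced terms. Your additional remarks on the converse direction and on the regularity of each constituent field are more detailed than the paper's two-line argument but do not change the approach.
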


\begin{proof}
According to Proposition \ref{reform-macro}, Equations \eqref{Qom}--\eqref{bc-intro}
are equivalent to \eqref{Qom}--\eqref{intrau1}, \eqref{bc-intro} and \eqref{anoter2}. 
Combining with Lemma~\ref{vitdec}, by substitution of the decomposition \eqref{vdecomp} into \eqref{anoter},
we prove the statement, since the rotation $\mu$ is conserved under the evolution by the Kelvin circulation theorem.
\end{proof}

Observe that in the irrotational case $\omega=0$, the equation \eqref{eq} simplifies to 
\begin{equation} \label{eq-simpli}
\big(  {\mathcal M}_{g} +  {\mathcal M}_{a} \big) p' 
 + \langle  \Gamma_g ,  p ,  p \rangle
 + \langle  \Gamma_a ,  p ,  p \rangle
=  \mu B p ,
\end{equation}
which is an autonomous equation, while the fluid can be recovered from $p$ 
through the formula \eqref{vdecomp}, which simplifies into 
\begin{equation}
\label{vdecomp-simpli}
u =  \mu   H + \sum_{1 \leqslant i \leqslant 6}   p_i \nabla \Phi_i .
\end{equation}
In that case, we have the following statement.
\begin{prop} 
\label{Macro-glob}
For any initial datum $p_0 \in \R^6$, 
there is a unique smooth global solution to \eqref{eq-simpli}.
\end{prop}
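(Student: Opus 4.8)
The plan is to show that \eqref{eq-simpli} is, after inverting the invertible matrix $\mathcal M_g+\mathcal M_a$, an autonomous polynomial (indeed at most quadratic) ODE on $\R^6$, hence locally well-posed by Cauchy--Lipschitz, and then to rule out finite-time blow-up by exhibiting a coercive conserved quantity. First I would note that $\mathcal M_g$ is symmetric positive definite and $\mathcal M_a$ is symmetric positive semidefinite, so $\mathcal M:=\mathcal M_g+\mathcal M_a$ is symmetric positive definite and in particular invertible; thus \eqref{eq-simpli} is equivalent to
\begin{equation*}
p' = \mathcal M^{-1}\Big( \mu B p - \langle \Gamma_g,p,p\rangle - \langle \Gamma_a,p,p\rangle \Big) =: F(p),
\end{equation*}
where $F:\R^6\to\R^6$ is a smooth (polynomial, quadratic) vector field, since $\Gamma_g,\Gamma_a$ are fixed bilinear maps and $B$ is a fixed matrix. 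By the Picard--Lindelöf theorem there is, for every $p_0\in\R^6$, a unique maximal smooth solution on some interval $[0,T_{\max})$.

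The remaining point is global existence, i.e.\ $T_{\max}=+\infty$. For this I would use the energy identity \eqref{Conserv-set}, which in the present irrotational setting reads
\begin{equation*}
\frac{d}{dt}\Big( \tfrac12 \mathcal M p\cdot p\Big) = 0 .
\end{equation*}
Concretely one verifies it by taking the scalar product of \eqref{eq-simpli} with $p$ and using that $\langle\Gamma_g,p,p\rangle\cdot p=0$ by \eqref{AnnulationGamma}, $\langle\Gamma_a,p,p\rangle\cdot p=0$ by \eqref{AnnulationGamma2}, and $Bp\cdot p = \mathcal B[H]p\cdot p = 0$ since $\mathcal B[H]$ is skewsymmetric by \eqref{calBdef}. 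Since $\mathcal M$ is positive definite, there is $c>0$ with $\mathcal M p\cdot p\geq c|p|^2$, so $|p(t)|^2 \leq c^{-1}\,\mathcal M p(t)\cdot p(t) = c^{-1}\,\mathcal M p_0\cdot p_0$ for all $t\in[0,T_{\max})$; the solution stays in a fixed bounded set.

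Finally, a solution to an autonomous ODE with $C^1$ right-hand side that remains in a fixed compact set cannot blow up in finite time (the standard maximal-interval criterion), so $T_{\max}=+\infty$, and likewise the solution is defined for all negative times; smoothness for all time follows from smoothness of $F$ by bootstrapping the equation. I do not expect any genuine obstacle here: the only things being used are the invertibility of $\mathcal M_g+\mathcal M_a$, the three algebraic orthogonality relations already recorded in the excerpt, and the classical ODE theory; the proposition is essentially a corollary of the conservation of energy together with the positivity of the total inertia.
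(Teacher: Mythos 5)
Your proof is correct and takes essentially the same route as the paper, which appeals to Cauchy--Lipschitz and the conserved energy $\tfrac12(\mathcal M_g+\mathcal M_a)p\cdot p$ together with the positive-definiteness of $\mathcal M_g+\mathcal M_a$. You have simply spelled out the one-line argument: local well-posedness from the quadratic right-hand side, verification of the energy identity via the three orthogonality relations, and the standard no-blow-up criterion for solutions confined to a compact set.
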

This follows from the Cauchy-Lipschitz theorem together with the energy conservation \eqref{Conserv-set}, since $\mathcal{M}_a+\mathcal{M}_g$  is positive definite.


\section{The zero-radius rigid filament limit}
\label{sec-z}
In this section, we describe the main problem studied in this paper and formulate our main statements. The geometric situation is the one described in
Subsection~\ref{Subsec:toro}.
%

%
%

\subsection{General framework}
\label{Subsec:GeneralFr}
The main question that we address in this paper is the asymptotic behavior of the solution of the system  
associated with a given fixed initial vorticity and a given fixed circulation around $\tilde{C}$ (which was introduced in \eqref{Eq:DefTildeC}), 
when the solid domain occupies $\mathcal{S}^\varepsilon_0$ and $\varepsilon \rightarrow 0^+$. 
The circulation $\mu$ around the body will be consequently considered fixed and independent of $\varepsilon$.
We will consider first the simpler case of irrotational flows in 
Section~\ref{Subsec:without}, and then the more intricate case of a fluid with vorticity in Section~\ref{Subsec:with}. \par
\ \par
Concerning the inertia coefficients, we will first consider the case of a {\it massive filament}.
We define a massive filament as the limit of a rigid body of the above form when its radius $\varepsilon$ goes to $0$
while its mass $m^\varepsilon$ and its initial momentum of inertia $ \mathcal J^\varepsilon_0$  
both satisfy $m^\varepsilon=  m $ and  $\mathcal{J}^\varepsilon_0 =   \mathcal{J}_0$. 
Here $m>0$  and  $\mathcal{J}_{0}$ is a $3 \times 3$ symmetric positive definite matrix, 
and both are fixed, independently of $\varepsilon$. \par
\ \par
Finally, to state our main results, we will need the following {\it area and volume vectors}, which merely depend on the
geometry of $\mathcal{C}_0$.
We first associate with ${\mathcal C}_0$ the unique vector-valued measure $\kappa_{\mathcal{C}_0}$ 
obtained from the restriction of the one-dimensional Hausdorff measure on ${\mathcal C}_0$: 
\begin{equation*}
\kappa_{\mathcal{C}_0} = \tau \, \mathcal{H}^1 \, \mres {\mathcal C_0} .
\end{equation*}
\begin{defn} \label{svmatrix}
We define the area vector and volume vector associated with $\mathcal{C}_0$ as: 
\begin{equation*}
\mathcal A_0 := \frac12 \int_{\mathcal C_0} x \wedge \dd\kappa_{\mathcal{C}_0} (x) \in \R^3 
\quad \text{ and } \quad 
\mathcal V_0 := -\frac{1}{2}\int_{\mathcal C_0} \vert x \vert^2 \dd\kappa_{\mathcal{C}_0} (x)   \in \R^3 .
\end{equation*}
\end{defn}
We recall that the vector area $ \mathcal A_0 $ is useful when determining the flux of a constant vector
field through a surface, which is given by the dot product of the vector field and of the vector area of 
the surface. 

Based on $\mathcal{A}_0$ and $\mathcal{V}_0$, we define the $6 \times 6$ skew-symmetric matrix $B^*$ as follows: 
\begin{equation} \label{defBstar}
B^* :=  
\begin{pmatrix}
   0  &   \mathcal A_0 \wedge \cdot\\ 
   -\mathcal A_0 \wedge \cdot  &  \mathcal V_0 \wedge \cdot
\end{pmatrix} ,
\end{equation}
where we write ``$M\wedge\cdot$'' to denote the $3\times3$-matrix associated with the linear map $x\rightarrow M\wedge x$. 
Using the parametrization $\gamma$ of $\mathcal{C}_0$ as in \eqref{def gamma}, it holds that
\begin{equation*}
\mathcal A_0 = \frac12 \int_0^L \gamma \wedge \gamma'\dd s \in \R^3 
\quad \text{ and } \quad
\mathcal V_0 =\frac{1}{2} \int_{0}^L \vert \gamma \vert^2   \gamma' \dd s\in \R^3 .
\end{equation*}

%
%

\subsection{Case without vorticity}
\label{Subsec:without}

We start by describing our results in the case of an irrotational fluid for which the equations 
at stake are \eqref{eq-simpli} and  \eqref{vdecomp-simpli}. 
To indicate the dependence on the scaling parameter $\varepsilon > 0$,
the equation \eqref{eq-simpli} now reads as
\begin{equation} \label{eqeps}
\big(  {\mathcal M}_{g} +  {\mathcal M}_{a}^\varepsilon \big) (p^\varepsilon)' 
 + \langle \Gamma_g, p^\varepsilon, p^\varepsilon \rangle
 + \langle \Gamma_a^\varepsilon, p^\varepsilon, p^\varepsilon \rangle
=  \mu B^\varepsilon p^\varepsilon .
\end{equation}
On the other hand, adapting \eqref{vdecomp-simpli} to the scaling, 
the corresponding  fluid velocity $u^\varepsilon$ is  given by the formula: 
\begin{equation} \label{vdecomp-simpli-eps}
u^\varepsilon 
= \mu  H^\varepsilon + \sum_{1 \leqslant i \leqslant 6}  p_i^\varepsilon \nabla \Phi_i^\varepsilon. 
\end{equation}
The main outcome is to identify the limit dynamic, which is given by the following first-order ODE:
\begin{equation} \label{eql1}
{\mathcal M}_{g}  (p^*)' + \langle  \Gamma_g ,  p^* ,  p^* \rangle
 = \mu B^* p^* .
\end{equation}
By \eqref{AnnulationGamma} 
and the skew-symmetry of the matrix $B^*$, we observe that the energy 
\begin{equation*}
 \frac12 {\mathcal M}_{g}  p^* \cdot p^* ,
\end{equation*}
is conserved in time for the solutions to \eqref{eql1}.
Then, by the Cauchy-Lipschitz theorem, for any initial data $p_0 \in \R^6$, there is a unique global smooth solution of 
\eqref{eql1}.

Another outcome of our investigations is the limit behaviour of the fluid. Observe that the zero radius limit then corresponds to a singular perturbation problem in space for the fluid velocity in the case of a non-zero circulation $\mu$ around the filament.  
Let us also introduce
\begin{equation} \label{def-H}
  H^*  := K_{\mathbb{R}^{3}}[ \kappa_{\mathcal{C}_0} ],
\end{equation}
where $K_{\R^3}$ is the full-space Biot-Savart law as defined in \eqref{KR2}. \par
To describe our convergence results, we will rely on the following definition.

\begin{defn} \label{def conv}
Let $f^\eps:\R^3\backslash \cals^\eps\rightarrow \R^3$ be a sequence of functions 
and $f^*:\R^3\rightarrow \R^3$.
\begin{itemize}
\item We say that $f^\eps\rightarrow f^*$ in $L^2(\mathcal{F}_0^\eps)$ if $\norm{f^\eps-f^*}_{L^2(\mathcal{F}_0^\eps)}\rightarrow 0$ (note that this does not require that $f^*\in L^2(\R^3)$).
\item We say that $f^\eps\rightarrow f^*$ in $C_{loc}^\infty$ if $f^\eps\rightarrow f^*$ in $C^m(U)$ for every $m>0$ and for every compact subset $U$ of $\R^3\backslash \mathcal{C}_0$.
\end{itemize}
\end{defn}
The second definition makes sense because for each such $U$ and suitably small $\varepsilon>0$,
$U$ and $\cals^\eps$ do not intersect. \par
\ \par
Our main result in the irrotational case is the following.
\begin{thm} \label{pasdenom-VO}
Let ${\mathcal C}_0$ be a loop in $\R^3$ as above, and associate 
$B^*$ to it by \eqref{defBstar}.
Let $p_0 := ( \ell_0 , \Omega_0 ) \in \R^6$ be some initial solid translation and rotation velocities 
and let a circulation $\mu\in \R$ be given. 
Let $m>0$  and  $\mathcal{J}_{0}$ be a  $3 \times 3$ symmetric positive definite matrix. 
Let $p^*$ be the unique global smooth solution of \eqref{eql1} associated with $p_0$. 
For each $\varepsilon > 0$, let $p^\varepsilon$ be the unique smooth global solution of \eqref{eqeps} 
with initial data $p (0) = p_0$, as given by Proposition \ref{Macro-glob}. 

Then, for all $T>0$,  for every $k\in \N_{\geq 0}$,
\begin{align*}
p^\varepsilon\rightarrow p^*  & \text{ in $C^k([0,T];\mathbb R^6)$ as $\varepsilon \to 0^+$}.
\end{align*}
Moreover, as $\varepsilon \to 0^+$, the corresponding fluid velocity $u^\eps$ 
given by \eqref{vdecomp-simpli-eps} converges to $\mu H^*$ in $L^2(\mathcal{F}_0^\eps)$ 
and $C_{loc}^\infty$, in the sense of Definition~\ref{def conv}.
\end{thm}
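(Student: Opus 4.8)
The plan is to prove the convergence of $p^\varepsilon \to p^*$ first, and then deduce the convergence of the fluid velocities as a consequence.

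\textbf{Step 1: Asymptotics of the coefficient matrices.} The equations \eqref{eqeps} and \eqref{eql1} are both first-order ODEs on $\R^6$; the only $\varepsilon$-dependence in \eqref{eqeps} sits in the coefficients $\mathcal M_a^\varepsilon$, $\Gamma_a^\varepsilon$, and $B^\varepsilon = \mathcal B[H^\varepsilon]$. So the first task is to establish the asymptotic behavior of these three objects as $\varepsilon \to 0^+$. Concretely, I expect: (i) $\mathcal M_a^\varepsilon \to 0$, since the Kirchhoff potentials $\nabla\Phi_i^\varepsilon$ are generated by Neumann data on a vanishing tube and their $L^2(\mathcal F_0^\varepsilon)$-norms go to zero (this is exactly the kind of statement the paper announces in Section~\ref{sec-Asymptotic}, the refined asymptotic analysis of the Kirchhoff potentials, the harmonic field, and the Biot–Savart field); (ii) similarly $\Gamma_a^\varepsilon \to 0$, being built bilinearly out of the $\nabla\Phi_i^\varepsilon$ via $\mathcal B[\nabla\Phi_i^\varepsilon]$; and (iii) $B^\varepsilon = \mathcal B[H^\varepsilon] \to B^*$, where $B^*$ is the skew-symmetric matrix \eqref{defBstar} built from the area and volume vectors of $\mathcal C_0$. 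Item (iii) is the heart of the matter: one must show that, although $H^\varepsilon$ blows up near $\mathcal C_0$ (it behaves like the Biot–Savart kernel of the vorticity filament $\mu\,\kappa_{\mathcal C_0}$, i.e.\ like $H^*$), the boundary integral $\mathcal B[H^\varepsilon]_{i,j} = \int_{\partial\mathcal S_0^\varepsilon}[\zeta_j,\zeta_i,H^\varepsilon\wedge n]\,\dd\sigma$ has a finite limit, and that limit is $B^*_{i,j}$. The natural route is to write $H^\varepsilon = H^* + (H^\varepsilon - H^*)$, where $H^* = K_{\R^3}[\kappa_{\mathcal C_0}]$ as in \eqref{def-H}; the corrector $H^\varepsilon - H^*$ should be controlled (e.g.\ bounded, or at worst $O(\log\frac1\varepsilon)$ but with vanishing contribution once integrated against the geometric factors) by the asymptotic estimates of Section~\ref{sec-Asymptotic}, while the main term $\int_{\partial\mathcal S_0^\varepsilon}[\zeta_j,\zeta_i,H^*\wedge n]\,\dd\sigma$ must be computed explicitly. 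On a thin circular cross-section of radius $\varepsilon$ around $\gamma(t)$, $H^*$ is, to leading order, the planar rotational field $\frac{\mu}{2\pi\varepsilon}e_\theta$; plugging in $\zeta_i\in\{e_k, e_{k-3}\wedge x\}$, integrating the triple product over each circle and then over the parameter $t$ along $\mathcal C_0$, and using $\int_0^L\gamma\wedge\gamma'\,\dd s = 2\mathcal A_0$ and $\int_0^L|\gamma|^2\gamma'\,\dd s = 2\mathcal V_0$, should reproduce exactly the block structure $\begin{pmatrix}0 & \mathcal A_0\wedge\cdot\\ -\mathcal A_0\wedge\cdot & \mathcal V_0\wedge\cdot\end{pmatrix}$, with $\mu$ absorbed into the $\mu B^\varepsilon p^\varepsilon$ term on the right of \eqref{eqeps}. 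The expected cancellation of the $\frac{1}{\varepsilon}$ factor against the $\varepsilon$-length of the circle of integration is the mechanism that makes the limit finite.

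\textbf{Step 2: ODE convergence.} Once Step 1 gives $\mathcal M_a^\varepsilon\to0$, $\Gamma_a^\varepsilon\to0$, $B^\varepsilon\to B^*$, the convergence $p^\varepsilon\to p^*$ in $C^k([0,T];\R^6)$ is a standard continuous-dependence argument for ODEs. One uses the uniform energy bound: since $B^\varepsilon$ is skew-symmetric and $\langle\Gamma_g,p,p\rangle\cdot p=0$ (by \eqref{AnnulationGamma}) and $\langle\Gamma_a^\varepsilon,p,p\rangle\cdot p=0$ (by \eqref{AnnulationGamma2}), the quantity $\frac12(\mathcal M_g+\mathcal M_a^\varepsilon)p^\varepsilon\cdot p^\varepsilon$ is conserved, and since $\mathcal M_g$ is positive definite and $\mathcal M_a^\varepsilon$ is positive semidefinite, we get $|p^\varepsilon(t)| \leq C|p_0|$ uniformly in $\varepsilon$ and $t$. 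On this fixed ball, the vector fields defining \eqref{eqeps} converge uniformly (in the $C^k$ sense in the coefficients) to the vector field defining \eqref{eql1}, which is smooth; $p^\varepsilon(0)=p_0=p^*(0)$; so Grönwall gives $C^0$ convergence on $[0,T]$, and differentiating the ODE repeatedly upgrades this to $C^k$ for every $k$.

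\textbf{Step 3: Fluid convergence.} With $p^\varepsilon\to p^*$ in hand and $u^\varepsilon = \mu H^\varepsilon + \sum_i p_i^\varepsilon\nabla\Phi_i^\varepsilon$ by \eqref{vdecomp-simpli-eps}, I split $u^\varepsilon - \mu H^* = \mu(H^\varepsilon - H^*) + \sum_i p_i^\varepsilon\nabla\Phi_i^\varepsilon$. The second sum goes to zero in $L^2(\mathcal F_0^\varepsilon)$ because $|p_i^\varepsilon|$ is bounded and $\|\nabla\Phi_i^\varepsilon\|_{L^2(\mathcal F_0^\varepsilon)}\to0$ (part of Step 1's estimates); and it goes to zero in $C^\infty_{loc}$ by the interior elliptic estimates in the asymptotic analysis. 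For the harmonic part, $\|H^\varepsilon - H^*\|_{L^2(\mathcal F_0^\varepsilon)}\to0$ and $H^\varepsilon - H^*\to0$ in $C^\infty_{loc}$ are again supplied by Section~\ref{sec-Asymptotic} — note that $H^*\in L^2(\mathcal F_0^\varepsilon)$ even though $H^*\notin L^2(\R^3)$ (the logarithmic divergence of $|H^*|^2$ near $\mathcal C_0$ is integrated against a tube of radius $\varepsilon$ that we are removing), consistent with Definition~\ref{def conv}. Combining gives $u^\varepsilon\to\mu H^*$ in $L^2(\mathcal F_0^\varepsilon)$ and $C^\infty_{loc}$.

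\textbf{Main obstacle.} The genuinely hard step is Step 1(iii): proving that $\mathcal B[H^\varepsilon]\to B^*$ with the precise identification of $B^*$. This requires both the delicate asymptotic estimate on $H^\varepsilon - H^*$ near the collapsing filament (the vector-valued analogue of the Mazya-type results mentioned in the introduction, for which the scalar techniques do not directly transfer) and the exact leading-order computation of the boundary integral, where one must be careful that the naive leading term $\frac{\mu}{2\pi\varepsilon}e_\theta$ of $H^*$ contributes the finite answer while all correction terms — curvature of $\mathcal C_0$, the difference $H^\varepsilon-H^*$, the discrepancy between the true normal $n$ on $\partial\mathcal S_0^\varepsilon$ and the model normal, and the variation of $\zeta_i$ over the cross-section — contribute vanishingly. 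Everything else (Steps 2 and 3) is routine given the estimates collected in Section~\ref{sec-Asymptotic}.
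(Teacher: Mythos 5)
Your proposal follows essentially the same route as the paper: uniform bounds on $p^\varepsilon$ from conservation of $\tfrac12(\mathcal M_g+\mathcal M_a^\varepsilon)p^\varepsilon\cdot p^\varepsilon$, asymptotics of $\mathcal M_a^\varepsilon$, $\Gamma_a^\varepsilon$ and $\mathcal B[H^\varepsilon]$ — with the leading term computed by replacing $H^\varepsilon$ on $\partial\mathcal S_0^\varepsilon$ by $H_{2D}=\tfrac{1}{2\pi\varepsilon}e_\theta$ and integrating the triple products along $\mathcal C_0$ to produce $\mathcal A_0$ and $\mathcal V_0$ — followed by passage to the limit in the ODE and the splitting $u^\varepsilon-\mu H^*=\mu(H^\varepsilon-H^*)+\sum_i p_i^\varepsilon\nabla\Phi_i^\varepsilon$ for the fluid; this is exactly the content of Propositions~\ref{est pot}, \ref{est vor filament}, \ref{compuB}, \ref{co} and \ref{kin fluid conv} used in Section~\ref{sec-cv}. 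The only minor deviation is in your Step 2, where you argue by Gr\"onwall/continuous dependence, whereas the paper derives uniform $C^k$ bounds and uses Arzel\`a--Ascoli plus uniqueness of the limit ODE — the quantitative alternative you describe is precisely what the paper's remark after the theorem says is possible.
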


The proof of Theorem \ref{pasdenom-VO} is given in Section  \ref{sec-cv}
 after some preliminary technical work in  Section  \ref{sec-Asymptotic} and \ref{Sec:compu}.

\begin{rem}
We point out that it is possible to strengthen the result of Theorem \ref{pasdenom-VO} by quantifying the convergence of $p^\varepsilon $ to $ p^*$. Actually, relying on the asymptotics of the coefficients (see Section \ref{Sec:compu}), 
it is possible to establish that the convergence holds in $C^k([0,T];\mathbb R^6)$ at the rate $O(\eps |\log\eps|^\frac{3}{2})$.
Instead, we rather use a compactness method which we will pursue in the general case with vorticity. 
\end{rem}

\begin{rem}
It would be interesting to investigate an extension of the results above to several rigid bodies,  see for instance in different contexts for several vortex filaments \cite{GS,GLMS,Meyer}.  
\end{rem}

%
%

\subsection{Case with vorticity}
\label{Subsec:with}

We now deal with the more difficult case where the fluid vorticity does not vanish.
The equations at stake are now \eqref{u_S}, \eqref{vdecomp}, \eqref{omega-eq} and   \eqref{eq}. 
To indicate the dependence on the scaling parameter $\varepsilon $, we write them as 
\begin{equation} \label{eqeps-WV}
\big(  {\mathcal M}_{g} +  {\mathcal M}_{a}^\varepsilon \big) (p^\varepsilon)' 
 + \langle  \Gamma_g ,  p^\varepsilon ,  p^\varepsilon \rangle
 + \langle  \Gamma_a^\varepsilon ,  p^\varepsilon ,  p^\varepsilon \rangle
=  \mu B^\varepsilon p^\varepsilon 
 + {\mathcal B} \big[ K_{ \mathcal F_0} [\omega^\varepsilon] \big] p^\varepsilon
 +  {\mathcal D} [ p^\varepsilon, u^\varepsilon, \omega^\varepsilon ],
\end{equation}
\begin{equation} \label{omega-eqeps}
\partial_t \omega^\varepsilon 
  + \big((u^\varepsilon -  u^\varepsilon_{\mathcal{S}}) \cdot\nabla\big)  \omega^\varepsilon
= \big(\omega^\varepsilon  \cdot\nabla \big) (u^\varepsilon -  u^\varepsilon_{\mathcal{S}}), 
  \quad  \text{ in } \quad  \mathcal{F}_0^\varepsilon ,
\end{equation}
\begin{equation} \label{vdecompeps}
u^\varepsilon =  \mu H^\varepsilon + \sum_{1 \leqslant i \leqslant 6} p^\varepsilon_i \nabla \Phi_i^\varepsilon 
+ K_{ \mathcal F_0^\varepsilon} [\omega^\varepsilon], \quad  \text{ in } \quad  \mathcal{F}_0^\varepsilon .
\end{equation}

\noindent In the Appendix (Section~\ref{Sec:WP-limit}),  
 we prove that for $\varepsilon > 0$ small enough,  
for any initial datum $p(0) = p_0 $ in $\R^6$, for any initial vorticity in the H\"older space 
$C^{\lambda,r}(\R^3 ;\, \R^3)$ where $r \in (0,1)$ and $\lambda\in \N_{\geq 0}$, supported away from ${\mathcal S}_0^\eps$, 
there is a $T^\varepsilon >0$, uniformly bounded from below, and a unique strong solution $(p^\varepsilon , u^\varepsilon, \omega^\varepsilon )$
of \eqref{eqeps-WV}--\eqref{vdecompeps}.

In such a case, the limit dynamics of the system are more coupled, in the sense that the limit dynamics of the filament are influenced by the fluid vorticity. 
Before stating our convergence result, let us first describe this limiting dynamic. 
This requires a few more notations/definitions. 

For two smooth vector fields $u$ and $\omega$ in $ \R^3  $, we introduce
the vector ${\mathcal D}^* [u, \omega]$ in $\R^6$ as: 
\begin{equation} \label{defD*}
{\mathcal D}^*  [u, \omega] :=
\left( \int_{\R^3}  [ \zeta_{i}  ,\omega , u ]  \ dx \right)_{1 \leqslant i \leqslant 6} .
\end{equation}
Using again the index $*$ to allude to quantities associated with the limit dynamics,
the equation for the limit filament is then given by the following ODE:
\begin{equation} \label{eql1-WV}
{\mathcal M}_{g}  (p^*)'  +  \langle  \Gamma_g ,  p^* ,  p^* \rangle
=  \mu B^* p^*  +  {\mathcal D}^* [ u^*, \omega^* ],
\end{equation}
where again $B^*$ is given by \eqref{defBstar}.
Observe that, compared to \eqref{eql1}, the equation \eqref{eql1-WV} contains an extra term
which involves, in particular, the fluid vorticity. 
Similarly to \eqref{omega-eq}, this limit vorticity $\omega^*$ satisfies the equation: 
\begin{equation} \label{omega-eq*}
\partial_t \omega^*  + \big((u^* -  u^*_{\mathcal{S}}) \cdot\nabla\big)  \omega^*
=  \big(\omega^* \cdot \nabla \big) (u^* - u^*_{\mathcal{S}})   \quad  \text{ in } \quad  \R^3 , 
\end{equation}
where 
\begin{equation} \label{u*}
 u^*_{\mathcal{S}} (t,x) := \ell^*  (t)  + \Omega^* (t) \wedge x 
 \quad  \text{ where } \quad p^* := ( \ell^* , \Omega^* ) \in  \R^3 \times  \R^3 .
 \end{equation}
Recalling \eqref{def-H}, the limit velocity $u^*$ satisfies 
\begin{equation} \label{vdecomp*}
u^* =  \mu   H^*  + K_{ \R^3} [\omega^*]  ,
\end{equation}
so that, in particular, 
\begin{equation*}
\omega^*  = \curl u^*- \mu . \kappa_{\mathcal{C}_0} .
\end{equation*}
Note in passing that the last term $\big( \omega^* \cdot \nabla \big) u^*_{\mathcal{S}}$ 
in \eqref{omega-eq*} simplifies to
\begin{equation} \label{Eq:NablaUS}
\big( \omega^* \cdot \nabla \big) u^*_{\mathcal{S}} = \Omega^* \wedge \omega^*.
\end{equation}
We emphasize that despite the singularity of $u^*$ along $\mathcal{C}_0$, all quantities 
$(u^* \cdot\nabla) \omega^*$, $(\omega^* \cdot \nabla) u^*$ and $\mathcal{D}^*[u^*, \omega^*]$ used above
are correctly defined as long as $\mathcal{C}_0$ and $\Supp (\omega^*)$ are separated.

We have the following result concerning the Cauchy problem for this system.
\begin{prop} \label{limit-WV}
Let $\mathcal{C}_0$ be a loop in $\R^3$. Let $\lambda\in \N_{\geq 0}$ and $r \in (0,1)$. 
Consider $p_0\in\R^6$ and $\omega_0\in C^{\lambda,r}(\R^3;\R^3)$, divergence-free 
and with compact support in $\R^3 \setminus \mathcal{C}_0$. 

Then there exists $T^*>0$ and a unique maximal solution $(p^*, \omega^*)$ 
of \eqref{eql1-WV}--\eqref{vdecomp*} with $p^*\in W^{\lambda,\infty}([0,T^*);\,\R^6)$ and \begin{equation*}\omega^*\in C([0,T^*);\, C^{\lambda,r}(\R^3;\,\R^3)-w^*) \cap C^l([0,T^*);\, C^{\lambda-l,\alpha}(\R^3;\,\R^3)-w^*) \quad \text{ for all $l\in \N_{\leq \lambda}$,}\end{equation*}
such that for any $t \in [0,T^*)$ we have $\dist(\supp \omega^*(t), \mathcal{C}_0) >0$. 
\end{prop}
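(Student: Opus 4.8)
The plan is to view the limit system \eqref{eql1-WV}--\eqref{vdecomp*} as a coupled problem made of a finite-dimensional ODE for $p^*=(\ell^*,\Omega^*)$ and a transport-with-stretching (i.e.\ $3$D Euler vorticity) equation \eqref{omega-eq*} for $\omega^*$, the two being linked through the Biot--Savart reconstruction \eqref{vdecomp*}, $u^*=\mu H^*+K_{\R^3}[\omega^*]$ with $H^*=K_{\R^3}[\kappa_{\mathcal C_0}]$ fixed by \eqref{def-H}; it is in the spirit of the $2$D vortex--wave system, the ``vortex'' part being here a fixed filament. Three structural facts make this tractable. First, $\mathcal M_g$ is symmetric positive definite, so \eqref{eql1-WV} is a genuine first-order ODE, $(p^*)'=\mathcal M_g^{-1}\big(\mu B^*p^*+\mathcal D^*[u^*,\omega^*]-\langle \Gamma_g,p^*,p^*\rangle\big)$. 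Second, $H^*$ is divergence-free, smooth, and bounded together with all its derivatives on every region at positive distance from $\mathcal C_0$, while $K_{\R^3}$ gains one derivative in Hölder spaces by the Schauder/Calderón--Zygmund estimates; hence on such a region $u^*$ is as regular as $\omega^*$, plus one derivative. Third, the transporting field $X^*:=u^*-u^*_{\mathcal S}$ is divergence-free (both $u^*$ and $u^*_{\mathcal S}$ are), and by \eqref{Eq:NablaUS} the stretching term in \eqref{omega-eq*} equals $(\omega^*\cdot\nabla)u^*-\Omega^*\wedge\omega^*$; thus \eqref{omega-eq*} is of pure $3$D Euler type and is solved along the characteristics of $X^*$ through the classical representation $\omega^*(t,\Phi^*_t(y))=\nabla_y\Phi^*_t(y)\,\omega_0(y)$.

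The key a priori input is that $\supp\omega^*$ stays away from $\mathcal C_0$. Write $d_0:=\dist(\supp\omega_0,\mathcal C_0)>0$. As long as $\supp\omega^*(t)\subset\{\dist(\cdot,\mathcal C_0)\geq d_0/2\}$, one has on a neighbourhood of $\supp\omega^*(t)$ the bound $\|X^*(t)\|_{L^\infty}\lesssim|\mu|\,\|H^*\|_{L^\infty(\{\dist(\cdot,\mathcal C_0)\geq d_0/4\})}+\|\omega^*(t)\|_{L^1\cap L^\infty}+|p^*(t)|\,(1+R)$, where $R$ bounds $|x|$ on $\supp\omega^*(t)$; since $\mathcal C_0$ does not move, this finite-speed bound keeps $\supp\omega^*(t)=\Phi^*_t(\supp\omega_0)$ inside $\{\dist(\cdot,\mathcal C_0)\geq d_0/2\}$ up to a time $T_0$ depending only on the data. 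On $[0,T_0]$ one then closes a system of differential inequalities for $\big(|p^*(t)|,\|\omega^*(t)\|_{C^{\lambda,r}}\big)$: the classical transport estimate for \eqref{omega-eq*} gives $\|\omega^*(t)\|_{C^{\lambda,r}}\leq\|\omega_0\|_{C^{\lambda,r}}\exp\!\big(C\int_0^t\|X^*(s)\|_{C^{\lambda,r}}\ds\big)$ with $\|X^*(s)\|_{C^{\lambda,r}}$, on the relevant region, bounded by $C\big(|\mu|+\|\omega^*(s)\|_{C^{\lambda,r}}+|p^*(s)|\big)$, while the ODE gives $|p^*(t)|\leq|p_0|+C\int_0^t\big(|p^*(s)|^2+|p^*(s)|+\|\mathcal D^*[u^*,\omega^*](s)\|\big)\ds$ with $\|\mathcal D^*[u^*,\omega^*](s)\|\lesssim\|\omega^*(s)\|_{L^1}\big(|\mu|\,\|H^*\|_{L^\infty}+\|\omega^*(s)\|_{L^\infty}\big)(1+R)$; this yields uniform bounds on some interval, still called $[0,T_0]$, together with a uniform lower bound on $\dist(\supp\omega^*(t),\mathcal C_0)$.

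With these bounds available, I would construct the solution by the standard fixed-point/iteration scheme used for $3$D Euler in Hölder spaces and for the vortex--wave system, e.g.\ producing $\omega^{(n+1)}$ by transporting $\omega_0$ along the flow of $X^{(n)}=\mu H^*+K_{\R^3}[\omega^{(n)}]-(\ell^{(n)}+\Omega^{(n)}\wedge x)$ and $p^{(n+1)}$ by solving \eqref{eql1-WV} with inhomogeneity $\mathcal D^*[\mu H^*+K_{\R^3}[\omega^{(n)}],\omega^{(n)}]$. The a priori estimates give uniform bounds on a common $[0,T_0]$ with supports uniformly away from $\mathcal C_0$; the flows then converge in $C^1_{\mathrm{loc}}$ away from $\mathcal C_0$ and the $p^{(n)}$ converge (by Arzelà--Ascoli, after bounding their time-derivatives through the equation), so $\omega^{(n)}$ converges, the uniform $C^{\lambda,r}$ bound promoting the limit to $\omega^*\in L^\infty_t C^{\lambda,r}$ with the (weak-$*$) time-continuity stated in the proposition --- only weak-$*$ continuity being available at the top Hölder index, as is classical for transport equations. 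Passing to the limit in the integral/characteristic formulations of \eqref{eql1-WV} and \eqref{omega-eq*} shows $(p^*,\omega^*)$ solves the system. Uniqueness is obtained by comparing two solutions on an interval where both vorticity supports stay in some $\{\dist(\cdot,\mathcal C_0)\geq c\}$: there $X^*$ is genuinely Lipschitz (since $K_{\R^3}[\omega^*]\in C^{1,r}$ on compact sets and $H^*$ is smooth), the difference of Biot--Savart fields is controlled by $\|\omega_1-\omega_2\|_{L^1\cap L^\infty}$, and a Gronwall/Osgood argument on $|p_1-p_2|+\|\omega_1-\omega_2\|_{L^1\cap L^\infty}$ --- using the uniform $C^{\lambda,r}$ bounds and the common initial datum --- forces the two solutions to coincide. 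Taking $T^*$ to be the supremum of the times $T$ for which a solution with the stated regularity exists on $[0,T)$ with $\dist(\supp\omega^*(t),\mathcal C_0)>0$ throughout produces the maximal solution.

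I expect the main difficulty to be the self-consistency of the a priori step: one must propagate the $C^{\lambda,r}$ bound on $\omega^*$ using a velocity field $X^*$ that is globally singular --- through $H^*$ near $\mathcal C_0$ --- but is smooth precisely along $\supp\omega^*$, while at the same time keeping $\supp\omega^*$ away from $\mathcal C_0$ (via the finite-speed bound, which itself uses the previous controls) and bounding $p^*$, whose ODE is coupled to $\omega^*$ both through the term $\mathcal D^*$ and, via $u^*_{\mathcal S}$, through the transport field; all of this must be arranged before the iteration is even well defined. A secondary technical point is the familiar loss of strong continuity in time at the borderline Hölder regularity, which is exactly what dictates the weak-$*$ formulation in the statement.
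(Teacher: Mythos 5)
Your plan is correct and lands on the same basic mechanism as the paper's: work in Lagrangian (flow-map) variables, exploit that on a short time interval $\supp\omega^*$ stays at positive distance from $\mathcal{C}_0$ so that $H^*$ and the Biot--Savart reconstruction are smooth along the support, and close Gronwall-type estimates on $(|p^*|,\|\omega^*\|_{C^{\lambda,r}})$ before producing a solution by a fixed-point-type argument and separate uniqueness. There are, however, worthwhile differences in how this is executed. The paper does not prove Proposition~\ref{limit-WV} in isolation: it establishes the unified Theorem~\ref{start3}, treating $\varepsilon=0$ and $\varepsilon\in(0,\varepsilon_0]$ on an equal footing with constants independent of $\varepsilon$, which is precisely what feeds the compactness step of Theorem~\ref{pasdenom-VO2}. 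To that end, it sets up a single Banach--Picard contraction directly on the pair $(p,\eta)$ in $C^0([0,T];\R^6)\times L^\infty(0,T;C^{\lambda+1,r}(\overline{\mathfrak{O}_2}))$, with an explicit cutoff $\Lambda$ that localizes the (singular) velocity to a neighborhood of $\supp\omega_0$; this gives existence, uniqueness and the uniform estimates in one shot, and it performs the contraction at top flow regularity (the usual loss of derivatives in the Eulerian picture is bypassed through Lemmas~\ref{BBLemmaA.2}--\ref{BBLemma4}). The paper also exploits the energy structure, i.e.\ the skew-symmetry of $B$ and $\langle\Gamma_g,p,p\rangle\cdot p=0$, so that the a priori bound on $p$ is a linear Gronwall estimate, whereas your version carries a quadratic term from $\Gamma_g$ — not fatal for short-time control, but strictly weaker. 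Your iteration-plus-Arzel\`a--Ascoli scheme and a separate uniqueness proof are a legitimate alternative for the $\varepsilon=0$ statement; note, though, that Osgood is overkill here, since on the region at positive distance from $\mathcal{C}_0$ the transporting field is genuinely Lipschitz and plain Gronwall closes the uniqueness. Finally, your sketch does not address the higher time regularity claimed in the proposition, namely $p^*\in W^{\lambda,\infty}$ and $\omega^*\in C^l(\,\cdot\,;C^{\lambda-l,\alpha}-w^*)$ for $l\leq\lambda$; these are obtained by bootstrapping through the equations, as done by strong induction at the end of the paper's proof, and would need to be added.
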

Here, $"w^*-"$ refers to the weak$\ast$-topology. Proposition~\ref{limit-WV} is a part of Theorem~\ref{start3} and is proved in the Appendix
 (Section~\ref{Sec:WP-limit}). \par
\ \par
We are now ready to state our second main result regarding the convergence in the zero-radius limit 
$\varepsilon \to 0$, of the solutions of \eqref{eqeps-WV}--\eqref{vdecompeps} towards the ones of
\eqref{eql1-WV}--\eqref{vdecomp*}, up to the maximal existence time  $T^* > 0$ of the latter. 
\begin{thm} \label{pasdenom-VO2}
Let ${\mathcal C}_0$ be a loop in $\R^3$. 
Let $ \mathcal A_0$ and  $\mathcal V_0$ be given by Definition~\eqref{svmatrix},
and $B^*$ given by \eqref{defBstar}. 
Let $m>0$ and let $\mathcal{J}_{0}$ be a $3 \times 3$ symmetric positive definite matrix. 
Let $p_0 := ( \ell_0 , \Omega_0 ) \in \R^6$ and $\omega_0\in C^{\lambda,r}(\R^3 ;\, \R^3)$ be divergence-free and
compactly supported away from ${\mathcal S}_0$ with $\lambda\in \N_{\geq 0}$ and $r\in (0,1)$ and let the circulation $\mu\in \R$ be fixed. 
Let $T^* > 0$ and $(p^*,u^* , \omega^*)$ be the unique strong solution of the limit system \eqref{eql1-WV}--\eqref{vdecomp*} on $ [0,T^*) $ as  given by Proposition~\ref{limit-WV}. 
Let, for each $\varepsilon > 0$,  $T^\varepsilon >0$ and $(p^\varepsilon , u^\varepsilon, \omega^\varepsilon )$ 
be the unique smooth solution  of \eqref{eqeps-WV}--\eqref{vdecompeps} with initial data $p(0) = p_0$,
on the time interval $ [0,T^\varepsilon)$. 

Then 
$$\liminf_{\varepsilon \to 0} \, T^{\varepsilon} \geq T^* \quad \text{for all} \quad   T\in (0,T^*),$$
\begin{equation*}
p^\varepsilon\rightarrow p^*\quad \text{ in $W^{\lambda,\infty}([0,T];\,\R^6)-w^*$ as $\eps\rightarrow 0^+$},
\end{equation*}
and 
\begin{equation*}
\omega^\varepsilon\rightarrow \omega^* 
\quad\text{in $L^\infty ([0,T];\, C^{\lambda,r}(\R^3 \setminus {\mathcal C}_0))-w^*$ as $\eps\rightarrow 0^+$}.
\end{equation*}
\end{thm}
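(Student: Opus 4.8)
The plan is to run a compactness-and-uniqueness argument for the coupled system \eqref{eqeps-WV}--\eqref{vdecompeps}, using the asymptotics of the coefficients established in Section~\ref{Sec:compu} (which already underpin Theorem~\ref{pasdenom-VO}) together with the Cauchy theory of Section~\ref{Sec:WP-limit}. First I would fix $T \in (0, T^*)$ and work on $[0, \min(T, T^\varepsilon))$. The key \emph{a priori} bounds are: (i) the energy identity for \eqref{eqeps-WV}, which after multiplying by $p^\varepsilon$ and using \eqref{AnnulationGamma}, \eqref{AnnulationGamma2}, and the skew-symmetry ${\mathcal B}[\cdot]p\cdot p = 0$ reduces to a $\mathcal{D}$-term that is controlled by $\|\omega^\varepsilon\|_{L^\infty}$ and the distance of $\supp\omega^\varepsilon$ to ${\mathcal C}_0$; (ii) the transport-with-stretching structure of \eqref{omega-eqeps}, which preserves $C^{\lambda,r}$-bounds on $\omega^\varepsilon$ up to a Gr\"onwall factor depending on $\|\nabla(u^\varepsilon - u^\varepsilon_{\mathcal S})\|_{L^\infty(\supp\omega^\varepsilon)}$; and (iii) a lower bound on $\dist(\supp\omega^\varepsilon(t), {\mathcal C}_0)$ propagated by the flow of $u^\varepsilon - u^\varepsilon_{\mathcal S}$. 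These three must be closed simultaneously in a bootstrap: the velocity estimate in (ii)--(iii) needs the $C^\infty_{loc}$-convergence $H^\varepsilon \to H^*$, $\nabla\Phi_i^\varepsilon \to 0$, and $K_{\mathcal F_0^\varepsilon}[\omega^\varepsilon] \to K_{\R^3}[\omega^*]$ away from ${\mathcal C}_0$ from Section~\ref{sec-Asymptotic}, while the coefficient asymptotics ${\mathcal M}_a^\varepsilon \to 0$, $\langle\Gamma_a^\varepsilon, \cdot,\cdot\rangle \to 0$ (both $O(\varepsilon^2|\log\varepsilon|)$ or so), and $\mu B^\varepsilon p^\varepsilon \to \mu B^* p^*$ give the limit ODE \eqref{eql1-WV}.

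The argument proceeds as follows. Step 1: establish uniform-in-$\varepsilon$ bounds on $(p^\varepsilon, \omega^\varepsilon)$ on a fixed interval $[0,T]$ by a continuity/bootstrap argument, using the energy identity plus the transport estimate plus the separation estimate, which together imply $\liminf_\varepsilon T^\varepsilon \geq T^*$. The crucial point is that all the "dangerous" singular terms — those involving $H^\varepsilon$ near ${\mathcal C}_0$ — only enter through the skew-symmetric ${\mathcal B}$-structure (which drops out of the energy identity) or through ${\mathcal D}[p^\varepsilon, u^\varepsilon, \omega^\varepsilon]$, where the integrand $[\zeta_i, \omega^\varepsilon, u^\varepsilon]$ and $[\omega^\varepsilon, u^\varepsilon - u^\varepsilon_{\mathcal S}, \nabla\Phi_i^\varepsilon]$ are multiplied by $\omega^\varepsilon$, supported at positive distance from ${\mathcal C}_0$, so no blow-up occurs. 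Step 2: extract weak-$\ast$ limits $p^\varepsilon \rightharpoonup \bar p$ in $W^{\lambda,\infty}$ and $\omega^\varepsilon \rightharpoonup \bar\omega$ in $L^\infty C^{\lambda,r}$, and upgrade to strong $C^k$-convergence of $p^\varepsilon$ on compact time intervals via Arzel\`a-Ascoli (the ODE \eqref{eqeps-WV} controls $(p^\varepsilon)'$ uniformly) and to local-uniform convergence of $u^\varepsilon$. Step 3: pass to the limit in each term of \eqref{eqeps-WV} and \eqref{omega-eqeps} using the coefficient asymptotics and the local convergence of velocities, obtaining that $(\bar p, \bar\omega)$ solves \eqref{eql1-WV}--\eqref{vdecomp*}. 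Step 4: invoke the uniqueness part of Proposition~\ref{limit-WV} to conclude $\bar p = p^*$, $\bar\omega = \omega^*$, and since the limit is independent of the subsequence, the full sequences converge.

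The main obstacle is Step 1: propagating the \emph{uniform} lower bound on $\dist(\supp\omega^\varepsilon(t), {\mathcal C}_0)$ while simultaneously controlling $\|\omega^\varepsilon(t)\|_{C^{\lambda,r}}$ and $|p^\varepsilon(t)|$, because the transport velocity $u^\varepsilon - u^\varepsilon_{\mathcal S}$ depends on $p^\varepsilon$ (through $\nabla\Phi_i^\varepsilon$ and $u^\varepsilon_{\mathcal S}$) and on $\omega^\varepsilon$ (through $K_{\mathcal F_0^\varepsilon}[\omega^\varepsilon]$), and one needs the asymptotic estimates of Section~\ref{sec-Asymptotic} to be \emph{uniform on the support of} $\omega^\varepsilon$, which is only known to stay away from ${\mathcal C}_0$ a posteriori. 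This is resolved by a standard continuity argument: one defines $T^\varepsilon_{good}$ as the largest time up to which $\dist(\supp\omega^\varepsilon, {\mathcal C}_0) \geq \tfrac12 d_0$ (with $d_0 := \dist(\supp\omega_0, {\mathcal C}_0)$) and $|p^\varepsilon| + \|\omega^\varepsilon\|_{C^{\lambda,r}} \leq 2M_0$ for a suitable $M_0$ depending on the limit solution on $[0,T]$; on $[0, T^\varepsilon_{good}]$ the asymptotic estimates apply uniformly, yielding \emph{strict} improvements of all three bounds for $\varepsilon$ small, so $T^\varepsilon_{good} \geq T$. A secondary technical point is that the convergence of $\omega^\varepsilon$ is only weak-$\ast$, so one cannot directly pass to the limit in the nonlinear terms $(u^\varepsilon\cdot\nabla)\omega^\varepsilon$ and $(\omega^\varepsilon\cdot\nabla)u^\varepsilon$; here one uses the strong local convergence of $u^\varepsilon$ (compactness in $\varepsilon$ coming from uniform $C^{\lambda+1,r}$-bounds away from ${\mathcal C}_0$ plus the equation for $\partial_t u^\varepsilon$) together with the Aubin--Lions-type compactness already needed to make sense of the limit flow, exactly as in the proof of Proposition~\ref{limit-WV}.
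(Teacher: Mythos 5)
Your overall architecture matches the paper's: uniform-in-$\varepsilon$ bounds on a maximal ``good'' time interval, compactness (Aubin--Lions plus the coefficient asymptotics of Propositions~\ref{co}, \ref{co4} and \ref{kin fluid conv}), identification of the limit as a solution of \eqref{eql1-WV}--\eqref{vdecomp*}, uniqueness of that solution to get full-sequence convergence, and a continuation argument to reach every $T<T^*$. Steps 2--4 of your plan are essentially the paper's Lemma~\ref{Lem:Convergence}.

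The genuine gap is in Step 1, precisely where you assert that on $[0,T^\varepsilon_{good}]$ the asymptotic estimates ``yield strict improvements of all three bounds for $\varepsilon$ small''. The a priori estimates you list close only on short time intervals: the coupled Gr\"onwall system for $|p^\varepsilon|+\|\omega^\varepsilon\|_{C^{\lambda,r}}$ has quadratic growth (the transport velocity is itself of size $1+|p^\varepsilon|+\|\omega^\varepsilon\|_{C^{\lambda,r}}$ on $\supp\omega^\varepsilon$), so it cannot by itself give uniform bounds up to an arbitrary $T<T^*$; and comparison with the limit solution cannot strictly improve the $C^{\lambda,r}$-bound either, because the convergence of $\omega^\varepsilon$ produced by compactness is only weak-$*$ in $C^{\lambda,r}$ (strong only in $C^{\lambda,r'}$, $r'<r$), and weak-$*$ lower semicontinuity of the norm gives no contradiction with a saturating sequence. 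This is why your bootstrap and your compactness step risk circularity as written. The paper fills exactly this hole with Theorem~\ref{start3} and, crucially, Corollary~\ref{Cor:UniformExistence}: a uniform-in-$\varepsilon$ local well-posedness result, proved by a Lagrangian fixed-point scheme with tame composition estimates, whose existence time and whose $C^{\lambda,r}$-control of the solution require only a bound on the data in the \emph{weaker} norm $C^{\lambda,r'}$ (plus $L^1$ and the support--distance condition) --- precisely the quantities that the compactness step does control strongly. The uniform bounds up to any $T<T^*$, and $\liminf_\varepsilon T^\varepsilon\geq T^*$, are then obtained by the restart/contradiction argument at $\widehat T=\sup\mathcal G$ (Lemma~\ref{Lem:TT*}): convergence on $[0,\widehat T-\eta]$ puts the $\varepsilon$-data within $\widehat M+1$ of the limit solution in the weak norms, and the uniform local theorem extends the solutions a fixed amount of time beyond $\widehat T$, contradicting maximality. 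To repair your proof you must either prove such a uniform local existence theorem with existence time depending only on weaker-norm data (propagating the $C^{\lambda,r}$-norm through the flow representation), or replace the compactness step by a quantitative stability estimate strong enough to propagate the $C^{\lambda,r}$-bound; the bootstrap as you state it does not close.
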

 The proof of Theorem \ref{pasdenom-VO2} is given in Section  \ref{Sec:MainProof}.
%
%
%
\begin{rem}
The analysis here also works for the case of the Euler equations in a fixed domain, consisting of the $\R^3$ minus an obstacle shrinking to a curve, in which case the governing equations are \eqref{omega-eqeps}, \eqref{vdecompeps}, and \eqref{eqeps-WV} is replaced with $p=0$. In this case, the same proof shows that the dynamics converge in the same topologies to the solution of \eqref{omega-eq*}, \eqref{u*} with $p=0$ replacing \eqref{eql1}. These limiting equations can be interpreted as the Euler equations, where the Biot-Savart law is modified to contain an additional vortex filament, similar to how fixed point vortices can occur in the 2D vanishing obstacle limit (cf. \cite{Ift,Lopes}).
\end{rem}
%

\subsection{An example of divergence in the massless limit}

One can also consider the massless limit of the body dynamics, where the inertia/rotational inertia scales 
as for a body of constant density. More precisely we set 
\begin{equation} \label{Eq:LightScaling}
\tilde{m}^\eps = \eps^2 m \quad \mathrm{and} \quad \tilde{\mathcal{J}}_0^\eps=\eps^2\mathcal{J}_0,
\end{equation} 
where $m>0$ is fixed and $\mathcal{J}_0$ is a $3\times 3$ symmetric positive definite matrix 
and both are fixed independently of $\eps$. 
Then we consider $\mathcal{M}_g$ (and $\Gamma_g$) defined with respect to this $\tilde{m}^\eps$ and $\tilde{\mathcal{J}}^\eps_0$. 
For the sake of better distinction, we will denote this inertial matrix and the corresponding Christoffel symbol 
with $\tilde{\mathcal{M}}_g^\eps$ and $\tilde{\Gamma}_g^\eps$ instead of $\mathcal{M}_g$ and $\Gamma_g$.

Then, similarly as in \eqref{eqeps-WV}-\eqref{vdecompeps}, the governing equations read as 
\begin{equation} \label{eqeps-WV massless}
\big(  \tilde{\mathcal M}_{g}^\eps +  {\mathcal M}_{a}^\varepsilon \big) (p^\varepsilon)' 
 + \langle  \tilde{\Gamma}_g^\eps ,  p^\varepsilon ,  p^\varepsilon \rangle
 + \langle  \Gamma_a^\varepsilon ,  p^\varepsilon ,  p^\varepsilon \rangle
=  \mu B^\varepsilon p^\varepsilon 
 + {\mathcal B} \lbrack  K_{ \mathcal F_0} [\omega^\varepsilon] \rbrack p^\varepsilon
 + {\mathcal D} \lbrack  p^\varepsilon, u^\varepsilon, \omega^\varepsilon \rbrack ,
\end{equation}
\begin{equation} \label{omega-eqeps massless}
\partial_t \omega^\varepsilon + \big((u^\varepsilon - u^\varepsilon_{\mathcal{S}}) \cdot\nabla\big) \omega^\varepsilon
=  \big(\omega^\varepsilon  \cdot\nabla \big) (u^\varepsilon - u^\varepsilon_{\mathcal{S}}), 
\quad  \text{ in } \quad  \mathcal{F}_0^\varepsilon ,
\end{equation}
\begin{equation} \label{vdecompeps massless}
u^\varepsilon =  \mu   H^\varepsilon 
+ \sum_{1 \leqslant i \leqslant 6}   p^\varepsilon_i \nabla \Phi_i^\varepsilon 
+ K_{ \mathcal F_0^\varepsilon} [\omega^\varepsilon], \quad 
\text{ in } \quad  \mathcal{F}_0^\varepsilon. 
\end{equation}

\noindent Surprisingly, the solutions of Equation~\eqref{eqeps-WV massless} can diverge in the $\eps\searrow 0$ limit, 
as shown in the following statement.
\begin{thm} \label{thm:div}
Let $\mathcal{C}_0$ be a circle around the $e_3$-axis.
There exists a smooth, compactly supported $\omega_0$ with $\dist(\supp\omega_0,\mathcal{C}_0)>0$, an initial $p_0=(\ell_0,\Omega_0)$ and a $T_0>0$, all independent of $\eps$, such that in the $\eps\searrow0$ limit of \eqref{eqeps-WV massless}-\eqref{vdecompeps massless}, we have 
\begin{equation*}
p^\eps(t)\cdot e_3\geq \eps^{-\frac{1}{10}},
\end{equation*} 
for all $t\in (\eps,T_0)$ for all small enough $\eps$. 
In particular, the body travels a distance $\geq \eps^{-\frac{1}{10}}$ in the original frame.

Furthermore, the vorticity in these solutions does not blow up in the sense that 
the solutions of \eqref{omega-eqeps massless}-\eqref{vdecompeps massless} exist at least up to time $T_0$ for small $\eps>0$ 
and for every $m\in \N_{\geq 0}$ we have 
\begin{gather}
\label{w nice 1}
\norm{\omega^\eps}_{L^\infty([0,T_0],C^m(\R^3))} \lesssim_m 1, \\
\label{w nice 2}
\min_{t \in [0,T_0]} \dist(\supp\omega_t^\eps,\cals^\eps) \gtrsim 1,
\end{gather}
uniformly in $\eps$.
\end{thm}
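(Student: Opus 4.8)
The plan is to construct the divergence example by exploiting the fact that, under the light scaling \eqref{Eq:LightScaling}, the added-mass matrix $\mathcal{M}_a^\eps$ does not degenerate (it has a logarithmically divergent entry corresponding to translation transverse to the curve, from the asymptotics of the Kirchhoff potentials established in Section~\ref{sec-Asymptotic}), whereas $\tilde{\mathcal{M}}_g^\eps = \eps^2\mathcal{M}_g \to 0$. Thus in the limit the ``effective inertia'' of the body is governed by $\mathcal{M}_a^\eps$ alone, and the body responds to the Kutta--Joukowski-type forcing $\mu B^\eps p^\eps$ and the vorticity-induced forcing $\mathcal{D}[p^\eps,u^\eps,\omega^\eps]$ on a fast time scale. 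Concretely, I would first set up the symmetric geometry: $\mathcal{C}_0$ a circle around the $e_3$-axis, $\Omega_0 = 0$, $\ell_0$ along $e_3$, and choose $\omega_0$ axisymmetric (e.g.\ a smooth vortex ring coaxial with $e_3$, supported at positive distance from $\mathcal{C}_0$) so that the axisymmetry is preserved by \eqref{omega-eqeps massless}--\eqref{vdecompeps massless}. Axisymmetry collapses the $6$-dimensional ODE \eqref{eqeps-WV massless} to a scalar equation for $q^\eps := p^\eps\cdot e_3$, since all other components of $p^\eps$ stay zero by symmetry (translation along the axis and no rotation).

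The second step is to extract, from the scalar reduction, the leading-order balance. Writing $M_a^\eps$ for the (scalar) coefficient $\mathcal{M}_a^\eps$ contributes to the $e_3$-direction, one gets schematically $\big(\eps^2 m + M_a^\eps\big)(q^\eps)' = \mu b^\eps + d^\eps[q^\eps,\omega^\eps]$, where $b^\eps$ comes from $\mu B^\eps p^\eps$ and $d^\eps$ from $\mathcal{D}$. The key point is to choose $\omega_0$ and the sign of $\mu$ so that the forcing $\mu b^\eps + d^\eps$ has a definite sign and is bounded below by a positive constant (uniformly in $\eps$) for as long as $\omega^\eps$ remains close to a translate of $\omega_0$ and $q^\eps$ is not yet enormous; simultaneously $M_a^\eps$ must be shown to stay bounded above (by $C|\log\eps|$, from the Kirchhoff asymptotics) — actually for the cleanest argument one wants $M_a^\eps\to +\infty$ but only like $|\log\eps|$, so that $(q^\eps)' \gtrsim 1/|\log\eps|$, which over time $\mathcal{O}(1)$ gives $q^\eps \gtrsim 1/|\log\eps| \gg \eps^{-1/10}$... this is too weak, so instead one wants $M_a^\eps$ to stay bounded, giving $(q^\eps)'\gtrsim 1$ but then $q^\eps = \mathcal{O}(1)$, still too weak. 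The actual mechanism must be that the forcing $\mu b^\eps$ itself blows up: from \eqref{defBstar}-type structure $\mu B^\eps p^\eps$ pairs $q^\eps$ against a coefficient, so the equation is genuinely $\big(\eps^2 m + M_a^\eps\big)(q^\eps)' = c^\eps q^\eps + (\text{bounded})$ with $c^\eps$ of a sign forcing exponential growth, and the ratio $c^\eps/(\eps^2 m + M_a^\eps) \sim 1/|\log\eps| \cdot$ — no — one checks that $c^\eps$ stays $\mathcal{O}(1)$ while the effective inertia is $\mathcal{O}(|\log\eps|)$ giving a growth rate $\sim 1/|\log\eps|$, hence $q^\eps(t)\sim e^{t/|\log\eps|}$; to reach $\eps^{-1/10} = e^{\frac1{10}|\log\eps|}$ one needs $t\gtrsim \frac1{10}|\log\eps|^2 \to\infty$, still not $\mathcal{O}(1)$. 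So the correct picture must be that $\eps^2 m$ and $M_a^\eps$ are \emph{both} small (the added mass in the \emph{axial} direction along a thin tube scales to zero) — indeed a thin loop slices through the fluid along its axis with vanishing resistance — so the effective inertia is $o(1)$, the forcing coefficient $c^\eps$ is $\mathcal{O}(1)$, and the growth rate blows up, so $q^\eps$ reaches $\eps^{-1/10}$ in time $\eps$ to $T_0$. The decisive technical input is therefore the precise asymptotics, from Section~\ref{sec-Asymptotic}, of $\mathcal{M}_a^\eps e_3\cdot e_3$, $B^\eps e_3$, and the geometry-dependent constants, showing the effective inertia vanishes while the forcing persists.

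The third step is to control the vorticity a priori so that the forcing estimate is self-consistent and \eqref{w nice 1}--\eqref{w nice 2} hold. Here I would run a bootstrap / continuity argument on $[0,T_0]$: assume $\omega^\eps$ stays within a fixed tube of a reference transported profile and at distance $\gtrsim 1$ from $\mathcal{C}_0$ and from $\cals^\eps$; then the transport--stretching equation \eqref{omega-eqeps massless} with velocity \eqref{vdecompeps massless} has coefficients bounded uniformly in $\eps$ on the region where $\omega^\eps$ lives (the singular parts $\mu H^\eps$ and $\nabla\Phi_i^\eps$ are smooth and, away from $\cals^\eps$, converge in $C^\infty_{loc}$ to the smooth fields $\mu H^*$, nothing), so standard Hölder/energy estimates for linear transport give \eqref{w nice 1}; the support stays away from $\mathcal{C}_0$ because the advecting velocity near $\Supp\omega^\eps$ is $\mathcal{O}(1)$, closing the bootstrap for $T_0$ small but fixed. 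The body's large translation $q^\eps$ does not disturb this because in the body frame the curve $\mathcal{C}_0$ is fixed; it is the \emph{fluid} (and the vortex ring) that would be displaced, but the relevant quantity $\dist(\supp\omega^\eps,\cals^\eps)$ is frame-independent. Finally, once the forcing lower bound and the effective-inertia upper bound are in hand, integrating the scalar ODE from time $\eps$ (where, using the convergence theory of Theorem~\ref{pasdenom-VO2} on the short initial layer, $q^\eps(\eps)$ is already $\gtrsim$ something, or more simply one starts from $q^\eps(0)=\ell_0\cdot e_3$ and shows growth kicks in immediately) yields $q^\eps(t)\geq \eps^{-1/10}$ on $(\eps,T_0)$.

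The main obstacle I anticipate is the second step: identifying the correct scaling of the effective inertia $\eps^2 m + \mathcal{M}_a^\eps$ versus the forcing coefficient in the axial direction, and in particular verifying that the axial added mass of an $\eps$-tube around a loop vanishes (rather than diverging logarithmically as the transverse added mass does) — this is precisely where the refined Kirchhoff-potential asymptotics of Section~\ref{sec-Asymptotic} are essential, and getting the power of $\eps$ exactly right (so that $\eps^{-1/10}$ with its deliberately lossy exponent is comfortably attained over an $\mathcal{O}(1)$ time) is the crux. The vorticity bootstrap (step three) is expected to be routine given the uniform estimates on the smooth parts of the velocity already proved in the excerpt.
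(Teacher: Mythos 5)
Your overall scaffolding (axisymmetric reduction to a scalar ODE for $p_3^\eps$, vanishing effective inertia against an $\eps$-independent force of definite sign, bootstrap on the vorticity) is the same as the paper's, but the decisive ingredient is missing: you never identify what the force actually is. In the axisymmetric configuration the Kutta--Joukowski terms cannot drive the growth at all: with $p^\eps=p_3^\eps e_3$ the matrices $\mathcal{B}[\cdot]$ enter the reduced equation only through their $(3,3)$ entry, which vanishes by skew-symmetry (and $B^*p\equiv 0$ anyway, since $\mathcal{A}_0,\mathcal{V}_0\parallel e_3$), so your ``$c^\eps q^\eps$, exponential growth'' picture is not available and the reduced equation \eqref{redux eq} reads $(\tilde{\mathcal{M}}_g^\eps+\mathcal{M}_a^\eps)_{3,3}(p_3^\eps)'=\mathcal{D}[p^\eps,u,\omega]_3$. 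The engine of the paper's construction (data \eqref{data 1}--\eqref{data 4}) is then Lemma~\ref{main ord D}: a coaxial vortex ring placed far away on the axis (distance $s_0\gg1$) sits in the far field of the circulation-carrying filament, whose radial component decays like $|x_3|^{-4}$ (see \eqref{asy K}); the ring's self-interaction cancels exactly, $\int[e_3,\omega_0,K_{\R^3}[\omega_0]]\dx=0$, and what survives is a sign-definite axial force bounded below by $c\,(s_0+s)^{-4}$, independently of $\eps$. Without such a quantitative, sign-definite lower bound (which dictates the choice of $\omega_0$, of its distance $s_0$, and of the orientation) your argument has no source of thrust. Combined with $\mathcal{M}_a^\eps=O(\eps^2|\log\eps|)$ from Proposition~\ref{co} — which also corrects your opening premise that the added mass is nondegenerate; all of its entries vanish at this rate — one gets $(p_3^\eps)'\gtrsim\eps^{-2}|\log\eps|^{-1}s(t)^{-4}$, so the threshold $\eps^{-1/10}$ is crossed well before time $\eps$.

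Second, the vorticity bootstrap is not routine as you claim. The bounds \eqref{w nice 1}--\eqref{w nice 2} must hold up to a fixed $T_0$ during which $p_3^\eps$ becomes arbitrarily large (far beyond $\eps^{-1/10}$), and the pointwise decay $|\nabla\Phi_3|\lesssim\eps\,\dist(x,\de\cals^\eps)^{-2}$ alone no longer controls $p_3^\eps\nabla\Phi_3$ on $\supp\omega^\eps$ once $p_3^\eps\gg\eps^{-1}$. The paper closes this loop through conservation of energy: Lemma~\ref{L2 est u} gives $(\mathcal{M}_a^\eps)_{3,3}(p_3^\eps)^2\lesssim1$, i.e.\ $\|p_3^\eps\nabla\Phi_3\|_{L^2(\mathcal{F}_0)}\lesssim1$, and interior elliptic regularity (Lemma~\ref{vel est}) then bounds the advecting field in $C^m$ near the ring — this is exactly what makes your assertion that the velocity on $\supp\omega^\eps$ is $O(1)$ true, and it is absent from your proposal. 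You also need the stopping-time bookkeeping of \eqref{Def:THat} because the force lower bound is only proved while $p_3^\eps\leq\eps^{-1/10}$ and the ring stays confined; and there is no ``initial layer'' via Theorem~\ref{pasdenom-VO2} (that theorem concerns the massive scaling): the growth starts instantly from $p_0=0$ because the acceleration is already of size $\eps^{-2}|\log\eps|^{-1}$.
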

Theorem~\ref{thm:div} is proved in Section~\ref{Sec:Proof-div}.

\begin{rem}
\textbf{1)} This result is in striking contrast with the situation in 2D, 
where the solution of the analogous kind of equation converges to the motion of a point vortex \cite{GMS,GS}.

It is quite remarkable that the data in the theorem can even be chosen to be axisymmetric, 
where it is known \cite{Meyer} that bodies which are allowed to undergo some non-rigid motions behave similarly to infinitesimal vortex filaments in an appropriate limit. There is no contradiction with \cite{Meyer} here because the bodies in \cite{Meyer} are allowed to undergo other motions. 

Let us also stress here that this divergence is on a completely different scale than the (logarithmic) divergence of the local induction approximation of vortex filaments/rings (cf.\ \cite{BanicaFilaments}).

\textbf{2)} Strictly speaking, the rotational inertia of $\mathcal{S}_0^\eps$ will only equal \eqref{Eq:LightScaling} to leading order if the density of the body is kept constant. The theorem is however still true with the same proof if one incorporates lower-order terms in \eqref{Eq:LightScaling}.

\end{rem}

 \subsection{Organization of the rest of the paper}

The last chapters of the paper are devoted to the proofs of Theorems~\ref{pasdenom-VO}, \ref{pasdenom-VO2} and \ref{thm:div}. 
In Section~\ref{sec-Asymptotic}, we establish asymptotic behaviors of various parts of the velocity field: the Kirchhoff potentials, the Biot-Savart field, and the harmonic field. 
In Section~\ref{Sec:compu}, we obtain asymptotic expansions for the various coefficients driving the solid's ODE.
We prove Theorems~\ref{pasdenom-VO}, \ref{pasdenom-VO2} and \ref{thm:div} in Sections~\ref{sec-cv}, \ref{Sec:MainProof} and \ref{Sec:Proof-div}, respectively.
Finally, in the  Appendix (Section \ref{app-IPP}), we establish some decay estimates to justify the integrations by parts done in the proof of Proposition \ref{reform-macro}, and in the Appendix (Section~\ref{Sec:WP-limit}), we prove that the limit system obtained in Theorem~\ref{pasdenom-VO2} is well-posed, as well as the system for $\varepsilon >0$, with uniform estimates with respect to $\varepsilon$.


\section{Asymptotics of the potential and the harmonic field}
\label{sec-Asymptotic}
This section is devoted to the computation of the asymptotic of the potential and the harmonic field. 

\subsection{Notation}\label{geo not}

Below we will often use the following coordinates system around $\mathcal{C}_0$, that is, 
using the notation of Section~\ref{Subsec:toro}, we will use the diffeomorphism 
\begin{equation} \label{def big J}
W(x):=W \left( \gamma(t) + a s_1(x) + b {s_2}(x) \right)= \left( t(x), a, b \right),
\end{equation}
which maps $\mathcal{O}$ to $\R/L\Z\times B_{\delta}(0)\subset \R/L\Z\times \R^2$,
where we recall that $\mathcal{O}$ was defined in \eqref{Def:CalO}.
Concerning this change of coordinates, we have the following lemma.
\begin{lem} \label{lemma j}
The modulus of the Jacobian determinant of $W$ is given by 
\begin{equation} \label{def j}
w(x) := \de_\tau t(x) .
\end{equation}
It also holds that \begin{align}\label{j identity}
\int_{\de\cals} f \dd\sigma = \int_{\R/L\Z} \int_{C_t^\eps} w^{-1} f \dx \dt,
\end{align}

\noindent In $\mathcal{O}$ it holds that
\begin{gather}
\label{bd j1}
| w - 1 | \lesssim \dist(x,\mathcal{C}_0),  \\
\label{bd j2}
| \nabla w | \lesssim 1.
\end{gather}
\end{lem}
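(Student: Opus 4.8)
\textbf{Proof plan for Lemma~\ref{lemma j}.}

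The plan is to work entirely in the adapted coordinates $W(x)=(t,a,b)$, where $t$ is the arclength parameter of the nearest point $\gamma(t)$ on $\mathcal{C}_0$ and $(a,b)$ are the coordinates of the normal displacement in the frame $(s_1(t),s_2(t))$. First I would write the inverse map explicitly, $x = \Xi(t,a,b) := \gamma(t) + a\,s_1(t) + b\,s_2(t)$, and compute its Jacobian matrix. Using the Frenet-type relations $\gamma'(t)=\tau(t)$ and the fact that $s_1',s_2'$ lie in the span of the moving frame (so $\partial_t\Xi = \tau + a\,s_1' + b\,s_2'$, $\partial_a\Xi = s_1$, $\partial_b\Xi = s_2$), one gets that the determinant of $D\Xi$ equals the $\tau$-component of $\partial_t\Xi$, i.e. $(\tau + a s_1' + b s_2')\cdot\tau$, because $s_1,s_2,\tau$ is orthonormal and the columns $\partial_a\Xi,\partial_b\Xi$ are exactly $s_1,s_2$. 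Since $W=\Xi^{-1}$, its Jacobian determinant is the reciprocal, and $\partial_\tau t(x)$ — the derivative of the first coordinate of $W$ in the direction $\tau$ — is precisely this reciprocal times the appropriate cofactor; a short computation identifies $w(x):=|\det DW(x)| = \partial_\tau t(x)$, which is \eqref{def j}. The change-of-variables formula for the surface integral then follows: parametrizing $\partial\mathcal{S}_0^\eps$ by fixing $a^2+b^2=\eps^2$ and letting $(t,\theta)$ vary, the surface measure $\dd\sigma$ pulls back with the same Jacobian factor $w^{-1}$ relative to $\dt$ times the circle measure on $C_t^\eps$, giving \eqref{j identity}. (Alternatively \eqref{j identity} follows from the coarea/Fubini theorem applied to the level sets of $t(\cdot)$.)

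For the estimates \eqref{bd j1}--\eqref{bd j2}: at points of $\mathcal{C}_0$ itself (i.e. $a=b=0$) we have $\partial_t\Xi = \tau$, so $\det D\Xi = 1$ and hence $w=1$ on $\mathcal{C}_0$. The map $x\mapsto w(x)$ is smooth on the compact tubular neighborhood $\mathcal{O}$ (since $\mathcal{C}_0$ is smooth and closed, the frame $(s_1,s_2,\tau)$ and the projection $t(\cdot)$ are smooth there), so $\nabla w$ is bounded on $\mathcal{O}$, which is \eqref{bd j2}; and then \eqref{bd j1} is just the mean value inequality together with $w|_{\mathcal{C}_0}=1$ and $\dist(x,\mathcal{C}_0) = \sqrt{a^2+b^2}$ in these coordinates. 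One should note the implicit constants depend only on $\mathcal{C}_0$ (through bounds on $\|s_1'\|_\infty$, $\|s_2'\|_\infty$, the curvature, and $\delta$), which is consistent with their use later.

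The only genuinely delicate point is making sure the frame $(s_1,s_2,\tau)$ and hence the coordinate map $W$ is globally well-defined and smooth on all of $\mathcal{O}$: this requires $\delta$ small enough that the nearest-point projection onto $\mathcal{C}_0$ is a well-defined smooth map (which holds since $\mathcal{C}_0$ is a smooth embedded closed curve, so it has a positive reach), and it requires choosing a smooth — not merely continuous — orthonormal normal frame along $\mathcal{C}_0$; such a frame exists because the normal bundle of a curve in $\R^3$ is orientable and trivial (one can take a rotated Bishop/parallel frame rather than the Frenet frame, which avoids issues at inflection points). Once this setup is granted, everything else is the routine determinant computation and the mean value estimate sketched above, so I would state the frame choice up front (or refer back to the construction in Section~\ref{Subsec:toro}) and then carry out the computation.
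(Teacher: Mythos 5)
Your proposal is correct and follows essentially the same route as the paper: a direct computation of the Jacobian in the adapted orthonormal frame (you do it via the inverse parametrization $\Xi$, the paper writes $DW$ directly, which is the same determinant calculation), the area/coarea formula for the surface identity \eqref{j identity}, and $w|_{\mathcal{C}_0}=1$ plus smoothness of $t(\cdot)$ and the frame on the compact tube for \eqref{bd j1}--\eqref{bd j2}. Your remark on choosing a globally smooth (rotated Bishop) normal frame is a sensible precision that the paper takes for granted.
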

\begin{proof}
By definition it holds that $\de_\tau W\cdot e_3=\de_\tau t$, hence, in the orthonormal frames $\tau(x)$, ${s_1}(x)$, ${s_2}(x)$ and $e_3,e_1,e_2$ the Jacobian $w$ of $W$ is
\begin{equation} \label{jac j}
\begin{pmatrix}
\de_\tau W\cdot e_3 & 0 & 0\\
\de_\tau W\cdot e_1 & 1 & 0\\
\de_\tau W\cdot e_2 & 0 & 1
\end{pmatrix}
\end{equation}
since $t$ is constant on the plane orthogonal to $\tau$ by definition. 
As the Jacobian determinant is independent of the choice of the orthonormal frame, this shows the first statement.

Regarding the formula \eqref{j identity}, we note by the definition of $W$ and Fubini, it holds that \begin{align*}
\int_{\R/L\Z} \int_{C_t^\eps} w^{-1} f \dd\mathcal{H}^1(x) \dd\mathcal{H}^1(t)=\int_{\R/L\Z\times \de B_\eps(0)} (f\circ W) (w^{-1}\circ W)\dd\sigma.
\end{align*}
Hence, it suffices to show that the Jacobian factor of the map $W$ is given by $w$. The restriction of $W$ is a bijection from $\de\cals^\eps$ to $\R/L\Z\times \de B_\eps(0)$. Therefore, by the area formula, this factor is (locally) given by $\frac{G(\mathrm{D}(W\circ \phi))}{G(\mathrm{D} \phi)}$, where $\phi$ is any smooth parametrization and $G$ is the Gramian determinant of the gradient.
Recalling that the Gramian determinant of a matrix $A$ equals the modulus of the determinant of $A$ restricted to its image with respect to any orthonormal basis of the image, we see that this quotient equals the determinant of the gradient of $W|_{\de\cals^\eps}$ with respect to any orthonormal basis of the tangent spaces of $\de\cals^\eps$ and $\R/L\Z\times \de B_\eps(0)$, independently of $\phi$.

The vectors $\tau$ and $e_\theta$ are such a basis of the tangent space of $\de\cals^\eps$. As an orthonormal basis of $\R/L\Z\times \de B_\eps(0)$, we take $e_3$ and $\tilde{e}_\theta$, the tangential vector in the azimuthal direction (with the same orientation as $e_\theta)$.

With respect to these bases the Jacobian of $W|_{\de\cals^\eps}$ is then given by 
\begin{equation*}
\begin{pmatrix}
\de_\tau W \cdot e_3 & 0 \\
\de_\tau W \cdot \tilde{e}_\theta & 1 
\end{pmatrix},
\end{equation*}
since $t(x)$ is constant in the $e_\theta$-direction.
Clearly, this matrix has the desired determinant.

For the bounds, we first note that on $\mathcal{C}_0$ it holds that $w=1$ by definition, 
and hence \eqref{bd j2} implies \eqref{bd j1}. 
The bound \eqref{bd j2} simply follows from the fact that $t(\cdot)$ and $\tau$ are $C^2$ functions that do not depend on $\eps$.
\end{proof}
%
%

%
%

\subsection{A priori estimates on the exterior Neumann problem}

We begin with a lemma estimating the dependence on $\varepsilon$ of the trace inequality on $\partial \mathcal{S}_0^\varepsilon$. We remark that similar estimates also appeared in \cite{Mori}, however, we prefer to keep the presentation self-contained here.
\begin{lem} \label{trace lemma}
We have the following trace inequality for all $v\in H^1(\mathcal{O} \backslash \cals^\varepsilon)$ 
\begin{equation} \label{trace ineq 2}
\norm{v}_{L^2(\de\cals^\eps)/\const}^2 \lesssim \eps|\log\eps| \int_{\mathcal{O} \backslash \cals^\eps} |\nabla v|^2 \dx,
\end{equation}
uniformly in $\eps$, where we set 
\begin{equation*}
\norm{v}_{L^2(\de\cals^\eps)/\const} := \inf_{a \in \R} \norm{ v - a }_{L^2(\de\cals^\eps)}.
\end{equation*}
\end{lem}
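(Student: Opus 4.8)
The plan is to reduce the estimate to a single cross-sectional slice and then to a one-dimensional (in the radial variable) computation. Using the coordinate system $W$ of \eqref{def big J} and the co-area-type formula \eqref{j identity}, the trace $\norm{v}_{L^2(\de\cals^\eps)}^2$ decomposes as an integral over $t\in\R/L\Z$ of $\int_{C_t^\eps}\vert v\vert^2$, and since $w\approx 1$ by \eqref{bd j1} we may freely pass between the flat measure $\dd\sigma$ and $w^{-1}\dx$ at the cost of harmless constants. Likewise the Dirichlet integral $\int_{\mathcal{O}\setminus\cals^\eps}\vert\nabla v\vert^2$ controls, slice by slice, the integral over the annulus $\{ \eps\le \vert(a,b)\vert\le\delta\}$ in each normal plane of the full two-dimensional gradient of $v$ restricted to that plane. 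So it suffices to prove the following planar statement, uniformly in the slice: for $v\in H^1$ of the annulus $A_\eps=\{\eps\le \rho\le\delta\}\subset\R^2$,
\[
\inf_{a\in\R}\norm{v-a}_{L^2(\de B_\eps)}^2 \lesssim \eps\vert\log\eps\vert \int_{A_\eps}\vert\nabla v\vert^2,
\]
and then integrate this inequality in $t$ over $\R/L\Z$ (choosing, for each $t$, a $t$-dependent constant $a(t)$; one must be slightly careful that the outer infimum over a single constant $a$ independent of $t$ on $\de\cals^\eps$ is \emph{smaller} than $\int (v-a(t))^2$, so this direction of the inequality is the favorable one and no gluing of the slicewise constants is needed).

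For the planar estimate I would argue as follows. Write $\bar v(\rho)=\frac1{2\pi}\int_0^{2\pi} v(\rho,\theta)\dd\theta$ for the angular average, and split $v-a$ into the oscillation $v-\bar v$ on the circle $\{\rho=\eps\}$ plus the radial part $\bar v(\eps)-a$. For the oscillatory part, the Poincaré–Wirtinger inequality on the circle $\de B_\eps$ of radius $\eps$ gives $\norm{v-\bar v(\eps)}_{L^2(\de B_\eps)}^2 \lesssim \eps^2 \int_{\de B_\eps}\vert\de_\theta v\vert^2\eps^{-1}\dd\theta\cdot(\text{?})$ — more cleanly, I would instead bound $\int_{\de B_\eps}\vert v-\bar v(\eps)\vert^2\le \eps^2\int_0^{2\pi}\vert\de_\theta v(\eps,\theta)\vert^2\dd\theta$ and then average this over a dyadic range of radii $\rho\in[\eps,2\eps]$ using $\vert v(\eps,\theta)-v(\rho,\theta)\vert\le\int_\eps^\rho\vert\de_\rho v\vert$, which produces exactly $\lesssim \eps\int_{A_\eps}\vert\nabla v\vert^2$ with no log. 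For the radial part, choosing $a=\bar v(\delta)$ (the average on the outer circle, hence a fixed constant, in fact one can then finally optimize over $a$), one has $\vert\bar v(\eps)-\bar v(\delta)\vert \le \int_\eps^\delta\vert\bar v'(\rho)\vert\dd\rho \le \big(\int_\eps^\delta\rho\vert\bar v'(\rho)\vert^2\dd\rho\big)^{1/2}\big(\int_\eps^\delta\rho^{-1}\dd\rho\big)^{1/2}$; the second factor is $\vert\log(\delta/\eps)\vert^{1/2}\sim\vert\log\eps\vert^{1/2}$, and by Cauchy–Schwarz in $\theta$ the first factor is controlled by $\big(\int_{A_\eps}\vert\nabla v\vert^2\big)^{1/2}$. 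Multiplying by the length $2\pi\eps$ of $\de B_\eps$ turns $\vert\bar v(\eps)-\bar v(\delta)\vert^2$ into $\norm{\cdot}_{L^2(\de B_\eps)}^2$ and yields the claimed $\eps\vert\log\eps\vert$ factor. Adding the two contributions proves the planar inequality.

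The main obstacle, and the only genuinely delicate point, is bookkeeping the passage between the tubular coordinates and the flat polar coordinates: one must check that (i) the pushforward of $\vert\nabla v\vert^2\dx$ under $W$ dominates, slice by slice and uniformly in $\eps$, the flat planar Dirichlet energy of the trace of $v$ on the normal disk — this uses that $W$ is a fixed ($\eps$-independent) diffeomorphism of $\mathcal O$ with Jacobian bounded above and below by \eqref{bd j1}, so the metric distortion is $O(1)$; and (ii) the integration in $t$ of the slicewise inequalities is done with the single constant $a$ on $\de\cals^\eps$ chosen \emph{after} the slicewise estimates, which is legitimate precisely because we are bounding an infimum. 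Everything else is the standard annular Poincaré argument whose logarithm is the familiar capacity of a thin annulus in $2$D; I would keep those computations terse.
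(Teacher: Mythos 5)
Your reduction to tubular coordinates via $W$, the slicewise decomposition, and the planar annulus capacity computation (which is in essence the paper's one-dimensional Lemma~\ref{1D lem}, with the $|\log\eps|$ arising from $\int_\eps^\delta \rho^{-1}\dd\rho$) are all in line with what the paper actually does. The gap is precisely in the parenthetical remark where you claim that ``no gluing of the slicewise constants is needed.'' You assert that the infimum over a \emph{single} constant $a$ on $\de\cals^\eps$ is smaller than $\int_{\de\cals^\eps}|v-a(t)|^2$ for a $t$-dependent choice $a(t)$; this has the direction reversed. Minimizing over the smaller class of genuine constants can only produce a \emph{larger} infimum:
\begin{equation*}
\norm{v}_{L^2(\de\cals^\eps)/\const}^2
=\inf_{a\in\R}\int_{\de\cals^\eps}|v-a|^2\dd\sigma
\ \geq\
\inf_{a(\cdot)}\int_{\de\cals^\eps}|v-a(t)|^2\dd\sigma ,
\end{equation*}
so a slicewise estimate in which $a(t)$ is chosen as the outer angular mean on the $t$-th cross-section does \emph{not} bound the left-hand side. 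To see that this is a real issue and not cosmetic: take $v$ depending only on the longitudinal coordinate $t$, nonconstant. Then $v$ is constant on each slice $C_t^\eps$ and the planar Dirichlet energy of each slice is zero, so your slicewise bound reduces to $0\leq 0$ and carries no information, while $\norm{v}_{L^2(\de\cals^\eps)/\const}^2\approx \eps\inf_{a}\int_0^L|v(t)-a|^2\dd t>0$.

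The paper handles this via a step your proposal omits: after flattening by $W$, it first applies a three-dimensional Poincar\'e--Wirtinger inequality on the $\eps$-independent outer shell $(B_\delta(0)\setminus B_{\delta/2}(0))\times\R/L\Z$, producing one fixed constant $\bar f$ with $\norm{f-\bar f}_{L^2}^2$ on that shell controlled by the full Dirichlet energy. Only then does it apply the radial one-dimensional Lemma~\ref{1D lem} slicewise, to $f-\bar f$ rather than to $f$; the right-hand side of that lemma contains, besides the weighted Dirichlet term $\int_\eps^\delta x|g'|^2$, an $L^2$ term on $[\delta/2,\delta]$, and it is exactly this term that the preceding 3D Poincar\'e step has already tamed with a $t$-independent constant. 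Your argument needs an analogous step controlling the $t$-variation of the per-slice outer averages by the Dirichlet energy --- which is precisely the gluing you claim to avoid.
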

The proof relies on the following auxiliary lemma.
\begin{lem} \label{1D lem}
For all $g\in H^1((\eps,\delta))$ it holds that, for small enough $\eps>0$ and fixed $\delta \gtrsim 1$,
\begin{equation} \label{1D ineq}
|g(\eps)|^2 \lesssim |\log\eps| \left( \int_\eps^\delta x|g'|^2 \dx + \int_{\delta/2}^\delta |g|^2 \dx \right).
\end{equation}
\end{lem}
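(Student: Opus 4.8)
The plan is to establish the one-dimensional weighted trace inequality $|g(\eps)|^2 \lesssim |\log\eps|\left(\int_\eps^\delta x|g'|^2\dx + \int_{\delta/2}^\delta|g|^2\dx\right)$ via the fundamental theorem of calculus combined with a Cauchy--Schwarz estimate tuned to the degenerate weight $x$. First I would reduce to controlling $g(\eps)$ in terms of $g$ at some interior point: for $x,y\in(\eps,\delta)$ with $x<y$ one has $g(x) = g(y) - \int_x^y g'(s)\ds$, so $|g(x)|^2 \lesssim |g(y)|^2 + \left(\int_\eps^\delta |g'(s)|\ds\right)^2$. The point is that the last integral must be estimated using the weight: by Cauchy--Schwarz,
\begin{equation*}
\left(\int_\eps^\delta |g'(s)|\ds\right)^2 = \left(\int_\eps^\delta s^{1/2}|g'(s)|\cdot s^{-1/2}\ds\right)^2 \le \left(\int_\eps^\delta s|g'(s)|^2\ds\right)\left(\int_\eps^\delta \frac{\ds}{s}\right),
\end{equation*}
and $\int_\eps^\delta \frac{\ds}{s} = \log(\delta/\eps) \lesssim |\log\eps|$ for small $\eps$ and $\delta\gtrsim 1$. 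This is exactly where the logarithmic factor enters, and it is the crux of the estimate.

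Next I would handle the term $|g(y)|^2$ for a suitable interior point $y$. Rather than fixing a single $y$, I would average over $y\in(\delta/2,\delta)$: integrating the inequality $|g(\eps)|^2 \lesssim |g(y)|^2 + |\log\eps|\int_\eps^\delta s|g'|^2\ds$ in $y$ over the interval $(\delta/2,\delta)$ of length $\delta/2\gtrsim 1$ and dividing by this length gives
\begin{equation*}
|g(\eps)|^2 \lesssim \frac{1}{\delta/2}\int_{\delta/2}^\delta |g(y)|^2\dy + |\log\eps|\int_\eps^\delta s|g'(s)|^2\ds \lesssim \int_{\delta/2}^\delta|g|^2\dy + |\log\eps|\int_\eps^\delta s|g'|^2\ds,
\end{equation*}
which is the claimed bound (absorbing the first term into the factor $|\log\eps|$ if one prefers a common constant, though it is cleaner to leave it as stated). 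One should note that $g\in H^1((\eps,\delta))$ with this weighted gradient bound is automatically continuous up to $x=\eps$, so $g(\eps)$ makes sense; the genuine subtlety is only near $x=\eps$, where the weight $s$ degenerates much less severely than in the $x=0$ case, so no further care is needed.

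The main obstacle — though it is a mild one — is simply recognizing that the degenerate weight $s$ in $\int_\eps^\delta s|g'|^2\ds$ forces the logarithmic loss and nothing stronger: the reciprocal weight $1/s$ is log-integrable on $(\eps,\delta)$ but not on $(0,\delta)$, which is precisely why the estimate blows up logarithmically as $\eps\to 0$ rather than remaining bounded. Everything else is a routine application of Cauchy--Schwarz and averaging; there are no regularity or approximation issues since one may first prove the inequality for smooth $g$ and pass to the limit by density in $H^1((\eps,\delta))$.
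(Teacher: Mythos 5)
Your proof is correct and essentially the same as the paper's: both rest on the fundamental theorem of calculus combined with Cauchy--Schwarz against the weight $s$, which is exactly where the factor $\log(\delta/\eps)$ appears. The paper organizes the same computation via the substitution $h(x)=g(e^x)$ (which turns the weighted Cauchy--Schwarz into an unweighted one on $(\log\eps,\log\delta)$) and then bounds $|g(\delta)|^2 \lesssim \norm{g}_{H^1(\delta/2,\delta)}^2$, whereas you keep the weight explicit and average over $y\in(\delta/2,\delta)$ instead of invoking a one-dimensional trace estimate; these are cosmetic variations on the identical argument.
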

\begin{proof}[Proof of Lemma \ref{1D lem}]
We make the transformation $h(x)=g(e^x)$, and estimate with the fundamental theorem of calculus and the substitution rule
\begin{align*}
|g(\eps)|^2
& = |h(\log\eps)|^2 \lesssim |h(\log\delta)|^2 + \left( \int_{\log\eps}^{\log\delta} |h'| \dx \right)^2 \\
& \lesssim |h(\log\delta)|^2 + \left| \log\frac{\delta}{\eps} \right|  \int_{\log\eps}^{\log\delta} |h'|^2 \dx \\
& \lesssim |g(\delta)|^2 + \left| \log\frac{\delta}{\eps} \right| \int_{\eps}^{\delta}x |g'|^2 \dx .
\end{align*}
As we have $|g(\delta)| \lesssim \norm{g}_{H^1(\delta/2,\delta)}$, this shows \eqref{1D ineq}.
\end{proof}

\begin{proof}[Proof of Lemma \ref{trace lemma}]
We use the diffeomorphism $W$ defined in \eqref{def big J}.
As it does not depend on $\eps$, we see that $1\lesssim\mathrm{D}W\lesssim 1$, in the sense that the smallest 
and highest eigenvalue fulfill these bounds, and its Jacobian determinant is also bounded uniformly in $\eps$
by Lemma \ref{lemma j}.
Hence, by setting $f=v\circ W$, we see that \eqref{trace ineq 2} is equivalent to showing that 
\begin{equation} \label{trace fixed dom}
\norm{f}_{L^2(\de B_\eps(0) \times \R/L\Z)/\const}^2 
  \lesssim \eps|\log\eps| \int_{\left(B_{\delta}(0) \backslash B_\eps(0) \right) \times \R/L\Z} |\nabla f|^2 \dx,
\end{equation}
for all $f \in H^1(\left(B_{\delta}(0) \backslash B_\varepsilon(0)\right)\times \R/L\Z)$, uniformly in $\eps$. 
It is not restrictive to assume that $f$ is smooth.

To show \eqref{trace fixed dom}, we first apply the Poincar\'e-Wirtinger inequality to see that 
there is some constant $\bar v\in \R$ such that
\begin{equation} \label{poincare}
\norm{f - \bar f}_{L^2((B_{\delta}\backslash B_{\delta/2})\times \R/L\Z)}^2
  \lesssim \int_{\left(B_{\delta}(0) \backslash B_\eps(0)\right) \times \R/L\Z} |\nabla f|^2 \dx.
\end{equation}
We now use standard cylindrical coordinates $(t,r,\theta)$ in $\left(B_{\delta}(0)\backslash B_\eps(0)\right)\times \R/L\Z$.
We apply \eqref{1D ineq} to $f-\bar f$ in the variable $r$ only and integrate over $\theta$ and $t$ to see that
\begin{align*}
& \int_{\partial B_\eps(0) \times \R/L\Z} | f - \bar f|^2 \dd\sigma
 = \eps \int_{[0,2\pi] \times \R/L\Z} | f(t,\eps,\theta) - \bar f|^2 \dd \theta \dd t \\
& \lesssim \eps|\log\eps|\left(\int_{[\eps,\delta]\times[0,2\pi]\times \R/L\Z} r|\nabla f|^2\dd r\dd\theta\dd t
 + \int_{[\delta/2,\delta] \times [0,2\pi] \times \R/L\Z} | f - \bar f |^2 \dd r \dd\theta \dd t \right) \\
& \lesssim \eps|\log\eps| \left(\int_{\left( B_{\delta}(0) \backslash B_\eps(0) \right)\times \R/L\Z} | \nabla f |^2 \dd x
 + \int_{\left(B_{\delta}(0) \backslash B_{\delta/2}(0)\right) \times \R/L\Z} | f - \bar f |^2 \dd x \right),
\end{align*}
where we used that $r \lesssim 1$ in the first integral and $r \gtrsim 1$ in the second one for the last step.
By \eqref{poincare}, this implies \eqref{trace fixed dom}.
\end{proof}
\begin{lem} \label{cor trace}
Let $\phi$ be harmonic in $\R^3 \backslash \cals^\eps$, vanishing at $\infty$, and such that $\int_{\de\cals^\eps} \de_n \phi \dd\sigma = 0$, then we have 
\begin{align}
\label{est L2 norm} 
& \norm{\phi}_{L^2(\de\cals^\eps)/\const} \lesssim \eps|\log\eps| \norm{ \de_n \phi}_{L^2(\de\cals^\eps)}, \\
\label{est nabla}
& \norm{\nabla\phi}_{L^2(\R^3\backslash \cals^\eps)} \lesssim \eps^\frac{1}{2}|\log\eps|^{\frac{1}{2}}
  \norm{\de_n\phi}_{L^2(\de\cals^\eps)}, \\
\label{ell est}
&\norm{\nabla \phi}_{L^2(\de\cals^\eps)} \lesssim |\log\eps|^{\frac{1}{2}} \norm{\de_n \phi}_{L^2(\de\cals^\eps)},
\end{align}
uniformly in $\eps$.
\end{lem}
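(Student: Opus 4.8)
The plan is to derive all three estimates from the trace inequality of Lemma~\ref{trace lemma} together with standard energy (Caccioppoli-type) estimates for harmonic functions. First I would prove \eqref{est nabla}. Multiplying $\Delta\phi=0$ by $\phi$ and integrating by parts over $\R^3\backslash\cals^\eps$ — the decay at infinity justifying the absence of a boundary term there, since $\phi$ vanishes at $\infty$ and is harmonic hence decays like $1/|x|$ with gradient like $1/|x|^2$ — gives $\int_{\R^3\backslash\cals^\eps}|\nabla\phi|^2 = -\int_{\de\cals^\eps}\phi\,\de_n\phi\,\dd\sigma$. Because $\int_{\de\cals^\eps}\de_n\phi\,\dd\sigma=0$, we may subtract any constant from $\phi$ in this boundary integral, so by Cauchy–Schwarz $\norm{\nabla\phi}_{L^2(\R^3\backslash\cals^\eps)}^2 \leq \norm{\phi}_{L^2(\de\cals^\eps)/\const}\,\norm{\de_n\phi}_{L^2(\de\cals^\eps)}$. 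Now apply Lemma~\ref{trace lemma} on $\mathcal{O}\backslash\cals^\eps$ (extending $\phi$ is not needed since it is already defined and $H^1$ there) to bound $\norm{\phi}_{L^2(\de\cals^\eps)/\const}^2 \lesssim \eps|\log\eps|\,\norm{\nabla\phi}_{L^2(\mathcal{O}\backslash\cals^\eps)}^2 \leq \eps|\log\eps|\,\norm{\nabla\phi}_{L^2(\R^3\backslash\cals^\eps)}^2$; combining the two displays and dividing through yields \eqref{est nabla}, and feeding \eqref{est nabla} back into the trace inequality immediately gives \eqref{est L2 norm}.

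For \eqref{ell est}, the point is to control the full gradient of $\phi$ on the boundary, not just its normal derivative. I would localize near $\de\cals^\eps$: cover a $\delta$-neighborhood of $\mathcal{C}_0$ by the coordinates $W$ of \eqref{def big J} and use interior elliptic regularity at the scale $\eps$. Concretely, for a point on $\de\cals^\eps$ one has a ball of radius $\sim\eps$ inside $\R^3\backslash\cals^\eps$ tangent to it (and on the fluid side a shell of thickness $\sim\eps$), on which standard elliptic estimates for the harmonic function $\phi$ give pointwise bounds on $\nabla\phi$ and its derivatives in terms of $L^2$ averages of $\nabla\phi$ over slightly larger balls, with the $\eps$-scaling tracked explicitly. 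Summing these local estimates (using the bounded overlap and the Jacobian bounds \eqref{bd j1}--\eqref{bd j2}) one gets $\norm{\nabla\phi}_{L^2(\de\cals^\eps)}^2 \lesssim \eps^{-1}\norm{\nabla\phi}_{L^2(N_\eps)}^2$, where $N_\eps$ is the $\eps$-collar of $\de\cals^\eps$; bounding $\norm{\nabla\phi}_{L^2(N_\eps)} \leq \norm{\nabla\phi}_{L^2(\R^3\backslash\cals^\eps)}$ and inserting \eqref{est nabla} then produces the factor $\eps^{-1}\cdot\eps|\log\eps| = |\log\eps|$, which is exactly \eqref{ell est}. One should double-check that the tangential derivative of $\phi$ on $\de\cals^\eps$ is genuinely controlled this way and not merely the normal one — this is where interior estimates (rather than just the trace theorem) are essential, since they see all components of $\nabla\phi$.

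The main obstacle I anticipate is the $\eps$-uniformity of the elliptic estimate used for \eqref{ell est}: one must verify that the geometry of $\de\cals^\eps$ — in particular the curvature of the cross-sectional circles, which is $\sim 1/\eps$ — does not spoil the constants once the correct parabolic/elliptic scaling by $\eps$ is performed. After the rescaling $x\mapsto x/\eps$ the rescaled boundary looks, on bounded chunks, like a straight cylinder of radius one whose axis is nearly straight (curvature $O(\eps)$), so the constants are uniform; but making this rigorous requires carefully setting up the rescaled coordinates using $W$ and checking that the coefficients of the transformed Laplacian converge to the flat ones with errors $O(\eps)$. The energy estimates for \eqref{est nabla} and \eqref{est L2 norm}, by contrast, are essentially immediate once Lemma~\ref{trace lemma} is in hand, modulo the decay-at-infinity bookkeeping which the appendix (Section~\ref{app-IPP}) already provides.
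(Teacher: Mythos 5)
Your proofs of \eqref{est nabla} and \eqref{est L2 norm} are exactly the paper's argument (energy identity, mean-freeness of $\de_n\phi$, Cauchy--Schwarz, then Lemma~\ref{trace lemma} applied twice), and they are fine. The problem is \eqref{ell est}. Your key intermediate claim, $\norm{\nabla\phi}_{L^2(\de\cals^\eps)}^2\lesssim \eps^{-1}\norm{\nabla\phi}_{L^2(N_\eps)}^2$ with $N_\eps$ the $\eps$-collar, cannot be obtained from interior elliptic estimates: a ball of radius $\sim\eps$ contained in the fluid and tangent to $\de\cals^\eps$ gives control of $\nabla\phi$ near its center, not at the tangency point, and with Neumann data merely in $L^2$ the boundary trace of $\nabla\phi$ is simply not dominated by bulk $L^2$ norms. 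A half-space computation makes this quantitative: for $\phi$ harmonic in $\{x_3>0\}$ with Neumann data a single Fourier mode of frequency $\xi$, one has $\norm{\nabla\phi}_{L^2(\de)}^2\approx|\hat g(\xi)|^2$ while $\eps^{-1}\norm{\nabla\phi}_{L^2(\{0<x_3<\eps\})}^2\approx \eps^{-1}\min(\eps,|\xi|^{-1})\,|\hat g(\xi)|^2$, so your inequality fails as soon as $|\xi|\gg\eps^{-1}$ --- precisely the high-frequency part of the data, which lives in a boundary layer thinner than $\eps$ and is invisible to the collar norm. The rescaling-to-a-unit-tube picture does not repair this, because what boundary $L^2$-Neumann regularity on the rescaled domain would give you is an estimate with $\norm{\de_n\phi}_{L^2(\de\cals^\eps)}$ on the right-hand side, not the collar bulk norm you invoke.

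The paper closes exactly this hole with a Rellich-type \emph{boundary} identity instead of interior regularity: it applies Lemma~\ref{lamb} with $\tilde u=\tilde v=\nabla\phi$ (curl-free) and $z=\eta\, n$, where $n$ is the extended normal \eqref{ext normal} and $\eta$ a cutoff, so that $|\nabla z|\lesssim\eps^{-1}$; rearranging \eqref{gen lamb} yields
\begin{equation*}
\int_{\de\cals^\eps}|\nabla\phi|^2\dd\sigma\;\lesssim\;\int_{\de\cals^\eps}|\de_n\phi|^2\dd\sigma\;+\;\eps^{-1}\int_{\mathcal O\backslash\cals^\eps}|\nabla\phi|^2\dx ,
\end{equation*}
i.e.\ your claimed inequality \emph{plus} the indispensable term $\norm{\de_n\phi}_{L^2(\de\cals^\eps)}^2$ that captures the high-frequency contributions; inserting \eqref{est nabla} then gives the $|\log\eps|^{1/2}$ in \eqref{ell est}. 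So to salvage your route you would have to replace the interior Caccioppoli step by a Rellich/Pohozaev identity or by the $L^2$-Neumann boundary regularity theory on the ($\eps$-rescaled) domain, at which point you are essentially doing what the paper does.
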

\begin{proof}
Partial integration reveals that 
\begin{equation*}
\int_{\R^3 \backslash \cals^\eps} |\nabla \phi|^2 \dx = \int_{\de\cals^\eps} \phi \de_n \phi \dd\sigma.
\end{equation*}
There is no contribution from $\infty$, for instance by the decay estimate \eqref{bddec} below. 
Using the assumption that $\de_n\phi$ is mean-free, we see that for all $a\in \R$ we have 
\begin{equation*}
\int_{\de\cals^\eps} (\phi - a) \de_n \phi \dd\sigma = \int_{\de\cals^\eps} \phi \de_n \phi \dd\sigma,
\end{equation*}
and by the Cauchy-Schwarz inequality, we have
\begin{equation*}
\norm{\nabla\phi}_{L^2(\R^3\backslash\cals^\eps)}^2 
  \leq \norm{\phi}_{L^2(\de\cals^\eps) / \const} \norm{\de_n\phi}_{L^2(\de\cals^\eps)},
\end{equation*} 
and hence, by Lemma~\ref{trace lemma} above,
\begin{equation*}
\norm{\nabla\phi}_{L^2(\R^3 \backslash \cals^\eps)} 
  \lesssim \eps^\frac{1}{2}|\log\eps|^{\frac{1}{2}} \norm{ \de_n \phi }_{L^2(\de\cals^\eps)},
\end{equation*}
that is, \eqref{est nabla}.
By applying Lemma \ref{trace lemma} a second time, we conclude \eqref{est L2 norm}.

To see \eqref{ell est}, we use Lemma \ref{lamb}, which in the case $\tilde u=\tilde{v}$ and $\curl \tilde u = 0$ reads as
\begin{equation} \label{gen lamb}
\int_{\de\cals^\eps} |\tilde u|^2 z \cdot n \dd\sigma - 2 \int_{\de\cals^\eps} (\tilde u\cdot n)(\tilde u\cdot z) \dd\sigma
= \int_{\R^3 \backslash \cals^\eps} |\tilde u|^2 \div z\dx 
- 2\int_{\R^3 \backslash \cals^\eps} \tilde u \cdot ((\tilde u \cdot \nabla) z) \dx.
\end{equation}
 We then set $\tilde u=\nabla \phi$, which is naturally curl-free, and take $z=n \eta$, where the extension of the normal 
 was defined in \eqref{ext normal} and $\eta$ is a smooth cutoff function, independent of $\eps$, 
 which is $1$ on a neighborhood of $\cals^\eps$ and supported in $\mathcal{O}$.
Then it follows from the definition that $| \nabla z | \lesssim \eps^{-1}$ in $\R^3 \setminus \mathcal{S}_0^\varepsilon$
and hence we see from rearranging \eqref{gen lamb} that 
\begin{equation*}
\int_{\de\cals^\eps} |\nabla\phi|^2 \dd\sigma
  \lesssim \int_{\de\cals^\eps} | \de_n \phi|^2 \dd\sigma
  + \eps^{-1} \int_{\mathcal{O} \backslash \cals^\eps} | \nabla\phi |^2 \dx,
\end{equation*}
which, together with \eqref{est nabla}, shows \eqref{ell est}.
\end{proof}
\begin{lem}\label{harmonic decay}
Let $\phi$ be harmonic in $\R^3\backslash \cals^\eps$ and vanishing at $\infty$ with $\int_{\de\cals^\eps} \de_n\phi \dd\sigma=0$, 
then it holds that 
\begin{equation} \label{bddec}
|\nabla^m \phi(x)| 
\lesssim_m \,  \eps^{\frac{1}{2}} \norm{\de_n\phi}_{L^2(\de\cals^\eps)} \dist(x,\cals^\eps)^{-2-m},
\end{equation}
for all $m\in \N_{\geq 0}$ and all $x\in \R^3\backslash\cals^\eps$.

\end{lem}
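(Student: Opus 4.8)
The plan is to derive the pointwise decay from the $L^2$-control of the gradient near the body (Lemma~\ref{cor trace}) together with interior elliptic estimates, after subtracting off a suitable far-field profile. First I would observe that, since $\phi$ is harmonic in $\R^3\setminus\cals^\eps$, vanishes at infinity, and the Neumann datum $\partial_n\phi$ has zero mean on $\partial\cals^\eps$, the integral representation via the single-layer potential shows that $\phi$ has no $1/|x|$ term at infinity: more precisely, writing $\phi(x)=\int_{\partial\cals^\eps}G(x-y)\,\partial_n\phi(y)\,\dd\sigma(y)$ with $G(z)=\tfrac{-1}{4\pi|z|}$ (this is the correct representation because the double-layer contribution vanishes for a harmonic function decaying at infinity with prescribed Neumann data, or alternatively one uses Green's representation and the compatibility condition \eqref{Stokes0}), and using $\int_{\partial\cals^\eps}\partial_n\phi\,\dd\sigma=0$, one sees that $|\phi(x)|\lesssim \dist(x,\cals^\eps)^{-2}\,\|\partial_n\phi\|_{L^1(\partial\cals^\eps)}$ and similarly $|\nabla^m\phi(x)|\lesssim \dist(x,\cals^\eps)^{-2-m}\|\partial_n\phi\|_{L^1(\partial\cals^\eps)}$, at least for $x$ far from the body (say $\dist(x,\cals^\eps)\geq 1$). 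Then I would bound $\|\partial_n\phi\|_{L^1(\partial\cals^\eps)}\lesssim |\partial\cals^\eps|^{1/2}\|\partial_n\phi\|_{L^2(\partial\cals^\eps)}\lesssim \eps^{1/2}\|\partial_n\phi\|_{L^2(\partial\cals^\eps)}$, since the surface area of $\partial\cals^\eps$ is $\mathcal O(\eps)$; this already gives \eqref{bddec} in the regime $\dist(x,\cals^\eps)\gtrsim 1$.

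It remains to handle points $x$ with $0<\dist(x,\cals^\eps)=:d\leq 1$. Here I would use the standard interior estimate for harmonic functions: for $\phi$ harmonic in the ball $B_{d/2}(x)\subset\R^3\setminus\cals^\eps$,
\begin{equation*}
|\nabla^m\phi(x)|\lesssim_m d^{-m-3/2}\,\|\nabla\phi\|_{L^2(B_{d/2}(x))}+d^{-m}\,\cdot\, d^{-3/2}\,\|\phi-c\|_{L^2(B_{d/2}(x))},
\end{equation*}
and to control the gradient term by the global estimate $\|\nabla\phi\|_{L^2(\R^3\setminus\cals^\eps)}\lesssim \eps^{1/2}|\log\eps|^{1/2}\|\partial_n\phi\|_{L^2(\partial\cals^\eps)}$ from \eqref{est nabla}. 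This gives $|\nabla^m\phi(x)|\lesssim d^{-m-3/2}\eps^{1/2}|\log\eps|^{1/2}\|\partial_n\phi\|_{L^2(\partial\cals^\eps)}$. Since for $d\leq 1$ we have $d^{-m-3/2}\leq d^{-m-2}$, this is consistent with the claimed bound up to the spurious factor $|\log\eps|^{1/2}$; to remove it (and to fix the exponent $3/2\to 2$ near the body), the cleanest route is to run a Moser/De Giorgi iteration or simply a dyadic-scale version of the interior estimate anchored at the far field: cover the range $d\leq\dist(x,\cals^\eps)\leq 1$ by dyadic shells, use the $L^1$-representation bound on the outer shell as a boundary value, and propagate inward via harmonic interior estimates on each shell, which trade powers of the shell radius against each other and ultimately reproduce $\eps^{1/2}\|\partial_n\phi\|_{L^2}\,d^{-2-m}$ without any logarithm.

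The main obstacle I anticipate is precisely getting the clean power $d^{-2-m}$ (rather than $d^{-3/2-m}$) and no logarithmic loss uniformly down to $d\to 0$ at fixed $\eps$; the naive combination of \eqref{est nabla} with a single interior estimate is lossy because \eqref{est nabla} already carries a $|\log\eps|^{1/2}$ and an energy estimate on a thin shell near $\partial\cals^\eps$ is not sharp. The remedy is to avoid using the thin-shell energy near the body altogether: express $\phi$ by the single-layer potential and differentiate the kernel directly, estimating $|\nabla^m_x G(x-y)|\lesssim |x-y|^{-2-m}$ and $|x-y|\gtrsim \dist(x,\cals^\eps)$ for $y\in\partial\cals^\eps$, so that
\begin{equation*}
|\nabla^m\phi(x)|\lesssim \int_{\partial\cals^\eps}|x-y|^{-2-m}|\partial_n\phi(y)|\,\dd\sigma(y)\lesssim \dist(x,\cals^\eps)^{-2-m}\|\partial_n\phi\|_{L^1(\partial\cals^\eps)}\lesssim \eps^{1/2}\dist(x,\cals^\eps)^{-2-m}\|\partial_n\phi\|_{L^2(\partial\cals^\eps)}.
\end{equation*}
This single computation covers all $x\in\R^3\setminus\cals^\eps$ at once and yields \eqref{bddec} directly; the only point to be careful about is justifying the single-layer representation (using that $\phi$ is the unique decaying solution of the exterior Neumann problem with mean-zero data, cf.\ \cite{Amrouche}), which is standard potential theory.
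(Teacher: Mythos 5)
The central step of your final ``clean route''---writing $\phi(x)=\int_{\de\cals^\eps}G(x-y)\,\de_n\phi(y)\dd\sigma(y)$ with density equal to the actual Neumann datum---is not a valid representation. The Green representation of an exterior harmonic function vanishing at infinity is $\phi(x)=-\int_{\de\cals^\eps}\bigl(G(x-y)\,\de_n\phi(y)-\phi(y)\,\de_{n_y}G(x-y)\bigr)\dd\sigma(y)$, and the double-layer contribution does \emph{not} vanish for prescribed Neumann data; a pure single-layer ansatz is possible, but its density is not $\de_n\phi$ (it solves a boundary integral equation, or equals the jump of normal derivatives of a harmonic extension as in Lemma~\ref{decay pot} of the Appendix, which is stated for a fixed ball). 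The omitted double-layer term is exactly where the uniformity in $\eps$ must be earned: the paper estimates it by noting $\int_{\de\cals^\eps}\de_n\tfrac{1}{|x-y|}\dd\sigma(y)=0$ for $x$ outside the solid, so a constant may be subtracted from $\phi$, and then invoking the quantified trace estimate $\norm{\phi}_{L^2(\de\cals^\eps)/\const}\lesssim\eps|\log\eps|\,\norm{\de_n\phi}_{L^2(\de\cals^\eps)}$ of Lemma~\ref{cor trace}, which yields a contribution $\lesssim\eps^{3/2}|\log\eps|\,\norm{\de_n\phi}_{L^2(\de\cals^\eps)}\dist(x,\cals^\eps)^{-2}$, subordinate to the single-layer one. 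If you instead insist on a single-layer representation with the jump density, you must control that density by $\norm{\de_n\phi}_{L^2(\de\cals^\eps)}$ \emph{uniformly in $\eps$} on the degenerating thin torus; this is not ``standard potential theory'' but precisely the kind of $\eps$-dependent estimate the paper's trace lemma is built to provide, so the gap cannot be waved away.

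Two further problems. The kernel bound $|\nabla_x^mG(x-y)|\lesssim|x-y|^{-2-m}$ in your final display is off by one power (the correct bound is $|x-y|^{-1-m}$); the extra power of $\dist(x,\cals^\eps)^{-1}$, and simultaneously the factor $\eps^{1/2}$, come from exploiting the mean-freeness of $\de_n\phi$: subtract $G(x-y_0)$ from the kernel and apply Cauchy--Schwarz over the boundary of area $O(\eps)$, so that one uses $\norm{\nabla G(x-\cdot)}_{L^2(\de\cals^\eps)}\lesssim\eps^{1/2}\dist(x,\cals^\eps)^{-2}$---this is exactly how the paper treats the single-layer term. You do invoke the mean-zero cancellation in your first paragraph, so this slip is repairable, but as written the concluding ``single computation'' does not give \eqref{bddec}. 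Finally, the intermediate route via \eqref{est nabla} combined with interior estimates and a dyadic or Moser-type iteration is both unnecessary and, as you acknowledge, lossy (it produces $d^{-3/2-m}$ and a spurious $|\log\eps|^{1/2}$); the paper's proof avoids interior estimates altogether and obtains the bound for all $x\in\R^3\setminus\cals^\eps$ at once from the two boundary-integral estimates just described.
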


\begin{proof}
We only show the case $m=0$, the other cases work in a completely similar fashion by using the derivatives 
of the fundamental solution instead of the fundamental solution itself. 
We start with the following integral representation of the harmonic function $\phi$: 
for $x$ in $\R^3\backslash \de\cals^\eps$, 
\begin{equation} \label{bd integral}
\phi(x) = - \int_{\de\cals^\eps} \Big( \frac{-1}{4\pi |x-y| } \de_n \phi - \phi \de_n \frac{-1}{4\pi|x-y|} \Big) \dd\sigma(y).
\end{equation}
In the first integrand, we can use that $\de_n\phi$ is mean-free by assumption and estimate $\frac{1}{|x-y|}$ with its gradient to see that 
\begin{align*}
\left| \int_{\de\cals^\eps} \frac{1}{4\pi |x-y|} \de_n \phi \dd\sigma(y) \right|
& \lesssim  \norm{\de_n\phi}_{L^2(\de\cals^\eps)} \norm{ \nabla \frac{1}{4\pi |x-\cdot| }}_{L^2(\de\cals^\eps)} \\
& \lesssim \eps^\frac{1}{2} \norm{\de_n \phi}_{L^2(\de\cals^\eps)} \dist(x,\cals^\eps)^{-2} .
\end{align*}
For the second integrand, we use that
\begin{equation*}
\int_{\de\cals^\eps} \de_n \frac{1}{|x-y|} \dd\sigma(y) = \int_{\cals^\eps} \Delta \frac{1}{|x-y|} \dy = 0.
\end{equation*}
Hence, using \eqref{est L2 norm} we see
\begin{align*}
\left| \int_{\de\cals^\eps} \phi \de_n \frac{1}{|x-y|} \dd\sigma(y) \right|
& \leq \norm{\phi}_{L^2(\de\cals^\eps)/\const} \norm{ \nabla \frac{1}{|\cdot-x|} }_{L^2(\de\cals^\eps)} \\
& \lesssim \eps^{\frac{3}{2}} |\log\eps|  \norm{ \de_n \phi }_{L^2(\de\cals^\eps)} \dist(x,\de\cals^\eps)^{-2}.
\end{align*}
Gathering the two inequalities above, we find \eqref{bddec}.
\end{proof}

\subsection{Estimates on the potentials}
%
%
%
We now apply the estimates above to the Kirchhoff potentials and the Biot-Savart fields as defined in Section \ref{Subsec:toro}.
Concerning the former, we have the following result. 
\begin{prop} \label{est pot}
For all $i\in\{1,\dots,6\}$ and all $m\in\N_{\geq 0}$ it holds uniformly in $\eps$ and $x\in\R^3\backslash \cals^\eps$ that 
\begin{align*}
& \norm{\nabla\Phi_i}_{L^2(\de\mathcal{S}_0^\eps)} \lesssim |\log\eps|^\frac{1}{2} \eps^\frac{1}{2} ,\\
& \norm{\nabla \Phi_i}_{L^2(\R^3\backslash\mathcal{S}_0^\eps)} \lesssim \eps|\log\eps|^\frac{1}{2} ,\\
& |\nabla^m\Phi_i(x)| \lesssim_m \eps \, \dist(x,\de\mathcal{S}_0^\eps)^{-2-m}.
\end{align*}
\end{prop}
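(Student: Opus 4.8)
The plan is to reduce everything to the a priori bounds for the exterior Neumann problem already established in Lemmas~\ref{cor trace} and~\ref{harmonic decay}, applied to $\phi=\Phi_i$. Recall that $\Phi_i$ is defined by \eqref{yi0}--\eqref{yi2}, so it is harmonic in $\R^3\backslash\cals^\eps$, vanishes at infinity, and has Neumann data $\de_n\Phi_i=K_i=\zeta_i\cdot n$ on $\de\cals^\eps$; moreover $\int_{\de\cals^\eps}\de_n\Phi_i\dd\sigma=\int_{\de\cals^\eps}K_i\dd\sigma=0$ by the compatibility relation \eqref{Stokes0}. Hence the hypotheses of Lemmas~\ref{cor trace} and~\ref{harmonic decay} are met, and the three desired estimates will follow once we control $\norm{K_i}_{L^2(\de\cals^\eps)}$ uniformly in $\eps$.

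The key — and essentially only — input is the bound $\norm{K_i}_{L^2(\de\cals^\eps)}\lesssim\eps^{\frac12}$. First I would note the pointwise estimate $|K_i(x)|\le|\zeta_i(x)|$ on $\de\cals^\eps$: for $i\in\{1,2,3\}$ this is $\le 1$, and for $i\in\{4,5,6\}$ one has $|\zeta_i(x)|=|e_{i-3}\wedge x|\le|x|\lesssim 1$, since every $x\in\de\cals^\eps$ lies within distance $\eps\le 1$ of the fixed compact curve $\mathcal{C}_0$. Combined with the surface area bound $|\de\cals^\eps|\lesssim\eps$ — which follows from \eqref{j identity} with $f\equiv 1$, using $|C_t^\eps|=2\pi\eps$ and $w^{-1}\lesssim 1$ from \eqref{bd j1} (equivalently, from the standard formula for the area of a tubular neighborhood of a smooth compact curve) — this yields $\norm{K_i}_{L^2(\de\cals^\eps)}^2\le\norm{\zeta_i}_{L^\infty(\de\cals^\eps)}^2\,|\de\cals^\eps|\lesssim\eps$.

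Finally I would substitute $\norm{\de_n\Phi_i}_{L^2(\de\cals^\eps)}=\norm{K_i}_{L^2(\de\cals^\eps)}\lesssim\eps^{\frac12}$ into the three a priori estimates: into \eqref{ell est} this gives $\norm{\nabla\Phi_i}_{L^2(\de\cals^\eps)}\lesssim|\log\eps|^{\frac12}\eps^{\frac12}$; into \eqref{est nabla} it gives $\norm{\nabla\Phi_i}_{L^2(\R^3\backslash\cals^\eps)}\lesssim\eps^{\frac12}|\log\eps|^{\frac12}\cdot\eps^{\frac12}=\eps|\log\eps|^{\frac12}$; and into \eqref{bddec} it gives $|\nabla^m\Phi_i(x)|\lesssim_m\eps^{\frac12}\cdot\eps^{\frac12}\,\dist(x,\cals^\eps)^{-2-m}=\eps\,\dist(x,\cals^\eps)^{-2-m}$, which is the asserted bound since $\dist(x,\cals^\eps)=\dist(x,\de\cals^\eps)$ for $x\in\R^3\backslash\cals^\eps$. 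There is no real obstacle here; the only thing requiring a little care is verifying that all implicit constants are uniform in $\eps$, which is automatic because $\mathcal{C}_0$ and the geometric constructions of Section~\ref{geo not} are $\eps$-independent and $\eps\in(0,1]$.
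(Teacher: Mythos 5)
Your proof is correct and follows essentially the same route as the paper: the paper likewise deduces all three estimates directly from Lemmas~\ref{cor trace} and \ref{harmonic decay}, after observing that $\de_n\Phi_i=K_i$ is mean-free on $\de\cals^\eps$ and that $\norm{\de_n\Phi_i}_{L^2(\de\cals^\eps)}\lesssim\eps^{\frac12}$. Your write-up merely makes explicit the surface-area and pointwise bounds behind that $\eps^{\frac12}$ estimate, which the paper leaves implicit.
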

\begin{proof}
This is a direct consequence of Lemmas~\ref{cor trace} and \ref{harmonic decay}. 
In fact, it suffices to observe that
$\| \de_n \Phi_i \|_{L^2(\partial \mathcal{S}_0^\varepsilon)} \lesssim \varepsilon^{\frac{1}{2}}$ 
and that,
since $\de_n \Phi_i = e_i \cdot n$ for $i=1,2,3$ 
and $\de_n \Phi_i = n \cdot (e_{i-3} \wedge x)$ for $i=4,5,6$,
the assumption that the normal component should be mean-free over $\de\cals$ is satisfied in all cases.
\end{proof}
We now turn to the Biot-Savart field.
We can relate the Biot-Savart law $K_{\mathcal{F}_0}$ in $\mathcal{F}_0$ 
to the Biot-Savart law $K_{\R^3}$ in $\R^3$ through the identity 
\begin{equation} \label{def ref}
K_{\mathcal{F}_0}[\omega] = K_{\R^3}[\omega] + \uref[\omega] ,
\end{equation}
where the {\it reflection term} $\uref[\omega]$ is defined through (recalling the notation \eqref{Def:CTeps})
\begin{align} 
\label{a1}
& \div \uref[\omega] = \curl \uref[\omega] = 0 \text{ in $\mathcal{F}_0$} , \\
\label{a2}
& \uref[\omega]\cdot n = - K_{\R^3}[\omega] \cdot n \text{ on $\de\mathcal{F}_0$} , \\
\label{a3}
& \int_{C_0^\eps} \uref[\omega] \cdot e_\theta\ds = 0 , \\
\label{a4}
& \lim_{|x|\rightarrow \infty} \uref[\omega] = 0.
\end{align}
As it is $\curl$-free and has $0$ circulation around ${C}_0^\varepsilon$, 
the function $\uref[\omega]$ can also be written as a gradient of a potential $\Phi_{\mathrm{ref}}[\omega]$ so that 
\begin{equation*}
\nabla \Phi_{\mathrm{ref}}[\omega] = \uref[\omega].
\end{equation*}
We can obtain the following estimates on $\uref$.
\begin{prop} \label{est ref}
Suppose $\omega\in L^1$ is compactly supported away from $\cals^\eps$.
Then, for the reflection term $\uref$, it holds uniformly in $\eps$, $x\in \R^3\backslash \mathcal{S}_0^\eps$, and $\omega$ that 
\begin{align*}
& \norm{\uref}_{L^2(\de\mathcal{S}_0^\eps)} \lesssim 
  |\log\eps|^\frac{1}{2}\eps^\frac{1}{2} \, \norm{\omega}_{L^1} \dist(\cals^\eps,\supp\omega)^{-2}, \\
& \norm{\uref}_{L^2(\R^3\backslash\mathcal{S}_0^\eps)} \lesssim 
   \eps|\log\eps|^\frac{1}{2} \, \norm{\omega}_{L^1} \dist(\cals^\eps,\supp\omega)^{-2}, \\
& |\nabla^m\Phi_{\mathrm{ref}}[\omega](x)| \lesssim_m 
  \eps \, \norm{\omega}_{L^1} \dist(x,\de\mathcal{S}_0^\eps)^{-2-m} \dist(\cals^\eps,\supp\omega)^{-2},
\end{align*}
for all $m\in \N_{\geq 0}$.
\end{prop}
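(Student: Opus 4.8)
The plan is to reduce all three estimates to the a priori bounds of Lemmas~\ref{cor trace} and \ref{harmonic decay}, applied to the scalar potential $\Phi_{\mathrm{ref}}[\omega]$ whose existence — with $\nabla\Phi_{\mathrm{ref}}[\omega]=\uref[\omega]$ — was recorded just before the statement. Since $\div\uref[\omega]=0$, this potential is harmonic in $\R^3\setminus\cals^\eps$ and vanishes at infinity, and by \eqref{a2} its Neumann datum on $\de\cals^\eps$ is $\de_n\Phi_{\mathrm{ref}}[\omega]=-K_{\R^3}[\omega]\cdot n$.

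First I would check the two hypotheses required by those lemmas. For the mean-free condition: the field $K_{\R^3}[\omega]=-\curl\Delta^{-1}\omega$ is divergence-free (being a curl) and, since $\supp\omega$ is disjoint from $\cals^\eps$, it is smooth on $\cals^\eps$; hence the divergence theorem gives $\int_{\de\cals^\eps}K_{\R^3}[\omega]\cdot n\,\dd\sigma=\int_{\cals^\eps}\div K_{\R^3}[\omega]\dx=0$. For the size of the datum: the explicit kernel \eqref{KR2} gives the pointwise bound $|K_{\R^3}[\omega](x)|\le\frac{1}{4\pi}\norm{\omega}_{L^1}\dist(x,\supp\omega)^{-2}$ for $x\notin\supp\omega$, so $|K_{\R^3}[\omega]|\lesssim\norm{\omega}_{L^1}\dist(\cals^\eps,\supp\omega)^{-2}$ on $\de\cals^\eps$; combining with the uniform-in-$\eps$ bound $\mathcal{H}^2(\de\cals^\eps)\lesssim\eps$ (valid because $\de\cals^\eps$ is the tube of radius $\eps$ around the fixed curve $\mathcal{C}_0$) yields
\[
\norm{\de_n\Phi_{\mathrm{ref}}[\omega]}_{L^2(\de\cals^\eps)}\lesssim\eps^{\frac12}\,\norm{\omega}_{L^1}\,\dist(\cals^\eps,\supp\omega)^{-2}.
\]

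Finally I would plug this into the two lemmas: estimate \eqref{ell est} of Lemma~\ref{cor trace} (multiplying by $|\log\eps|^{\frac12}$) gives the bound for $\norm{\uref}_{L^2(\de\cals^\eps)}=\norm{\nabla\Phi_{\mathrm{ref}}[\omega]}_{L^2(\de\cals^\eps)}$; estimate \eqref{est nabla} (multiplying by $\eps^{\frac12}|\log\eps|^{\frac12}$) gives the bound for $\norm{\uref}_{L^2(\R^3\setminus\cals^\eps)}$; and the decay estimate \eqref{bddec} of Lemma~\ref{harmonic decay} gives $|\nabla^m\Phi_{\mathrm{ref}}[\omega](x)|\lesssim_m\eps^{\frac12}\norm{\de_n\Phi_{\mathrm{ref}}}_{L^2(\de\cals^\eps)}\dist(x,\cals^\eps)^{-2-m}$, which is the third claim once one notes $\dist(x,\cals^\eps)=\dist(x,\de\cals^\eps)$ for $x\notin\cals^\eps$. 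I do not expect any genuine obstacle: the analytic work was already carried out in Lemmas~\ref{trace lemma}, \ref{cor trace} and \ref{harmonic decay}, and the only points meriting a line of justification are the mean-free identity and the surface-measure estimate $\mathcal{H}^2(\de\cals^\eps)\lesssim\eps$, both immediate from the geometric setup of Section~\ref{Subsec:toro}.
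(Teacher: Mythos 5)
Your proposal is correct and follows essentially the same route as the paper: bound the Neumann datum $\de_n\Phi_{\mathrm{ref}}[\omega]=-K_{\R^3}[\omega]\cdot n$ pointwise via the Biot--Savart kernel, verify the mean-free condition by the divergence theorem, and feed the resulting $L^2(\de\cals^\eps)$ bound into Lemmas~\ref{cor trace} and \ref{harmonic decay}. You merely spell out which estimate in Lemma~\ref{cor trace} gives which conclusion and make explicit the factor $\mathcal{H}^2(\de\cals^\eps)^{1/2}\lesssim\eps^{1/2}$, which the paper leaves implicit.
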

\begin{proof}
As we have $\uref\cdot n=-K_{\R^3}[\omega]\cdot n$ on $\de\cals$, 
we see from the explicit form of the Biot-Savart law \eqref{KR2}, that 
\begin{equation*}
|\de_n \Phi_{\mathrm{ref}}[\omega]| = |\uref[\omega] \cdot n|
\lesssim \norm{\omega}_{L^1} \dist(\cals,\supp \omega)^{-2},
\end{equation*}
pointwise on $\de\cals^\eps$. We also have the following: 
\begin{align*}
\int_{\de\cals^\eps} \uref[\omega] \cdot n \dd\sigma
= - \int_{\de\cals^\eps} K_{\R^3}[\omega] \cdot n\dd\sigma
= -\int_{\cals^\eps} \div K_{\R^3}[\omega] \dx = 0.
\end{align*}
The statement then follows directly from Lemmas \ref{cor trace} and \ref{harmonic decay} 
applied to $\Phi_{\mathrm{ref}}[\omega]$.
\end{proof}

%
%

\subsection{Expansion of the harmonic field}
Concerning the harmonic field, which is the most singular part, we have the following estimates.
\begin{prop} \label{est vor filament}
Let 
\begin{equation} \label{defH2D}
     H_{2D} = \frac{1}{2\pi\eps} e_\theta .
\end{equation}
Then for all $x\in \R^3\backslash \cals^\eps$ 
and all $m\in \N_{\geq 0}$ it holds that
\begin{align*}
& \norm{ H^\eps - H_{2D} }_{L^2(\de\cals^\eps)} 
  \lesssim \eps^\frac{1}{2} |\log\eps|^{\frac{3}{2}}, \\
& \norm{ H^\eps - K_{\R^3}[ \kappa_{\mathcal{C}_0} ] }_{L^2(\R^3 \backslash \cals^\eps)} 
  \lesssim \eps |\log\eps|^\frac{3}{2}, \\
& |\nabla^m(H^\eps-K_{\R^3}[\kappa_{\mathcal{C}_0}])(x)| 
  \lesssim_m \eps |\log\eps| \dist(x,\de\mathcal{S}_0^\eps)^{-3-m}.
\end{align*}
\end{prop}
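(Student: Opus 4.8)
The plan is to compare $H^\eps$ with the full‑space Biot–Savart field of the unit vortex filament carried by $\mathcal C_0$ and to reduce everything to an exterior Neumann problem governed by Lemmas~\ref{cor trace} and \ref{harmonic decay}.

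\textbf{Step 1 (reduction to a Neumann problem).} I would set $r^\eps := H^\eps - K_{\R^3}[\kappa_{\mathcal C_0}]$ on $\mathcal F_0^\eps$. Since $\mathcal C_0$ is closed, $\kappa_{\mathcal C_0}$ is divergence‑free, so $K_{\R^3}[\kappa_{\mathcal C_0}]$ is divergence‑free and, being supported in $\mathcal C_0\subset\cals^\eps$, also curl‑free in $\mathcal F_0^\eps$; it decays like $|x|^{-2}$ at infinity, and by Stokes' theorem its circulation around $\tilde C$ is $1$ (the disk bounded by $\tilde C$ meeting $\mathcal C_0$ exactly once, with the orientation fixed in \eqref{def-H}). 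Hence $r^\eps$ is divergence‑ and curl‑free in $\mathcal F_0^\eps$, vanishes at infinity, and has zero circulation around $\tilde C$; as $\mathcal F_0^\eps$ is the exterior of a solid torus this forces $r^\eps=\nabla\phi^\eps$ for a single‑valued harmonic $\phi^\eps$ normalised to vanish at infinity, with Neumann datum
\begin{equation*}
\de_n\phi^\eps = r^\eps\cdot n = -\,K_{\R^3}[\kappa_{\mathcal C_0}]\cdot n \qquad\text{on }\de\cals^\eps,
\end{equation*}
using $H^\eps\cdot n=0$. The compatibility condition needed for Lemmas~\ref{cor trace}, \ref{harmonic decay} is immediate: $\int_{\de\cals^\eps}\de_n\phi^\eps\dd\sigma=-\int_{\cals^\eps}\div K_{\R^3}[\kappa_{\mathcal C_0}]\dx=0$.

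\textbf{Step 2 (pointwise asymptotics of the filament field).} The core is to show that for $x$ near $\mathcal C_0$ with $d:=\dist(x,\mathcal C_0)$ small, $\bigl|K_{\R^3}[\kappa_{\mathcal C_0}](x)-\tfrac1{2\pi d}e_\theta(x)\bigr|\lesssim|\log d|$, where $e_\theta=n\wedge\tau$ as in Section~\ref{Subsec:toro}. I would start from the explicit kernel \eqref{KR2}, parametrise $\mathcal C_0$ by arclength $s$ around the closest point $\gamma(t(x))$, and split the integral at a fixed small $|s|=s_0$. The far part is $O(1)$ since there $|x-y|\gtrsim1$. On the near part I would write $x-\gamma(t(x))=-d\,\nu$ with $\nu\perp\tau$ and Taylor‑expand $\gamma(t(x)+s)=\gamma(t(x))+s\tau+\tfrac{s^2}2\gamma''+O(s^3)$: the leading contribution is the planar point‑vortex integral $\tfrac{-1}{4\pi}\int_{\R}\frac{-d\,\nu-s\tau}{(d^2+s^2)^{3/2}}\wedge\tau\,\dd s=\tfrac1{2\pi d}(\nu\wedge\tau)$, which equals $\tfrac1{2\pi d}e_\theta(x)$ on $\de\cals^\eps$, while the curvature corrections and the truncation error produce terms of size at most $\int_0^{s_0}\frac{s^2}{(d^2+s^2)^{3/2}}\dd s\lesssim|\log d|$. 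Since $e_\theta\cdot n=0$, this also gives $|K_{\R^3}[\kappa_{\mathcal C_0}](x)\cdot n|\lesssim|\log d|$. Evaluating on $\de\cals^\eps$, where $d\equiv\eps$ and $\tfrac1{2\pi\eps}e_\theta=H_{2D}$ (cf.\ \eqref{defH2D}), yields $\norm{K_{\R^3}[\kappa_{\mathcal C_0}]-H_{2D}}_{L^\infty(\de\cals^\eps)}\lesssim|\log\eps|$ and $\norm{K_{\R^3}[\kappa_{\mathcal C_0}]\cdot n}_{L^\infty(\de\cals^\eps)}\lesssim|\log\eps|$.

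\textbf{Step 3 (conclusion).} Since $|\de\cals^\eps|\lesssim\eps$, Step 2 gives $\norm{\de_n\phi^\eps}_{L^2(\de\cals^\eps)}\lesssim\eps^{1/2}|\log\eps|$. Feeding this into Lemma~\ref{cor trace} produces $\norm{\nabla\phi^\eps}_{L^2(\R^3\backslash\cals^\eps)}\lesssim\eps|\log\eps|^{3/2}$ (the second estimate) and $\norm{\nabla\phi^\eps}_{L^2(\de\cals^\eps)}\lesssim\eps^{1/2}|\log\eps|^{3/2}$; Lemma~\ref{harmonic decay}, applied with $m+1$ in place of $m$, gives $|\nabla^m r^\eps(x)|=|\nabla^{m+1}\phi^\eps(x)|\lesssim_m\eps^{1/2}\norm{\de_n\phi^\eps}_{L^2(\de\cals^\eps)}\dist(x,\de\cals^\eps)^{-3-m}\lesssim_m\eps|\log\eps|\dist(x,\de\cals^\eps)^{-3-m}$ (the third estimate). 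Finally, writing $H^\eps-H_{2D}=\nabla\phi^\eps+(K_{\R^3}[\kappa_{\mathcal C_0}]-H_{2D})$ on $\de\cals^\eps$ and combining the $L^2(\de\cals^\eps)$ bound on $\nabla\phi^\eps$ with Step 2 and $|\de\cals^\eps|\lesssim\eps$ gives the first estimate. The main obstacle is Step 2: getting the leading coefficient $\tfrac1{2\pi d}$ exactly right and a genuinely uniform $O(|\log d|)$ control of the remainder as $x$ ranges over $\de\cals^\eps$ — essentially the 3D local‑induction computation for a thin vortex ring; once that is in hand, the rest is bookkeeping with the two cited lemmas.
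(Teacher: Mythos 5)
Your proposal is correct and takes essentially the same route as the paper's proof: you reduce $H^\eps-K_{\R^3}[\kappa_{\mathcal{C}_0}]$ to a harmonic potential with Neumann datum $-K_{\R^3}[\kappa_{\mathcal{C}_0}]\cdot n$ of size $O(|\log\eps|)$ on $\de\cals^\eps$, prove exactly the pointwise comparison $K_{\R^3}[\kappa_{\mathcal{C}_0}]\approx \tfrac{1}{2\pi d}e_\theta + O(|\log d|)$ that is the paper's Lemma~\ref{ind approx}, and then conclude with Lemmas~\ref{cor trace} and \ref{harmonic decay} in the same way. The only minor deviation is that you obtain the unit circulation of $K_{\R^3}[\kappa_{\mathcal{C}_0}]$ around $\tilde C$ by a direct Stokes/linking-number argument, whereas the paper deduces it from the asymptotic lemma applied to loops $C_0^{\eps'}$ of arbitrary radius; both are legitimate.
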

\begin{proof}[Proof of Proposition \ref{est vor filament}]
The proof requires the following Lemma, whose proof is postponed.
\begin{lem} \label{ind approx}
It holds that
\begin{equation*}
\norm{ K_{\R^3}[\kappa_{\mathcal{C}_0}] - H_{2D} }_{L^\infty(\de\cals^\eps)} \lesssim |\log\eps|,
\end{equation*}
uniformly in $\eps$.
\end{lem}
The crucial observation is that $H^\eps-K_{\R^3}[\kappa_{\mathcal{C}_0}]$ is harmonic 
in $\R^3\backslash \cals^\eps$ and has $0$ circulation around $C_0^\varepsilon$.
The harmonicity is obvious.
Concerning the circulation, since $H^\eps$ has circulation $1$ by definition (see \eqref{def H},
it suffices to check that $K_{\R^3}[\kappa_{\mathcal{C}_0}]$ has circulation $1$ 
as well. 
This follows from Lemma~\ref{ind approx}, by using that the circulation is the same 
for all homology equivalent loops in $\R^3\backslash \mathcal{C}_0$ by Stokes' theorem.
Indeed we can apply the Lemma with a different $\eps'>0$ to the loop $C_0^{\eps'}$ and obtain that 
\begin{multline*}
\int_{C_0^\eps} e_\theta \cdot K_{\R^3}[\kappa_{\mathcal{C}_0}] \dx
= \int_{C_0^{\eps'}} e_\theta \cdot K_{\R^3}[\kappa_{\mathcal{C}_0}] \dx \\
= \int_{C_0^{\eps'}} \frac{1}{2\pi\eps'}e_\theta \cdot e_\theta\dx + O(\eps'|\log\eps'|) 
= 1 + O(\eps'|\log\eps'|).
\end{multline*}
As $\eps'$ was arbitrary, this shows that the circulation is $1$. 
%

Therefore, there exists a $\tilde{\phi}:\R^3\backslash \cals^\eps$ such that 
\begin{equation*}
\nabla \tilde{\phi} = H^\eps-K_{\R^3}[\kappa_{\mathcal{C}_0}] 
\quad \text{and} \quad
\Delta\tilde{\phi} = 0.
\end{equation*}
Furthermore, on $\de\cals^\eps$, it holds that $n\cdot H^\eps=0$ by definition and hence 
\begin{equation*}
\de_n\tilde{\phi} = - K_{\R^3}[\kappa_{\mathcal{C}_0}] \cdot n,
\end{equation*}
which is $\lesssim |\log\eps|$ pointwise on $\de\cals^\eps$ by Lemma~\ref{ind approx}, 
since $e_\theta$ and $n$ are orthogonal by definition.

It holds that
\begin{equation*}
\int_{\de\cals^\eps} \de_n \tilde{\phi} \dd\sigma 
= - \int_{\de\cals^\eps} n \cdot K_{\R^3}[\kappa_{\mathcal{C}_0}] \dd\sigma
= \int_{\cals^\eps} \div K_{\R^3}[\kappa_{\mathcal{C}_0}] \dx = 0,
\end{equation*}
where the last equality follows from $K_{\R^3}[\kappa_{\mathcal{C}_0}]$ being divergence-free (as it is a curl by definition).

Hence, using Lemma \ref{ind approx} and that $\mathcal{H}^2(\de\cals^\eps) \approx \eps$, we have that 
\begin{align*}
\norm{H_{2D}-H^\eps}_{L^2(\de\cals^\eps)}
& \leq \norm{ H_{2D} - K_{\R^3}[\kappa_{\mathcal{C}_0}] }_{L^2(\de\cals^\eps)}
  +\norm{ \nabla \tilde{\phi} }_{L^2(\de\cals^\eps)} \\
& \lesssim \eps^\frac{1}{2} |\log\eps| + |\log\eps|^\frac{1}{2} \norm{ \de_n \tilde{\phi} }_{L^2(\de\cals^\eps)} \\
& \lesssim \eps^\frac{1}{2} |\log\eps|^\frac{3}{2},
\end{align*}
where we used the \textit{a priori} estimate from Lemma \ref{cor trace} in the penultimate step.

For the second estimate we have by \eqref{est nabla}
\begin{equation*}
\norm{ H^\eps - K_{\R^3}[\kappa_{\mathcal{C}_0}] }_{L^2(\R^3\backslash \cals^\eps)}
= \norm{ \nabla \tilde{\phi} }_{L^2(\R^3\backslash \cals^\eps)}
\lesssim \eps^\frac{1}{2} |\log\eps|^\frac{1}{2} \norm{ \de_n \tilde{\phi} }_{L^2(\de\cals^\eps)}
\lesssim \eps|\log\eps|^\frac{3}{2}.
\end{equation*}
The third estimate follows similarly from Lemma \ref{harmonic decay}.
\end{proof}

\begin{proof}[Proof of Lemma \ref{ind approx}]
This kind of computation is well-known, we provide the proof here for the convenience of the reader.
Let $x \in \partial \mathcal{S}_0^\varepsilon$.
It is not restrictive to assume that $x$ is such that $t(x)=0$.
We may write the definition of the Biot-Savart law as
\begin{equation} \label{curv pot}
K_{\R^3}[\kappa_{\mathcal{C}_0}](x)
= \int_{-\frac{L}{2}}^\frac{L}{2} \frac{1}{4\pi} 
\frac{\gamma(t)-\gamma(0) - x_{s_1} s_1(0) - x_{s_2} s_2(0)}{|\gamma(t) - \gamma(0) - x_{s_1} {s_1}(0) - x_{s_2} {s_2}(0)|^3}
\wedge\gamma'(t)\dt.
\end{equation}
Using the fact that $(x_{s_1} s_1(0) + x_{s_2} s_2(0)) \wedge \gamma'(0) =- e_\theta$ 
and the orthogonality of the three vectors $s_1(0)$, $s_2(0)$, $\gamma'(0)$, 
we may write, for $x \in \R^3 \setminus \mathcal{C}_0$,
\begin{multline} \label{cyc pot}
H_{2D}(x) = \frac{1}{2\pi\eps}e_\theta
= \frac{-1}{4\pi} \int_{\R} \frac{ x_{s_1} s_1(0) + x_{s_2} s_2(0) }{\sqrt{\eps^2 + t^2}^3} \wedge \gamma'(0)\dt \\
= \int_\R \frac{1}{4\pi}
\frac{t\gamma'(0) - x_{s_1} {s_1}(0) - x_{s_2} {s_2}(0)}{|t\gamma'(0) - x_{s_1} {s_1}(0) - x_{s_2} {s_2}(0)|^3}
\wedge \gamma'(0)\dt.
\end{multline}
We then obtain from \eqref{curv pot} and \eqref{cyc pot}, that 
\begin{align*}
& K_{\R^3}[\kappa_{\mathcal{C}_0}]-H_{2D}(x)
=\underbrace{ \int_{-\frac{L}{2}}^\frac{L}{2} 
\frac{1}{4\pi} \frac{\gamma(t)-\gamma(0)-x_{s_1}{s_1}(0)-x_{s_2}{s_2}(0)}{|\gamma(t)-\gamma(0)-x_{s_1}{s_1}(0)-x_{s_2}{s_2}(0)|^3}
\wedge \left( \gamma'(t) - \gamma'(0) \right) \dt}_{=:I} \\
& + \underbrace{\int_{\frac{L}{2}}^\frac{L}{2}
\frac{1}{4\pi} \left(
\frac{\gamma(t)-\gamma(0)-x_{s_1}{s_1}(0)-x_{s_2}{s_2}(0)}{|\gamma(t)-\gamma(0)-x_{s_1}{s_1}(0)-x_{s_2}{s_2}(0)|^3}
- \frac{t\gamma'(0)-x_{s_1}{s_1}(0)-x_{s_2}{s_2}(0)}{|t\gamma'(0)-x_{s_1}{s_1}(0)-x_{s_2}{s_2}(0)|^3} \right)
\wedge\gamma'(0)\dt}_{=:II} \\
& +\underbrace{\int_{\R\backslash[-\frac{L}{2},\frac{L}{2}]} \frac{-1}{4\pi}
\frac{t\gamma'(0)-x_{s_1}{s_1}(0)-x_{s_2}{s_2}(0)}{|t\gamma'(0)-x_{s_1}{s_1}(0)-x_{s_2}{s_2}(0)|^3}\wedge\gamma'(0)\dt}_{=:III}.
\end{align*}
Concerning $I$, we proceed as follows.
For small $|t|$, using the orthogonality of $\gamma'(0)$, $s_1(0)$, $s_2(0)$,
and the fact that $\gamma$ is $C^2$, we have the following estimate
\begin{align}
\nonumber 
\left| \gamma(0) \right. & \left. - \ \gamma(t) + s_1(x) e_{s_1}(0) + s_2(x) e_{s_2}(0) \right| \\
\nonumber 
& \geq |t\gamma'(0) + s_1(x) e_{s_1}(0) + s_2(x) e_{s_2}(0)| 
  - \left| \gamma(0) - \gamma(t) + t \gamma'(0) \right| \\
\label{den est 1}
& \gtrsim \sqrt{\eps^2+t^2} - O(t^2) \geq |t|+\eps,
\end{align}
for $t$ in some interval $[-K,K]$. \par
Now for large $|t|$, we use that, since $\gamma$ is $C^2$, 
we have
\begin{equation} \label{gamma est}
| \gamma'(t) - \gamma'(0) | \lesssim \min(1,|t|).
\end{equation}
Since $x \in \partial \mathcal{S}_0^\varepsilon$, we have that $x_{s_1}^2+x_{s_2}^2=\eps^2$.
Since $\mathcal{C}_0$ does not intersect itself, 
we deduce that for all $t \in [-\frac{L}{2},\frac{L}{2}] \setminus [-K,K]$,
\begin{equation} \label{den est 2}
|\gamma(0) - \gamma(t) + s_1(x) e_{s_1}(0) + s_2(x) e_{s_2}(0)| \gtrsim 1.
\end{equation}
Putting \eqref{den est 1}, \eqref{gamma est} and \eqref{den est 2} together, we obtain that 
\begin{equation*}
|I| \lesssim \int_{0}^1 t \frac{t + \eps}{|t+\eps|^3} \dt
+ \int_1^L1 \dt \lesssim |\log\eps|.
\end{equation*}
Similarly, we may bound $II$ by making use of the inequalities 
\begin{equation} \label{est den 3}
\begin{aligned}
& \left| \frac{a}{|a|^3} - \frac{b}{|b|^3} \right|
  \lesssim |a-b| \max\left( \frac{1}{|a|^3}, \frac{1}{|b|^3} \right), \\
& |t\gamma'(0) - s_1(x) e_{s_1}(0) - s_2(x) e_{s_2}(0)|
  = \sqrt{\eps^2 + t^2} \gtrsim t + \eps ,\\
& \left|\gamma(0) - \gamma(t) + t\gamma'(0) \right| \lesssim \min(1, |t|^2).
\end{aligned}
\end{equation}
This yields that 
\begin{align*}
|II|\lesssim \int_0^\infty \frac{\min(1,|t|^2)}{(t+\eps)^3}\dt\lesssim |\log\eps|.
\end{align*}
The bound of $III$ follows directly from \eqref{est den 3} and the fact that 
$\int_1^\infty \frac{1}{t^2} \dt \leq 1$. \par
This completes the proof of Proposition~\ref{est vor filament}.
\end{proof}
For future reference, we note that:
\begin{cor} \label{Cor:DecayH}
For all $m\in \N_{\geq 0}$ and all $x\in\R^3\backslash \cals$ we have 
\begin{align}
\label{decay H}
| \nabla^m H^\eps(x) | & \lesssim_m \dist(x, \de\cals^\eps)^{-3-m} , \\
\label{decay K}
|\nabla^m K_{\R^3}[\kappa_{\mathcal{C}_0}](x)| & \lesssim_m \dist(x,\de\cals^\eps)^{-3-m}. 
\end{align}
\end{cor}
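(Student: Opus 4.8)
The plan is to derive both decay estimates directly from the pointwise estimates already obtained in Proposition~\ref{est vor filament}, together with the elementary decay of the full-space Biot-Savart kernel applied to the compactly supported measure $\kappa_{\mathcal{C}_0}$. The only mild subtlety is that for points $x$ \emph{close} to $\cals^\eps$ the naive bound on $K_{\R^3}[\kappa_{\mathcal{C}_0}]$ is of order $\dist(x,\de\cals^\eps)^{-1-m}$, not $\dist(x,\de\cals^\eps)^{-3-m}$, so one has to check that this weaker local singularity is still dominated by the claimed right-hand side on the relevant range.

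First I would establish \eqref{decay K}. Write $d:=\dist(x,\de\cals^\eps)$ and note $\dist(x,\mathcal{C}_0)\le d+\eps$. Differentiating the explicit formula \eqref{KR2} under the integral sign against the one-dimensional measure $\kappa_{\mathcal{C}_0}$, which has finite total mass $L$ and is supported on $\mathcal{C}_0$, gives
\begin{equation*}
|\nabla^m K_{\R^3}[\kappa_{\mathcal{C}_0}](x)| \lesssim_m \int_{\mathcal{C}_0} \frac{\dd\mathcal{H}^1(y)}{|x-y|^{2+m}} \lesssim_m \dist(x,\mathcal{C}_0)^{-2-m}\int_0^{L}\!\!\Big(1+\tfrac{s}{\dist(x,\mathcal{C}_0)}\Big)^{-2-m}\dd s .
\end{equation*}
For $m\ge1$ the last integral is $O(\dist(x,\mathcal{C}_0)^{1})$, so $|\nabla^m K_{\R^3}[\kappa_{\mathcal{C}_0}](x)|\lesssim_m \dist(x,\mathcal{C}_0)^{-1-m}$; for $m=0$ one gets at worst a logarithm, which is harmless. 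When $d\ge \eps$ we have $\dist(x,\mathcal{C}_0)\approx d$, hence $|\nabla^m K_{\R^3}[\kappa_{\mathcal{C}_0}](x)|\lesssim_m d^{-1-m}\le d^{-3-m}$ since $d$ is bounded (the relevant region is compact, and for $|x|$ large one has the even faster decay $|x|^{-2-m}$ from the mass of $\kappa_{\mathcal{C}_0}$). When $d<\eps$, the same calculation with $\dist(x,\mathcal{C}_0)\le 2\eps$ gives $|\nabla^m K_{\R^3}[\kappa_{\mathcal{C}_0}](x)|\lesssim_m \eps^{-1-m}$; but here $d^{-3-m}\ge \eps^{-3-m}\gg \eps^{-1-m}$, so the bound $\lesssim_m d^{-3-m}$ holds a fortiori. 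This proves \eqref{decay K}.

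Next, \eqref{decay H} follows by the triangle inequality $H^\eps = (H^\eps - K_{\R^3}[\kappa_{\mathcal{C}_0}]) + K_{\R^3}[\kappa_{\mathcal{C}_0}]$. The third estimate of Proposition~\ref{est vor filament} bounds the first term by $\lesssim_m \eps|\log\eps|\,d^{-3-m}\lesssim_m d^{-3-m}$ (absorbing $\eps|\log\eps|\lesssim1$), and \eqref{decay K} just established bounds the second term by $\lesssim_m d^{-3-m}$. Adding the two gives \eqref{decay H}. The main (very minor) obstacle is simply bookkeeping the two regimes $d\ge\eps$ and $d<\eps$ for $K_{\R^3}[\kappa_{\mathcal{C}_0}]$; everything else is an immediate consequence of results already in hand, so the corollary is essentially a repackaging of Proposition~\ref{est vor filament}.
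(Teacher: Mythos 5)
There is a genuine gap in your proof of \eqref{decay K}, located in the far field. The claimed bound is $\dist(x,\de\cals^\eps)^{-3-m}$, which for $|x|$ large means decay like $|x|^{-3-m}$. Your direct kernel estimate only yields $|\nabla^m K_{\R^3}[\kappa_{\mathcal{C}_0}](x)|\lesssim_m \dist(x,\mathcal{C}_0)^{-1-m}$ near the curve and, as you note in the parenthetical, $|x|^{-2-m}$ from the finite mass of $\kappa_{\mathcal{C}_0}$ at large distances. But $|x|^{-2-m}$ is \emph{weaker} (larger) than the required $|x|^{-3-m}$, not ``even faster''; and your step ``$d^{-1-m}\le d^{-3-m}$ since $d$ is bounded'' is invalid because $x$ ranges over all of $\R^3\backslash\cals^\eps$, so $d=\dist(x,\de\cals^\eps)$ is unbounded and the inequality reverses for $d\ge 1$. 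Your two-regime bookkeeping handles $d\lesssim 1$ correctly, but it cannot close the case $d\gg 1$.

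The missing ingredient is exactly the one the paper points to: the cancellation $\int \dd\kappa_{\mathcal{C}_0}=\int_0^L\gamma'(s)\ds=0$, valid because $\mathcal{C}_0$ is a closed curve. Fixing a reference point $y_0\in\mathcal{C}_0$, one may subtract the constant (in $y$) kernel value at $y_0$ inside the integral,
\begin{equation*}
\nabla^m K_{\R^3}[\kappa_{\mathcal{C}_0}](x)
=\int_{\mathcal{C}_0}\Bigl(\nabla_x^m\tfrac{-1}{4\pi}\tfrac{x-y}{|x-y|^3}-\nabla_x^m\tfrac{-1}{4\pi}\tfrac{x-y_0}{|x-y_0|^3}\Bigr)\wedge \dd\kappa_{\mathcal{C}_0}(y),
\end{equation*}
and the bracket is controlled by $|y-y_0|$ times the $(m+1)$-st derivative of the kernel, i.e.\ $\lesssim_m |x|^{-3-m}$ once $|x|$ exceeds a multiple of the diameter of $\mathcal{C}_0$. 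This is the ``magnetic dipole'' estimate: the vanishing total mass kills the monopole term $|x|^{-2-m}$ and upgrades the decay to $|x|^{-3-m}$, which combined with your (correct) near-field analysis gives \eqref{decay K}. Your deduction of \eqref{decay H} from \eqref{decay K} and the third estimate of Proposition~\ref{est vor filament} is fine and coincides with the paper's argument.
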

\begin{proof}
Clearly \eqref{decay H} follows from Proposition \ref{est vor filament} and \eqref{decay K}. 
The estimate \eqref{decay K} (which corresponds classically to the estimate for a magnetic dipole)
follows directly from the definition of the Biot-Savart law and the fact that 
$\int \dd \kappa_{\mathcal{C}_0} = 0$.
\end{proof}


\section{Expansions of the reduced ODE coefficients}
\label{Sec:compu}
Recalling \eqref{calBdef} and \eqref{defH2D}, we consider the $6 \times 6$ skew-symmetric matrix $B$ given as follows:
\begin{equation} \label{Eq:Bij}
(B_{i,j} )_{1 \leq i,j \leq 6} := {\mathcal B} [ H_{2D} ] 
\text{ that is, }
B_{i,j} := \int_{\partial  \mathcal{S}_{0}^\eps  } \,  [ \zeta_{j} , \zeta_i , H_{2D} \wedge n ] \dd \sigma .
\end{equation}
Below, we prove that its leading part can be computed in terms of the matrix $B^*$ defined in \eqref{defBstar}.
\begin{prop} \label{compuB}
It holds that: 
${\mathcal B} [ H_{2D} ] = B^* + O(\eps )$.
\end{prop}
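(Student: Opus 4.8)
The plan is to reduce the surface integral defining $\mathcal{B}[H_{2D}]$ to a one-dimensional integral along $\mathcal{C}_0$ and then to expand in powers of $\eps$. Since $e_\theta = n\wedge\tau$ on $\partial\mathcal{S}_0^\eps$, with $|n|=1$ and $n\cdot\tau=0$, one has $H_{2D}\wedge n = \tfrac{1}{2\pi\eps}(n\wedge\tau)\wedge n = \tfrac{1}{2\pi\eps}\tau$, so from \eqref{Eq:Bij},
\[
B_{i,j} = \frac{1}{2\pi\eps}\int_{\partial\mathcal{S}_0^\eps}\bigl[\zeta_j(x),\zeta_i(x),\tau(x)\bigr]\dd\sigma .
\]
I would then apply the slicing identity \eqref{j identity} of Lemma~\ref{lemma j} to rewrite this as
\[
B_{i,j} = \frac{1}{2\pi\eps}\int_{\R/L\Z}\int_{C_t^\eps} w^{-1}(x)\,\bigl[\zeta_j(x),\zeta_i(x),\tau(x)\bigr]\,\dd\mathcal{H}^1(x)\,\dt ,
\]
where by \eqref{bd j1} one has $|w-1|\lesssim\eps$ on $\partial\mathcal{S}_0^\eps$, since $\dist(\cdot,\mathcal{C}_0)=\eps$ there.

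Next I would Taylor-expand each cross-sectional slice. Parametrizing $C_t^\eps$ by $x(\phi)=\gamma(t)+\eps\bigl(\cos\phi\, s_1(t)+\sin\phi\, s_2(t)\bigr)$ for $\phi\in[0,2\pi)$, one has $\dd\mathcal{H}^1(x)=\eps\,\dd\phi$, the tangent $\tau(x)=\gamma'(t)$ is constant along the slice, $w^{-1}(x)=1+O(\eps)$, and $\zeta_i(x)=\zeta_i(\gamma(t))+O(\eps)$ --- the $O(\eps)$ correction being present only for $i\in\{4,5,6\}$, and there its angular average vanishes since $\int_0^{2\pi}\bigl(\cos\phi\, s_1(t)+\sin\phi\, s_2(t)\bigr)\dd\phi=0$. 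As the triple product is trilinear in its (uniformly bounded) arguments, each slice integral equals $2\pi\eps\,\bigl[\zeta_j(\gamma(t)),\zeta_i(\gamma(t)),\gamma'(t)\bigr]+O(\eps^2)$ uniformly in $t$; integrating in $t$ and multiplying by $\tfrac{1}{2\pi\eps}$ gives
\[
B_{i,j} = \int_0^L\bigl[\zeta_j(\gamma(t)),\zeta_i(\gamma(t)),\gamma'(t)\bigr]\dt + O(\eps),
\]
where $\zeta_i(\gamma(t))=e_i$ for $1\le i\le 3$ and $\zeta_i(\gamma(t))=e_{i-3}\wedge\gamma(t)$ for $4\le i\le 6$.

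It then remains to identify the leading integral, block by block, with $B^*$ from \eqref{defBstar}. For $i,j\in\{1,2,3\}$ the integrand is $\det(e_j,e_i,\gamma'(t))$, whose integral is $\det\bigl(e_j,e_i,\int_0^L\gamma'\dt\bigr)=0$ because $\gamma$ is closed --- this is the vanishing top-left block. For the two mixed blocks (one index in $\{1,2,3\}$, the other in $\{4,5,6\}$) I would expand the triple product with $[a,b,c]=a\cdot(b\wedge c)$, the identity $(a\wedge b)\wedge c=b(a\cdot c)-a(b\cdot c)$, and \eqref{CauchyBinet}, and then integrate by parts over the loop --- using $\int_0^L\frac{\mathrm{d}}{\mathrm{d}t}(\,\cdot\,)\dt=0$ applied to $t\mapsto(\gamma(t)\cdot e_i)(\gamma(t)\cdot e_{j-3})$ and to $t\mapsto|\gamma(t)|^2$ --- which rewrites the integral as a contraction of $e_i$ and $e_{j-3}$ against $\mathcal{A}_0=\tfrac12\int_0^L\gamma\wedge\gamma'\dt$, reproducing the two off-diagonal blocks of $B^*$. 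Finally, for $i,j\in\{4,5,6\}$ the integrand is $\bigl[e_{j-3}\wedge\gamma,\,e_{i-3}\wedge\gamma,\,\gamma'\bigr]$; here $(e_{j-3}\wedge\gamma)\cdot\gamma=0$ annihilates one half of the BAC--CAB expansion, leaving $-\bigl((e_{i-3}\wedge e_{j-3})\cdot\gamma\bigr)(\gamma\cdot\gamma')$, and integrating by parts via $\frac{\mathrm{d}}{\mathrm{d}t}|\gamma|^2=2\,\gamma\cdot\gamma'$ produces the contraction of $e_{i-3}\wedge e_{j-3}$ against $\mathcal{V}_0$ from Definition~\ref{svmatrix}, i.e.\ the bottom-right block $\mathcal{V}_0\wedge\cdot$. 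Collecting the four blocks yields $\mathcal{B}[H_{2D}]=B^*+O(\eps)$.

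The main point to watch is the $\eps$-bookkeeping: because of the $\eps^{-1}$ prefactor, one must verify that every correction term in the slice expansion is $O(\eps^2)$ --- not merely $O(\eps)$ --- before integrating in $t$ and dividing. This uses $\mathcal{H}^1(C_t^\eps)=2\pi\eps$, the bound $|w-1|\lesssim\eps$ on $\partial\mathcal{S}_0^\eps$ from \eqref{bd j1}, and the vanishing of the angular average of $\cos\phi\, s_1(t)+\sin\phi\, s_2(t)$, which removes the a priori $O(\eps)$ contribution of the displacement $x-\gamma(t)$ inside $\zeta_i$ for $i\ge4$. Once these bounds are in place uniformly in $t$, the rest is a routine manipulation of triple products together with integration by parts exploiting that $\mathcal{C}_0$ is a closed loop, the remaining content being the algebraic identification of the limit with $B^*$.
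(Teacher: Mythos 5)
Your proposal is correct and follows essentially the same route as the paper's proof: rewrite $H_{2D}\wedge n=\tfrac{1}{2\pi\eps}\tau$, apply the slicing identity \eqref{j identity} with $|w-1|\lesssim\eps$ to reduce to $\int_0^L[\zeta_j,\zeta_i,\gamma']\dt+O(\eps)$, and then identify the blocks via Cauchy--Binet/triple-product identities and integration by parts over the closed loop. The only (harmless) difference is your extra care with the angular average of the slice displacement for $i\geq 4$: it is not actually needed, since the pointwise $O(\eps)$ deviation of $\zeta_i$, integrated over a slice of length $2\pi\eps$ and divided by the $2\pi\eps$ prefactor, already yields an admissible $O(\eps)$ error.
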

\begin{proof}[Proof of Proposition \ref{compuB}]
Starting from \eqref{Eq:Bij}, we note that $H_{2D} \wedge n = \frac{1}{2\pi\eps} \tau$ 
and use the identity \eqref{j identity} to see that 
\begin{equation*}
B_{i,j} = \frac{1}{2\pi\eps} \int_{\de\cals^\eps}[\zeta_j, \zeta_i ,\tau] \dd \sigma
= \frac{1}{2\pi\eps} \int_0^L \int_{C_t^\eps} w^{-1} [\zeta_j, \zeta_i, \gamma'] \dx \dt
= \int_0^L [\zeta_j, \zeta_i, \gamma'] \dt + O(\eps),
\end{equation*}
since by Lemma \ref{lemma j} we have $|w-1|\lesssim \eps$ pointwise and $|C_t^\eps|=2\pi \eps$ by definition. \par
For $i, j$ in $\{ 1,2,3 \}$, we obtain that  
\begin{equation*}
B_{i,j} = \left[ e_{j} , e_i ,  \int_0^L \, \gamma' \dt \right] + O(\eps) 
= O(\eps).
\end{equation*}
Similarly, for $i$ in $\{ 1,2,3 \}$ and $j$ in $\{ 1,2,3 \}$,
\begin{equation*}
B_{i,3+j} =  \int_0^L \, [e_{j}\wedge\gamma , e_i , \gamma'] \dt + O(\eps) .
\end{equation*}
But, using the Cauchy-Binet identity as in \eqref{CauchyBinet},
\begin{equation*}
[e_{j}\wedge\gamma , e_i , \gamma'] = 
(e_{j} \cdot  e_i ) (\gamma \cdot \gamma')-(e_{i} \cdot \gamma) (e_j \cdot \gamma') ,
\end{equation*}
and
\begin{equation*}
\int_0^L \, (\gamma \cdot \gamma') \dt= 0 .
\end{equation*}
Thus, by partial integration 
\begin{align*}
B_{i,3+j} & =- \int_0^L \, (e_{i} \cdot \gamma) (e_j \cdot \gamma')\dt + O(\eps) \\
& = -\frac12 \int_0^L \, (e_{i} \cdot \gamma) (e_j \cdot \gamma')\dt 
  + \frac12  \int_0^L \, (e_{j} \cdot \gamma) (e_i \cdot \gamma')\dt + O(\eps) \\
& =- \frac12 \int_0^L \,  \Big( (\gamma \wedge \gamma') \wedge e_i \Big) \cdot e_j\dt + O(\eps) \\
& =- [ e_i , e_j ,\mathcal{A}_0 ]  + O(\eps),
\end{align*}
where the last step directly follows from the Definition \ref{svmatrix} of $\mathcal{A}_0$. Finally, for  $i$ in $\{ 1,2,3 \}$ and  $j$ in $\{ 1,2,3 \}$, we have that 
\begin{equation*}
B_{i+3,j+3} = \int_0^L \, [e_{j} \wedge \gamma, e_i \wedge \gamma, \gamma'] \dt+ O(\eps) .
\end{equation*}
But
\begin{align*}
[e_{j} \wedge \gamma,  e_i \wedge \gamma ,\gamma' ] 
&= (e_{j} \wedge \gamma) \cdot \big( (  e_i \wedge \gamma ) \wedge \gamma') \big) \\
& = \big[ e_{j},\, \gamma, \, (e_i \wedge \gamma) \wedge \gamma' \big] \\ 
& = e_{j} \cdot \Big( \gamma \wedge \big( (e_i \wedge \gamma) \wedge \gamma' \big) \Big) \\ 
& = e_{j} \cdot \big( (\gamma \cdot \gamma') e_i \wedge \gamma \big),
 \end{align*}
so that 
\begin{align*}
B_{i+3,j+3} 
& = \left[ e_{j} ,  e_i ,  \int_0^L \, \big( \gamma \cdot\gamma' \big) \gamma\dt \right] + O(\eps) \\ 
& = [ e_{j}, e_i, \mathcal V_0 ] + O(\eps),
 \end{align*}
by an integration by parts and the Definition \eqref{svmatrix} of $\mathcal{V}_0$.
\end{proof}
Let $B^\eps := {\mathcal B} [H^\eps] \in \R^{6 \times 6}$.
We have the following asymptotic expansion for the first coefficients in \eqref{eqeps-WV}.
\begin{prop} \label{co}
For any $p\in\R^3$, as $\eps \rightarrow 0$, 
\begin{gather} 
\label{co1}
B^\eps = B^* + O(\eps |\log\eps|^\frac{3}{2}), \\
\label{co2}
{\mathcal M}_{a}^\eps = O(\eps^2 |\log\eps|), \\
\label{co3}
\langle \Gamma_a^\eps, p, p \rangle =  O(\eps|\log\eps|^\frac{1}{2}).
\end{gather}
\end{prop}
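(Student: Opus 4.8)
The plan is to reduce all three estimates to a single elementary observation about the matrix ${\mathcal B}[\cdot]$ and then to feed it the asymptotics already established in Section~\ref{sec-Asymptotic}. First I would record the pointwise bound $|[\zeta_j,\zeta_i,u\wedge n]|=|\zeta_j\cdot(\zeta_i\wedge(u\wedge n))|\le|\zeta_i|\,|\zeta_j|\,|u|$ coming directly from the definition \eqref{calBdef}. Since $\cals^\eps\subset\mathcal O$ is bounded, the elementary rigid fields $\zeta_i$ are bounded on $\de\cals^\eps$ by a constant depending only on the fixed geometry of $\mathcal C_0$, and $\mathcal H^2(\de\cals^\eps)\approx\eps$; Cauchy--Schwarz then yields the uniform bound $|{\mathcal B}[u]_{i,j}|\lesssim\eps^{1/2}\norm{u}_{L^2(\de\cals^\eps)}$, valid for any vector field $u$ on $\de\cals^\eps$.

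For \eqref{co1} I would use the linearity of ${\mathcal B}$ in $u$ to split $B^\eps={\mathcal B}[H^\eps]={\mathcal B}[H_{2D}]+{\mathcal B}[H^\eps-H_{2D}]$. Proposition~\ref{compuB} handles the first piece as $B^*+O(\eps)$, while the second is controlled by the elementary bound above together with the first estimate of Proposition~\ref{est vor filament}: $|{\mathcal B}[H^\eps-H_{2D}]_{i,j}|\lesssim\eps^{1/2}\cdot\eps^{1/2}|\log\eps|^{3/2}=\eps|\log\eps|^{3/2}$. Combining, the $O(\eps)$ is absorbed into $O(\eps|\log\eps|^{3/2})$.

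For \eqref{co2} I would apply Cauchy--Schwarz directly in the expression \eqref{def M} for ${\mathcal M}_a^\eps$ and insert the bound $\norm{\nabla\Phi_i}_{L^2(\mathcal F_0^\eps)}\lesssim\eps|\log\eps|^{1/2}$ from Proposition~\ref{est pot}, which gives $|({\mathcal M}_a^\eps)_{i,j}|\lesssim(\eps|\log\eps|^{1/2})^2=\eps^2|\log\eps|$. For \eqref{co3} I would recall from \eqref{DefGammaa} that $\langle\Gamma_a^\eps,p,p\rangle=-\sum_i p_i\,{\mathcal B}[\nabla\Phi_i]\,p$, apply the elementary bound with $u=\nabla\Phi_i$ (its trace on $\de\cals^\eps$), and use the boundary estimate $\norm{\nabla\Phi_i}_{L^2(\de\cals^\eps)}\lesssim|\log\eps|^{1/2}\eps^{1/2}$ from Proposition~\ref{est pot}; this gives $|{\mathcal B}[\nabla\Phi_i]_{j,k}|\lesssim\eps|\log\eps|^{1/2}$ and hence $|\langle\Gamma_a^\eps,p,p\rangle|\lesssim\eps|\log\eps|^{1/2}$ for each fixed $p\in\R^6$.

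I do not expect a serious obstacle: the substance lies entirely in the asymptotic estimates of Section~\ref{sec-Asymptotic} (Propositions~\ref{est pot} and \ref{est vor filament}) and in the computation of Proposition~\ref{compuB}, all of which are already proved. The only points requiring mild care are the bookkeeping of logarithmic powers in \eqref{co1} (checking that the $O(\eps)$ error of Proposition~\ref{compuB} is dominated by $O(\eps|\log\eps|^{3/2})$) and the remark that the implied constants in the $\zeta_i$ bounds, and in $\mathcal H^2(\de\cals^\eps)\approx\eps$, depend only on the fixed curve $\mathcal C_0$ and not on $\eps$.
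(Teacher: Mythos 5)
Your proposal is correct and follows essentially the same route as the paper: Cauchy--Schwarz on the boundary integral defining $\mathcal{B}$ (using $\mathcal H^2(\de\cals^\eps)\approx\eps$) combined with Proposition~\ref{compuB} and the first estimate of Proposition~\ref{est vor filament} for \eqref{co1}, the definition \eqref{def M} with the $L^2(\mathcal F_0^\eps)$ bound of Proposition~\ref{est pot} for \eqref{co2}, and the definition \eqref{DefGammaa} with the boundary $L^2$ bound of Proposition~\ref{est pot} for \eqref{co3}. The log bookkeeping is exactly as in the paper's argument.
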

\begin{proof}
We use Proposition~\ref{compuB} as well as the Cauchy-Schwarz inequality to obtain that
\begin{equation*}
\big|\, \mathcal{B}[H^\eps] - \mathcal{B}[H_{2D}] \,\big|\
  \lesssim \eps^\frac{1}{2} \norm{ H^\eps - H_{2D} }_{L^2(\de\cals^\eps)},
\end{equation*}
and \eqref{co1} follows from the first estimate in Proposition~\ref{est vor filament}.
Estimate \eqref{co2} follows directly from the definition \eqref{def M} and Proposition~\ref{est pot}.
Finally \eqref{co3} follows from the definition \eqref{DefGammaa}, the $L^2$-estimate in Proposition~\ref{est pot}
and Cauchy-Schwarz inequality.
\end{proof} 
Before estimating the last coefficients in \eqref{eqeps-WV}, we study the limit of $u^\varepsilon$.
\begin{prop} \label{kin fluid conv}
Suppose that $p=(\ell,\Omega)\in \R^6$ and $\mu\in \R$ are given and $\omega\in L^1(\R^3)$ is fixed, divergence-free,
compactly supported and such that $\dist(\supp\omega,\mathcal{C}_0)>0$. 
Let $u^\eps$ be the velocity $u$ in Lemma~\ref{vitdec} determined by this data.
Then as $\varepsilon \rightarrow 0^+$, $u^\varepsilon$ converges to
\begin{equation} \label{Eq:ustar}
u^* = K_{\R^3}[\omega] + K_{\R^3}[\kappa_{\mathcal{C}_0}],
\end{equation}
in $L^2(\mathcal{F}_0^\eps)$ and in $C_{loc}^\infty$ as defined in Definition \ref{def conv}. \par
Moreover, for sufficiently small $\eps$ we have the following quantitative bound, for all $m \in \N_{\geq 0}$,
\begin{equation} \label{kin decay u}
|\nabla^m (u^\eps - u^*)(x)| \lesssim (1+|p|) \eps |\log\eps|^2 (\dist(x,\mathcal{C}_0)-\eps)^{-3-m},
\end{equation}
where the implicit constant depends on $m$, $\norm{\omega}_{L^1}$ and $\dist(\supp\omega,\mathcal{C}_0)$. 
\end{prop}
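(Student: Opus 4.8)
The plan is to read off the three ``small'' pieces of $u^\eps-u^*$ directly from the decomposition of $u^\eps$ provided by Lemma~\ref{vitdec}, and then to bound each of them using the $\eps$-uniform estimates of Section~\ref{sec-Asymptotic}. Recalling from \eqref{def-H} that $H^*=K_{\R^3}[\kappa_{\mathcal{C}_0}]$, so that (cf.\ \eqref{vdecomp*}) the limit field is $u^*=\mu K_{\R^3}[\kappa_{\mathcal{C}_0}]+K_{\R^3}[\omega]$, I would combine the decomposition \eqref{vdecomp} of $u^\eps$ with the splitting \eqref{def ref} of $K_{\mathcal{F}_0^\eps}$ to obtain
\[
u^\eps-u^* = \mu\bigl(H^\eps-K_{\R^3}[\kappa_{\mathcal{C}_0}]\bigr) + \sum_{1\leq i\leq 6} p_i\,\nabla\Phi_i^\eps + \uref[\omega].
\]
The crucial structural point is that the full-space Biot--Savart field $K_{\R^3}[\omega]$ cancels: the right-hand side is a sum of three gradients of harmonic functions on $\R^3\setminus\cals^\eps$ with mean-free Neumann traces, so one never has to estimate $K_{\R^3}[\omega]$ itself (which would be awkward, $\omega$ being only an $L^1$-datum).

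For the $L^2(\mathcal{F}_0^\eps)$-convergence I would simply sum the three $L^2(\R^3\setminus\cals^\eps)$-bounds already available: Proposition~\ref{est vor filament} gives $\norm{H^\eps-K_{\R^3}[\kappa_{\mathcal{C}_0}]}_{L^2(\mathcal{F}_0^\eps)}\lesssim\eps|\log\eps|^{3/2}$, Proposition~\ref{est pot} gives $\norm{\sum_{1\leq i\leq 6} p_i\nabla\Phi_i^\eps}_{L^2(\mathcal{F}_0^\eps)}\lesssim|p|\,\eps|\log\eps|^{1/2}$, and Proposition~\ref{est ref} gives $\norm{\uref[\omega]}_{L^2(\mathcal{F}_0^\eps)}\lesssim\eps|\log\eps|^{1/2}\norm{\omega}_{L^1}\dist(\cals^\eps,\supp\omega)^{-2}$. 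Since $\cals^\eps$ is the $\eps$-tube around $\mathcal{C}_0$, for small $\eps$ one has $\dist(\cals^\eps,\supp\omega)=\dist(\mathcal{C}_0,\supp\omega)-\eps\geq\tfrac12\dist(\mathcal{C}_0,\supp\omega)>0$, so the last factor stays bounded and the sum tends to $0$.

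For the pointwise bound \eqref{kin decay u}, and hence for the $C^\infty_{loc}$-convergence, I would apply $\nabla^m$ to the displayed identity and invoke the pointwise decay estimates of the same three propositions, using $\nabla^m(\nabla\Phi_i^\eps)=\nabla^{m+1}\Phi_i^\eps$ and $\uref[\omega]=\nabla\Phi_{\mathrm{ref}}[\omega]$. Since $\dist(x,\de\cals^\eps)=\dist(x,\mathcal{C}_0)-\eps$ for $x\notin\cals^\eps$, the three contributions are bounded by $|\mu|\,\eps|\log\eps|\,(\dist(x,\mathcal{C}_0)-\eps)^{-3-m}$, by $|p|\,\eps\,(\dist(x,\mathcal{C}_0)-\eps)^{-3-m}$, and by $\eps\norm{\omega}_{L^1}\dist(\cals^\eps,\supp\omega)^{-2}(\dist(x,\mathcal{C}_0)-\eps)^{-3-m}$, respectively; summing and absorbing the fixed quantities $|\mu|$ and $\dist(\mathcal{C}_0,\supp\omega)^{-2}$ into the implicit constant yields \eqref{kin decay u} (in fact with $|\log\eps|$, a fortiori with $|\log\eps|^2$). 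Finally, on a fixed compact $U\subset\R^3\setminus\mathcal{C}_0$ one has $U\cap\cals^\eps=\emptyset$ and $\dist(x,\mathcal{C}_0)-\eps\gtrsim_U 1$ for small $\eps$, so the right-hand side of \eqref{kin decay u} tends to $0$ uniformly over $U$ for every $m$, which is precisely the $C^\infty_{loc}$-convergence of Definition~\ref{def conv}.

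Thus the argument is essentially an assembly of the estimates from Section~\ref{sec-Asymptotic}; the only step that is not purely mechanical is the algebraic reorganisation of the first paragraph that makes $K_{\R^3}[\omega]$ drop out, so that the rough part of the vorticity never needs to be estimated, together with the routine tracking of how the implicit constants depend on $|p|$, $\norm{\omega}_{L^1}$ and $\dist(\supp\omega,\mathcal{C}_0)$.
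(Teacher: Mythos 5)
Your proof is correct and follows essentially the same route as the paper: the same algebraic reorganisation $u^\eps-u^*=\mu(H^\eps-K_{\R^3}[\kappa_{\mathcal{C}_0}])+\sum_i p_i\nabla\Phi_i^\eps+\uref[\omega]$ (which makes $K_{\R^3}[\omega]$ drop out), then the $L^2$ and pointwise-decay estimates from Propositions~\ref{est pot}, \ref{est ref} and \ref{est vor filament}, and the identity $\dist(x,\de\cals^\eps)=\dist(x,\mathcal{C}_0)-\eps$ (the paper only uses the inequality $\geq$, which is all that is needed). Your remark that the three propositions actually give $\eps|\log\eps|$ rather than the stated $\eps|\log\eps|^2$ is also correct; the statement is simply not sharp.
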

\begin{proof}
We know from Lemma~\ref{vitdec} that
\begin{equation*}
u^\eps = \mu H^\eps + \sum_i p_i\nabla \Phi_i + K_{\mathcal{F}_0}[\omega]
\end{equation*}
and from \eqref{def ref} that $K_{\mathcal{F}_0}[\omega]=K_{\R^3}[\omega] + \uref[\omega]$.
Hence,
\begin{equation*}
u^\eps - u^* = \mu \bigl( H^\eps - K_{\R^3}[\kappa_{\mathcal{C}_0}] \bigr) 
  + \sum_i p_i \nabla \Phi_i + \uref[\omega].
\end{equation*}
The $L^2$-convergence then follows directly from the estimates in the Propositions~\ref{est pot},
\ref{est ref} and \ref{est vor filament}.
The estimate \eqref{kin decay u} and hence also the $C_{loc}^\infty$-convergence also follows 
from these estimates upon noticing that for all $x$ it holds that 
$\dist(x,\mathcal{C}_0) - \eps \leq \dist(x,\cals^\eps)$.
\end{proof}
Now we have the following estimates for the last coefficients in \eqref{eqeps-WV}.
\begin{prop} \label{co4}
For any fixed divergence-free and compactly supported $\omega\in L^1(\R^3)$ such that 
$\dist(\supp \omega,\mathcal{C}_0)>0$ we have
\begin{equation} \label{co5}
\bigl|{\mathcal B} [ K_{ \mathcal F_0} [\omega] ] \bigr| \lesssim |\log\eps|^\frac{1}{2} \eps \norm{\omega}_{L^1},
\end{equation}
where the implicit constant depends only on $\dist(\supp\omega,\mathcal{C}_0)$. \par
Furthermore, if $p=(\ell,\Omega)\in \R^6$ and $\mu\in \R$ are fixed, and $u^\eps$ is given as
the velocity $u$ in Lemma~\ref{vitdec} with this data, and $u^*$ is given as in \eqref{Eq:ustar}, we have
\begin{equation} \label{Eq:CvD}
\bigl| {\mathcal D}^\eps [ p, u^\eps, \omega ]-  {\mathcal D}^* [ u^*, \omega ] \bigr|
  \lesssim \eps |\log\eps|^2 \, (1+|p|) \norm{\omega}_{L^1},
 \end{equation}
where the implicit constant merely depends on $\dist(\supp\omega,\mathcal{C}_0)$ and on the size of the support of $\omega$.
\end{prop}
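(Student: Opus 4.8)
The plan is to reduce both estimates to the pointwise and $L^2$ bounds on the harmonic field, the Kirchhoff potentials and the reflection term established in Section~\ref{sec-Asymptotic}, using throughout that $\supp\omega$ stays at a fixed positive distance $d_0 := \dist(\supp\omega,\mathcal{C}_0)$ from $\mathcal{C}_0$; in particular, for $\eps$ small one has $\supp\omega\subset\mathcal{F}_0^\eps$ and $\dist(\cdot,\de\cals^\eps)\geq d_0/2$ on $\supp\omega$, and all implicit constants below may depend on $d_0$, on the size of $\supp\omega$ and on $\mu$, but not on $\eps$, $p$ or $\norm{\omega}_{L^1}$.

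For \eqref{co5}, I would first use that the $\zeta_i$ are bounded on $\de\cals^\eps\subset\mathcal{O}$ uniformly in $\eps$, so that \eqref{calBdef} together with the Cauchy--Schwarz inequality and $\mathcal{H}^2(\de\cals^\eps)\lesssim\eps$ gives $|{\mathcal B}[K_{\mathcal{F}_0}[\omega]]|\lesssim\eps^{1/2}\norm{K_{\mathcal{F}_0}[\omega]}_{L^2(\de\cals^\eps)}$. Then I would split $K_{\mathcal{F}_0}[\omega]=K_{\R^3}[\omega]+\uref[\omega]$ using \eqref{def ref}: since $\supp\omega$ is separated from $\mathcal{C}_0$, the kernel bound coming from \eqref{KR2} gives $|K_{\R^3}[\omega]|\lesssim\norm{\omega}_{L^1}$ pointwise on $\de\cals^\eps$, hence $\norm{K_{\R^3}[\omega]}_{L^2(\de\cals^\eps)}\lesssim\eps^{1/2}\norm{\omega}_{L^1}$, while Proposition~\ref{est ref} yields $\norm{\uref[\omega]}_{L^2(\de\cals^\eps)}\lesssim\eps^{1/2}|\log\eps|^{1/2}\norm{\omega}_{L^1}$. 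Inserting this into the previous bound proves \eqref{co5}.

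For \eqref{Eq:CvD}, the key observation is that every integrand in \eqref{defD} and \eqref{defD*} carries the factor $\omega$, so that all the integrals reduce to integrals over $\supp\omega\subset\mathcal{F}_0^\eps$ and the mismatch between the domains $\mathcal{F}_0^\eps$ and $\R^3$ plays no role. Componentwise I would then write $|{\mathcal D}^\eps[p,u^\eps,\omega]_i-{\mathcal D}^*[u^*,\omega]_i|\leq\int_{\supp\omega}|[\zeta_i,\omega,u^\eps-u^*]|\dx+\int_{\supp\omega}|[\omega,u^\eps-u_{\mathcal{S}},\nabla\Phi_i^\eps]|\dx$. The first integral I would control by the pointwise estimate \eqref{kin decay u} of Proposition~\ref{kin fluid conv}, which on $\supp\omega$ reads $|u^\eps-u^*|\lesssim(1+|p|)\eps|\log\eps|^2$, together with $|\zeta_i|\lesssim 1$ there, giving a contribution $\lesssim(1+|p|)\eps|\log\eps|^2\norm{\omega}_{L^1}$. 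The second integral is a pure error term: by Proposition~\ref{est pot} one has $|\nabla\Phi_i^\eps|\lesssim\eps\,\dist(\cdot,\de\cals^\eps)^{-2}\lesssim\eps$ on $\supp\omega$, while $u^\eps-u_{\mathcal{S}}$ is bounded on $\supp\omega$ uniformly in $\eps$ --- this one would read off from the decomposition \eqref{vdecomp}, using Corollary~\ref{Cor:DecayH} for $H^\eps$, Proposition~\ref{est pot} for the $\nabla\Phi_j^\eps$, Proposition~\ref{est ref} for the reflection part and \eqref{KR2} for $K_{\R^3}[\omega]$ away from $\mathcal{C}_0$ --- so this integral is $\lesssim(1+|p|)\eps\norm{\omega}_{L^1}$. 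Summing the two and bounding $\eps\leq\eps|\log\eps|^2$ gives \eqref{Eq:CvD}.

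The hard part, or rather the part requiring care, is essentially bookkeeping: one must make sure that the $\eps$-smallness of $\nabla\Phi_i^\eps$ and of $u^\eps-u^*$ on the fixed region $\supp\omega$ survives the degeneration of the geometry, which is precisely what the decay estimates of Section~\ref{sec-Asymptotic} guarantee, and that the self-interaction contribution $\int_{\supp\omega}|\omega|\,|K_{\R^3}[\omega]|\dx$ --- which already enters the definition of ${\mathcal D}^*$ and reappears inside the error term --- is finite and can be absorbed into the implicit constant together with the size of $\supp\omega$. No genuinely new estimate is needed beyond those of Section~\ref{sec-Asymptotic} and the convergence of $u^\eps$ in Proposition~\ref{kin fluid conv}.
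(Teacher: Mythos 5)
Your proposal is correct and follows essentially the same route as the paper: for \eqref{co5} the splitting $K_{\mathcal{F}_0}[\omega]=K_{\R^3}[\omega]+\uref[\omega]$ combined with Cauchy--Schwarz, the boundary area $\mathcal{H}^2(\de\cals^\eps)\approx\eps$ and Proposition~\ref{est ref}, and for \eqref{Eq:CvD} the same two-integral decomposition of ${\mathcal D}^\eps-{\mathcal D}^*$, estimated via \eqref{kin decay u} and the smallness of $\nabla\Phi_i^\eps$ on $\supp\omega$ from Proposition~\ref{est pot}. The only differences (applying Cauchy--Schwarz before rather than after the splitting of $K_{\mathcal{F}_0}[\omega]$, and bounding $u^\eps-u_{\mathcal S}$ directly instead of $1+\norm{u^\eps}_{L^\infty(\supp\omega)}$) are cosmetic.
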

\begin{proof}
For the first estimate, we use that $K_{\mathcal{F}_0}[\omega]=K_{\R^3}[\omega]+\uref[\omega]$ 
and the linearity of $\mathcal{B}$.
Using the Biot-Savart law and the fact that $\mathcal{H}^2(\cals^\eps)\approx \eps$, it is easy to see that 
$| \mathcal{B}(K_{\R^3}[\omega]) | \lesssim \eps$ whenever $\eps \ll \dist(\supp\omega, \mathcal{C}_0)$.
Using the Cauchy-Schwarz inequality and the $L^2$-estimate in Proposition~\ref{est ref}, we also see that
$| \mathcal{B}[\uref[\omega]] | \lesssim \eps|\log\eps|^\frac{1}{2}$. 
This proves \eqref{co5}. \par
For the second estimate, we note that we have
\begin{equation*}
\bigl| \mathcal{D}[p,u^\eps,\omega]_i - \mathcal{D}^*[u^*, \omega]_i \bigr|
=\left| \int_{\mathcal{F}_0}[\zeta_i, \omega, u^\eps - u^*]\dx 
  - \int_{\mathcal{F}_0}[\omega, u^\eps - u_\mathcal{S}, \nabla\Phi_i] \dx \right|.
\end{equation*} 
We estimate both integrals separately and use the triangle inequality. 
The first integral is $\lesssim \norm{\omega}_{L^1}\norm{u^\eps-u^*}_{L^\infty(\supp\omega)}$, 
which goes to $0$ with the desired rate by the estimate \eqref{kin decay u} and because of the compact support of $\omega$.
Similarly, the second integral is
\begin{equation*}
 \lesssim \norm{\omega}_{L^1} (1 + \norm{u^\eps}_{L^\infty(\supp\omega)}) \norm{\nabla\Phi_i}_{L^\infty(\supp\omega)},
 \end{equation*}
which also goes to $0$ with the desired rate, since $\norm{u^\eps}_{L^\infty(\supp\omega)}$ is bounded 
by Proposition~\ref{kin fluid conv} and by the convergence of $\nabla \Phi_i$ in Proposition~\ref{est pot}.
\end{proof} 
%
%
%


\section{The irrotational case: Proof of Theorem \ref{pasdenom-VO}}
\label{sec-cv}
In this section, we prove Theorem~\ref{pasdenom-VO}. 
We first consider for any $\eps$ the solution $p^\eps$ to \eqref{eqeps}.
The total energy  $ \frac12 p^\eps(t) \cdot \big({\mathcal M}_{g} + {\mathcal M}_{a}^\eps \big) p^\eps(t)$
is constant in time when $p^\eps$ satisfies $\eqref{eqeps}$. 
Since the genuine inertia is bounded from below and the added inertia is non-negative, 
this proves that $(p^\eps)_\eps$ is bounded in $L^{\infty}([0,+\infty);\mathbb R^6)$. 

We observe that all the different terms in Equation~\eqref{eqeps} are polynomial (and in particular smooth) in $p^\eps$ and only depend on time through $p^\eps$.
Hence we see from rearranging the $k$-th derivative of equation \eqref{eqeps} as \begin{align*}
\frac{\mathrm{d}^{k+1}}{\mathrm{d}^{k+1}t} p^\eps=\frac{\mathrm{d}^{k}}{\mathrm{d}^{k}t}\Big((\mathcal{M}_g+\mathcal{M}_a^\eps)^{-1}\left(\mu B^\eps p^\eps - \langle\Gamma_g+\Gamma_a^\eps,p^\eps,p^\eps\rangle\right)\Big),
\end{align*}
and a straightforward induction argument that $p^\eps$ is in fact bounded in every $C^k([0,\infty), \R^6)$ and this bound is uniform in $\eps$ because the coefficients are bounded uniformly in $\eps$ by \eqref{co1}-\eqref{co3}.
In particular, by the Arzel\`a-Ascoli theorem, there exists a limit of a subsequence, which we denote by $p^*$.

 
We pass to the limit in \eqref{eqeps}. 
Since $p^\eps$ converges uniformly, using the bound \eqref{co3} on $\Gamma_a^\eps$ and because $\Gamma_g$ is fixed, we have that
\begin{equation*}
\langle \Gamma_g + \Gamma_a^\eps, p^\eps, p^\eps \rangle \rightarrow \langle \Gamma_g, p^*, p^* \rangle,
\end{equation*}
in $C^k$.
Concerning the term $B^\eps$, by \eqref{co1}, it holds that 
\begin{equation*}
\mu B^\eps p^\eps \rightarrow \mu B^\ast p^\ast,
\end{equation*} 
in $C^k$.
By the boundedness of ${p^\eps}^\prime$  
and \eqref{co2},  we have
\begin{equation*}
\mathcal{M}_a^\eps {p^\eps}^\prime \longrightarrow 0,
\end{equation*}
in $C^k$.
Finally, because $\mathcal{M}_g$ is constant and fixed, we have 
\begin{equation*}
\mathcal{M}_g{p^\eps}^\prime \rightarrow \mathcal{M}_g{p^\ast}^\prime
\end{equation*}
in $C^k$.
Hence, we obtain that  $p^* $ satisfies \eqref{eql1}.
It also satisfies $p^* (0) = p_0$.
By uniqueness of the Cauchy problem for \eqref{eql1}, we have that the whole family
$p^\eps$ converges to $p^*$.

Finally, it follows from \eqref{kin decay u} that as $\varepsilon \to 0$, the corresponding fluid velocity $u^\eps$,
given by \eqref{vdecomp-simpli-eps}, converges to $\mu H^*$ in $L^2(\mathcal{F}_0^\eps)$ 
and locally in $C_{loc}^\infty$ in the sense explained in Definition~\ref{def conv}. \hfill \qedsymbol
%
%

%
%

\section{Case with vorticity: Proof of Theorem~\ref{pasdenom-VO2}}\label{Sec:MainProof}
\subsection{An improved existence theorem}
To prove Theorem~\ref{pasdenom-VO2}, we will need an improved version of the Cauchy theory
for the system with positive radius $\varepsilon > 0$ 
(that is, \eqref{eqeps-WV}--\eqref{vdecompeps}), as well as for the limit system
(that is, \eqref{eql1-WV}-\eqref{u*}).
The main point is to obtain a uniform time of existence and uniform estimates for solutions,
with respect to $\varepsilon>0$. 
This will in particular prove Proposition~\ref{limit-WV}. \par
For that purpose, we fix $\varepsilon_0$ as 
\begin{equation*}
\varepsilon_0 := \frac{\dist(\supp(\omega_0), \mathcal{C}_0))}{10} >0,
\end{equation*}
and for $\varepsilon \in [0, \varepsilon_0]$, we put both systems in the form
\begin{gather}
\label{1}
\partial_t \omega + \big((u -  u_{\mathcal{S}}) \cdot\nabla\big)  \omega     
=  \big(\omega \cdot \nabla \big) (u - u_{\mathcal{S}})  \quad  \text{ in } \quad \R^3, \\ 
\label{3}
 {\mathcal M}_{g}  (p)' + \langle \Gamma_g, p , p \rangle
=  \mu B p  + {\mathcal D} \lbrack  u, \omega \rbrack , \\
\label{4}
 u_{\mathcal{S}} (t,x) := \ell  (t)  + \Omega (t) \wedge x 
 \quad  \text{ where } \quad p := ( \ell , \Omega ) \in  \R^3 \times  \R^3 ,
\end{gather}
and where the vector ${\mathcal D} \lbrack  u, \omega \rbrack$ in $\R^6$ is given 
by \eqref{defD} for $\varepsilon>0$ and by \eqref{defD*} for $\varepsilon =0$, and $B$ is given by the $B^*$ in \eqref{defBstar} if $\eps=0$ and by \eqref{def B} if $\eps>0$.
In the latter case, we will continue to write $\mathcal{D}$ as  
${\mathcal D} \lbrack p, u, \omega \rbrack$, even if it does not depend on $p$,
for the sake of notational uniformity.
Note that $u$ is slightly differently decomposed in the case $\varepsilon>0$ 
(see \eqref{vdecomp}) and in the case $\varepsilon=0$ (see \eqref{vdecomp*}).
To treat these decompositions in a uniform way, we write
\begin{equation} \label{Eq:UnifiedDecomp}
u =  \mu   H^\varepsilon  + K_{ \R^3} [\omega]  + \uref^\varepsilon[\omega] 
    + \sum_{i=1}^{6} p_i \nabla \Phi_i^\varepsilon,
\end{equation}
with the natural convention that for $\varepsilon=0$,
\begin{equation*}
H^0 =H^*,\, \uref^0=0 \ \text{ and } \ \Phi_i^{0}=0. 
\end{equation*}
We will use the following notation for $\Omega \subset \R^3$:
\begin{equation*}
{C}^{\lambda,r}_\sigma (\Omega) 
  := \Bigl\{ u \in {C}^{\lambda,r}(\Omega;\R^3) \ \Big/ \ \div(u) = 0 \Bigr\}. 
\end{equation*}
We define ${C}^{\lambda,r}_{\sigma,c}$ as the subspace of functions that are additionally compactly supported. When considering the weak$^*$-topology, we add a ``$-w^*$''.
The main statement is the following. 
\begin{thm} \label{start3}
Let $\lambda\in\N_{\geq 0}$ 
and $r \in (0,1)$ be given.
Consider $(p_0 ,\omega_0 )$ in  $\R^6 \times  {C}^{\lambda,r}_\sigma ( \R^3, \R^3)$ with 
$\omega_0$  compactly supported and moreover satisfying
$\supp \omega_0  \cap \mathcal{C}_0 = \emptyset $.
There exists a constant $\underline{c} > 0$ depending only on 
\begin{equation} \label{Eq:R0D0}
D_0:= \dist(\supp(\omega_0), \mathcal{C}_0) \text{ and }
R_0:= \max \left\{|x|, \ x \in \supp(\omega_0)\right\},
\end{equation} 
such that the following holds.
We set
\begin{equation} \label{Eq:EstTexistence}
\underline{T} := \underline{c} \, \frac{1}{1 + |p_0| + \|\omega_0\|_{C^{\lambda,r}}}.
\end{equation}
Then for all $\varepsilon \in [0,\varepsilon_0]$,
the problem \eqref{eqeps-WV}--\eqref{vdecompeps} for $\varepsilon>0$ 
or \eqref{eql1-WV}-\eqref{vdecomp*} for $\varepsilon=0$ admits a unique solution 
\begin{equation*}
(p, \omega) \in C ([0, \underline{T}];\, \R^6) \times 
C ( [0,\underline{T}], C^{\lambda,r} (\R^3)-w^* ),
\end{equation*}
which also enjoys the regularity
\begin{alignat*}{2}p &\in W^{\lambda,\infty} ([0,\underline{T}];\, \R^6),\\
\de_t^l\omega &\in L^\infty([0,\underline{T}];\,C^{\lambda-l,r}(\mathcal{F}_0^\varepsilon ))\quad &&\text{(for all $l\leq \lambda+1$),}\\ 
u &\in C ( [0,\underline{T}] ;\, C^{\lambda+1,r} (\mathcal{F}_0^\varepsilon )-w^*)\quad 
&&\text{(for $\varepsilon>0$),}\\
u -\mu H^* &\in C ( [0,\underline{T}] ;\, C^{\lambda+1,r} (\R^3 )-w^*)\quad&&\text{(for $\varepsilon=0$),}\\
\partial_t u &\in C ( [0,\underline{T}] ;\, C^{\lambda,r} (\mathcal{F}_0^\varepsilon )-w^*)
\quad&&\text{(resp.  $C ( [0,\underline{T}] ;\, C^{\lambda,r} (\R^3 )-w^*)$)}.
\end{alignat*}
%
Moreover, given $M>0$, there exists a $C= C(M, D_0, R_0)>0$ such that 
for all $\varepsilon \in [0,\varepsilon_0]$, if $(p_0,\omega_0)$ satisfies
\begin{equation}
\label{init M}
|p_0 | \leq M, \ \ 
\| \omega_0 \|_{C^{\lambda,r}} \leq M
\ \text{ and } \ 
\| \omega_0 \|_{L^1} \leq M, 
\end{equation}
the corresponding solution $(p^\varepsilon,\omega^\varepsilon)$ satisfies
for all $t \in [0,\underline{T}]$ and all $l\in \N_{\geq 0}\cap [0,\lambda+1]$:
\begin{equation} \label{Eq:UniformEstimates}
\left|\frac{\mathrm{d}^l}{\mathrm{d} t^l}p^\varepsilon(t)\right| \leq C, \ \ 
\| \de_t^l\omega^\varepsilon(t) \|_{C^{\lambda-l,r}} \leq C, \ \ 
\| \omega^\varepsilon(t) \|_{L^1} \leq C, \ \text{and} \ 
\dist(\supp \omega(t), \mathcal{C}_0) \geq \frac{1}{C}.
\end{equation}
\end{thm}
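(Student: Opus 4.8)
The plan is to set up a Banach fixed-point argument on the vorticity variable $\omega$, with $p$ slaved to $\omega$ through the ODE \eqref{3}, and to run the argument on a time interval whose length $\underline{T}$ depends only on the quantities in \eqref{Eq:R0D0} together with $|p_0|+\|\omega_0\|_{C^{\lambda,r}}$. First I would fix $\varepsilon \in [0,\varepsilon_0]$ and work in the closed ball, in the space $C([0,T];C^{\lambda,r}_\sigma(\R^3)-w^*)$ (or rather in the metric given by the $C^{\lambda,r'}$-norm for some $r'<r$ to get a genuine contraction, keeping the $C^{\lambda,r}$-bound as an a priori constraint), of radius $2\|\omega_0\|_{C^{\lambda,r}}$ around the constant map $\omega_0$, intersected with the constraint $\dist(\supp\omega(t),\mathcal{C}_0)\geq D_0/2$ and $\supp\omega(t)\subset B_{2R_0+1}$. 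Given such an $\omega$, I solve the ODE \eqref{3} for $p=p[\omega]$: the coefficients $B=B^\varepsilon$ are fixed matrices bounded uniformly in $\varepsilon$ by \eqref{co1}, $\Gamma_g$ is fixed, and $\mathcal{D}[p,u,\omega]$ is, by \eqref{defD}/\eqref{defD*} together with the decomposition \eqref{Eq:UnifiedDecomp} and the pointwise decay estimates of Section~\ref{sec-Asymptotic} (Propositions~\ref{est pot}, \ref{est ref}, \ref{est vor filament}, valid on $\supp\omega$ which stays at distance $\geq D_0/2$ from $\mathcal{C}_0$), an affine function of $p$ with coefficients controlled by $\|\omega\|_{L^1}\lesssim \|\omega\|_{C^{\lambda,r}}R_0^3$; hence $p[\omega]$ exists on $[0,T]$ with $|p[\omega](t)|\leq C(1+|p_0|+\|\omega_0\|_{C^{\lambda,r}})$ for $T$ small, and Grönwall gives Lipschitz dependence $\|p[\omega_1]-p[\omega_2]\|\lesssim T\sup\|\omega_1-\omega_2\|$.

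Next I would construct the velocity field $u=u[\omega]$ via the decomposition \eqref{Eq:UnifiedDecomp} with $p=p[\omega]$, note that on $\R^3\setminus\mathcal{C}_0$ (in particular away from $\supp\omega$ with a fixed margin) it is smooth with bounds uniform in $\varepsilon$ by the estimates of Section~\ref{sec-Asymptotic}, and then push $\omega_0$ forward by the flow of the divergence-free field $u-u_{\mathcal S}$, solving the transport-stretching equation \eqref{1} by the method of characteristics: $\omega(t,X(t,x)) = \mathrm{D}_x X(t,x)\,\omega_0(x)$ after accounting for the $\Omega\wedge\omega$ term via the rotation $Q(t)$, exactly as in the classical local Cauchy theory for 3D Euler (cf. the references in Section~\ref{Subsec-Cauchy}). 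The key point is that the characteristic speed $u-u_{\mathcal S}$ is Lipschitz on a neighborhood of $\supp\omega(t)$ with a bound uniform in $\varepsilon\in[0,\varepsilon_0]$ — here one uses that $\nabla\Phi_i^\varepsilon$, $H^\varepsilon$, $\uref^\varepsilon$ all converge and are uniformly bounded away from $\mathcal{C}_0$ — so that for $T\leq \underline{T}$ small the support stays within distance $D_0/2$ of $\mathcal{C}_0$ and inside $B_{2R_0+1}$, and the $C^{\lambda,r}$-norm of $\omega$ at most doubles. This defines the fixed-point map $\Phi:\omega\mapsto\omega$; the contraction estimate in the weaker $C^{\lambda,r'}$-norm follows from standard estimates on flows of Lipschitz fields, again using the uniform-in-$\varepsilon$ bounds on the velocity coefficients, at the cost of one power of $T$.

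Once the fixed point $(p^\varepsilon,\omega^\varepsilon)$ is obtained on $[0,\underline{T}]$, uniqueness follows from the same contraction estimate, and the stated extra regularity ($p\in W^{\lambda,\infty}$, $\partial_t^l\omega\in L^\infty C^{\lambda-l,r}$, $u\in C\,C^{\lambda+1,r}-w^*$, $\partial_t u \in C\,C^{\lambda,r}-w^*$) is read off by differentiating \eqref{1} and \eqref{3} in time and using the transport structure together with elliptic regularity for the div-curl system defining $u$ (Lemma~\ref{vitdec}), exactly as in \cite{Sueur12}. The uniform estimates \eqref{Eq:UniformEstimates} under the hypothesis \eqref{init M} are then a direct consequence of the a priori bounds built into the fixed-point ball: the constant $C(M,D_0,R_0)$ comes from the ODE bound on $p^\varepsilon$, the doubling bound on $\|\omega^\varepsilon\|_{C^{\lambda,r}}$, conservation of $\|\omega^\varepsilon\|_{L^1}$ along the incompressible flow (up to the stretching factor, which is bounded by $\exp(T\|\nabla u\|_\infty)$), and the support-separation bound; the time derivatives $\partial_t^l$ are controlled by induction using the equations. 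I expect the main obstacle to be establishing the \emph{uniformity in $\varepsilon$} of the Lipschitz and higher-norm bounds on $u^\varepsilon - u_{\mathcal S}^\varepsilon$ near $\supp\omega$ — i.e. checking that all the constants coming from Propositions~\ref{est pot}, \ref{est ref}, \ref{est vor filament} genuinely depend only on $D_0$ and $R_0$ and not on $\varepsilon$, and in particular that $H^\varepsilon\to H^*$ with its derivatives uniformly on the relevant region; everything else is a routine adaptation of the classical 3D Euler local existence theory.
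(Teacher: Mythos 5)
Your proposal is correct in substance, but it organizes the fixed-point argument differently from the paper. You set up a contraction on the vorticity $\omega$ itself (with $p$ slaved to $\omega$ via the ODE), holding an a priori ball in $C^{\lambda,r}$ and contracting in the weaker norm $C^{\lambda,r'}$ for $r'<r$ — the standard loss-of-derivative workaround, since pushing a $C^{\lambda,r}$ datum through the flow of a $C^{\lambda,r}$ field does not give a contraction in the top norm. The paper instead takes the fixed point on the pair $(p,\eta)$, where $\eta$ is the Lagrangian flow map, living in $C^{\lambda+1,r}$: since the flow is obtained by \emph{integrating} the velocity, it gains a derivative relative to $\omega$, and the contraction then holds directly in $C^{0}_t\times L^\infty_t C^{\lambda+1,r}_x$ with no need for a secondary weaker norm (this is the $\cite{InitialsBB}$/$\cite{GST}$ scheme). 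Both routes are valid; the flow-map formulation is somewhat cleaner since it avoids the two-norm bookkeeping and makes the support-separation and confinement constraints \eqref{Eq:DefB2}--\eqref{Eq:DefB4} easy to encode directly in the definition of the ball, whereas your approach keeps the PDE formulation more visible at the cost of the $r'<r$ trick. You correctly identify the real technical point — uniformity in $\varepsilon$ of the bounds on $H^\varepsilon$, $\nabla\Phi_i^\varepsilon$, $\uref^\varepsilon$ on a region at fixed distance from $\mathcal{C}_0$ — and this is exactly what Propositions~\ref{est pot}, \ref{est ref}, \ref{est vor filament} supply; the paper uses a smooth cutoff $\Lambda$ supported away from $\mathcal{C}_0$ to make this precise, which you may want to make explicit (otherwise $u$ is not defined on all of $\R^3$ for $\varepsilon>0$, and is singular on $\mathcal{C}_0$ for $\varepsilon=0$, so ``the flow of $u-u_{\mathcal S}$'' is not a globally defined object without cutting off).
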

Here the negative Hölder space $C^{-1,r}$ which occurs when $l=\lambda+1$ can be defined e.g.\ in terms of the Littlewood-Paley decomposition, as $B_{\infty,\infty}^{r-1}$, see e.g.\ \cite[Sec.\ 2]{Chemin}.
Theorem~\ref{start3} 
is proved in the Appendix,
Section~\ref{Sec:WP-limit}.
We have the following corollary of Theorem~\ref{start3}, whose only slight novelty is 
to say that for $\omega_0 \in {C}^{\lambda,r}_\sigma ( \R^3)$, 
we may require a bound only on $\| \omega_0 \|_{C^{\lambda,r'}}$, (for $r'<r$), to obtain a 
bounded solution in $C^{\lambda,r}$ and still get a uniform existence time.
 \par
\begin{cor} \label{Cor:UniformExistence}
Let $\lambda \in \N_{\geq 0}$, $r \in (0,1)$ and $r' \in (0,r)$.
Given $D_0 >0$, $R_0 >0$, $M>0$, there exist  
$c=c(M, D_0, R_0)>0$ and $C=C(M, D_0, R_0)>0$ 
such that the following holds.
For all $\varepsilon \in [0,\varepsilon_0]$, 
if $(p_0,\omega_0) \in \R^6 \times C^{\lambda,r}_{\sigma,c}(\R^3;\R^3)$ satisfies
\begin{equation} \label{init M2}
|p_0 | \leq M, \ \ 
\| \omega_0 \|_{C^{\lambda,r'}} \leq M, \ \ 
\| \omega_0 \|_{L^1} \leq M, \ \ 
\ \text{ and } \ 
\dist(\supp \omega_0, \mathcal{C}_0) \geq \frac{1}{M} ,
\end{equation}
then, setting
\begin{equation} \nonumber 
\underline{T}(M) := c \, \frac{1}{1 + M},
\end{equation}
the corresponding solution $(p^\varepsilon,\omega^\varepsilon)$
defined during the time-interval $[0,\underline{T}]$, is in 
$C ([0,\underline{T}];\, \R^6) \times 
      L^\infty ( [0,\underline{T}], C^{\lambda,r} (\R^3))$
and satisfies for all $t \in [0,\underline{T}]$ and all $l\in \N_{\geq 0}\cap [0,\lambda+1]$:
\begin{equation} \label{Eq:UniformEstimates2}
\left|\frac{\mathrm{d}^l}{\mathrm{d} t^l}p^\varepsilon(t)\right| \leq C, \ \ 
\| \de_t^l\omega^\varepsilon(t) \|_{C^{\lambda-l,r}} \leq C\norm{\omega_0}_{C^{\lambda,r}}, \ \ 
\| \omega^\varepsilon(t) \|_{L^1} \leq C,  \ \text{and} \ 
\dist(\supp \omega(t), \mathcal{C}_0) \geq \frac{1}{C}.
\end{equation}
%
\end{cor}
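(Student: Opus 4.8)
The plan is to deduce Corollary~\ref{Cor:UniformExistence} from Theorem~\ref{start3} applied at the \emph{lower} H\"older exponent $r'$, and then to propagate the $C^{\lambda,r}$--regularity of the vorticity on the very same time interval by a linear Gr\"onwall argument, exploiting that the equation \eqref{1} for $\omega$ is a transport--with--stretching equation which is \emph{linear} in $\omega$ and whose velocity $v=u-u_{\mathcal{S}}$ splits, near $\supp\omega$, into a part controlled by the decay estimates of Section~\ref{sec-Asymptotic} plus the order--zero operator $\nabla K_{\R^3}$ acting on $\omega$. Concretely: since $\omega_0\in C^{\lambda,r}_{\sigma,c}(\R^3)\subset C^{\lambda,r'}_{\sigma}(\R^3)$ with $\supp\omega_0\cap\mathcal{C}_0=\emptyset$ and $|p_0|\leq M$, $\|\omega_0\|_{C^{\lambda,r'}}\leq M$, $\|\omega_0\|_{L^1}\leq M$, $\dist(\supp\omega_0,\mathcal{C}_0)\geq 1/M$, I would first invoke Theorem~\ref{start3} with $r'$ in place of $r$. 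Writing $D_0=\dist(\supp\omega_0,\mathcal{C}_0)$ and $R_0=\max_{\supp\omega_0}|x|$, this yields, for every $\varepsilon\in[0,\varepsilon_0]$, a unique solution $(p^\varepsilon,\omega^\varepsilon)$ on a common interval $[0,\underline{T}]$ with $\underline{T}=\underline{T}(M)=c/(1+M)$ (taking $c$ a fixed small multiple of $\underline{c}(D_0,R_0)$), together with the bounds \eqref{Eq:UniformEstimates} at regularity $r'$, uniformly in $\varepsilon$ and $t\in[0,\underline{T}]$:
\begin{equation*}
\Big|\tfrac{\mathrm{d}^l}{\mathrm{d}t^l}p^\varepsilon(t)\Big|\leq C,\qquad
\|\de_t^l\omega^\varepsilon(t)\|_{C^{\lambda-l,r'}}\leq C,\qquad
\|\omega^\varepsilon(t)\|_{L^1}\leq C,\qquad
\dist(\supp\omega^\varepsilon(t),\mathcal{C}_0)\geq \tfrac1C,
\end{equation*}
with $C=C(M,D_0,R_0)$, and in particular $\|\omega^\varepsilon(t)\|_{L^\infty}\leq C$. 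Shrinking $c$ if necessary (which is allowed, since the conclusions' constants may depend on $M,D_0,R_0$), all the supports $\supp\omega^\varepsilon(t)$, $t\in[0,\underline{T}]$, $\varepsilon\in[0,\varepsilon_0]$, stay inside one fixed bounded open set $U$ which is separated from $\mathcal{C}_0$, hence from $\cals^\varepsilon$, uniformly in $\varepsilon$. This already delivers everything in \eqref{Eq:UniformEstimates2} except the $C^{\lambda-l,r}$--bounds on $\de_t^l\omega^\varepsilon$.

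The second step is to record the regularity of the transport field $v^\varepsilon:=u^\varepsilon-u^\varepsilon_{\mathcal{S}}$ on $U$. Using \eqref{Eq:UnifiedDecomp}, write $v^\varepsilon=v^\varepsilon_{\mathrm{reg}}+K_{\R^3}[\omega^\varepsilon]$ with $v^\varepsilon_{\mathrm{reg}}=\mu H^\varepsilon+\uref^\varepsilon[\omega^\varepsilon]+\sum_i p^\varepsilon_i\nabla\Phi^\varepsilon_i-u^\varepsilon_{\mathcal{S}}$. Since $U$ is separated from $\cals^\varepsilon$ uniformly in $\varepsilon\in[0,\varepsilon_0]$, Propositions~\ref{est pot}, \ref{est ref}, \ref{est vor filament} and Corollary~\ref{Cor:DecayH}, combined with the first-step bounds on $p^\varepsilon$ and $\|\omega^\varepsilon\|_{L^1}$, show that $v^\varepsilon_{\mathrm{reg}}$ is bounded in $C^m(U)$ for every $m$, uniformly in $\varepsilon$. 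Since $\nabla K_{\R^3}$ is a Calder\'on--Zygmund operator of order $0$, bounded on every H\"older space of non-integer smoothness index, one has $\|\nabla K_{\R^3}[\omega^\varepsilon]\|_{C^{\lambda,r}(U)}\lesssim\|\omega^\varepsilon\|_{C^{\lambda,r}}$, while its lower-order norms are already controlled: $\|\nabla K_{\R^3}[\omega^\varepsilon]\|_{L^\infty(U)}\lesssim\|\omega^\varepsilon\|_{C^{0,r'}}+\|\omega^\varepsilon\|_{L^1}\leq C$ and, when $\lambda\geq1$, $\|\nabla K_{\R^3}[\omega^\varepsilon]\|_{C^{\lambda-1,r}(U)}\lesssim\|\omega^\varepsilon\|_{C^{\lambda-1,r}}\lesssim\|\omega^\varepsilon\|_{C^{\lambda,r'}}\leq C$, using the embedding $C^{\lambda,r'}\hookrightarrow C^{\lambda-1,r}$ on bounded sets (valid since $r-r'<1$). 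Summarizing, uniformly in $\varepsilon$ and $t\in[0,\underline{T}]$,
\begin{equation*}
\|\nabla v^\varepsilon(t)\|_{L^\infty(U)}+\|\nabla v^\varepsilon(t)\|_{C^{\lambda-1,r}(U)}\leq C,\qquad
\|\nabla v^\varepsilon(t)\|_{C^{\lambda,r}(U)}\leq C+C\,\|\omega^\varepsilon(t)\|_{C^{\lambda,r}(U)},
\end{equation*}
where the $C^{\lambda-1,r}$--summand is dropped when $\lambda=0$.

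The third step is the Gr\"onwall argument. Set $y(t):=\|\omega^\varepsilon(t)\|_{C^{\lambda,r}(U)}$. Applying the standard transport estimate in $C^{\lambda,r}$ to \eqref{1} (see \cite{Chemin}, or the analogous estimates in the proof of Theorem~\ref{start3}), and using that the stretching term $(\omega^\varepsilon\cdot\nabla)v^\varepsilon$ is linear in $\omega^\varepsilon$, one obtains, in the integrated sense on $[0,\underline{T}]$,
\begin{equation*}
\tfrac{\mathrm d}{\mathrm dt}\,y(t)\ \lesssim\ \big(\|\nabla v^\varepsilon\|_{L^\infty(U)}+\|\nabla v^\varepsilon\|_{C^{\lambda-1,r}(U)}\big)\,y(t)\ +\ \|\omega^\varepsilon(t)\|_{L^\infty}\,\|\nabla v^\varepsilon(t)\|_{C^{\lambda,r}(U)}.
\end{equation*}
In the first term the coefficient is $\leq C(M,D_0,R_0)$ by the second step. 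For the second term one writes $\|\nabla v^\varepsilon(t)\|_{C^{\lambda,r}(U)}\leq C+C\,y(t)$ as above and estimates the factor $\|\omega^\varepsilon(t)\|_{L^\infty}$ by the \emph{two} available bounds according to the summand it multiplies: against the constant $C$ (the $v^\varepsilon_{\mathrm{reg}}$ contribution) one uses the $L^\infty$--with--stretching bound $\|\omega^\varepsilon(t)\|_{L^\infty}\leq\|\omega_0\|_{L^\infty}\exp\!\big(\int_0^t\|\nabla v^\varepsilon\|_{L^\infty(U)}\big)\leq C\,\|\omega_0\|_{C^{\lambda,r}}$, while against $C\,y(t)$ (the $\nabla K_{\R^3}[\omega^\varepsilon]$ contribution) one uses instead $\|\omega^\varepsilon(t)\|_{L^\infty}\leq C$ from the first step. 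This produces the \emph{linear} differential inequality $\tfrac{\mathrm d}{\mathrm dt}y\lesssim C\,y+C\,\|\omega_0\|_{C^{\lambda,r}}$, whence $y(t)\leq C(M,D_0,R_0)\,\|\omega_0\|_{C^{\lambda,r}}$ on $[0,\underline{T}]$. The bounds on $\de_t^l\omega^\varepsilon$ in $C^{\lambda-l,r}$ then follow by induction on $l$ by differentiating \eqref{1} in time: $\de_t^l\omega^\varepsilon$ is a universal polynomial in the lower-order $\de_t^{l'}\omega^\varepsilon$, the spatial derivatives of $v^\varepsilon_{\mathrm{reg}}$ and of $p^\varepsilon$, and the fields $K_{\R^3}[\de_t^{l'}\omega^\varepsilon]$ and $\uref^\varepsilon[\de_t^{l'}\omega^\varepsilon]$ ($l'<l$), all already controlled by $C(M,D_0,R_0)\|\omega_0\|_{C^{\lambda,r}}$ in the relevant H\"older norms.

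The main obstacle — and the only place where the hypothesis $r'<r$ is genuinely used — is the book-keeping in the third step ensuring that the $C^{\lambda,r}$--norm of $\omega^\varepsilon$ never feeds back into the length $\underline{T}$ of the existence interval: $\underline{T}$ is fixed in the first step using the $C^{\lambda,r'}$--norm only, and in the Gr\"onwall inequality the possibly large quantity $y(t)=\|\omega^\varepsilon(t)\|_{C^{\lambda,r}}$ appears only multiplied by the uniformly bounded coefficient $\|\nabla v^\varepsilon\|_{L^\infty}+\|\nabla v^\varepsilon\|_{C^{\lambda-1,r}}$, never by a factor proportional to $\|\omega^\varepsilon\|_{C^{\lambda,r}}$ itself, which is precisely what the two-fold estimate of $\|\omega^\varepsilon\|_{L^\infty}$ secures. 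All constants are uniform in $\varepsilon\in[0,\varepsilon_0]$ because Theorem~\ref{start3} and the estimates of Section~\ref{sec-Asymptotic} are.
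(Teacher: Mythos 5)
Your proposal is correct and matches the strategy the paper intends: fix the uniform time interval by invoking Theorem~\ref{start3} at the lower exponent $r'$, and then propagate the $C^{\lambda,r}$ regularity of $\omega^\varepsilon$ on that same interval by a linear (tame) estimate whose coefficients only involve the already-controlled lower norms — the paper delegates this second step to the classical Hölder-flow and tame-estimate lemmas of Chemin, while you implement it in Eulerian form via the transport-with-stretching estimate, the Hölder boundedness of $\nabla K_{\R^3}$, and a Grönwall argument, which is an equivalent realization of the same idea. The only point left implicit (and routinely so) is the continuation argument ensuring the $C^{\lambda,r}$ regularity itself persists up to $\underline{T}$ so that the Grönwall quantity is finite, which follows from the local $C^{\lambda,r}$ theory of Theorem~\ref{start3} combined with your a priori bound.
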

The fact that we can merely require a bound on $\| \omega_i \|_{C^{\lambda,r'}}$ to obtain
 a bounded solution in $C^{\lambda,r}$ is classical (and weaker than Beale-Kato-Majda \cite{BKM1984}). 
It can be seen for instance, as a consequence of classical lemmas concerning H\"older flows and tame estimates (see, for instance Chemin, \cite{Chemin} Lemma 4.1.1 and Corollary 2.4.1).

Taking these results for granted, we can proceed to the proof of Theorem~\ref{pasdenom-VO2}.
\subsection{Proof of Theorem~\ref{pasdenom-VO2}}
Given the data of Theorem~\ref{pasdenom-VO2}, we associate the maximal solutions 
$(p^*, \omega^*)$ and  $(p^\varepsilon, \omega^\varepsilon)$ of the limit system 
and of the macroscopic system with thickness $\varepsilon$.
We denote by $T^*$ and $T^\varepsilon$ the maximal times of existence for these solutions. 

We then let $\mathcal{G}$ denote the set of all times for which uniform estimates as
in \eqref{Eq:UniformEstimates} hold for all sufficiently small $\eps$, more precisely:
\begin{multline*}
\mathcal{G} := \Big\{ T >0 \ \Big/ \ \exists \varepsilon_1>0, \ \exists M>0, \ 
\forall \varepsilon \in (0, \varepsilon_1), \  T^\varepsilon \geq T \text{ and } \forall t \in [0,T]:\\
|p^\varepsilon(t)| \leq M, \ 
\| \omega^\varepsilon(t)\|_{C^{\lambda,r}} \leq M, \ 
\| \omega^\varepsilon(t)\|_{L^1} \leq M, \ |p^\varepsilon(t)| \leq M, \ 
\dist(\supp \omega^\varepsilon(t), \mathcal{C}_0) \geq \frac{1}{M}
\Big\}
\end{multline*}
Now, Theorem~\ref{pasdenom-VO2} is a consequence of the three following lemmas.
\begin{lem} \label{Lem:MinCVTime}
The set $\mathcal{G}$ is not empty and connected. 
\end{lem}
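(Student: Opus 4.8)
The plan is to prove the two assertions separately: that $\mathcal{G}$ is nonempty, and that $\mathcal{G}$ is connected (i.e., an interval of the form $(0,T_{\mathcal G})$ or $(0,T_{\mathcal G}]$).

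\textbf{Nonemptiness.} First I would invoke Theorem~\ref{start3} (or rather Corollary~\ref{Cor:UniformExistence}) directly with the initial data $(p_0,\omega_0)$ of Theorem~\ref{pasdenom-VO2}. Setting $D_0 = \dist(\supp\omega_0,\mathcal C_0)$, $R_0 = \max\{|x| : x\in\supp\omega_0\}$, and $M_0 := 1 + |p_0| + \|\omega_0\|_{C^{\lambda,r}} + \|\omega_0\|_{L^1} + 1/D_0$, the theorem produces a uniform existence time $\underline{T} = \underline{T}(M_0, D_0, R_0) > 0$ and a constant $C = C(M_0,D_0,R_0)$ such that, for every $\varepsilon \in [0,\varepsilon_0]$, the solution $(p^\varepsilon,\omega^\varepsilon)$ exists on $[0,\underline T]$ and satisfies the bounds \eqref{Eq:UniformEstimates} with constant $C$ on that interval. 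Choosing $M := C$ and $\varepsilon_1 := \varepsilon_0$ (or any smaller positive number), the defining conditions of $\mathcal{G}$ are met for $T = \underline T$, so $\underline T \in \mathcal{G}$ and $\mathcal{G} \neq \emptyset$.

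\textbf{Connectedness.} It suffices to show that $\mathcal{G}$ is a subinterval of $(0,\infty)$ containing its infimum's complement downward — concretely, that if $T \in \mathcal{G}$ and $0 < T' < T$, then $T' \in \mathcal{G}$. This is immediate from the definition: if $\varepsilon_1, M$ witness $T \in \mathcal{G}$, then the same $\varepsilon_1, M$ witness $T' \in \mathcal{G}$, since $T^\varepsilon \geq T \geq T'$ and the pointwise bounds required on $[0,T']$ are a subset of those already established on $[0,T]$. Hence $\mathcal{G}$ is downward-closed in $(0,\infty)$, and being a nonempty downward-closed subset of an interval, it is itself an interval (of the form $(0,T_{\mathcal G})$ or $(0,T_{\mathcal G}]$), hence connected.

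\textbf{Expected obstacle.} For this particular lemma there is essentially no analytic obstacle: the content is entirely bookkeeping, resting on the uniform-in-$\varepsilon$ Cauchy theory already packaged in Theorem~\ref{start3}/Corollary~\ref{Cor:UniformExistence}. The only point requiring a little care is making sure the constants $D_0, R_0$ controlling $\underline{c}$ and $C$ in Theorem~\ref{start3} are fixed quantities determined by the initial data alone (which they are, by \eqref{Eq:R0D0}), so that the uniform existence time $\underline T$ does not degenerate as $\varepsilon \to 0$; this is exactly what the statement of Theorem~\ref{start3} guarantees. The real work — showing that $\mathcal{G}$ is in fact all of $(0,T^*)$, i.e.\ that $\sup\mathcal{G} \geq T^*$ — is deferred to the subsequent lemmas (a bootstrap/continuation argument combined with the convergence $p^\varepsilon\to p^*$, $\omega^\varepsilon\to\omega^*$), and is where the genuine difficulty lies.
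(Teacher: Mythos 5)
Your proof is correct and matches the paper's (very brief) argument: the paper simply remarks that nonemptiness is a direct consequence of Theorem~\ref{start3} and that connectedness is straightforward, and your fleshed-out version does exactly this — applying the uniform-in-$\varepsilon$ Cauchy theory to produce a common existence time and common bound $M$, then observing that $\mathcal{G}$ is downward-closed by definition. No discrepancy.
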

Based on Lemma~\ref{Lem:MinCVTime}, we define
\begin{equation*}
\widehat{T} := \sup \mathcal{G}.
\end{equation*}
\begin{lem} \label{Lem:Convergence}
For all $T \in \mathcal{G}$ and $r'< r$, one has the convergences:
\begin{gather}
\label{Eq:CV1}
p^\varepsilon \longrightarrow p^* \ \text{ in } \ W^{\lambda,\infty}([0,T])-w^*, \\
\label{Eq:CV2}
\omega^\varepsilon \longrightarrow \omega^* \ 
\text{ in } \ L^\infty(0,T;C^{\lambda,r}(\R^3))-w^*
\ \text{ and in } \ C^l([0,T];C^{\lambda-l,r'}(\R^3)) \ \text{ for $l\in \N_{\leq \lambda}$}.
\end{gather}
\end{lem}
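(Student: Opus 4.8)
The plan is to prove Lemma~\ref{Lem:Convergence} by a compactness argument combined with uniqueness for the limit system. The key input is the uniform bounds encoded in the definition of $\mathcal{G}$: for $T \in \mathcal{G}$, there exist $\varepsilon_1 > 0$ and $M > 0$ such that for all $\varepsilon \in (0,\varepsilon_1)$ one has $|p^\varepsilon(t)| \leq M$, $\|\omega^\varepsilon(t)\|_{C^{\lambda,r}} \leq M$, $\|\omega^\varepsilon(t)\|_{L^1} \leq M$, and $\dist(\supp \omega^\varepsilon(t), \mathcal{C}_0) \geq 1/M$ on $[0,T]$. First I would upgrade these to bounds on time derivatives: using the ODE \eqref{eqeps-WV} together with the coefficient estimates from Proposition~\ref{co} and Proposition~\ref{co4} (in particular \eqref{co2}, so that $\mathcal{M}_g + \mathcal{M}_a^\varepsilon$ is invertible with uniformly bounded inverse for small $\varepsilon$), one gets a uniform bound on $(p^\varepsilon)'$, and differentiating repeatedly (as in the proof of Theorem~\ref{pasdenom-VO}) gives uniform bounds on $\frac{\mathrm{d}^l}{\mathrm{d}t^l}p^\varepsilon$ for $l \leq \lambda+1$; similarly the transport-with-stretching equation \eqref{omega-eqeps} together with the velocity decomposition and the decay estimates of Section~\ref{sec-Asymptotic} yields uniform bounds on $\partial_t^l \omega^\varepsilon$ in $C^{\lambda-l,r}(\mathcal{F}_0^\varepsilon)$, all valid because the vorticity stays uniformly away from $\mathcal{C}_0$.

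Next I would extract convergent subsequences. For $p^\varepsilon$: the uniform $W^{\lambda+1,\infty}$ bound gives, via Arzelà–Ascoli, a subsequence converging in $C^l([0,T])$ for $l \leq \lambda$ and weak-$*$ in $W^{\lambda,\infty}$; call the limit $\bar p$. For $\omega^\varepsilon$: one needs to be slightly careful because the $\omega^\varepsilon$ are defined on the $\varepsilon$-dependent domains $\mathcal{F}_0^\varepsilon$, but since $\supp\omega^\varepsilon(t)$ stays at distance $\geq 1/M$ from $\mathcal{C}_0$ (hence eventually from $\mathcal{S}_0^\varepsilon$), one may regard $\omega^\varepsilon$ as a function on a fixed open set containing all supports and avoiding a neighborhood of $\mathcal{C}_0$, or simply extend by zero. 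The uniform $C^{\lambda,r}$ bound in space and the uniform bound on $\partial_t\omega^\varepsilon$ give, by Arzelà–Ascoli in the lower norm, convergence in $C^l([0,T]; C^{\lambda-l,r'}(\R^3))$ for every $r' < r$ and $l \leq \lambda$, and weak-$*$ convergence in $L^\infty(0,T; C^{\lambda,r}(\R^3))$; call the limit $\bar\omega$. Then I would pass to the limit in the equations: in \eqref{eqeps-WV}, using $B^\varepsilon \to B^*$ \eqref{co1}, $\mathcal{M}_a^\varepsilon \to 0$ \eqref{co2}, $\langle\Gamma_a^\varepsilon,\cdot,\cdot\rangle \to 0$ \eqref{co3}, $\mathcal{B}[K_{\mathcal{F}_0}[\omega^\varepsilon]] \to 0$ \eqref{co5}, and $\mathcal{D}^\varepsilon[p^\varepsilon,u^\varepsilon,\omega^\varepsilon] \to \mathcal{D}^*[u^*,\omega^*]$ \eqref{Eq:CvD} — where $u^* = K_{\R^3}[\bar\omega] + \mu K_{\R^3}[\kappa_{\mathcal{C}_0}]$ is the limit velocity by Proposition~\ref{kin fluid conv} — one finds that $(\bar p, \bar\omega)$ solves the limit system \eqref{eql1-WV}--\eqref{vdecomp*}; similarly \eqref{omega-eqeps} passes to \eqref{omega-eq*} because $u^\varepsilon - u^\varepsilon_\mathcal{S} \to u^* - u^*_\mathcal{S}$ locally uniformly away from $\mathcal{C}_0$ (again via Proposition~\ref{kin fluid conv}) and the vorticities converge in $C^1_t C^1_x$ on a fixed compact set supporting all of them. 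Since $(\bar p, \bar\omega)(0) = (p_0, \omega_0)$ and the limit system has a unique solution on $[0,T]$ by Proposition~\ref{limit-WV} (note $T \leq \widehat T \leq T^*$, which will be ensured by Lemma~\ref{Lem:MinCVTime}), we conclude $\bar p = p^*$, $\bar\omega = \omega^*$, and hence the whole families converge, establishing \eqref{Eq:CV1}--\eqref{Eq:CV2}.

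The main obstacle I anticipate is making the convergence of the velocity fields — and therefore of the nonlinear transport and stretching terms — rigorous despite the singularity of $u^*$ along $\mathcal{C}_0$ and the $\varepsilon$-dependence of the domains. The resolution is that the vorticities $\omega^\varepsilon$ are uniformly supported in a fixed compact set $\mathcal{K}$ with $\dist(\mathcal{K}, \mathcal{C}_0) \geq 1/M > 0$; on such a set the quantitative estimate \eqref{kin decay u} of Proposition~\ref{kin fluid conv} gives $\|u^\varepsilon - u^*\|_{C^m(\mathcal{K})} \lesssim (1+|p^\varepsilon|)\varepsilon|\log\varepsilon|^2 \to 0$ for every $m$, so all the products appearing in \eqref{omega-eqeps} and in $\mathcal{D}^\varepsilon$ converge uniformly. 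A secondary technical point is the correct functional-analytic setup for the weak-$*$ limits (the negative Hölder / Besov space $C^{\lambda-l,r} = B^{r-1}_{\infty,\infty}$ when $l = \lambda+1$, as noted after Theorem~\ref{start3}), and verifying that weak-$*$ limits are compatible with the strong limits in the lower norms so that $\bar\omega$ indeed lies in $C([0,T]; C^{\lambda,r}-w^*)$; these are handled by standard interpolation and lower-semicontinuity of norms under weak-$*$ convergence. Finally, one should record that the limit $(\bar p,\bar\omega)$ automatically inherits the bounds in \eqref{Eq:UniformEstimates} by lower semicontinuity, which is what couples this lemma to the bootstrap in Lemma~\ref{Lem:MinCVTime}.
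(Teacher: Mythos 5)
Your proposal is correct and follows essentially the same route as the paper: uniform bounds from the definition of $\mathcal{G}$ (with the time-derivative bounds coming from the regularity statement of Theorem~\ref{start3}, which is exactly the differentiation argument you sketch), compactness via Aubin--Lions/Arzel\`a--Ascoli, convergence of the velocities away from $\mathcal{C}_0$ via Proposition~\ref{kin fluid conv}, passage to the limit in the equations via Propositions~\ref{co} and \ref{co4}, and identification of the limit by uniqueness of the limit system, which upgrades subsequential to full convergence. The only cosmetic remark is that the comparison of $T$ with $T^*$ is handled in the paper by Lemma~\ref{Lem:TT*} rather than Lemma~\ref{Lem:MinCVTime}, but this does not affect your argument.
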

Here we use the convention that $\omega^\varepsilon$ is extended by $0$ in 
$\mathcal{S}_0^\varepsilon$ (which causes no regularity issues due to its support.) 
\begin{lem} \label{Lem:TT*}
One has $\liminf T^\eps \geq \widehat{T} \geq T^*$.
\end{lem}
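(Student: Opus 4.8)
I would establish the two inequalities $\liminf_{\varepsilon\to 0}T^\varepsilon\geq\widehat T$ and $\widehat T\geq T^*$ separately. The first is immediate from the definition of $\mathcal G$: for each $T\in\mathcal G$ there is an $\varepsilon_1>0$ with $T^\varepsilon\geq T$ for all $\varepsilon\in(0,\varepsilon_1)$, so $\liminf_{\varepsilon\to 0^+}T^\varepsilon\geq T$, and taking the supremum over $T\in\mathcal G$ (which is nonempty by Lemma~\ref{Lem:MinCVTime}) gives $\liminf_{\varepsilon\to 0^+}T^\varepsilon\geq\sup\mathcal G=\widehat T$. For the inequality $\widehat T\geq T^*$, I would argue by contradiction: assume $\widehat T<T^*$ and derive that $\widehat T+\delta\in\mathcal G$ for some $\delta>0$, contradicting $\widehat T=\sup\mathcal G$. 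Since $[0,\widehat T]$ is a compact subinterval of the maximal interval $[0,T^*)$ of the limit solution, there is a finite $M_0$ such that for all $t\in[0,\widehat T]$ one has $|p^*(t)|\leq M_0$, $\|\omega^*(t)\|_{C^{\lambda,r}}\leq M_0$, $\|\omega^*(t)\|_{L^1}\leq M_0$, $\supp\omega^*(t)\subseteq B_{M_0}(0)$ and $\dist(\supp\omega^*(t),\mathcal C_0)\geq \tfrac{1}{M_0}$. Set $M:=M_0+1$, and let $\underline T(M)=c/(1+M)>0$ and $C=C(M,\tfrac1M,M)$ be the uniform existence time and constant provided by Corollary~\ref{Cor:UniformExistence} (with $D_0:=\tfrac1M$, $R_0:=M$); both depend only on $M_0$. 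Put $\delta:=\tfrac12\underline T(M)>0$.

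Next I would pick $T_0\in\mathcal G$ with $T_0>\widehat T-\delta$ (possible since $\widehat T=\sup\mathcal G$), so $0<T_0<\widehat T<T^*$, and transfer the limit bounds to the approximate solution at time $T_0$. By Lemma~\ref{Lem:Convergence} applied on $[0,T_0]$, together with the uniform bound on $\de_t p^\varepsilon$, we have $p^\varepsilon\to p^*$ uniformly and $\omega^\varepsilon\to\omega^*$ in $C([0,T_0];C^{\lambda,r'}(\R^3))$; moreover, by the $C^\infty_{loc}$ convergence of the velocity fields away from $\mathcal C_0$ (Proposition~\ref{kin fluid conv}) the associated transport flows converge locally uniformly on the region occupied by the vorticities — which, since $T_0\in\mathcal G$, stays in a fixed ball at positive distance from $\mathcal C_0$ on $[0,T_0]$ — so that $\supp\omega^\varepsilon(t)\to\supp\omega^*(t)$ in Hausdorff distance, uniformly on $[0,T_0]$. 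Hence, for all $\varepsilon$ small enough (depending on $T_0$), the datum $(p^\varepsilon(T_0),\omega^\varepsilon(T_0))$, which is divergence-free and compactly supported, satisfies
\begin{equation*}
|p^\varepsilon(T_0)|\leq M,\quad \|\omega^\varepsilon(T_0)\|_{C^{\lambda,r'}}\leq \|\omega^*(T_0)\|_{C^{\lambda,r}}+1\leq M,\quad \|\omega^\varepsilon(T_0)\|_{L^1}\leq M,\quad \dist(\supp\omega^\varepsilon(T_0),\mathcal C_0)\geq \tfrac{1}{M},
\end{equation*}
and $\supp\omega^\varepsilon(T_0)\subseteq B_M(0)$; note that only the weaker $C^{\lambda,r'}$ control of the restart vorticity is needed here, which is exactly why Corollary~\ref{Cor:UniformExistence} was stated with the pair $r'<r$.

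Since the (autonomous) system \eqref{1}--\eqref{4} with $u$ given by \eqref{Eq:UnifiedDecomp} is time-translation invariant, I would then apply Corollary~\ref{Cor:UniformExistence} with $(p^\varepsilon(T_0),\omega^\varepsilon(T_0))$ as the initial condition at time $T_0$: the solution exists on $[T_0,T_0+\underline T(M)]$ and obeys the uniform estimates \eqref{Eq:UniformEstimates2} with constant $C$, independent of $\varepsilon$, and by uniqueness it coincides with $(p^\varepsilon,\omega^\varepsilon)$ there. Combining with the uniform bounds on $[0,T_0]$ furnished by $T_0\in\mathcal G$ (say with a constant $M_{T_0}$), and using $T_0+\underline T(M)>\widehat T-\delta+\underline T(M)=\widehat T+\delta$, we obtain that for all sufficiently small $\varepsilon$ one has $T^\varepsilon>\widehat T+\delta$ and the estimates of \eqref{Eq:UniformEstimates2} hold on $[0,\widehat T+\delta]$ with constant $\max(M_{T_0},C)$. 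Therefore $\widehat T+\delta\in\mathcal G$, contradicting $\widehat T=\sup\mathcal G$; hence $\widehat T\geq T^*$, and together with the first inequality this proves the lemma.

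The main obstacle is ensuring that the single constant $M$ used to restart the solution at $T_0$ does not deteriorate as $T_0\uparrow\widehat T$. This is precisely where the assumption $\widehat T<T^*$ is essential: it forces the limit solution — and, crucially, the support of its vorticity — to remain bounded and bounded away from $\mathcal C_0$ on the \emph{closed} interval $[0,\widehat T]$, and Lemma~\ref{Lem:Convergence} (through the convergence of the transport flows, which is what controls $\supp\omega^\varepsilon$) then transfers these bounds to the approximate solution at time $T_0$ with only a harmless loss of a constant factor. Given that, autonomy of the system, uniqueness, and the fact that the uniform local existence time $\underline T(M)$ depends only on $M_0$ make the continuation argument routine.
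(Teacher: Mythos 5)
Your proof is correct and follows essentially the same route as the paper: the first inequality is read off from the definition of $\mathcal{G}$, and the second is obtained by contradiction, transferring the bounds \eqref{Eq:EstLimitSol} on the limit solution over $[0,\widehat{T}]$ to $(p^\varepsilon,\omega^\varepsilon)$ via Lemma~\ref{Lem:Convergence} at a time in $\mathcal{G}$ within $\underline{T}(M)/2$ of $\widehat{T}$, and then invoking the uniform local existence of Corollary~\ref{Cor:UniformExistence} to push past $\widehat{T}$ by a uniform amount, contradicting $\widehat{T}=\sup\mathcal{G}$. The only differences are presentational: you make explicit the time-translation/restart with uniqueness gluing and the transfer of the support-distance condition via convergence of the transport flows, points the paper leaves implicit.
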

We prove Lemmas~\ref{Lem:MinCVTime}--\ref{Lem:TT*} in order.
\begin{proof}[Proof of Lemma~\ref{Lem:MinCVTime}]
The connectedness of $\mathcal{G}$ is straightforward,
and its nonemptiness is a direct consequence of Theorem~\ref{start3}.
\end{proof}
\begin{proof}[Proof of Lemma~\ref{Lem:Convergence}]
For such a $T \in \mathcal{G}$, let us fix an $\eps_1>0$ and an $M>0$ as in the definition of $\mathcal{G}$.

We first observe that we have a uniform $W^{\lambda+1,\infty}([0,T])$-bound on $p^\eps$ 
for sufficiently small $\eps$ due to the statement about time regularities in Theorem \ref{start3}.

We further have a uniform bound on $\norm{\de_t\omega^\eps}_{L^\infty([0,T]; C^{\lambda-1,r})(\R^3)}$ and $\norm{\omega^\eps}_{L^\infty([0,T]; C^{\lambda,r}(\R^3)}$ by \eqref{Eq:UniformEstimates}.

Hence, we may apply the Aubin-Lions lemma to extract a subsequence of 
$(p^\eps,\omega^\eps)$ which converges to some $(p^*,\omega^*)$ 
in $W^{\lambda+1,\infty}([0,T])-w^*\times L^\infty([0,T],C^{\lambda,r'}(\R^3))$ for every $r'\in (0,r)$,
where the $w^*$ denotes the weak$^\ast$-topology.

We also have convergence of the velocities $u^\eps$ to the corresponding 
$u^*=\mu H^\eps+K_{\R^3}[\omega^*]$ in $L^\infty([0,T]; C^{\lambda+1,r'}(\mathfrak{B}_M))$, where 
\begin{align*}
\mathfrak{B}_M=B_M(0)\backslash\left\{x\,\big|\, \dist(x, \mathcal{C}_0)\leq \frac{1}{M}\right\}.
\end{align*}
Indeed, we know from Proposition \ref{kin fluid conv} that 
\begin{align*}
u^\eps - K_{\R^3}[\omega^\eps] - \mu K_{\R^3}[\kappa_{\mathcal{C}_0}] \longrightarrow 0
  \quad \text{ in $C^\infty(\mathfrak{B}_M)$},
\end{align*}
and $K_{\R^3}[\cdot]$ is bounded from $C^{k,\alpha}$ to $C^{k+1,\alpha}$ 
for all $k\in \N_{\geq 0}$.



%
Then one may start to pass to the limit in the Newton equations 
by using Proposition~\ref{co4} to obtain that 
\begin{equation*}
{\mathcal B} \big[ K_{ \mathcal F_0} [\omega^\varepsilon] \big] \longrightarrow 0 ,
\end{equation*}
in $L^{\infty}([0,T]; \R^{6 \times 6})$ and then Proposition~\ref{co} to deduce
\begin{equation*}
B \longrightarrow B^* .
\end{equation*}
Also relying on Proposition~\ref{co4}, we have
\begin{equation*}
{\mathcal D} [ p^\varepsilon, u^\varepsilon, \omega^\varepsilon ] 
  \longrightarrow {\mathcal D}^* [ u^*, \omega^* ]
\end{equation*}
in $L^{\infty}([0,T];\mathbb R^6)$.
Arguing similarly as in the proof of Theorem~\ref{pasdenom-VO} above (see Section~\ref{sec-cv}), we see that 
all the other coefficients in \eqref{eqeps-WV} converge at least weakly$^\ast$ 
to the corresponding terms in \eqref{eql1-WV}, 
in particular $p^*$ is a solution of \eqref{eql1-WV}.

Since $u_\mathcal{S}$ is linear in $p$, it also converges, and we may pass 
to the limit in \eqref{omega-eqeps}.

It follows from the uniqueness of the limiting solution that all convergent subsequences have the same limit and that, hence, convergence of the whole sequence holds.

Finally, we also have the convergence of the time derivatives by the Aubin-Lions Lemma and their estimates in \eqref{Eq:UniformEstimates}.\end{proof}
%
%
%
\begin{proof}[Proof of Lemma~\ref{Lem:TT*}]
The leftmost inequality trivially follows from the condition $T^\eps\geq T$ in 
the definition of $\mathcal{G}$.

For the right inequality, we argue by contradiction and suppose that $\widehat{T} < T^*$.
Then on $[0,\widehat{T}]$, the solution $(p^*,\omega^*)$ of the limit system is well-defined, 
and for some $\widehat{M}>0$, one has for all $t \in [0,\widehat{T}]$:
\begin{equation} \label{Eq:EstLimitSol}
|p^*(t)|  \leq \widehat{M}, \
\| \omega^*(t,\cdot)\|_{C^{\lambda,r'}} \leq \widehat{M}, \ 
\| \omega^*(t,\cdot)\|_{L^1} \leq \widehat{M}, \ 
\ \text{and} \ 
\dist(\supp \omega^*(t), \mathcal{C}_0) \geq \frac{1}{\widehat{M}} .
\end{equation}
Now, using the notation $\underline{T}(M)$ of Corollary~\ref{Cor:UniformExistence}, we let
\begin{equation*}
\eta := \frac{\min(\underline{T}(\widehat{M} + 1), \widehat{T})}{2} \ \text{ and } \ 
\widetilde{T}:= \widehat{T} - \eta.
\end{equation*}
Then, $\widetilde{T} \in \mathcal{G}$, and consequently we have the convergences 
\eqref{Eq:CV1} and \eqref{Eq:CV2} in $[0,\widetilde{T}]$.
Hence, for small enough $\varepsilon$ and $t\in[0,\widetilde{T}]$, we have
\begin{equation*}
|p^\varepsilon(t)|  \leq \widehat{M} + 1, \
\| \omega^\varepsilon(t,\cdot)\|_{C^{\lambda,r'}} \leq \widehat{M} + 1, \ 
\| \omega^\varepsilon(t,\cdot)\|_{L^1} \leq \widehat{M} + 1
\ \text{and} \ 
\dist(\supp \omega^\varepsilon(t), \mathcal{C}_0) \geq \frac{1}{\widehat{M} + 1} .
\end{equation*}
Using Corollary~\ref{Cor:UniformExistence}, we deduce that for such $\varepsilon$, 
the solutions $(p^\varepsilon,\omega^\varepsilon)$ are well-defined up to time 
$\widetilde{T} + \widehat{T}(\widehat{M}+1)$, with uniform estimates as in \eqref{Eq:UniformEstimates2}.
Hence, we have $\widehat{T} \geq \widetilde{T} + \widehat{T}(\widehat{M}+1) > \widehat{T} + \eta$,
which is a contradiction.
\end{proof}

\section{Divergence in the non-massive case: Proof of Theorem~\ref{thm:div}}
\label{Sec:Proof-div}



%
%

\subsection{First steps}

Given that $\mathcal{C}_0$ is a circle around the $e_3$-axis,
we consider axisymmetric data, i.e.\ initial data such that, 
\begin{equation*}
p_i^\eps(0)  =0 \quad \text{for } i \neq 3,
\end{equation*}
and such that $u_0$ satisfies that
for every rotation $A$ around the $e_3$-axis it holds that 
\begin{equation*}
u_0(A\cdot) = A u_0(\cdot).
\end{equation*}
We further pick the rotational inertia matrix $\mathcal{J}_0$ as an arbitrary rotational inertia matrix of an axisymmetric body, that is any positive symmetric definite matrix $\mathcal{J}_0$ such that it holds that $A^T\mathcal{J}_0A=\mathcal{J}_0$ for any rotation $A$ around the $e_3$-axis. The exact choice is irrelevant to the construction below since there are never any rotations of the body involved.

%
This axisymmetry of $u_0$ can be equivalently expressed as $\omega_0$ being of the form 
\begin{equation*}
\omega_0(x) = \omega_0^\theta \left( \sqrt{x_1^2+x_2^2}, \, x_3 \right)
\begin{pmatrix} - x_2 \\ x_1 \\ 0 \end{pmatrix}.
\end{equation*}
If the initial data has these symmetries, then the solution must also be axisymmetric at later times, as symmetry-breaking would imply non-uniqueness.

With such data we have in particular that $p_i^\eps(t) = 0$ for $i\neq 3$ for all $t$.
Consequently, the equation \eqref{eqeps-WV massless} simplifies to
\begin{equation} \label{redux eq}
(\tilde{\mathcal{M}}_g^\eps + \mathcal{M}_a)_{3,3} \, p_3^\eps
= \mathcal{D}[p^\eps, u, \omega]_3,
\end{equation}
because $\mathcal{B}$ is skew-symmetric and because by \eqref{DefGammag}, $\langle \Gamma_g, p^\eps, p^\eps \rangle = 0$ 
whenever $p_i^\eps=0$ for $i=4, 5, 6$. \par
\ \par
The main mechanism at stake here is that $(\tilde{\mathcal{M}}_g^\eps + \mathcal{M}_a)_{3,3} = O( \eps^2 |\log\eps|)$,
(by \eqref{co2} and the definition of the massless limit) 
while the term $\mathcal{D}$ on the right hand side is of order $1$ (at least if $p^\eps$ is not too big), 
yielding the divergence provided that we can control its sign. 

To arrange this, we will carry out most of the analysis under the assumption that the solid movement remains 
away from the vorticity, an assumption that will be checked to be true afterwards on some uniform time interval
as $\varepsilon \rightarrow 0^+$.
Correspondingly, we introduce for each $\varepsilon>0$ the solution $(p^\varepsilon,\omega^\varepsilon)$ of the system on the maximal interval $[0,T^*_\varepsilon)$, and we introduce the possibly smaller time 
$\widehat{T}_\varepsilon$ as follows:
\begin{equation} \label{Def:THat} 
\widehat{T}_\varepsilon := 
\sup \left\{ t \in [0,T^*_\varepsilon) \ \Big/ \ 
  \forall t' \in [0,t], \ \int_0^{t'} p_3^\eps(s) \ds > -1  \right\}.
\end{equation}
Obviously, one has $\widehat{T}_\varepsilon >0$ for each $\varepsilon>0$. \par
The main statement yielding Theorem~\ref{thm:div} is the following proposition.
\begin{prop} \label{key lemma}
There exists an axisymmetric initial datum $(\omega_0,p_0)$,
an axisymmetric circle $\mathcal{C}_0$, $T_0>0$, $c>0$, $\mu>0$ and $C_m>0$ for each $m \in \N_{\geq 0}$,
all independent of $\eps$, such that the following holds for all sufficiently small $\eps$. 
\begin{enumerate}
\item One has
\begin{equation} \label{wnice-local}
\norm{\omega^\eps}_{L^\infty([0,T_0] \cap [0,\widehat{T}_\varepsilon) ,C^m(\R^3))} \leq C_m
\ \text{and} \
\min_{t\in [0,T_0] \cap [0,\widehat{T}_\varepsilon)} \dist(\supp\omega_t^\eps,\cals^\eps) \geq c.
\end{equation}

\item For all $t \in [0,\widehat{T}_\varepsilon) \cap [0,T_0]$, if moreover 
\begin{equation} \label{Eq:PepsLess}
p^\eps_3(t) \leq \eps^{-\frac{1}{10}},
\end{equation}
then 
\begin{equation*}
\left( 2 +  \int_0^t p_3^\eps(s) \ds  \right)^{-4}
  \lesssim \mathcal{D}[p^\eps(t), u(t), \omega(t)]_3.
\end{equation*}
\end{enumerate}
\end{prop}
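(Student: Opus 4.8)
The plan is to exploit the axisymmetry of the configuration to reduce the computation of $\mathcal{D}[p^\eps,u,\omega]_3$ to an explicit quantity involving the fluid vorticity, and then to engineer the initial data so that this quantity has a definite sign and does not degenerate. First I would make the reduction to \eqref{redux eq} precise and unwind the definition \eqref{defD} of $\mathcal{D}$ in the axisymmetric setting: since $p^\eps_i = 0$ for $i\neq 3$, only the $i=3$ component matters, $\zeta_3 = e_3$, $\nabla\Phi_3$ is the third Kirchhoff potential, and $u_\mathcal{S} = p_3^\eps e_3$; so
\begin{equation*}
\mathcal{D}[p^\eps,u,\omega]_3 = \int_{\mathcal{F}_0^\eps} [e_3,\omega,u]\dx - \int_{\mathcal{F}_0^\eps}[\omega, u - p_3^\eps e_3, \nabla\Phi_3^\eps]\dx .
\end{equation*}
The second term I expect to be a harmless perturbation: by Proposition~\ref{est pot}, $\nabla\Phi_3^\eps$ decays like $\eps\,\dist(x,\partial\mathcal{S}_0^\eps)^{-2}$ away from the filament, so on $\supp\omega^\eps$ (which stays at distance $\gtrsim c$ by part~(1)) it is $O(\eps)$, and since $p_3^\eps(t) \leq \eps^{-1/10}$ by assumption \eqref{Eq:PepsLess}, the whole second integral is $O(\eps^{9/10})\norm{\omega}_{L^1}$, negligible. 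So everything reduces to controlling the leading term $\int [e_3,\omega,u]\dx$, and by Proposition~\ref{kin fluid conv} one may as well replace $u^\eps$ by its limit $u^* = K_{\R^3}[\omega^\eps] + \mu K_{\R^3}[\kappa_{\mathcal{C}_0}]$ up to an $O(\eps|\log\eps|^2)$ error, again using that $\omega^\eps$ is supported away from $\mathcal{C}_0$.

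The heart of the matter is then to choose $\omega_0$, $\mathcal{C}_0$ and $\mu$ so that $\int_{\R^3}[e_3,\omega^\eps,\mu K_{\R^3}[\kappa_{\mathcal{C}_0}] + K_{\R^3}[\omega^\eps]]\dx$ is bounded below by a negative power of $2 + \int_0^t p_3^\eps$. The self-interaction term $\int[e_3,\omega^\eps, K_{\R^3}[\omega^\eps]]\dx$ should be treated as a perturbation controlled by $\norm{\omega^\eps}_{C^0}$ and the size of its support (bounded by part~(1)), while the cross term $\mu\int[e_3,\omega^\eps, K_{\R^3}[\kappa_{\mathcal{C}_0}]]\dx$ is the driving force. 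I would pick $\omega_0$ to be a small, smooth, axisymmetric vortex ring — a thin tube of swirl-free vorticity of the form $\omega_0^\theta(\sqrt{x_1^2+x_2^2},x_3)(-x_2,x_1,0)$ — located initially on one side of the plane of $\mathcal{C}_0$ and at a controlled distance from it, with $\mu$ chosen of the appropriate (say, positive) sign and large enough that the cross term dominates. The factor $(2 + \int_0^t p_3^\eps(s)\,ds)^{-4}$ on the right is exactly the geometric decay one expects: as the body translates in the $e_3$ direction by an amount $\int_0^t p_3^\eps$, the curve $\mathcal{C}_0(t)$ moves away from $\supp\omega^\eps$, so $K_{\R^3}[\kappa_{\mathcal{C}_0(t)}]$ evaluated on $\supp\omega^\eps$ decays like (distance)$^{-3} \sim (2+\int_0^t p_3^\eps)^{-3}$ (magnetic-dipole decay, cf.\ Corollary~\ref{Cor:DecayH}), and one picks up one more power from the lever arm; the power $4$ gives a comfortable margin. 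Here I should be careful that the \emph{sign} of the cross term is preserved as long as the ring stays roughly on its original side, which is guaranteed by the definition \eqref{Def:THat} of $\widehat{T}_\eps$ (the net displacement of $\mathcal{C}_0$ stays $> -1$) together with the a priori transport estimates.

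For part~(1), the vorticity estimates \eqref{wnice-local}: this is a standard propagation-of-regularity argument for the transport-with-stretching equation \eqref{omega-eqeps massless}, using that the transporting velocity $u^\eps - u_\mathcal{S}^\eps$ is, on the region where $\omega^\eps$ lives, bounded in $C^m$ uniformly in $\eps$ — which follows from the decomposition \eqref{vdecompeps massless}, the decay estimates on $H^\eps$, $\nabla\Phi_i^\eps$ and $\uref^\eps$ from Section~\ref{sec-Asymptotic}, the bound $p_3^\eps \leq \eps^{-1/10}$ coming into play only mildly via $u_\mathcal{S}$, and Gronwall on a short but $\eps$-uniform time interval $[0,T_0]$. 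The support estimate is then immediate from the flow being Lipschitz with $\eps$-uniform constant. The actual divergence claim of Theorem~\ref{thm:div} will follow in Section~\ref{Sec:Proof-div} by feeding the lower bound of part~(2) into \eqref{redux eq}, dividing by $(\tilde{\mathcal{M}}_g^\eps + \mathcal{M}_a)_{3,3} = O(\eps^2|\log\eps|)$, and integrating twice — a continuity/bootstrap argument showing $\widehat{T}_\eps \geq T_0$ and that $p_3^\eps$ crosses $\eps^{-1/10}$ quickly and then stays large. The main obstacle I anticipate is \textbf{not} the analytic estimates but the explicit design of $(\omega_0,\mathcal{C}_0,\mu)$ together with a clean, robust lower bound on $\int[e_3,\omega^\eps,K_{\R^3}[\kappa_{\mathcal{C}_0}]]\dx$ that survives the time-evolution of $\omega^\eps$; one needs the sign and a quantitative non-degeneracy that is stable under the (small, controlled) deformation of the ring and the (bounded) displacement of $\mathcal{C}_0$, and this is where a careful but elementary geometric computation with the Biot–Savart kernel of a circle is required.
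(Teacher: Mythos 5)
Your overall architecture (axisymmetric reduction to the third component, a priori control of the vorticity, and a sign/lower bound driven by the interaction of the ring-shaped vorticity with the Biot--Savart field of the circle, with the sign protected by the definition of $\widehat{T}_\varepsilon$) is the same as the paper's, but two of your key steps contain genuine gaps. First, you propose to treat the self-interaction term $\int [e_3,\omega^\eps,K_{\R^3}[\omega^\eps]]\dx$ as ``a perturbation controlled by $\norm{\omega^\eps}_{C^0}$ and the size of its support'' and to beat it by taking $\mu$ large. Such a bound is $O(1)$, while the cross term with $K_{\R^3}[\kappa_{\mathcal{C}_0}]$ is only of size $(2+\int_0^t p_3^\eps)^{-4}$ (and in the paper's construction the ring is placed at distance $s_0\gg 1$ precisely so that all errors are dominated by this tiny main term); no choice of $\mu$ salvages a sign from bounds of that quality, since the cross term decays as the body runs away while your bound on the self-interaction does not. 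The paper instead uses the exact cancellation $\int_{\R^3} v\wedge\curl v\,\dx=0$ for divergence-free decaying $v$ (see \eqref{canc}), so the self-interaction vanishes identically, both for $\omega_0$ and for the evolved $\omega_t$; what remains is compared using the $L^1$-closeness of $\omega_t$ to a translate of $\omega_0$ (Lemma~\ref{no blow up}) multiplied by the $\dist^{-4}$ bound on the radial component of $K_{\R^3}[\kappa_{\mathcal{C}_0}]$ near the axis. Relatedly, your error estimates for the remaining terms (e.g.\ the $O(\eps^{9/10})\norm{\omega}_{L^1}$ bound for the $\nabla\Phi_3$ integral) are not proportional to $s(t)^{-4}$; since the hypothesis of part~(2) bounds $p_3^\eps$ only at time $t$ and $\int_0^t p_3^\eps$ is not bounded uniformly in $\eps$, an error that is merely a positive power of $\eps$ cannot be absorbed into the right-hand side $(2+\int_0^t p_3^\eps)^{-4}$. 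This is why the paper proves the refined axisymmetric decay estimates of Lemma~\ref{lem87} (horizontal components of $\nabla\Phi_3$, of $H^\eps-K_{\R^3}[\kappa_{\mathcal{C}_0}]$ and of $\uref$ decay like $\eps|\log\eps|^{1/2}\sqrt{x_1^2+x_2^2}\,|x_3|^{-4}$), which your outline has no substitute for.

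Second, your argument for part~(1) assumes the transporting velocity is bounded in $C^m$ on $\supp\omega^\eps$ ``uniformly in $\eps$'' from the decay estimates, invoking $p_3^\eps\leq\eps^{-1/10}$ only ``mildly''. But that hypothesis is not available in part~(1) at all (it belongs to part~(2)), and in the massless scaling $p_3^\eps$ has no a priori pointwise bound on $[0,\widehat{T}_\varepsilon)$ (it could a priori reach size $\eps^{-2}$), so the contribution $p_3^\eps\nabla\Phi_3^\eps$ to $u^\eps$ is not controlled by decay estimates alone, and the rigid part $u_\mathcal{S}=p_3^\eps e_3$ of the transport field is potentially huge. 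The paper closes this with two ingredients you are missing: the conservation of a modified energy (Lemma~\ref{L2 est u}), which yields $(\mathcal{M}_a^\eps)_{3,3}(p_3^\eps)^2\lesssim 1$, i.e.\ an $\eps$-uniform $L^2$ bound on $p_3^\eps\nabla\Phi_3^\eps$ that is upgraded to $C^m$ bounds on the far-away vorticity support by harmonicity and interior elliptic regularity (Lemma~\ref{vel est}); and the recentering of the vorticity equation in the moving frame at $-s(t)e_3$, which absorbs the unbounded translation $u_\mathcal{S}$ so that Gronwall can be run with $\eps$-uniform constants and the support statement \eqref{confinement} makes sense. Without these, neither the $C^m$ bounds nor the separation estimate in \eqref{wnice-local} follows from your sketch.
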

Theorem~\ref{thm:div} can be deduced from Proposition~\ref{key lemma} as follows.
\begin{proof}[Proof of Theorem~\ref{thm:div} from Proposition~\ref{key lemma}]
First, a direct consequence of \eqref{wnice-local} and of the Cauchy theory for the system is that
 $T^*_\varepsilon > \min(T_0,\widehat{T}_\varepsilon)$, 
 and the only possibility to have $\widehat{T}_\varepsilon < T_0$ is that the integral condition 
\begin{equation} \label{Eq:IntegralCondition}
\int_0^{t'} p_3^\eps(s) \ds > -1
\end{equation}
is violated at time $t'=\widehat{T}_\varepsilon$. \par
Now, from \eqref{Eq:LightScaling} and Proposition~\ref{est pot}, we see that
\begin{equation*}
(\tilde{\mathcal{M}}_g + \mathcal{M}_a)_{3,3} \lesssim \varepsilon^2 |\log \varepsilon|.
\end{equation*}
Relying on \eqref{redux eq}, we see that
\begin{equation*} 
{p_3^\eps}^\prime 
= \frac{ \mathcal{D}[p^\eps, u, \omega]_3 }{(\tilde{\mathcal{M}}_g + \mathcal{M}_a)_{3,3}}
\gtrsim \eps^{-2} |\log\eps|^{-1} \mathcal{D}[p^\eps, u, \omega]_3 .
\end{equation*}
Now we let
\begin{equation*}
\overline{T}_\varepsilon := 
\sup \left\{ t \in [0,\widehat{T}_\varepsilon) \cap [0,T_0] \ \Big/ \ 
   \eqref{Eq:PepsLess} \text{ holds true on } [0,t]  \right\}.
\end{equation*}
By mere continuity, $\overline{T}_\varepsilon>0$ for each $\varepsilon>0$.
Due to Proposition~\ref{key lemma}, for all $t \in [0,\overline{T}_\varepsilon)$ we have 
\begin{equation} \label{est acc}
{p_3^\eps}^\prime 
\gtrsim \eps^{-2} |\log\eps|^{-1} \left| 2+\int_0^t p_3^\eps(s) \ds \right|^{-4} > 0.
\end{equation}
%
In particular, $\overline{T}_\varepsilon < \widehat{T}_\varepsilon$.
Moreover, we see from \eqref{est acc} that in $[0,\overline{T}_\varepsilon)$,
$p_3^\eps$ grows at least like $\eps^{-2} |\log\eps|^{-1} t$.
It follows that for small $\varepsilon$, 
$\overline{T}_\varepsilon < T_0$ and that 
${p_3^\eps}(\overline{T}_\varepsilon) = \eps^{-\frac{1}{10}}$.
By continuity and by the estimate \eqref{est acc}, ${p_3^\eps}$ can never fall below $\eps^{-\frac{1}{10}}$ 
for times in $[\overline{T}_\varepsilon, T_0]$.
Consequently $\widehat{T}_\varepsilon \geq T_0$ since the integral condition \eqref{Eq:IntegralCondition}
cannot be violated in $[\overline{T}_\varepsilon, T_0]$ either. 
Hence, we have $T^*_\varepsilon > \widehat{T}_\varepsilon \geq T_0$.
This proves Theorem~\ref{thm:div} when Proposition~\ref{key lemma} is established.
\end{proof}
%
%
%

%
%

\subsection{Proof of Proposition \ref{key lemma}}

The rest of this section is devoted to the proof of Proposition~\ref{key lemma}.
%
We set 
\begin{align}
\label{data 1}
& \mathcal{C}_0 = \big\{ x \in \R^3 \, \big| \, x_1^2 + x_2^2 = 1 ; \ x_3=0 \big\}, \\
\label{data 2}
& p_0 = 0, \\
\label{data 3}
& \mu = 1, \\
\label{data 4}
& \omega_0 = \eta(x + s_0 e_3) \begin{pmatrix} x_2 \\ - x_1 \\ 0 \end{pmatrix},
\end{align}
where $s_0 \gg 1$ is some sufficiently large positive number 
and $\eta\neq 0$ is a smooth nonnegative function supported in $B_1(0)$ which only depends on $x_3$ and $x_1^2+x_2^2$. 
We further pick $\eta$ so that $\eta\leq 1$ pointwise and that it holds that 
\begin{equation} \label{ass eta}
\int_{\R^3} (x_1^2 + x_2^2)\eta(x) \dx \geq \frac{1}{100}.
\end{equation}
%
%
We choose the orientation of $\mathcal{C}_0$ so that at the point $(1,0,0)$ the tangent is $(0,1,0)$. \par
\ \par
The principle of the proof of Proposition~\ref{key lemma} is as follows.
On the one hand, as long as $\omega$ does not blow up and $p_3^\eps$ is not too big, we expect that over short times 
the main contribution to $\mathcal{D}$ is given by 
$\mathcal{D} \big[0, K_{\R^3}[\kappa_{\mathcal{C}_0}+\omega_0], \omega_0 \big]$,
up to the translation induced by the moving frame.
On the other hand we expect that $\mathcal{D} \big[0, K_{\R^3}[\kappa_{\mathcal{C}_0}+\omega_0], \omega_0 \big]$
fulfills a desired sign condition, with a quantitative lower bound.
These two parts, together with a control on the vorticity on uniform time-intervals,
establish the behavior described above. \par
\ \par
The above ideas correspond to the three following lemmas, which are
proved in the next subsections, and which involve Proposition~\ref{key lemma}.
In these statements, it will be convenient to drop the $\varepsilon$ exponents in $p$, $u$, $\omega$,
and to use the notation $\omega_t := \omega(t,\cdot)$ and
\begin{equation} \label{def s}
s(t) = s_0 + \int_0^t p_3^\eps(t') \dd t'.
\end{equation}
Note that, by the definition of $\widehat{T}_{\varepsilon}$, we have for all $t \in [0,\widehat{T}_{\varepsilon})$,
\begin{equation} \label{bd s}
s(t) \geq s_0-1 \gg 1.
\end{equation}
We recall that, in line with the statement of Proposition~\ref{key lemma}, the whole analysis will be performed in 
the time interval $[0,\widehat{T}_{\varepsilon})$. \par
The first lemma concerns the control of the vorticity.
\begin{lem} \label{no blow up}
For every $\tilde{\delta}>0$ there is a time $T_0>0$, independent of $\eps$, 
such that for all $\varepsilon>0$, the unique maximal solution of \eqref{eqeps-WV}--\eqref{vdecompeps}
satisfies for all $t \in \min(\widehat{T}_\varepsilon,T_0)$:
\begin{align}
\label{cont vort}
& \norm{ \omega_t(\cdot + s(t)e_3) - \omega_0(\cdot + s_0e_3)}_{L^2(\mathcal{F}_0)} \lesssim \tilde{\delta} ,\\
\label{confinement}
&\supp(\omega_t)\subset B_2(-s(t)e_3).
\end{align}
Furthermore, for all $m\in \N_{\geq 0}$ it holds that
\begin{equation} \label{vort smooth}
\norm{\omega_t}_{L^\infty([0,\min(\widehat{T}_\varepsilon,T_0)],C^m)} \lesssim_m 1 ,
\end{equation}
uniformly in $\eps$.
\end{lem}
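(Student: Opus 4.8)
The plan is to establish Lemma~\ref{no blow up} via a standard energy/transport argument for the vorticity equation \eqref{omega-eqeps}, exploiting the fact that as long as the solid stays away from the vorticity and $p_3^\eps$ remains controlled, the velocity field $u^\eps - u^\eps_{\mathcal S}$ advecting $\omega^\eps$ is uniformly (in $\eps$) well-behaved on a neighborhood of $\supp(\omega^\eps)$. First I would change variables to the ``co-moving'' frame centered at $-s(t)e_3$: setting $\bar\omega_t(x) := \omega_t(x + s(t)e_3)$, the transport-with-stretching equation \eqref{omega-eqeps} becomes an equation of the same type with advecting velocity $u^\eps(\cdot + s(t)e_3) - u^\eps_{\mathcal S}(\cdot + s(t)e_3) - s'(t)e_3 = u^\eps(\cdot+s(t)e_3) - \Omega^\eps\wedge(\cdot) - (\ell^\eps + s'(t))e_3$; since the data is axisymmetric, $\Omega^\eps$ stays zero and $\ell^\eps$ only has an $e_3$-component equal to $p_3^\eps$, so $\ell^\eps_3 + s'(t) = 2p_3^\eps$, but more importantly the whole drift is a pure $e_3$-translation plus a field that, by Proposition~\ref{kin fluid conv} and Corollary~\ref{Cor:DecayH}, is bounded in $C^m$ on any fixed compact set away from $\mathcal{C}_0$, uniformly in $\eps$ and with size controlled by $(1+|p^\eps|)$ — which, under the running hypothesis \eqref{Eq:PepsLess}, is at most $\eps^{-1/10}$, but crucially the \emph{part} of $u^\eps$ that actually sees the vorticity's location (roughly $K_{\R^3}[\omega^\eps]$, which is $O(\norm{\omega^\eps}_{L^1})$) is $O(1)$ while the potentially large $p_3^\eps H^\eps$-type contributions are localized near $\mathcal{C}_0$ and hence decay to zero on $\supp(\omega^\eps)$ like $\eps$ by Proposition~\ref{est pot}/\ref{est vor filament}. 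This is the key structural point.

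Next I would run a Gr\"onwall argument. For the confinement \eqref{confinement}: $\supp(\omega_t)$ is transported by the flow of the drift field, and since that field has $L^\infty$-norm $\lesssim 1$ on the region where $\supp(\omega^\eps)$ lives (using $\dist(\supp\omega^\eps,\mathcal{C}_0)\gtrsim 1$, which is part of what we prove inductively / via a continuity argument), the flow moves points by at most $Ct$; choosing $T_0$ small, since $\supp(\omega_0(\cdot+s_0e_3))\subset B_1(0)$ we get $\supp(\bar\omega_t)\subset B_2(0)$, i.e.\ \eqref{confinement}, and simultaneously the distance to $\mathcal{C}_0$ (which in the co-moving frame has moved to $\mathcal{C}_0 + s(t)e_3$, i.e.\ far away since $s(t)\gg 1$ by \eqref{bd s}) stays $\gtrsim 1$ — indeed it stays $\gtrsim s_0$, consistent with the needed separation. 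For the $L^2$-closeness \eqref{cont vort}: write $\partial_t\bar\omega_t + (\text{drift}\cdot\nabla)\bar\omega_t = (\bar\omega_t\cdot\nabla)(\text{drift})$, test against $\bar\omega_t - \bar\omega_0$, integrate by parts (the drift being divergence-free up to the $e_3$-translation which is harmless), and bound $\frac{d}{dt}\norm{\bar\omega_t-\bar\omega_0}_{L^2}^2 \lesssim \norm{\bar\omega_t-\bar\omega_0}_{L^2}^2 + \norm{\bar\omega_t-\bar\omega_0}_{L^2}\cdot(\text{error terms from }\bar\omega_0)$; the error terms involve $(\text{drift}\cdot\nabla)\bar\omega_0$ and $(\bar\omega_0\cdot\nabla)(\text{drift})$ which are $O(1)$ in $L^2$, so Gr\"onwall on $[0,T_0]$ gives $\norm{\bar\omega_t-\bar\omega_0}_{L^2}\lesssim e^{CT_0}CT_0 \leq \tilde\delta$ for $T_0$ small enough depending on $\tilde\delta$. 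For \eqref{vort smooth}, differentiate the transport equation $m$ times and run the analogous $C^m$ (or $H^s$ with $s$ large, then Sobolev embed) estimate: the commutators produce only lower-order terms times $C^{m+1}$-norms of the drift, which are $\lesssim 1$ on the relevant region by the decay estimates of Section~\ref{sec-Asymptotic}; standard transport estimates then give a bound depending only on $m$ and $\norm{\omega_0}_{C^m}$ on a uniform time interval.

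The one subtlety to handle carefully — and the place I expect to spend the most effort — is the bookkeeping of which pieces of $u^\eps = \mu H^\eps + \sum_i p_i^\eps\nabla\Phi_i^\eps + K_{\mathcal F_0^\eps}[\omega^\eps]$ are ``large'' and which are ``local''. The harmonic field $H^\eps$ and the Kirchhoff potentials $\nabla\Phi_i^\eps$ are the singular objects near $\mathcal{C}_0$, but on $\supp(\omega^\eps)$ (distance $\gtrsim 1$ from $\mathcal{C}_0$) they are uniformly bounded in every $C^m$ by Corollary~\ref{Cor:DecayH} and Proposition~\ref{est pot}, and in fact $\nabla\Phi_i^\eps$ is $O(\eps)$ there; meanwhile $p_3^\eps$ could a priori be as large as $\eps^{-1/10}$, so $p_3^\eps\nabla\Phi_3^\eps$ on $\supp(\omega^\eps)$ is $O(\eps^{9/10}) \to 0$, which is fine. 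Thus the effective advection of $\omega^\eps$ differs from that of the limiting field $K_{\R^3}[\omega^\eps] + \mu K_{\R^3}[\kappa_{\mathcal{C}_0}]$ (in the co-moving frame, minus the $e_3$-drift) by something $\to 0$ uniformly on the relevant compact region, so all the bounds are genuinely uniform in $\eps$. I would set this up as a continuity/bootstrap argument: define $T_0$ as the supremum of times on which \eqref{confinement} and, say, $\norm{\omega_t}_{C^1}\leq 2\norm{\omega_0}_{C^1}$ hold; show via the above estimates that on that interval the stronger bounds \eqref{cont vort}, \eqref{vort smooth} and $\dist(\supp\omega^\eps,\cals^\eps)\gtrsim 1$ all hold with constants independent of $\eps$, hence $T_0$ can be taken independent of $\eps$ (and as small as needed in terms of $\tilde\delta$), which closes the argument and simultaneously feeds into \eqref{wnice-local} of Proposition~\ref{key lemma}.
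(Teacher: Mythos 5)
There is a genuine gap, and it sits exactly at the point you flag as the ``key structural point''. You control the advecting field on $\supp\omega^\eps$ by writing $p_3^\eps\nabla\Phi_3^\eps = O(\eps^{-1/10})\cdot O(\eps)$, i.e.\ you invoke the running hypothesis \eqref{Eq:PepsLess}. But Lemma~\ref{no blow up} is stated (and is needed) on all of $[0,\min(\widehat{T}_\varepsilon,T_0)]$ \emph{without} any upper bound on $p_3^\eps$: the definition \eqref{Def:THat} of $\widehat{T}_\varepsilon$ only constrains $\int_0^t p_3^\eps$ from below, and in the proof of Theorem~\ref{thm:div} the solution crosses the threshold $p_3^\eps=\eps^{-1/10}$ at some $\overline{T}_\varepsilon<T_0$ and then grows at rate $\eps^{-2}|\log\eps|^{-1}$, while \eqref{wnice-local} (the conclusion of this lemma) must still hold on $[\overline{T}_\varepsilon,T_0]$ to keep the strong solution alive and the support away from the body. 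In that regime your pointwise bookkeeping fails: with $p_3^\eps\sim\eps^{-2}$ and $|\nabla\Phi_3^\eps|\sim\eps\,\dist^{-3}$ on the support (Proposition~\ref{est pot}), the product $p_3^\eps\nabla\Phi_3^\eps$ is of order $\eps^{-1}s(t)^{-3}$, which is not uniformly bounded since $s_0$ is fixed. So your argument proves the lemma only up to the time where \eqref{Eq:PepsLess} holds, which is not enough to close the divergence argument.

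The paper's mechanism for exactly this difficulty is what is missing from your sketch: one does not bound $p_3^\eps$ and $\nabla\Phi_3^\eps$ separately, but bounds the \emph{product} in $L^2$ via energy conservation. Under the auxiliary support/size assumption \eqref{aux ass}, the conserved modified energy yields $(\mathcal{M}_a^\eps)_{3,3}\,(p_3^\eps)^2\lesssim 1$ (Lemma~\ref{L2 est u}), i.e.\ by \eqref{def M} precisely $\norm{p_3^\eps\nabla\Phi_3}_{L^2(\mathcal{F}_0)}\lesssim 1$ no matter how large $p_3^\eps$ is; the nontrivial work there is controlling the cross terms $\scalar{H^\eps}{K_{\mathcal{F}_0}[\omega_t]}$ and $\scalar{\nabla\Phi_3}{K_{\mathcal{F}_0}[\omega_t]}$ and absorbing the latter. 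Interior elliptic regularity for the harmonic $\Phi_3$ then upgrades this $L^2$ bound on $B_3(-s(t)e_3)$ to $C^m$ bounds on $B_2(-s(t)e_3)$ (Lemma~\ref{vel est}), and only then does the part of your proposal that is correct --- the co-moving frame, confinement via a bounded drift, the $H^m$ estimates with commutators, the Gr\"onwall/bootstrap in $m$, and the short-time $L^2$ continuity giving \eqref{cont vort} --- go through uniformly in $\eps$. (A minor side remark: in the frame centered at $-s(t)e_3$ the drift is just $\tilde u=u(\cdot+s(t)e_3)$, since the translation by $s'(t)e_3$ exactly cancels $u_{\mathcal S}$; your ``$\ell_3^\eps+s'(t)=2p_3^\eps$'' bookkeeping is off, though this is not the essential issue.)
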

For further reference, we note in particular that Lemma~\ref{no blow up} implies by \eqref{bd s}
that up to time $\min(\widehat{T}_\varepsilon,T_0)$ it holds that
\begin{equation} \label{dist bd}
\dist(\supp\omega_t, \cals^\eps) \geq s(t) - 2 - \eps \gtrsim s(t) \gg 1.
\end{equation}
The second lemma establishes the claim concerning the main term 
$\mathcal{D} \big[0, K_{\R^3}[\kappa_{\mathcal{C}_0}+\omega_0], \omega_0 \big]$.
\begin{lem} \label{main ord D}
For all $s>-1$ and sufficiently large $s_0$ it holds that
\begin{align}
\mathcal{D} \big[ 0, \, K_{\R^3}[\kappa_{\mathcal{C}_0} + \omega_0(\cdot + s e_3)], \, \omega_0(\cdot + s e_3) \big]
\geq \frac{1}{1000}(s+s_0)^{-4},
\end{align}
for sufficiently small $\eps$, where the smallness condition on $\eps$ is uniform in $s,s_0$.
\end{lem}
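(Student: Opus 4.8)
The idea is to compute $\mathcal{D}[0, K_{\R^3}[\kappa_{\mathcal{C}_0}+\omega_0(\cdot+se_3)], \omega_0(\cdot+se_3)]_3$ directly from the definition \eqref{defD*}: with $p=0$ we have $u_{\mathcal{S}}=0$, so only the first term survives, and
\[
\mathcal{D}^*[u,\omega]_3 = \int_{\R^3} [\zeta_3, \omega, u]\dx = \int_{\R^3} (e_3 \wedge x)\cdot(\omega \wedge u)\dx,
\]
wait — more precisely $[\zeta_3,\omega,u] = \zeta_3\cdot(\omega\wedge u)$ with $\zeta_3(x)=e_3\wedge x$ since $3\in\{4,5,6\}$? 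No: here $\zeta_3 = e_3$ (as $3\le 3$). So $\mathcal{D}^*[u,\omega]_3 = \int_{\R^3} e_3\cdot(\omega\wedge u)\dx = \int_{\R^3} [e_3,\omega,u]\dx$. Set $u = K_{\R^3}[\kappa_{\mathcal{C}_0}] + K_{\R^3}[\omega_0(\cdot+se_3)]$. By bilinearity this splits into an interaction term between the filament-induced field $H^* = K_{\R^3}[\kappa_{\mathcal{C}_0}]$ and $\omega_0(\cdot+se_3)$, plus a self-interaction term of the localized vortex blob with itself. First I would argue the self-interaction term is negligible (indeed for an axisymmetric blob one expects it to be small, or at least lower order in $1/s_0$; in any case it is $O(s_0^{-k})$ for a large power and does not spoil the sign); the dominant contribution is
\[
\int_{\R^3} \big[e_3, \omega_0(x+se_3), H^*(x)\big]\dx = \int_{\R^3} \big[e_3, \omega_0(y), H^*(y - se_3)\big]\dy.
\]

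Next I would use the far-field (dipole) expansion of $H^* = K_{\R^3}[\kappa_{\mathcal{C}_0}]$. Since $\mathcal{C}_0$ is the unit circle in the plane $\{x_3=0\}$ around $e_3$, $H^*$ is the field of a magnetic dipole with moment proportional to $\mathcal{A}_0 = \pi e_3$ (the vector area of the unit disk), and $\int \dd\kappa_{\mathcal{C}_0}=0$ gives the leading $|x|^{-3}$ decay (cf.\ Corollary~\ref{Cor:DecayH}, \eqref{decay K}). For $y$ in the support of $\omega_0$ (a ball of radius $1$) and $s\ge s_0 \gg 1$, the argument $y - se_3$ has large modulus $\approx s$, so
\[
H^*(y-se_3) = \frac{1}{4\pi}\Big(\frac{3(\mathcal{A}_0\cdot \hat r)\hat r - \mathcal{A}_0}{|y-se_3|^3}\Big) + O(s^{-4}),
\]
with $\hat r = (y-se_3)/|y-se_3| \approx -e_3$. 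Plugging $\mathcal{A}_0 = \pi e_3$ in and evaluating at $\hat r = -e_3$ gives $H^*(y-se_3) \approx \frac{1}{4\pi}\cdot\frac{2\pi e_3}{s^3} = \frac{e_3}{2s^3}$ to leading order — it points in $+e_3$. Then $[e_3, \omega_0(y), H^*(y-se_3)] \approx \frac{1}{2s^3}[e_3,\omega_0(y),e_3] = 0$, so the leading term vanishes and I must go to the next order: expand $|y-se_3|^{-3}$ and $\hat r$ to first order in $y/s$. This is the delicate bookkeeping step. Using the explicit swirl form $\omega_0(y) = \eta(y+s_0 e_3)(y_2,-y_1,0)^T$ and the assumption \eqref{ass eta} on $\int (y_1^2+y_2^2)\eta$, the surviving term will be a positive multiple of $(s+s_0)^{-4}\int(y_1^2+y_2^2)\eta\,\mathrm{d}y$; the constant $\frac{1}{1000}$ is obtained by making the smallness of $\eps$, the size of $s_0$, and the crude constants in the dipole expansion explicit, using $\int(y_1^2+y_2^2)\eta \ge \frac{1}{100}$ and $\eta\le 1$.

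The main obstacle I anticipate is the second-order term extraction: one has to carefully track both the gradient of the dipole kernel and the deviation of $\hat r$ from $-e_3$, show that all the terms which are odd in the cross-section variables $(y_1,y_2)$ integrate to zero against the axisymmetric $\eta$, and that the remaining even term has the right sign and the stated magnitude. A secondary point is controlling the self-interaction $\int[e_3, \omega_0(\cdot+se_3), K_{\R^3}[\omega_0(\cdot+se_3)]]$; here I would note that $K_{\R^3}[\omega_0(\cdot+se_3)](x) = K_{\R^3}[\omega_0](x-se_3+\cdots)$ is translation-covariant, and that $\int[e_3,\omega_0,K_{\R^3}[\omega_0]]\dx$ is a fixed finite number independent of $s$ — it represents the self-induced $e_3$-impulse rate of the blob, which for a compactly supported axisymmetric swirl vorticity one can bound, or which can simply be absorbed: it is $O(1)$ but multiplied by nothing growing, whereas we only claim a lower bound of size $(s+s_0)^{-4}$. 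Hence for $s_0$ large enough this "constant" term is actually the issue — so I would instead observe that with the swirl ansatz $\omega_0$ purely azimuthal, $K_{\R^3}[\omega_0]$ is poloidal (in the meridional plane), so $\omega_0 \wedge K_{\R^3}[\omega_0]$ has no $e_3$-component pointwise after integrating over the azimuthal angle, giving exactly $\int[e_3,\omega_0,K_{\R^3}[\omega_0]]\dx = 0$. That symmetry observation is what makes the clean lower bound possible, and I would state it as a short sub-claim before doing the dipole computation.
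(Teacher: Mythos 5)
Your plan for the leading term is essentially the paper's computation: the self-interaction of the blob is discarded exactly, and the blob--filament interaction is evaluated through the far-field expansion of $K_{\R^3}[\kappa_{\mathcal{C}_0}]$, whose radial component at distance $s+s_0$ along the axis is $-\tfrac{3\sqrt{x_1^2+x_2^2}\,x_3}{4|x_3|^5}+O(|x_3|^{-6})$ (the paper quotes this as \eqref{asy K} rather than redoing the dipole bookkeeping), and the bracket identity $[e_3,(x_2,-x_1,0)^T,e_r]=\sqrt{x_1^2+x_2^2}$ together with \eqref{ass eta} gives the positive multiple of $(s+s_0)^{-4}$. However, there is a genuine gap: you identify $\mathcal{D}$ with the limit functional \eqref{defD*} and claim that, since $p=0$ forces $u_{\mathcal{S}}=0$, ``only the first term survives.'' The $\mathcal{D}$ in this lemma is the $\varepsilon$-dependent functional \eqref{defD} (integrals over $\mathcal{F}_0^\eps$, with $\nabla\Phi_3^\eps$); that is the only reason the statement mentions ``sufficiently small $\eps$, uniform in $s,s_0$.'' Setting $p=0$ kills $u_{\mathcal{S}}$ but not the second term $-\int_{\mathcal{F}_0}[\omega, u, \nabla\Phi_3]\,\dd x$, which your proposal never addresses. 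This term is not harmless: the crude bounds $|\nabla\Phi_3|\lesssim\eps\,\dist^{-2}$ and $\|K_{\R^3}[\omega_0]\|_{L^\infty(\supp\omega_0)}\lesssim1$ only give $O(\eps\,(s+s_0)^{-2})$, which cannot be absorbed into $(s+s_0)^{-4}$ with an $\eps$-smallness uniform in $s,s_0$. The paper needs a genuinely finer argument here: for the piece involving $K_{\R^3}[\omega_0(\cdot+se_3)]$ it cycles the bracket, subtracts the best constant multiple of $e_3$ from $\nabla\Phi_3$, reuses the cancellation \eqref{canc}, and invokes the transverse-component decay \eqref{4th ord 1} of Lemma~\ref{lem87} to obtain $O(\eps|\log\eps|^{\frac12}(s+s_0)^{-4})$ (see \eqref{2nd bra est}); the piece involving $K_{\R^3}[\kappa_{\mathcal{C}_0}]$ is $O(\eps|\log\eps|^{\frac12}(s+s_0)^{-5})$. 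Without this half of the argument the stated lower bound does not follow.

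A secondary point: your symmetry argument for the self-interaction term is not correct as stated. With $\omega_0$ azimuthal and $K_{\R^3}[\omega_0]$ poloidal, the $e_3$-component of $\omega_0\wedge K_{\R^3}[\omega_0]$ is $-\omega^\theta u^r$, which is $\theta$-independent, so azimuthal integration does not make it vanish pointwise in the meridional plane. The conclusion $\int_{\R^3}[e_3,\omega_0,K_{\R^3}[\omega_0]]\,\dd x=0$ is nevertheless true, and the paper proves it by the general identity $\int_{\R^3} v\wedge\curl v\,\dd x=0$ for divergence-free decaying $v$ (a consequence of \eqref{etlesigne} and integration by parts, see \eqref{canc}); you should replace your pointwise-symmetry claim by that identity. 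With these two repairs --- restoring and estimating the $\nabla\Phi_3$ term, and fixing the cancellation argument --- your route coincides with the paper's proof.
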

%
%
%
%
The third lemma establishes the claim that this term is indeed the main contribution to $\mathcal{D}$.
\begin{lem} \label{diff D}
Given $T_0$ as in Lemma~\ref{no blow up} for suitably small $\tilde{\delta}$, the following holds. \par
\noindent
If $t\leq \min(T_0,\widehat{T}_{\varepsilon})$ and $p_3^\eps(t) \leq \eps^{-\frac{1}{10}}$, then we have that 
\begin{equation*}
\Bigl| \mathcal{D}[p^\eps,u(t),\omega_t]_3 -
\mathcal{D} \bigl[ 0, K_{\R^3}[\kappa_{\mathcal{C}_0}+\omega_0(\cdot+(s(t)-s_0)e_3)], \omega_0(\cdot+(s(t)-s_0)e_3) \bigr]_3 \Bigr|
\leq \frac{1}{2000} s(t)^{-4}.
\end{equation*}
\end{lem}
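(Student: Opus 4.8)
The idea is to expand the difference $\mathcal{D}[p^\eps,u(t),\omega_t]_3 - \mathcal{D}[0, K_{\R^3}[\kappa_{\mathcal C_0}+\tilde\omega_0], \tilde\omega_0]_3$ (writing $\tilde\omega_0 := \omega_0(\cdot+(s(t)-s_0)e_3)$ for brevity) into several contributions, each controlled by a small quantity that can be made $\leq \frac{1}{C} s(t)^{-4}$ for any prescribed $C$ by choosing $\tilde\delta$ small and $\eps$ small. Recall from \eqref{defD} that $\mathcal{D}[p,u,\omega]_i = \int_{\mathcal F_0}[\zeta_i,\omega,u]\dx - \int_{\mathcal F_0}[\omega, u-u_{\mathcal S},\nabla\Phi_i]\dx$, while $\mathcal{D}^*[u^*,\omega^*]_i = \int_{\R^3}[\zeta_i,\omega^*,u^*]\dx$ from \eqref{defD*}. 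So the plan is: first I would pass from $\mathcal{D}$ (with $\eps>0$) to $\mathcal{D}^*$, then from the actual velocity/vorticity $(u(t),\omega_t)$ to the comparison pair $(K_{\R^3}[\kappa_{\mathcal C_0}+\tilde\omega_0],\tilde\omega_0)$.

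For the first passage, note the $\nabla\Phi_i$-term in $\mathcal{D}$ is negligible: by Proposition~\ref{est pot}, $\|\nabla\Phi_i\|_{L^\infty(\supp\omega_t)}\lesssim \eps\,\dist(\supp\omega_t,\partial\mathcal S_0^\eps)^{-3}$, and by \eqref{dist bd} this distance is $\gtrsim s(t)$, so together with $u-u_{\mathcal S}$ being polynomially bounded (using $p_3^\eps(t)\leq\eps^{-1/10}$, Lemma~\ref{no blow up}, and the bound $\|K_{\mathcal F_0}[\omega_t]\|_{L^\infty}\lesssim 1$ which follows from \eqref{dist bd}) and $\|\omega_t\|_{L^1}\lesssim 1$, this term is $O(\eps^{9/10}s(t)^{-3})\ll s(t)^{-4}$ for small $\eps$ (since $s(t)\lesssim s_0 + T_0\eps^{-1/10}$ grows at most like a small negative power of $\eps$, one needs to check this polynomial-in-$\eps$ bookkeeping carefully, but it works). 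Then the first integral $\int_{\mathcal F_0}[\zeta_3,\omega_t,u(t)]\dx$ agrees with $\int_{\R^3}[\zeta_3,\omega_t,u(t)]\dx$ since $\omega_t$ vanishes on $\mathcal S_0^\eps$.

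For the second passage, I would compare $u(t)$ with $K_{\R^3}[\kappa_{\mathcal C_0}]+K_{\R^3}[\omega_t]$ using Proposition~\ref{kin fluid conv}/estimate \eqref{kin decay u}: on $\supp\omega_t$ we have $|u(t)-(K_{\R^3}[\kappa_{\mathcal C_0}]+K_{\R^3}[\omega_t])|\lesssim (1+|p^\eps|)\eps|\log\eps|^2(\dist(x,\mathcal C_0)-\eps)^{-3}\lesssim \eps^{9/10}|\log\eps|^2 s(t)^{-3}$, again negligible. Then I replace $\omega_t$ by $\tilde\omega_0$: since both are supported in $B_2(-s(t)e_3)$ by \eqref{confinement}, and since the Biot-Savart kernel $K_{\R^3}[\cdot]$ restricted to estimating $[\zeta_3,\cdot,\cdot]$ over that ball is a bounded operation, the difference is controlled by $\|\omega_t(\cdot+s(t)e_3)-\omega_0(\cdot+s_0e_3)\|_{L^2}\lesssim\tilde\delta$ (from \eqref{cont vort}) times the relevant norms of the other factors, which are uniformly bounded; here I must be a little careful that $[\zeta_3,\omega,K_{\R^3}[\omega]]$ is quadratic in $\omega$, so one expands the difference as a sum of terms each linear in $(\omega_t-\tilde\omega_0)$ and controls the other factor in $L^2$ or $L^\infty$ using \eqref{vort smooth} and standard Biot-Savart/Calderón–Zygmund bounds. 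Finally, the $\kappa_{\mathcal C_0}$-contribution $\int[\zeta_3,\tilde\omega_0,K_{\R^3}[\kappa_{\mathcal C_0}]]$ matches exactly between the two expressions by construction.

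The main obstacle is the bookkeeping of how the smallness in $\eps$ races against the possible growth of $s(t)$ and $|p^\eps(t)|$: since $p_3^\eps(t)$ is only bounded by $\eps^{-1/10}$, all polynomial-in-$|p^\eps|$ factors inflate the error estimates by negative powers of $\eps$, and $s(t)$ appears both as a large denominator (helping) and inside the domain of integration. One must verify that the net power of $\eps$ in every error term is strictly positive while the net power of $s(t)$ is at most $-3$ (so that, combined with $s(t)\geq s_0-1\gg1$, the comparison with $\frac{1}{2000}s(t)^{-4}$ closes — note $s(t)^{-3}\leq (s_0-1)^{-1}s(t)^{-4}$, so taking $s_0$ large absorbs the mismatch in powers, while the residual $\tilde\delta$-term is handled by the freedom in choosing $\tilde\delta$). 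Organizing this so that the choice of $s_0$ (made once, before $\tilde\delta$ and $\eps$) is consistent with the hypotheses of Lemma~\ref{no blow up} and Lemma~\ref{main ord D} is the delicate part.
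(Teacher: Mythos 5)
Your skeleton (peel off the $\nabla\Phi_3$-terms, replace $u(t)$ by $K_{\R^3}[\kappa_{\mathcal{C}_0}]+K_{\R^3}[\omega_t]$, then replace $\omega_t$ by $\tilde\omega_0:=\omega_0(\cdot+(s(t)-s_0)e_3)$) is the same as the paper's, but the error bookkeeping does not close as you set it up. All of your error terms decay only like $s(t)^{-3}$ (times positive powers of $\eps$), one power short of the target $\tfrac{1}{2000}s(t)^{-4}$, and your proposed repair rests on the inequality $s(t)^{-3}\leq (s_0-1)^{-1}s(t)^{-4}$, which is backwards: since $s(t)\geq s_0-1$, one has $s(t)^{-3}\geq (s_0-1)\,s(t)^{-4}$, so enlarging $s_0$ makes the mismatch worse, not better. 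The missing power is recovered in the paper from axisymmetry: in the brackets $[\,e_3,\cdot,\cdot\,]$ and $[\,\omega_t,p_3^\eps e_3,\nabla\Phi_3\,]$ the $e_3$-components drop out, and the horizontal components of the potential-type fields $\nabla\Phi_3$, $H^\eps-K_{\R^3}[\kappa_{\mathcal{C}_0}]$, $\uref[\omega_t]$ vanish on the symmetry axis, giving the improved decay $\lesssim \eps|\log\eps|^{\frac12}\sqrt{x_1^2+x_2^2}\,|x_3|^{-4}$ of Lemma~\ref{lem87}; likewise only the $e_r$-component of $K_{\R^3}[\kappa_{\mathcal{C}_0}]$ contributes and it is $O(|x_3|^{-4})$ on $\supp\omega_t$ by \eqref{asy K}. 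Without these refinements you cannot reach $s(t)^{-4}$, because under the hypothesis $p_3^\eps(t)\leq\eps^{-\frac1{10}}$ the quantity $s(t)$ is not bounded uniformly in $\eps$, so a leftover factor $s(t)$ cannot be absorbed by a fixed constant.

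The second genuine gap is your treatment of the quadratic self-interaction term. Bounding $\int[e_3,\omega_t,K_{\R^3}[\omega_t]]\dx-\int[e_3,\tilde\omega_0,K_{\R^3}[\tilde\omega_0]]\dx$ by $\tilde\delta$ times uniformly bounded norms produces an error of size $C\tilde\delta$ with no factor $s(t)^{-4}$ attached; since $\tilde\delta$ and $T_0$ must be chosen independently of $\eps$ while $s(t)^{-4}$ can be as small as a positive power of $\eps$, no admissible choice of $\tilde\delta$ yields $C\tilde\delta\leq\tfrac1{2000}s(t)^{-4}$. The paper avoids this term altogether: both self-interaction integrals vanish identically by $\int_{\R^3}v\wedge\curl v\dx=0$ (see \eqref{canc}), so the only place where the closeness \eqref{cont vort} is used is the linear term $\int[e_3,\omega_t-\tilde\omega_0,K_{\R^3}[\kappa_{\mathcal{C}_0}]]\dx$, which carries the factor $\norm{K_{\R^3}[\kappa_{\mathcal{C}_0}]\cdot e_r}_{L^\infty(B_2(-s(t)e_3))}\lesssim s(t)^{-4}$ and is therefore $\lesssim\tilde\delta\,s(t)^{-4}$. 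You need both the exact cancellation and the axisymmetric decay gains to make the estimate close.
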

Once Lemmas~\ref{no blow up}, \ref{main ord D} and \ref{diff D} are proved, 
the proof of Proposition~\ref{key lemma} is immediate.
\begin{proof}[Proof of Proposition~\ref{key lemma}]
We conclude the estimate on $\mathcal{D}$ in the proposition from Lemmata~\ref{main ord D} and \ref{diff D}, 
setting $s=s(t)-s_0$, which is $\geq -1$ by the definition of $\widehat{T}_{\varepsilon}$ and \eqref{def s}, 
while the statement about the non-blow-up of $\omega$ was proven in Lemma \ref{no blow up}.
\end{proof}

%
%
%
%
%

\subsection{Proof of Lemma \ref{no blow up}}

Our first estimate in $[0,\widehat{T}_{\varepsilon})$ is the following, 
and uses an additional assumption which will be proved later to hold true in $[0,\widehat{T}_{\varepsilon})$.
\begin{lem} \label{L2 est u}
For $t$ in $[0,\widehat{T}_{\varepsilon})$, if
\begin{equation} \label{aux ass}
\supp(\omega_t) \subset B_2(-s(t)e_3) \ \text{ and } \ \norm{ \omega_t }_{L^2} \leq 100,
\end{equation} 
then one has for some constant $C>0$ independent of $\eps$ and $t$,
\begin{align}
(\mathcal{M}_a^\eps)_{3,3} \, p_3^\eps(t)^2 \leq C.
\end{align}
\end{lem}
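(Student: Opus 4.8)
The goal is an energy-type bound: the kinetic energy of the fluid is controlled by the kinetic energy of the solid, and since the mass is vanishing this forces the added-inertia contribution $(\mathcal M_a^\eps)_{3,3}\,(p_3^\eps)^2$ to stay bounded. The natural starting point is the conservation of the total kinetic energy $\mathcal E$ from Section~\ref{subsec-en}, adapted to the present scaling \eqref{Eq:LightScaling} and the vorticity-carrying setting. First I would write the conserved energy as
\begin{equation*}
\mathcal E^\eps(t) = \tfrac12 \tilde m^\eps |\ell^\eps|^2 + \tfrac12 \tilde{\mathcal J}_0^\eps \Omega^\eps\cdot\Omega^\eps + \tfrac12 \int_{\mathcal F_0^\eps} |u^\eps|^2 \dx,
\end{equation*}
and recall (this is the rigorous version of \ref{subsec-en}, valid for strong solutions) that $\mathcal E^\eps$ is constant in time. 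In the axisymmetric reduction only $p_3^\eps$ is nonzero, so the solid part is $\tfrac12(\tilde{\mathcal M}_g^\eps)_{3,3}(p_3^\eps)^2 = O(\eps^2)\,(p_3^\eps)^2$, which is \emph{not} the quantity we want; the point is rather that the fluid energy at time $t$ is bounded by $\mathcal E^\eps(0)$, which is $O(1)$.

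\textbf{Key steps.} (i) Bound the initial energy $\mathcal E^\eps(0)$ uniformly in $\eps$: with $p_0=0$ the solid part vanishes, and $u^\eps(0) = \mu H^\eps + K_{\mathcal F_0^\eps}[\omega_0]$, so using the decomposition \eqref{def ref} and the $L^2(\R^3\setminus\cals^\eps)$ estimates of Propositions~\ref{est vor filament} and \ref{est ref} (note $H^\eps\to K_{\R^3}[\kappa_{\mathcal C_0}]$ in $L^2(\mathcal F_0^\eps)$, which is finite since it is a dipole-type field away from $\mathcal C_0$, and $\omega_0$ is supported far from $\mathcal C_0$), one gets $\mathcal E^\eps(0)\le C$ uniformly. (ii) Hence $\int_{\mathcal F_0^\eps}|u^\eps(t)|^2\dx \le 2\mathcal E^\eps(0) \le C$ for all $t\in[0,\widehat T_\eps)$. (iii) Now extract the added-inertia piece: decompose $u^\eps = p_3^\eps \nabla\Phi_3^\eps + r^\eps$ where $r^\eps := \mu H^\eps + K_{\R^3}[\omega^\eps] + u_{\mathrm{ref}}^\eps[\omega^\eps]$. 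Then
\begin{equation*}
\int_{\mathcal F_0^\eps}|u^\eps|^2 = (p_3^\eps)^2 (\mathcal M_a^\eps)_{3,3} + 2 p_3^\eps \int_{\mathcal F_0^\eps}\nabla\Phi_3^\eps\cdot r^\eps + \int_{\mathcal F_0^\eps}|r^\eps|^2,
\end{equation*}
using $(\mathcal M_a^\eps)_{3,3} = \|\nabla\Phi_3^\eps\|_{L^2}^2$ from \eqref{def M}. (iv) Estimate the cross term by Cauchy--Schwarz: $2|p_3^\eps|\,\|\nabla\Phi_3^\eps\|_{L^2}\|r^\eps\|_{L^2} \le \tfrac12 (p_3^\eps)^2(\mathcal M_a^\eps)_{3,3} + 2\|r^\eps\|_{L^2}^2$, and $\|r^\eps\|_{L^2(\mathcal F_0^\eps)}$ is bounded uniformly by the same Biot--Savart/reflection estimates as in step (i), using the confinement and $L^2$-control of $\omega_t$ provided by the standing assumption \eqref{aux ass} together with \eqref{bd s} (so $\supp\omega_t$ stays a distance $\gtrsim s_0-2$ from $\mathcal C_0$). (v) Absorb: $\tfrac12(p_3^\eps)^2(\mathcal M_a^\eps)_{3,3} \le \int|u^\eps|^2 + 2\|r^\eps\|_{L^2}^2 \le C$, which is the claimed bound.

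\textbf{Main obstacle.} The delicate point is step (iv)--(v): one must make sure the cross term is genuinely absorbable, i.e.\ that the bound on $\|r^\eps\|_{L^2}$ does not itself hide a factor of $p_3^\eps$. Since $r^\eps$ depends on $\omega^\eps$ but not directly on $p^\eps$, and under \eqref{aux ass} the vorticity is controlled uniformly, this is fine — but it is exactly here that one uses that the lemma is stated \emph{conditionally} on \eqref{aux ass}. A secondary technical point is justifying the energy identity and the integrations by parts in the decomposition of $\int|u^\eps|^2$: the fields $\nabla\Phi_3^\eps$, $H^\eps$ and the reflection term all decay like $|x|^{-2}$ or faster at infinity (Propositions~\ref{est pot}, \ref{est vor filament}, \ref{est ref}), while $K_{\R^3}[\omega^\eps]$ decays like $|x|^{-2}$ with mean-zero vorticity, so all cross integrals converge and the orthogonality $\int_{\mathcal F_0^\eps}\nabla\Phi_3^\eps\cdot(\mu H^\eps)=0$ etc.\ would be a bonus but is not even needed — Cauchy--Schwarz suffices. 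I would also keep $\|\omega_t\|_{L^2}\le 100$ explicit so that the constant $C$ is manifestly independent of $\eps$ and $t$.
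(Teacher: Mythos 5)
Your overall architecture (energy conservation, decomposition of $u^\eps$, Cauchy--Schwarz/Young absorption of the cross term) is the same as the paper's, but there is a genuine gap at steps (i), (ii) and (iv): the uniform bounds you claim for $\mathcal E^\eps(0)$ and for $\norm{r^\eps}_{L^2(\mathcal F_0^\eps)}$ are false. The harmonic field carries the fixed circulation $\mu$ around the shrinking filament, and near $\mathcal C_0$ both $H^\eps$ and $K_{\R^3}[\kappa_{\mathcal C_0}]$ are of size $\sim (2\pi\,\dist(x,\mathcal C_0))^{-1}$ (cf.\ Lemma~\ref{ind approx}); hence $\norm{H^\eps}_{L^2(\mathcal F_0^\eps)}^2\sim |\log\eps|$ and $K_{\R^3}[\kappa_{\mathcal C_0}]$ is \emph{not} square-integrable in a neighbourhood of the curve --- the dipole decay at infinity does not help with the line singularity. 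Proposition~\ref{est vor filament} only controls the difference $H^\eps-K_{\R^3}[\kappa_{\mathcal C_0}]$, not the field itself. Consequently the conserved energy is of order $|\log\eps|$, your remainder $r^\eps$ (which contains $\mu H^\eps$) has $L^2$ norm $\sim|\log\eps|^{1/2}$, and your scheme only yields $(\mathcal M_a^\eps)_{3,3}(p_3^\eps)^2\lesssim|\log\eps|$, not the $\eps$-independent bound claimed in the lemma.

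The missing idea is how to neutralize this divergent self-energy. The paper does it as follows: since $\norm{H^\eps}_{L^2(\mathcal F_0)}^2$ is independent of $t$, it can be dropped from the conserved quantity, and the cross term $\scalar{H^\eps}{\nabla\Phi_3}$ vanishes by integration by parts; one then works with the modified conserved energy $\tilde{\mathcal E}(t)$ containing only $(\mathcal M_a^\eps)_{3,3}(p_3^\eps)^2$, $\norm{K_{\mathcal F_0}[\omega_t]}_{L^2}^2$ and the two remaining cross terms. The term $\scalar{H^\eps}{K_{\mathcal F_0}[\omega_t]}$ must \emph{not} be estimated by Cauchy--Schwarz (that is exactly where the $|\log\eps|^{1/2}$ would re-enter): instead one splits $K_{\mathcal F_0}[\omega]=\uref[\omega]+K_{\R^3}[\omega]$, uses that $\uref$ is a gradient so its pairing with the divergence-free, tangent-to-the-boundary field integrates to zero, and integrates $\scalar{K_{\R^3}[\omega]}{K_{\R^3}[\kappa_{\mathcal C_0}]}$ by parts against the vector potential $\frac{1}{4\pi|\cdot|}*\kappa_{\mathcal C_0}$; the bulk term is $O(1)$ because $\omega$ is supported far from $\mathcal C_0$, and the boundary term is $O(1)$ because the potential is $O(\eps^{-1})$ on $\partial\mathcal F_0$ while $\mathcal H^2(\partial\mathcal F_0)=O(\eps)$. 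With these $O(1)$ bounds on the $H^\eps$-cross terms (at time $0$ and at time $t$, the latter using the standing assumption \eqref{aux ass} and \eqref{bd s}), your step (iv)--(v) absorption of $|p_3^\eps|\,|\scalar{\nabla\Phi_3}{K_{\mathcal F_0}[\omega_t]}|$ via Young's inequality goes through and gives the stated $\eps$-uniform bound.
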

\begin{proof}[Proof of Lemma~\ref{L2 est u}]
Notice that when \eqref{bd s} and \eqref{confinement} are both satisfied, we have in  particular
\begin{equation*}
\mbox{dist}(\supp \omega_t, \mathcal{S}^{\varepsilon}_0) \geq 1.
\end{equation*}
Now, splitting the fluid velocity as in Lemma~\ref{vitdec} as
\begin{equation*}
u= H^\eps + p_3^\eps \nabla \Phi_3 + K_{\mathcal{F}_0}[\omega],
\end{equation*}
and inserting the definition of $\mathcal{M}_a^\eps$, we can rewrite the fluid energy 
$\frac{1}{2}\norm{u}_{L^2(\mathcal{F}_0)}^2$ as
\begin{align*}
\frac{1}{2}\Bigl(
& \norm{H^\eps}^2_{L^2(\mathcal{F}_0)} + (\mathcal{M}_a^\eps)_{3,3} \, p_3(t)^2
 + \norm{K_{\mathcal{F}_0}[\omega_t]}_{L^2(\mathcal{F}_0)}^2 + 2 p_3^\eps(t) \scalar{H^\eps}{\nabla\Phi_3}
 + 2 \scalar{H^\eps}{K_{\mathcal{F}_0}[\omega_t]} \\
& + 2p_3^\eps(t) \scalar{\nabla\Phi_3}{K_{\mathcal{F}_0}[\omega_t]} \Bigr).
\end{align*}
By Subsection~\ref{subsec-en}, the total energy 
$\frac{1}{2} (\tilde{\mathcal{M}}_g^\eps)_{3,3} \, p_3^\eps(t)^2 + \frac{1}{2}\norm{u}_{L^2(\mathcal{F}_0)}^2$
is conserved over time.
Now, note that $\norm{H^\eps}_{L^2(\mathcal{F}_0)}^2$ is independent of $t$ 
and hence the energy without this term is conserved too. 
Furthermore, as $\div H^\eps=0$ we always have
\begin{equation} \label{part int}
\int_{\mathcal{F}_0} \nabla \Phi_3 \cdot H^\eps \dx = \int_{\de\cals^\eps} n \cdot H^\eps \Phi_3 \dd\sigma = 0,
\end{equation}
where this partial integration is justified by the decay estimates in Proposition~
\ref{est pot} and Corollary~\ref{Cor:DecayH}.
Therefore, 
\begin{align*}
\tilde{\mathcal{E}}(t):=\frac{1}{2}
&\left((\mathcal{M}_a^\eps)_{3,3} \, p_3^\eps(t)^2 + \norm{K_{\mathcal{F}_0}[\omega_t]}_{L^2(\mathcal{F}_0)}^2
  + 2\scalar{H^\eps}{K_{\mathcal{F}_0}[\omega_t]} + 2p_3^\eps(t) \scalar{\nabla\Phi_3}{K_{\mathcal{F}_0}[\omega_t]} \right) \\
& + \frac{1}{2}(\tilde{\mathcal{M}}_g^\eps)_{3,3}p_3^\eps(t)^2,
\end{align*}
is a conserved quantity. We first check that 
\begin{equation*}
\tilde{\mathcal{E}}(0) \lesssim 1.
\end{equation*}
As $p_3^\eps(0)=0$ all terms involving $p_3^\eps$ are $0$.
Concerning the others, using \eqref{def ref}, we have 
\begin{equation} \label{in vort e}
\norm{ K_{\mathcal{F}_0}[\omega_0] }_{L^2(\mathcal{F}_0)}^2
  \lesssim \norm{ K_{\R^3}[\omega_0] }_{L^2(\mathcal{F}_0)}^2
  + \norm{ \uref[\omega_0] }_{L^2(\mathcal{F}_0)}^2 \lesssim 1,
\end{equation}
where the estimate on $K_{\R^3}[\omega_0]$ is an easy consequence of $\omega_0$ being smooth and compactly supported,
while the estimate on $\uref$ directly follows from Proposition~\ref{est ref} with the fact that by definition 
we have $\dist(\supp\omega_0,\cals^\eps)\geq 1$. \par
In the other scalar product, we can split $K_{\mathcal{F}_0}[\omega_0] = \uref[\omega_0] + K_{\R^3}[\omega_0]$. 
\begin{equation}\begin{aligned} \label{Eq:EstKvsK}
| \scalar{ K_{\mathcal{F}_0}[\omega_0] }{ H^\eps }| 
\leq& |\scalar{\uref[\omega_0]}{H^\eps}|+
\norm{ H^{\eps} - K_{\R^3}[ \kappa_{\mathcal{C}_0} ] }_{L^2(\mathcal{F}_0)} 
    \norm{K_{\R^3}[\omega_0] }_{L^2(\mathcal{F}_0) }\\
&+ | \scalar{ K_{\R^3}[\omega_0] }{ K_{\R^3}[ \kappa_{\mathcal{C}_0} ] }|.
\end{aligned}\end{equation}
As $\uref$ is the gradient of a potential, we can partially integrate as in \eqref{part int} above to see that 
\begin{equation*}
\int_{\mathcal{F}_0} \uref[\omega_0] \cdot K_{\R^3}[ \kappa_{\mathcal{C}_0} ] \dx = 0.
\end{equation*}
The second term is $\lesssim 1$ by \eqref{in vort e} and the $L^2$-estimate in Proposition \ref{est vor filament}.

Recalling that 
$K_{\R^3}[ \kappa_{\mathcal{C}_0} ] = \curl \left(\frac{1}{4\pi|\cdot|} * \kappa_{\mathcal{C}_0}\right)$, 
we can partially integrate the last term in \eqref{Eq:EstKvsK} to see that
\begin{align*}
& \int_{\mathcal{F}_0} K_{\R^3}[\omega_0] \cdot K_{\R^3}[\kappa_{\mathcal{C}_0}] \dx \\
& = - \int_{\mathcal{F}_0} \curl K_{\R^3}[\omega_0] \cdot \left( \frac{1}{4\pi|\cdot|} * \kappa_{\mathcal{C}_0} \right) \dx
+ \int_{\de \mathcal{F}_0} K_{\R^3}[\omega_0] \cdot
    \left( n \wedge \left( \frac{1}{4\pi|\cdot|} * \kappa_{\mathcal{C}_0} \right) \right) \dd\sigma.
\end{align*}
The first integral is $\lesssim 1$ because $\curl K_{\R^3}[\omega_0]=\omega_0$ and because of the natural decay estimate
$\left| \frac{1}{4\pi|\cdot|} * \kappa_{\mathcal{C}_0} (x)\right| \lesssim \dist(x, \mathcal{C}_0)^{-1}$.
In the second integral, we note that $\frac{1}{4\pi|\cdot|} * \kappa_{\mathcal{C}_0} \lesssim \eps^{-1}$ on $\de\mathcal{F}_0$ 
because $\dist(\mathcal{C}_0,\de\mathcal{F}_0)=\eps$ by definition and obtain that 
\begin{equation*}
\left| \int_{\de \mathcal{F}_0}K_{\R^3}[\omega_0] \cdot 
  \left( n \wedge \left( \frac{1}{4\pi|\cdot|} * \kappa_{\mathcal{C}_0} \right) \right)\dd\sigma \right|
\lesssim \eps^{-1} \mathcal{H}^2(\de\mathcal{F}_0) \norm{ K_{\R^3}[ \omega_0 ]}_{L^\infty(\de\mathcal{F}_0)}
\lesssim 1,
\end{equation*}
where the last estimate uses the explicit form of the Biot-Savart law and the assumption that 
$\dist(\supp \omega_0,\mathcal{S}_0) \geq 1$.
Together, we have obtained that
\begin{equation} \label{prod 1}
| \scalar{ K_{\mathcal{F}_0}[\omega_0] }{ H^\eps } | \lesssim 1.
\end{equation}

%

%
 
\noindent Now, we can estimate
\begin{equation}\begin{aligned}  
( \tilde{\mathcal{M}}_g + \mathcal{M}_a )_{3,3} \, (p_3^\eps(t))^2 
\, & \, \leq \tilde{\mathcal{E}}(t) + |p_3^\eps(t)| \, | \scalar{\nabla\Phi_3}{K_{\mathcal{F}_0}[\omega_t]} |
  + | \scalar{H^\eps}{K_{\mathcal{F}_0}[\omega_t]} | \\
\label{est mod en} 
\, & \, \lesssim  1 +|p_3^\eps(t)| \,| \scalar{\nabla\Phi_3}{K_{\mathcal{F}_0}[\omega_t]} |
  + | \scalar{H^\eps}{K_{\mathcal{F}_0}[\omega_t]} |.
\end{aligned}\end{equation}
One can show that 
\begin{equation}
\norm{K_{\mathcal{F}_0}[\omega_t]}_{L^2(\mathcal{F}_0)}\lesssim 1\label{bd kft}
\end{equation}
by the same argument as above in \eqref{in vort e},
since $\norm{\omega_t}_{L^2}$ is controlled by the Assumption \eqref{aux ass} and 
\begin{equation*}
\dist(\supp\omega_t,\cals^\eps) \geq s(t) - 2 - \eps \gg 1,
\end{equation*}
which is due to \eqref{bd s} and again Assumption\eqref{aux ass}.

One can also show that 
\begin{equation*}
| \scalar{ H^\eps }{ K_{\mathcal{F}_0}[\omega(t)] } | \lesssim 1
\end{equation*}
with the same argument as above in \eqref{prod 1}, again using that $\dist(\supp\omega_t,\cals^\eps) \gg 1$.

Using \eqref{bd kft} and the definition of $\mathcal{M}_a$ (see \eqref{def M}), we then have for every $\alpha>0$ that
\begin{align*}
|p_3^\eps(t)| \, | \scalar{ \nabla\Phi_3 }{ K_{\mathcal{F}_0}[\omega_t] }|
\lesssim |p_3^\eps(t)|\norm{\nabla\Phi_3}_{L^2(\mathcal{F}_0)}
= \sqrt{(\mathcal{M}_a^\eps)_{3,3} \, (p_3^\eps)^2}
\leq \frac{1}{\alpha} + \alpha(\mathcal{M}_a^\eps)_{3,3} \, (p_3^\eps(t))^2.
\end{align*}
For small enough $\alpha$ (not depending on the other parameters), we can absorb the second term back into the left hand side. Finally we obtain from \eqref{est mod en} that
\begin{equation*}
( \tilde{\mathcal{M}}_g^\eps + \mathcal{M}_a^\eps )_{3,3} \, (p_3^\eps(t))^2 \lesssim 1,
\end{equation*}
which establishes Lemma~\ref{L2 est u} since $\tilde{\mathcal{M}}_g^\eps$ is positive definite.
\end{proof}
Our next estimate in $[0,\widehat{T}_{\varepsilon})$ and under Assumption~\eqref{aux ass} is the following.
\begin{lem} \label{vel est}
For all $m\in\N_{\geq 0}$, there exists some $C_m>0$, 
such that for all $\varepsilon>0$ and for all $t \in [0,\widehat{T}_{\varepsilon})$
for which \eqref{aux ass} is valid, one has
\begin{equation*}
\norm{ u(t) - K_{\R^3}[\omega_t] }_{C^m(B_2(-s(t)e_3))} \leq C_m.
\end{equation*}
\end{lem}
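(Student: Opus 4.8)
## Proof plan for Lemma \ref{vel est}

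The plan is to decompose the fluid velocity $u(t)$ into its three parts as in Lemma~\ref{vitdec}, namely $u = \mu H^\eps + p_3^\eps \nabla\Phi_3 + K_{\mathcal{F}_0}[\omega_t]$, and to estimate each of the three pieces $u - K_{\R^3}[\omega_t] = \mu(H^\eps - K_{\R^3}[\kappa_{\mathcal{C}_0}]) + \mu K_{\R^3}[\kappa_{\mathcal{C}_0}] + p_3^\eps\nabla\Phi_3 + \uref[\omega_t]$ on the ball $B_2(-s(t)e_3)$, using the pointwise decay estimates already established in Section~\ref{sec-Asymptotic}. The key geometric fact, which I would record first, is that under Assumption~\eqref{aux ass} together with \eqref{bd s} one has $\dist(B_2(-s(t)e_3), \cals^\eps) \gtrsim s(t) \gg 1$ and more precisely $\dist(x, \de\cals^\eps) \gtrsim s(t)$ for all $x$ in that ball; so every ``$\dist(x,\de\cals^\eps)^{-k}$'' factor appearing in the decay estimates is bounded by a constant (in fact tends to $0$).

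With this in hand the estimates are routine. First, $\mu = 1$ in this construction, so $\|H^\eps - K_{\R^3}[\kappa_{\mathcal{C}_0}]\|_{C^m(B_2(-s(t)e_3))} \lesssim_m \eps|\log\eps|$ directly by the third estimate of Proposition~\ref{est vor filament} (using $\dist(x,\de\cals^\eps)^{-3-m}\lesssim 1$), which is $\lesssim 1$ for small $\eps$. Second, $\|K_{\R^3}[\kappa_{\mathcal{C}_0}]\|_{C^m(B_2(-s(t)e_3))} \lesssim_m \dist(x,\de\cals^\eps)^{-3-m} \lesssim_m 1$ by \eqref{decay K} in Corollary~\ref{Cor:DecayH} — note this term does \emph{not} vanish but is bounded, which is why the lemma subtracts only $K_{\R^3}[\omega_t]$ and not the full $u^*$. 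Third, the Kirchhoff term: $\|p_3^\eps\nabla\Phi_3\|_{C^m} \leq |p_3^\eps(t)|\,\|\nabla\Phi_3\|_{C^m(B_2(-s(t)e_3))} \lesssim_m |p_3^\eps(t)|\,\eps\,\dist(x,\de\cals^\eps)^{-3-m} \lesssim_m |p_3^\eps(t)|\,\eps$ by the third estimate of Proposition~\ref{est pot}; and $|p_3^\eps(t)|$ is controlled using Lemma~\ref{L2 est u}, which gives $(\mathcal{M}_a^\eps)_{3,3}(p_3^\eps(t))^2 \leq C$, combined with the lower bound $(\mathcal{M}_a^\eps)_{3,3}\gtrsim \eps^2|\log\eps|$ — wait, this is only an upper bound from \eqref{co2}; instead one uses $\|\nabla\Phi_3\|_{C^m(B_2(-s(t)e_3))}^2 \lesssim_m \eps^2$ directly and $(\mathcal{M}_a^\eps)_{3,3} = \|\nabla\Phi_3\|_{L^2(\mathcal{F}_0)}^2$, so $|p_3^\eps(t)|\,\|\nabla\Phi_3\|_{C^m(B_2)} \lesssim_m \sqrt{(\mathcal{M}_a^\eps)_{3,3}(p_3^\eps)^2}\cdot \eps / \|\nabla\Phi_3\|_{L^2}$; more simply, bound $|p_3^\eps(t)|\,\|\nabla\Phi_3\|_{C^m(B_2)}\leq \sqrt{(\mathcal{M}_a^\eps)_{3,3}(p_3^\eps(t))^2}\cdot \big(\|\nabla\Phi_3\|_{C^m(B_2)}/\|\nabla\Phi_3\|_{L^2(\mathcal{F}_0)}\big) \lesssim_m \sqrt{C}\cdot (\eps/(\eps|\log\eps|^{1/2})) = \sqrt{C}|\log\eps|^{-1/2}\lesssim 1$, using the second estimate of Proposition~\ref{est pot} for the $L^2$-norm. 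Fourth, the reflection term: $\|\uref[\omega_t]\|_{C^m(B_2(-s(t)e_3))} = \|\nabla^{\leq m}\nabla\Phi_{\mathrm{ref}}[\omega_t]\|_{C^0} \lesssim_m \eps\,\|\omega_t\|_{L^1}\,\dist(x,\de\cals^\eps)^{-2-m}\,\dist(\cals^\eps,\supp\omega_t)^{-2}$ by the third estimate of Proposition~\ref{est ref}; here $\|\omega_t\|_{L^1}\lesssim 1$ follows from the $L^2$-bound in \eqref{aux ass} together with the compact support \eqref{confinement}, and both distance factors are $\lesssim 1$, so this is $\lesssim_m \eps \lesssim 1$.

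Summing the four contributions gives the claimed uniform bound $\|u(t)-K_{\R^3}[\omega_t]\|_{C^m(B_2(-s(t)e_3))} \leq C_m$. I do not anticipate a genuine obstacle here — the only mild subtlety is the bookkeeping in the third step, ensuring that the factor $|p_3^\eps(t)|$ (which is \emph{not} known to be bounded a priori, only its product with $\sqrt{(\mathcal{M}_a^\eps)_{3,3}}$) is absorbed correctly by matching the $\eps$-power in $\|\nabla\Phi_3\|_{C^m}$ against the $\eps$-power in $\|\nabla\Phi_3\|_{L^2}$ up to the harmless $|\log\eps|^{-1/2}$; this is exactly the same mechanism used in the proof of Lemma~\ref{L2 est u}. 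All other pieces either vanish or are bounded by universal multiples of the decay constants, thanks to $\dist(\cdot,\de\cals^\eps)\gtrsim s(t)\gg1$ on the relevant ball.
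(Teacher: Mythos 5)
Your overall strategy — split $u - K_{\R^3}[\omega_t]$ into the harmonic field, the Kirchhoff term, and the reflection term, then invoke the pointwise decay estimates from Section~\ref{sec-Asymptotic} together with $\dist(B_2(-s(t)e_3),\mathcal{S}_0^\eps)\gg 1$ — matches the paper, and your handling of $H^\eps$, $K_{\R^3}[\kappa_{\mathcal{C}_0}]$ and $\uref[\omega_t]$ is correct. But the estimate of the Kirchhoff piece $p_3^\eps\nabla\Phi_3$ has a genuine gap, and it is precisely the step where you yourself flagged discomfort.

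You want to absorb the (a priori unbounded) factor $|p_3^\eps(t)|$ using the conserved quantity from Lemma~\ref{L2 est u}, which gives $\sqrt{(\mathcal{M}_a^\eps)_{3,3}}\,|p_3^\eps(t)|\lesssim 1$. You then divide and multiply by $\|\nabla\Phi_3\|_{L^2(\mathcal{F}_0)}=\sqrt{(\mathcal{M}_a^\eps)_{3,3}}$ to write
\begin{equation*}
|p_3^\eps(t)|\,\norm{\nabla\Phi_3}_{C^m(B_2)}
\;\leq\;
\sqrt{(\mathcal{M}_a^\eps)_{3,3}\,(p_3^\eps)^2}\;\cdot\;
\frac{\norm{\nabla\Phi_3}_{C^m(B_2)}}{\norm{\nabla\Phi_3}_{L^2(\mathcal{F}_0)}},
\end{equation*}
and try to bound the quotient by $|\log\eps|^{-1/2}$ using $\norm{\nabla\Phi_3}_{C^m(B_2)}\lesssim\eps$ (Proposition~\ref{est pot}) and $\norm{\nabla\Phi_3}_{L^2(\mathcal{F}_0)}\lesssim\eps|\log\eps|^{1/2}$ (again Proposition~\ref{est pot}). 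But the second of these is an \emph{upper} bound, while you are dividing by it: to control the quotient you would need a \emph{lower} bound $\norm{\nabla\Phi_3}_{L^2(\mathcal{F}_0)}\gtrsim\eps$, which appears nowhere in the paper and is not claimed. The inequality as written goes the wrong way; this is the same mistake you first made with the lower bound on $(\mathcal{M}_a^\eps)_{3,3}$, merely relocated to the denominator.

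The paper avoids needing a lower bound at all. It treats the \emph{product} $p_3^\eps\nabla\Phi_3$ as a single harmonic vector field: since $\nabla\Phi_3$ is harmonic in $\mathcal{F}_0^\eps$ and $B_3(-s(t)e_3)\subset\mathcal{F}_0^\eps$ under the standing distance assumption, interior elliptic regularity gives
\begin{equation*}
\norm{p_3^\eps\nabla\Phi_3}_{C^m(B_2(-s(t)e_3))}
\;\lesssim_m\;
\norm{p_3^\eps\nabla\Phi_3}_{L^2(B_3(-s(t)e_3))}
\;\leq\;
\norm{p_3^\eps\nabla\Phi_3}_{L^2(\mathcal{F}_0^\eps)}
=\sqrt{(\mathcal{M}_a^\eps)_{3,3}\,(p_3^\eps)^2}\;\lesssim\;1,
\end{equation*}
with no factorization of $p_3^\eps$ from $\nabla\Phi_3$ and hence no lower bound needed. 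Replacing your quotient manipulation by this elliptic-regularity step repairs the proof; the rest of your argument stands.
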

\begin{proof}[Proof of Lemma~\ref{vel est}]
We again note that $\dist(\supp\omega_t,\cals^\eps) \gg 1$. 
We may split
\begin{equation*}
u(t) - K_{\R^3}[\omega_t] = H^\eps + p_3^\eps(t) \nabla \Phi_3 + \uref[\omega_t].
\end{equation*}
By \eqref{decay H}, the $C^m(\supp\omega_t)$-norm of $H^\eps$ is $\lesssim 1$ and by the Assumption \eqref{aux ass}, 
and the decay estimate in Proposition \ref{est ref}, the same holds for $\uref$.

Next we note that by Lemma \ref{L2 est u} above and the Definition \eqref{def M}, we have 
\begin{equation} \label{l2 bd}
\norm{ p_3^\eps \nabla\Phi_3 }_{L^2(\mathcal{F}_0^\eps)}
  = \sqrt{ (\mathcal{M}_a^\eps)_{3,3} \, (p_3^\eps)^2 } \lesssim 1.
\end{equation}
We may then use that $\Phi_3$ is harmonic to estimate 
\begin{align}
\norm{ p_3^\eps \nabla \Phi_3}_{C^m(B_2(-s(t)e_3))}
  \lesssim_m \norm{ p_3^\eps \nabla \Phi_3 }_{L^2(B_3(-s(t)e_3))} \lesssim 1,
\end{align}
were we used \eqref{l2 bd} and elliptic regularity (see e.g.\ \cite[Thm.\ 8.10]{GilbargTrudinger}).
\end{proof}
We are now in a position to prove Lemma~\ref{no blow up}, obtaining in particular that 
Assumption~\eqref{aux ass} is satisfied in $[0,\widehat{T}_{\varepsilon})$.
\begin{proof}[Proof of Lemma~\ref{no blow up}]
We recast the vorticity equation \eqref{omega-eqeps} in the frame centered at $-s(t)e_3$, where it reads as
\begin{equation} \label{vort tilde}
\de_t \tilde{\omega} + (\tilde{u} \cdot \nabla) \tilde{\omega} = (\tilde{\omega} \cdot \nabla) \tilde{u},
\end{equation}
with $\tilde{\omega} = \omega(\cdot+s(t)e_3)$ and $\tilde{u} = u(\cdot+s(t)e_3)$,
where we used that the point $-s(t)e_3$ moves with speed $-u_\mathcal{S}$.

By testing with $\Delta^m\tilde{\omega}$ and using some standard commutator estimates, we have that 
\begin{multline} \label{gronwall}
\frac{\mathrm{d}}{\mathrm{d}t}\norm{\nabla^m\tilde{\omega}}_{L^2}^2 
\lesssim_m ( \norm{ \tilde{u} }_{W^{3,\infty}(\supp\tilde{\omega}_t)}
  + \norm{ \tilde{u} }_{H^m(\supp\tilde{\omega}_t)} ) \norm{ \tilde{\omega} }_{H^m}^2 \\
+ ( \norm{ \tilde{u} }_{W^{3,\infty}(\supp\tilde{\omega}_t)}
  + \norm{ \tilde{u} }_{H^{m+1}(\supp\tilde{\omega}_t)} ) \norm{ \tilde{\omega} }_{H^m}^2.
\end{multline}
Observe that the translations do not change the norms and we may hence drop them all.
As long as \eqref{aux ass} holds and $m\geq 4$, we have that 
\begin{align*}
\MoveEqLeft \norm{ u(t) }_{W^{3,\infty}(\supp\omega_t)} + \norm{ u(t) }_{H^{m+1}(\supp\omega_t)}
  \lesssim_m\norm{ u(t) }_{H^{m+1}(\supp\omega_t)} \\
& \lesssim_m \norm{ K_{\R^3}[\omega_t] }_{H^{m+1}(\supp\omega_t)}
  + \norm{ u(t) - K_{\R^3}[\omega_t] }_{H^{m+1}(\supp\omega_t)} \\
& \lesssim_m \norm{ \omega_t }_{H^m}+1,
\end{align*}
where we used Lemma \ref{vel est} in the last step, as well as Sobolev 
and the $H^m \rightarrow H^{m+1}$-boundedness of the Biot-Savart law in $\R^3$.

Inserting this into \eqref{gronwall}, we obtain that as long as \eqref{aux ass} holds
\begin{equation*}
\frac{\mathrm{d}}{\mathrm{d}t} \norm{ \nabla^m \tilde{\omega} }_{L^2}^2
  \lesssim_m \norm{  \tilde{\omega} }_{H^m}^2 + \norm{\tilde{\omega} }_{H^m}^3,
\end{equation*}
for $m\geq 4$. 
This shows that the strong solution of \eqref{omega-eqeps} persists for a time uniformly bounded from below 
and also that \eqref{aux ass} holds for a time uniformly bounded from below. 
Standard estimates also show that if \eqref{aux ass} and if $\norm{\omega_t}_{H^m}\lesssim 1$ both hold up 
to some time $T$ for $m\geq 5$, then it holds that
\begin{equation*}
\frac{\mathrm{d}}{\mathrm{d}t} \norm{ \omega }_{H^{m+1}}^2 \lesssim_m \norm{ \omega }_{H^{m+1}}^2,
\end{equation*}
up to the same time $T$, showing that in that case $\omega_t$ can not blow up in $H^{m+1}$ before time $T$, 
which shows the existence time of the solution in $C^m$ can be chosen independently of $m$. 

Finally $\norm{\de_t\tilde\omega}_{L^2}$ is uniformly bounded over short times, 
which implies that \eqref{cont vort} holds if we pick $T_0$ sufficiently small.
\end{proof}
%
%
%
%
%

\subsection{Proof of Lemma \ref{main ord D}}
To obtain the estimate on $\mathcal{D}$ of Lemma~\ref{main ord D}, we will need some auxiliary estimates on the potentials. 
\begin{lem} \label{lem87}
For all $x$ such that $|x_3|>1$ it holds that 
\begin{align}
\label{4th ord 1}
|\de_1\Phi_3(x)| + |\de_2\Phi_3(x)| 
  \lesssim\, & \eps|\log\eps|^\frac{1}{2} \sqrt{x_1^2+x_2^2}|x_3|^{-4}, \\
\label{4th ord 2}
|(H^\eps - K_{\R^3}[\kappa_{\mathcal{C}_0}])_1| + |(H^\eps - K_{\R^3}[\kappa_{\mathcal{C}_0}])_2|
  \lesssim\, & \eps|\log\eps|^\frac{1}{2}\sqrt{x_1^2+x_2^2}|x_3|^{-4}, \\
\label{4th ord 3}
|\uref[\omega_t]_1(x)| + |\uref[\omega_t]_2(x)|
  \lesssim\, & \eps|\log\eps|^\frac{1}{2} \sqrt{x_1^2+x_2^2}|x_3|^{-4},
\end{align}
where the last estimate is uniform in $t$ up to time $T_0$.
\end{lem}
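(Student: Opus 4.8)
The plan is to exploit the \textbf{axisymmetry} of the configuration around the $e_3$-axis. Since $\mathcal{C}_0$ is a circle about the $e_3$-axis and, as recalled at the beginning of Section~\ref{Sec:Proof-div}, the solution stays axisymmetric (and swirl-free) for all later times, each of the three vector fields occurring in \eqref{4th ord 1}--\eqref{4th ord 3} --- namely $\nabla\Phi_3$, $H^\eps-K_{\R^3}[\kappa_{\mathcal{C}_0}]$ and $\uref[\omega_t]$ --- is invariant under the rotation action about the $e_3$-axis, has vanishing azimuthal component, and is the gradient of an axisymmetric harmonic potential on $\R^3\backslash\mathcal{S}_0^\eps$: for $\nabla\Phi_3$ this is immediate ($\Phi_3$ is axisymmetric because $\zeta_3=e_3$ and $\mathcal{S}_0^\eps$ is axisymmetric); for $H^\eps-K_{\R^3}[\kappa_{\mathcal{C}_0}]$ it is the potential $\tilde{\phi}$ constructed in the proof of Proposition~\ref{est vor filament}; and for $\uref[\omega_t]$ it is $\nabla\Phi_{\mathrm{ref}}[\omega_t]$. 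The key consequence, used repeatedly below, is that the $e_1$- and $e_2$-components of each of these fields \emph{vanish identically on the $e_3$-axis}.

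First I would dispose of the regime $\rho:=\sqrt{x_1^2+x_2^2}\geq|x_3|$. There one has $\dist(x,\mathcal{C}_0)=\sqrt{(\rho-1)^2+x_3^2}\gtrsim\rho$, so the pointwise decay bounds already available --- the third estimate of Proposition~\ref{est pot}, of Proposition~\ref{est ref}, and of Proposition~\ref{est vor filament} (with the factors $\norm{\omega_t}_{L^1}$ and $\dist(\mathcal{S}_0^\eps,\supp\omega_t)$ bounded uniformly in $t\le T_0$ thanks to Lemma~\ref{no blow up}) --- give for the $j$-th component of any of the three fields a bound of the order $\eps|\log\eps|^{\frac12}\rho^{-3}$, which is $\leq\eps|\log\eps|^{\frac12}\rho\,|x_3|^{-4}$ since $\rho\geq|x_3|$. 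So it remains to treat $\rho\leq|x_3|$.

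In the regime $\rho\leq|x_3|$ with $|x_3|>1$, I connect $x$ to the axis point $(0,0,x_3)$ by the radial segment $t\mapsto(tx_1,tx_2,x_3)$, $t\in[0,1]$. Every point of this segment has distance $\sqrt{(t\rho-1)^2+x_3^2}\geq|x_3|$ to $\mathcal{C}_0$, hence, for small $\eps$, distance $\gtrsim|x_3|$ to $\mathcal{S}_0^\eps$, and the same is true of a ball of radius $|x_3|/10$ around each such point. Writing $F$ for any of the three fields and $F_1,F_2$ for its first two components, the vanishing on the axis yields
\[ |F_1(x)|+|F_2(x)|\ \leq\ \rho\,\sup_{t\in[0,1]}\bigl(|\nabla F_1|+|\nabla F_2|\bigr)(tx_1,tx_2,x_3). \]
Since $F_1,F_2$ are harmonic on $\R^3\backslash\mathcal{S}_0^\eps$, interior elliptic estimates on balls of radius $\sim|x_3|$ bound $|\nabla F_j|$ along the segment by $|x_3|^{-1}$ times $\sup|F|$ over a $\sim|x_3|$-neighbourhood of the segment, and on that neighbourhood $\sup|F|$ is controlled by the respective third (pointwise) estimates of Propositions~\ref{est pot}, \ref{est ref}, \ref{est vor filament} --- using, for $F=\uref[\omega_t]$, again $\norm{\omega_t}_{L^1}\lesssim1$ and $\dist(\mathcal{S}_0^\eps,\supp\omega_t)\gtrsim1$ from Lemma~\ref{no blow up}, which also gives the uniformity in $t$ up to $T_0$ claimed in \eqref{4th ord 3}. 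Multiplying the three gains --- a factor $\rho$ from the vanishing on the axis, a factor $|x_3|^{-1}$ from the interior estimate, and a factor $\eps|\log\eps|^{\frac12}|x_3|^{-3}$ from the far-field decay --- produces $|F_1(x)|+|F_2(x)|\lesssim\eps|\log\eps|^{\frac12}\rho\,|x_3|^{-4}$, which is the desired bound in each of the three cases.

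The main --- and essentially only --- point requiring care is the \textbf{geometric bookkeeping}: checking that the radial segment, together with a fixed fraction of its distance to $\mathcal{C}_0$, stays uniformly away from $\mathcal{S}_0^\eps$, so that both the interior elliptic estimate and the far-field decay estimates of Section~\ref{sec-Asymptotic} can be applied with the relevant distance replaced by $|x_3|$. Everything else is the classical observation that a harmonic function vanishing on an axis picks up a factor equal to the distance to that axis, together with standard interior elliptic regularity; no estimate beyond those of Section~\ref{sec-Asymptotic} and Lemma~\ref{no blow up} is needed.
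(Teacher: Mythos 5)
Your argument is correct and essentially the paper's own proof: axisymmetry forces the horizontal components of these gradient fields to vanish on the $e_3$-axis, and the fundamental theorem of calculus along the horizontal radial segment (whose points stay at distance $\gtrsim |x_3|$ from $\mathcal{S}_0^\eps$ since $|x_3|>1$) combined with the decay estimates of Section~\ref{sec-Asymptotic} produces the extra factor $\sqrt{x_1^2+x_2^2}\,|x_3|^{-1}$. The only deviations are cosmetic: the paper needs no case split (the segment argument works for every $\rho$, since the distance bound along the segment never degrades), and it bounds the derivative of the field directly from the all-order derivative decay already contained in Propositions~\ref{est pot}, \ref{est ref} and \ref{est vor filament} (taking $m=2$, resp.\ $m=1$), so your interior elliptic-estimate detour, while valid, is unnecessary.
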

\begin{proof}[Proof of Lemma~\ref{lem87}]
Observe that all these functions are the gradient of a potential which is axisymmetric and that hence the $e_1$- and $e_2$-components of these gradients must vanish on the $e_3$-axis.

By the axisymmetry, it is not restrictive to assume $x_2=0$ and $x_1>0$, 
in which case, the $e_2$-component of all three functions must vanish. 
We then use the fundamental theorem of calculus along the line segment  $\{(s,0,x_3)\,|\, s\in [0,x_1]\}$
(which does not intersect $\cals^\eps$ because $|x_3|>1$), yielding 
\begin{align*}
|\de_1 \Phi_3(x)| & \leq \int_0^{x_1} |\nabla^2 \Phi_3(s,0,x_3)| \ds \\
&\lesssim x_1 \eps|\log\eps|^\frac{1}{2} \sup_{s\in [0,x_1]} \dist((s,0,x_3),\cals^\eps)^{-4}
\lesssim x_1 \eps|\log\eps|^\frac{1}{2} |x_3|^{-4} ,
\end{align*}
where we used that $\dist((s,0,x_3),\cals^\eps) \geq |x_3| - \eps \gtrsim |x_3| \geq 1$ along the line segment 
and used the decay estimate in Proposition \ref{est pot}.

The estimates \eqref{4th ord 2} and \eqref{4th ord 3} follow by the same argument, 
using the decay estimates in Proposition \ref{est ref} and \ref{est vor filament} instead
(along with the bounds on $\omega_t$ above).
\end{proof}

We can now prove Lemma~\ref{main ord D} and conclude the bound on $\mathcal{D}$.
\begin{proof}[Proof of Lemma \ref{main ord D}]
We insert the Definition \eqref{defD} of $\mathcal{D}$ and see that 
\begin{multline*}
\mathcal{D} \big[ 0, K_{\R^3}[\kappa_{\mathcal{C}_0} + \omega_0(\cdot+se_3)], \omega_0(\cdot+se_3) \big]_3 \\
= \int_{\mathcal{F}_0} \bigl[ e_3, \omega_0(\cdot+se_3), K_{\R^3}[\kappa_{\mathcal{C}_0} + \omega_0(\cdot+se_3)] \bigr]
  - \bigl[ \omega_0(\cdot+se_3), K_{\R^3}[\kappa_{\mathcal{C}_0} + \omega_0(\cdot+se_3)], \nabla \Phi_3 \bigr] \dd x.
\end{multline*}
We first analyze the first bracket on the right-hand side. 
Separating $K_{\R^3}[\kappa_{\mathcal{C}_0} + \omega_0(\cdot+se_3)]$ as 
$K_{\R^3}[\omega_0(\cdot+se_3)] + K_{\R^3}[\kappa_{\mathcal{C}_0} ]$, a direct computation gives
\begin{multline} \label{canc}
\int_{\mathcal{F}_0} [e_3, \omega_0(\cdot+se_3), K_{\R^3}[\omega_0(\cdot+se_3)] \dx 
= \int_{\R^3} [e_3, \omega_0, K_{\R^3}[\omega_0]] \dx \\
= \int_{\R^3} e_3 \cdot (\curl K_{\R^3}[\omega_0] \wedge K_{\R^3}[\omega_0])\dx = 0,
\end{multline}
where the last step follows from the identity $\int_{\R^3} v\wedge \curl v\dx=0$, which holds for every $v\in W^{1,2}$ 
(see e.g.\ \eqref{etlesigne}).

The other term in the first bracket is the main contribution.
It is well known that in the axisymmetric setting $K_{\R^3}[\kappa_{\mathcal{C}_0}]$ can be expressed in terms 
of elliptic integrals and that one can compute its asymptotics, see e.g.\ \cite[Lemma 2.1]{GallaySverak}.
Indeed if we set 
$$e_r=\frac{1}{\sqrt{x_1^2+x_2^2}}(x_1,x_2,0),$$
then for $e_3\rightarrow -\infty$ 
and $\sqrt{x_1^2+x_2^2}\leq 10$ it holds that 
\begin{equation} \label{asy K}
K_{\R^3}[\kappa_{\mathcal{C}_0}](x) \cdot e_r
 = \frac{ -3\sqrt{x_1^2+x_2^2} x_3 }{4|x_3|^5} + O\left( \frac{x_1^2+x_2^2}{x_3^6} \right).
\end{equation}
Observe that only the $e_r$-component of $K_{\R^3}[\kappa_{\mathcal{C}_0}]$ contributes to the bracket in the integral, 
since it has no azimuthal component by axisymmetry and the $e_3$-component does not contribute 
because of the definition of the bracket $[e_3, \cdot, \cdot]$. Also note that it holds that 
\begin{equation} \label{sign bracket}
\left[ e_3, \begin{pmatrix} x_2 \\ -x_1 \\ 0 \end{pmatrix}, e_r \right] = \sqrt{x_1^2+x_2^2},
\end{equation}
by direct calculation.
We then have 
\begin{equation*}
\int_{\mathcal{F}_0} \big[ e_3, \omega_0(\cdot +se_3), K_{\R^3}[\kappa_{\mathcal{C}_0}] \big] \dx
= \int_{\mathcal{F}_0} \frac{-3(x_1^2+x_2^2)x_3}{4|x_3|^5} \eta(x+(s_0+s)e_3) \dx + O((s+s_0)^{-6}),
\end{equation*}
where we have used \eqref{asy K} and \eqref{sign bracket} and furthermore used that $\omega_0(\cdot+se_3)$ is supported in $B_1(-(s_0+s)e_3)$ 
to estimate the error term coming from the error in \eqref{asy K}. 
By making $s_0$ sufficiently large, we can make the error term here much smaller than $(s_0+s)^{-4}$. We have 
\begin{multline*}
\int_{\mathcal{F}_0} \frac{ -3(x_1^2+x_2^2)x_3 }{ 4|x_3|^5 } \eta(x+(s_0+s)e_3) \dx
= \int_{\R^3} \frac{ -3(x_1^2+x_2^2)(x_3-(s+s_0)) }{ 4|x_3-(s_0+s)|^5 } \eta(x) \dx \\
= (s+s_0)^{-4}\int_{\R^3} \frac{3(x_1^2+x_2^2)}{4} \eta(x) \dx + O \left( (s+s_0)^{-5} \right).
\end{multline*}
The last integral no longer depends on $s,s_0$ or $\eps$ and is $\geq \frac{3}{400}$ by \eqref{ass eta}.

For the second bracket, we have 
\begin{equation} \label{2nd bra 1}
\left| [ \omega_0(\cdot+se_3), K_{\R^3}[\kappa_{\mathcal{C}_0}], \nabla \Phi_3 ] \right|
\lesssim \eps|\log\eps|^\frac{1}{2} \dist(\supp \omega_0(\cdot+se_3),\cals^\eps)^{-5}
\lesssim \eps|\log\eps|^\frac{1}{2} (s_0+s)^{-5}
\end{equation}
by \eqref{decay K} and the decay estimate in Proposition \ref{est pot}.

For the second part of the second summand, we use that $[a,b,c]=[b,c,a]$ and use \eqref{canc} again to see that 
\begin{equation} \begin{aligned}
& \int_{\mathcal{F}_0} [ \omega_0(\cdot+se_3), K_{\R^3}[\omega_0(\cdot+se_3)], \nabla \Phi_i ] \dx \\
& = \inf_{a\in \R} \int_{\mathcal{F}_0} [ \nabla \Phi_3 - a e_3, \omega_0(\cdot+se_3), K_{\R^3}[\omega_0(\cdot+se_3)] ] \dx \\
& \lesssim  \inf_{a\in \R} \norm{ \nabla \Phi_3 - a e_3}_{L^\infty(\supp\omega_0(\cdot+se_3))}
  \norm{ K_{\R^{3}}[\omega_0(\cdot+s_3)] }_{L^2(\supp\omega_0(\cdot+se_3))} \norm{\omega_0}_{L^2} \\
& \lesssim \norm{ \nabla_{1,2} \Phi_3 }_{L^\infty(\supp\omega_0(\cdot+se_3))}
  + \norm{ \nabla^2 \Phi_3 }_{L^\infty(\supp\omega_0(\cdot+se_3))} \\
\label{2nd bra est}
& \lesssim \eps|\log\eps|^\frac{1}{2} (s+s_0)^{-4},
\end{aligned} \end{equation}
where in the penultimate step we used that $\supp\omega_0(\cdot+se_3))$ is a ball of radius $1$ 
to estimate $\inf \de_3\Phi_3-ae_3$ with $\nabla^2\Phi_3$ and in the last step we used the decay estimate 
in Proposition~\ref{est pot} and \eqref{4th ord 1} in the last step.

Putting everything together, we obtain Lemma~\ref{main ord D}.
\end{proof}
%
%
%

%
%

\subsection{Proof of Lemma~\ref{diff D}}

\begin{proof}[Proof of Lemma~\ref{diff D}]
We estimate the different contributions directly, using the definition of $\mathcal{D}$ (see \eqref{defD}).
We first note that
\begin{equation*}
\Bigl| \mathcal{D}[p^\eps,u(t),\omega_t]_3 - \mathcal{D}[0,u(t),\omega_t]_3 \Bigr|
= \left| \int_{\mathcal{F}_0} [\omega_t,u_\mathcal{S},\nabla\Phi_3] \dx \right|
.\end{equation*}
The contribution of $\de_3\Phi_3$ to this is $0$, since $u_\mathcal{S}=p_3^\eps e_3$. 
Therefore, we may use the decay estimate \eqref{4th ord 1} 
and the \textit{a priori} estimates \eqref{vort smooth} and \eqref{dist bd} on $\omega_t$ to estimate this by 
\begin{equation*}
\lesssim |p_3^\eps| \norm{\omega_t}_{L^1} \eps|\log\eps|^\frac{1}{2} \dist(\supp\omega_t,\cals^\eps)^{-4}
\lesssim \eps^{\frac{8}{10}} s(t)^{-4},
\end{equation*}
where we used the assumption $|p_3^\eps(t)|\leq \eps^{-\frac{1}{10}}$.

We now focus on $\mathcal{D}[0, u(t), \omega_t]_3$. From \eqref{defD}, we have
\begin{equation} \label{Eq:SecondTermD}
{\mathcal D} [ 0, u(t), \omega_t ]_3
:=  \int_{\mathcal  F_0}  [ e_{3}, \omega_t, u(t) ]  \ dx 
  - \int_{ \mathcal{F}_0}   [ \omega_t, u(t), \nabla \Phi_{3} ] \dx.
\end{equation}
We first estimate the second integral in \eqref{Eq:SecondTermD}. 
It is true that
\begin{multline} \label{Eq:SecondIntergral}
\int_{\mathcal{F}_0}[\omega_t, u(t), \nabla \Phi_3]\dx
= \int_{\mathcal{F}_0} [ \omega_t, K_{\R^3}[\kappa_{\mathcal{C}_0}+\omega_t], \nabla \Phi_3 ] \dx \\
+ \int_{\mathcal{F}_0} [ \omega_t, u(t)-K_{\R^3}[\kappa_{\mathcal{C}_0}+\omega_t], \nabla \Phi_3 ] \dx .
\end{multline}

\noindent We estimate the two integrals in the right-hand side differently.
%
Concerning the second one, we use the decomposition of $u$ in Lemma~\ref{vitdec} and get
\begin{align*}
\MoveEqLeft
\left| \int_{\mathcal{F}_0} [\omega_t, u(t) - K_{\R^3}[\kappa_{\mathcal{C}_0}+\omega_t], \nabla \Phi_3 ] \dx \right| \\
&\leq \norm{\omega_t}_{L^1} \left( |p_3^\eps| \norm{ \nabla \Phi_3 }_{L^\infty(\supp\omega_t)}
  + \norm{ \uref[\omega_t] }_{L^\infty(\supp\omega_t)}
  +\norm{ H^\eps - K_{\R^3}[\kappa_{\mathcal{C}_0}] }_{L^\infty(\supp\omega_t)} \right) \\
& \quad \times \eps|\log\eps|^\frac{1}{2} \dist(\supp\omega_t,\cals^\eps)^{-3} \\
& \lesssim (1+|p_3^\eps|) \eps|\log\eps| \dist(\supp\omega_t,\cals^\eps)^{-6} \lesssim \eps^\frac{9}{10} s(t)^{-4},
\end{align*} 
where we use the decay estimates in the Propositions~\ref{est pot}, \ref{est ref}, and \ref{est vor filament},
as well as the a priori estimates \eqref{vort smooth} and \eqref{dist bd} for $\omega_t$.

Concerning the first integral in the right-hand side of \eqref{Eq:SecondIntergral}, we have shown in the previous proof (with $s=s(t)-s_0$) that 
\begin{equation*}
\int_{\mathcal{F}_0} [ \omega_t, K_{\R^3}[\kappa_{\mathcal{C}_0}+\omega_0(\cdot+(s(t)-s_0)e_3)], \nabla \Phi_3 ] \dx
\lesssim \eps|\log\eps|^\frac{1}{2} s(t)^{-4},
\end{equation*}
(see \eqref{2nd bra 1} and \eqref{2nd bra est}).

Hence, to complete the estimate we only need to compare the first integral in the right-hand side of 
\eqref{Eq:SecondTermD} with
$\mathcal{D} \bigl[ 0, K_{\R^3}[\kappa_{\mathcal{C}_0} + \tilde{\omega}_0^t], \tilde{\omega}_0^t \bigr]_3$, where we denoted $\tilde{\omega}_0^t :=\omega_0(\cdot+(s(t)-s_0)e_3)$ to lighten the writing.
We split 
\begin{align*}
& \left| \int [e_3,\omega_t,u(t)] 
  - \bigl[ e_3, \tilde{\omega}_0^t, K_{\R^3}[\tilde{\omega}_0^t+\kappa_{\mathcal{C}_0}] \bigr] \dx \right| \\
& \leq \left| \int \bigl[ e_3,\omega_t,K_{\R^3}[\omega_t] \bigr]\dx 
  - \int \bigl[ e_3, \tilde{\omega}_0^t, K_{\R^3}[\tilde{\omega}_0^t] \bigr]\dx \right| 
  + \left| \int \bigl[ e_3,\omega_t, u(t) - K_{\R^3}[\omega_t+\kappa_{\mathcal{C}_0}] \bigr] \dx \right| \\
& \:\:\: + \left| \int \Bigl[ e_3, \omega_t - \tilde{\omega}_0^t, K_{\R^3}[\kappa_{\mathcal{C}_0}] \Bigr] \dx \right| \\
& =:I+II+III .
\end{align*}
The term $I$ is $0$ by the same calculations as in \eqref{canc} above.

In $II$, we may write\begin{align*}
u(t)-K_{\R^3}[\omega_t+\kappa_{\mathcal{C}_0}]=p_3^\eps\nabla\Phi_3+\uref[\omega_t]+(H^\eps-K_{\R^3}[\kappa_{\mathcal{C}_0}]),\end{align*}
and observe that the $e_3$-component of these does not contribute to the integral by the definition of the bracket 
and that we may hence estimate them with Lemma \ref{lem87}.
Using the a priori estimates \eqref{vort smooth} and \eqref{dist bd} on $\omega_t$, and the estimates on the velocity components in Section \ref{sec-Asymptotic}, we then see that
\begin{equation*}
II \lesssim \norm{ \omega_t }_{L^1}(1+|p_3^\eps|) \eps|\log\eps|\dist(\supp\omega_t,\cals^\eps)^{-4}
\lesssim \eps^\frac{8}{10} s(t)^{-4} .
\end{equation*}
For $III$, we note that only the $e_r$-component of $K_{\R^3}[\kappa_{\mathcal{C}_0}]$ contributes to the integral 
by the definition of the bracket and because $K_{\R^3}[\kappa_{\mathcal{C}_0}]$ has no azimuthal component. 
Hence, using the formula \eqref{asy K} and that $\omega_0(\cdot+(s(t)-s_0)e_3)$ 
and $\omega_t$ are supported in $B_2(-s(t)e_3)$,
we see that
\begin{multline*}
III \lesssim \norm{ \omega_0(\cdot+(s(t)-s_0)e_3) - \omega_t }_{L^1}
  \norm{ K_{\R^3}[\kappa_{\mathcal{C}_0}] \cdot e_r }_{L^\infty(B_2(-s(t)e_3))} \\
\lesssim \norm{\omega_0(\cdot+(s(t)-s_0)e_3)-\omega_t}_{L^1}s(t)^{-4}.
\end{multline*}
Using \eqref{cont vort}, this is $\leq \frac{1}{10000}s(t)^{-4}$ for suitably small $\tilde{\delta}$
(which fixes $T_0$ according to Lemma~\ref{no blow up}). 

Putting all these estimates together, this concludes the proof of Lemma~\ref{diff D}.
\end{proof}
%


\section{Appendix. Decay estimates}
\label{app-IPP}

For the decay of $u$, we first use the following Lemma. 
\begin{lem}\label{decay pot}
Let $r>0$ be given and assume that $\bar{u}\in (L^2\cap C^\infty)(\R^3\backslash B_r(0), \R^3)$ 
is such that 
\begin{equation}
\div \bar u  = 0 \quad \text{ and } \quad \curl \bar u=0
\end{equation}
and
\begin{equation}
\int_{\de B_r(0)} \bar{u}\cdot n\dd\sigma=0.\label{inc}
\end{equation}
Then for all $m\in \N_{\geq 0}$ it holds that 
\begin{align*}
|\nabla^m\bar u(x)|\lesssim_{r,m} \norm{\bar u\cdot n}_{L^2(\de B_r(0))}|x|^{-3-m} 
  \text{ as $|x|\rightarrow \infty$}. 
\end{align*}
\end{lem}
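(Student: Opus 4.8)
The plan is to reduce the statement to a classical exterior-domain estimate by exploiting that a curl-free, divergence-free field with zero flux through $\partial B_r(0)$ is the gradient of a harmonic potential which decays appropriately. First I would observe that, since $\R^3 \setminus B_r(0)$ is simply connected and $\curl \bar u = 0$, there exists $\phi \in C^\infty(\R^3 \setminus B_r(0))$ with $\nabla \phi = \bar u$; since $\div \bar u = 0$ we get $\Delta \phi = 0$ in the exterior domain, and we may normalize $\phi$ so that it tends to a constant, in fact to $0$, at infinity (this uses that $\bar u \in L^2$ so that $\phi$ cannot grow; more precisely, the only harmonic functions on an exterior $3$D domain whose gradient is $L^2$ are those converging to a constant at infinity — the potential expansion below makes this explicit). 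The zero-flux condition \eqref{inc} is exactly $\int_{\partial B_r(0)} \partial_n \phi \, \dd\sigma = 0$, which is the solvability/compatibility condition that kills the leading $1/|x|$ term in the multipole expansion of $\phi$.

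Next I would write the standard single-and-double-layer integral representation for the exterior harmonic function $\phi$ vanishing at infinity: for $|x| > r$,
\begin{equation*}
\phi(x) = -\int_{\partial B_r(0)} \Big( \frac{-1}{4\pi |x-y|} \partial_n \phi(y) - \phi(y) \, \partial_{n_y} \frac{-1}{4\pi|x-y|} \Big) \dd\sigma(y),
\end{equation*}
exactly as in \eqref{bd integral} in the proof of Lemma~\ref{harmonic decay}. In the first term, since $\partial_n \phi = \bar u \cdot n$ is mean-free on $\partial B_r(0)$ by \eqref{inc}, I can subtract the value of the kernel at the center and estimate $\frac{1}{|x-y|}$ by its gradient, gaining one power of decay: this term is $\lesssim \| \bar u \cdot n\|_{L^2(\partial B_r(0))} \, \| \nabla_y \frac{1}{4\pi|x-\cdot|}\|_{L^2(\partial B_r(0))} \lesssim_r \|\bar u\cdot n\|_{L^2(\partial B_r(0))} |x|^{-2}$. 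For the double-layer term I would use $\int_{\partial B_r(0)} \partial_{n_y} \frac{1}{|x-y|} \dd\sigma(y) = \int_{B_r(0)} \Delta_y \frac{1}{|x-y|}\dy = 0$ for $|x| > r$, so that again subtracting a constant is free; then $|\phi|$ on $\partial B_r(0)$ must be controlled. Here I would invoke the trace/a priori estimate from Lemma~\ref{cor trace} applied on the fixed domain $\R^3 \setminus B_r(0)$ — or more elementarily, a direct trace and elliptic estimate on this fixed smooth domain — to bound $\|\phi\|_{L^2(\partial B_r(0))/\const}$ by $\|\partial_n\phi\|_{L^2(\partial B_r(0))} = \|\bar u\cdot n\|_{L^2(\partial B_r(0))}$. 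This gives $|\phi(x)| \lesssim_r \|\bar u\cdot n\|_{L^2(\partial B_r(0))} |x|^{-2}$, whence $|\nabla^m \bar u(x)| = |\nabla^{m+1}\phi(x)| \lesssim_{r,m} \|\bar u\cdot n\|_{L^2(\partial B_r(0))} |x|^{-3-m}$ by differentiating the kernels under the integral sign, the case $m=0$ being $|\bar u(x)| \lesssim |x|^{-3}$.

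The routine part is the kernel estimates and differentiation under the integral, which are identical in spirit to the proof of Lemma~\ref{harmonic decay}; indeed the cleanest write-up is probably just to say that $\bar u = \nabla \phi$ with $\phi$ exterior-harmonic, vanishing at infinity, with $\int_{\partial B_r(0)} \partial_n \phi\,\dd\sigma = 0$, and then cite Lemma~\ref{harmonic decay} with $\cals^\eps$ replaced by the fixed ball $B_r(0)$ (the proof of that lemma used nothing about the specific geometry beyond harmonicity, vanishing at infinity, the mean-free Neumann data, and a trace bound). The one point that needs a word of care — and the mildest obstacle — is justifying the existence and the decay normalization of the potential $\phi$: one must rule out a logarithmic/linear growing harmonic part and a $\log$-type or constant-gradient behavior, which follows because $\bar u \in L^2(\R^3 \setminus B_r(0))$ forces $\phi$ into the homogeneous Sobolev space $\dot H^1$ of the exterior domain, whose harmonic elements admit the multipole expansion starting at order $|x|^{-1}$, and then \eqref{inc} removes that order so the expansion starts at $|x|^{-2}$. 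This is standard (cf.\ \cite{Amrouche}) and can be dispatched in one or two sentences.
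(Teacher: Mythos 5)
Your proposal is correct, and it rests on the same mechanism as the paper's proof: write $\bar u=\nabla\phi$ with $\phi$ exterior-harmonic (simple connectedness of $\R^3\setminus B_r(0)$), represent $\phi$ by layer potentials on $\partial B_r(0)$, and use the mean-freeness of $\partial_n\phi=\bar u\cdot n$ coming from \eqref{inc} to gain one extra power of decay, so that $|\nabla^m\phi|\lesssim |x|^{-2-m}$ and hence $|\nabla^m\bar u|\lesssim|x|^{-3-m}$. The only real difference is in the choice of representation: you use the exterior Green formula (single plus double layer), which forces you to also control $\|\phi\|_{L^2(\partial B_r(0))/\const}$ by a fixed-domain Neumann-to-Dirichlet/trace estimate — in effect you rerun Lemma~\ref{harmonic decay} with $\cals^\eps$ replaced by the ball, with $r$-dependent rather than $\eps$-dependent constants — whereas the paper extends $\bar\Phi$ harmonically into $B_r(0)$ and represents it as a pure single-layer potential whose density is the jump $\llbracket\partial_n\bar\Phi\rrbracket$, which is mean-free (by \eqref{inc} outside and the divergence theorem inside) and $L^2$-controlled by $\bar u\cdot n$ through interior elliptic regularity. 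Both routes need the same two ingredients (mean-free data and an elliptic estimate on the fixed sphere), so neither buys much over the other; your version has the small advantage of reusing Lemma~\ref{harmonic decay} verbatim in spirit, while the paper's jump representation avoids the Dirichlet-trace step, and your care about normalizing $\phi$ at infinity (using $\bar u\in L^2$ to rule out growing harmonic parts before invoking the representation formula) is a point the paper passes over implicitly, so it is well spent.
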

\begin{proof}[Proof of  Lemma \ref{decay pot}]
Because $\R^3\backslash B_r(0)$ is simply connected, there is some harmonic potential 
$\bar\Phi$ such that $\bar u=\nabla\bar\Phi$.
If we extend $\bar\Phi$ harmonically to $B_r(0)$, we can recover $\bar\Phi$ 
and its derivatives as a single layer potential (see e.g.\ \cite[Chapter 5.12]{Medkova}) 
\begin{align*}
\nabla^m\bar\Phi(x) 
  = \int_{\de B_r(0)} \llbracket \de_n \bar\Phi \rrbracket(y)
    \nabla_x^m \left( \frac{1}{4\pi|x-y|} \right) \dd\sigma(y),
\end{align*}
where $\llbracket\cdot\rrbracket$ denotes the jump across the boundary.
This decays like $|x|^{-2-m}$, because on the one hand $\llbracket\de_n\bar\Phi\rrbracket$ is mean-free 
by the assumption \eqref{inc} on $\bar u\cdot n=\de_n \bar \Phi$ and on the other hand 
$\llbracket\de_n\bar\Phi\rrbracket$ is in $L^2(\de B_r(0))$ and controlled by $u\cdot n$ by elliptic regularity. 
\end{proof}
\begin{lem} \label{decay prop}
Assume the strong solution to \eqref{chE1}-\eqref{chSolide2} exists up to some time $T$ 
and is such that $\omega$ has bounded support. Then we have the following decay estimates 
\begin{align}
|u(x)|+|\de_t u(x)|&\lesssim |x|^{-3} \text{ as $|x|\rightarrow \infty$}, \label{decay u1} \\
|\nabla u(x)|&\lesssim |x|^{-4} \text{ as $|x|\rightarrow \infty$}\label{decay u2},
\end{align}
locally uniformly on $[0,T)$.
Furthermore, $\lim_{x\rightarrow \infty} \pi(x)$ exists and it holds that 
\begin{align}
\left|\pi(x)-\lim_{x\rightarrow \infty} \pi(x)\right|\lesssim |x|^{-2}.
\end{align}
In particular, all partial integrations in the proof of Proposition~\ref{reform-macro}
 are justified. 
\end{lem}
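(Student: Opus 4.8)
The plan is to prove Lemma~\ref{decay prop} by propagating decay estimates from the vorticity equation and the Biot--Savart-type decomposition, and then to feed these into the pressure equation. First I would fix a ball $B_r(0)$ large enough to contain $\mathcal{S}_0$ and the support of $\omega(t,\cdot)$ for $t$ in a compact subinterval of $[0,T)$; such a ball exists locally uniformly because the vorticity is transported by a smooth velocity field (cf.\ \eqref{omega-eq}) and hence keeps bounded support on compact time intervals. Outside $B_r(0)$ the field $u$ is divergence-free and curl-free, so Lemma~\ref{decay pot} applies provided the compatibility condition \eqref{inc} holds; but $\int_{\partial B_r(0)} u\cdot n\,\dd\sigma = \int_{\partial \mathcal{S}_0} u\cdot n\,\dd\sigma = \int_{\partial\mathcal{S}_0} u_\mathcal{S}\cdot n\,\dd\sigma = 0$ by the boundary condition \eqref{chE3} and the divergence theorem applied to the rigid field $u_\mathcal{S}$ (whose divergence vanishes). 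Thus \eqref{decay u1} for $u$ and \eqref{decay u2} follow immediately, with the $L^2$-norm $\|u\cdot n\|_{L^2(\partial B_r(0))}$ bounded locally uniformly in $t$ by the regularity of the strong solution in Proposition~\ref{loc wellp}.

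Next I would handle $\partial_t u$. Differentiating \eqref{chE1} in time shows that $\partial_t u$ is still divergence-free in $\mathcal{F}_0$; away from $B_r(0)$ we have $\curl \partial_t u = \partial_t \omega = 0$ there as well (again because $\omega$ has bounded support propagating continuously, so for a slightly larger radius $\partial_t\omega$ also vanishes outside it on the time subinterval). The flux condition for $\partial_t u$ across $\partial B_r(0)$ is obtained by differentiating in time the identity $\int_{\partial B_r(0)} u\cdot n\,\dd\sigma = 0$, giving $\int_{\partial B_r(0)} \partial_t u\cdot n\,\dd\sigma = 0$. Hence Lemma~\ref{decay pot} applies to $\partial_t u$ as well, and the required $L^2$ trace norm $\|\partial_t u\cdot n\|_{L^2(\partial B_r(0))}$ is controlled locally uniformly by the regularity class $\partial_t u\in L^\infty_{loc}([0,T);(C^{\lambda,r}\cap L^2)(\mathcal{F}_0(\cdot)))$ from Proposition~\ref{loc wellp}. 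This yields the $\partial_t u$ part of \eqref{decay u1}.

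For the pressure, I would use that $\pi$ satisfies an elliptic equation obtained by taking the divergence of \eqref{looser}: since $\div \partial_t u = 0$ and $\div u_\mathcal{S}=0$, one gets $\Delta\big(\tfrac12 u^2 + \pi - u\cdot u_\mathcal{S}\big) = -\div\big(\omega\wedge(u-u_\mathcal{S})\big)$. Outside $B_r(0)$ the right-hand side vanishes, and the quantities $u^2$ and $u\cdot u_\mathcal{S}$ are known: $|u|^2 = O(|x|^{-6})$ by the decay just established, while $u\cdot u_\mathcal{S} = u\cdot(\ell + \Omega\wedge x) = O(|x|^{-2})$ since $u = O(|x|^{-3})$. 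Thus $\pi - (u\cdot u_\mathcal{S} - \tfrac12 u^2)$ is harmonic at infinity, bounded, and hence has a limit $\pi_\infty$ with $|\pi - \pi_\infty| \lesssim |x|^{-1}$ from the harmonic part plus $O(|x|^{-2})$ from the explicit terms --- actually we only need $O(|x|^{-2})$ overall, so I would be a little more careful: the harmonic remainder, being bounded and harmonic in the exterior of a ball, admits an expansion $c/|x| + O(|x|^{-2})$; to kill the $1/|x|$ term one checks that the relevant integral (a surface average of the harmonic function's normal derivative, equivalently the total ``charge'') vanishes, which follows from the fact that $\int_{\partial B_R} \partial_n\pi\,\dd\sigma = \int_{B_R\setminus\mathcal{S}_0}\Delta\pi\,\dd x + \int_{\partial\mathcal{S}_0}\partial_n\pi\,\dd\sigma$ and both contributions are controlled (the first by the already-established decay of the nonlinear terms, the second via the Newton equations \eqref{chSolide1}--\eqref{chSolide2} which pair $\pi$ against a bounded right-hand side). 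This gives $|\pi - \pi_\infty| \lesssim |x|^{-2}$.

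The main obstacle I anticipate is the bookkeeping in the pressure estimate: carefully isolating the genuinely harmonic-at-infinity part of $\pi$, verifying that its leading $1/|x|$ coefficient vanishes, and confirming that all the implied constants are \emph{locally uniform in time} rather than merely finite for each fixed $t$ --- this requires tracking that the radius $r$ of the confining ball, the $L^2$ trace norms, and the support of $\omega$ all vary continuously (hence stay bounded) on compact time intervals, which is exactly what the regularity statement in Proposition~\ref{loc wellp} provides. Once all three decay statements are in hand, every integration by parts in the proof of Proposition~\ref{reform-macro} --- in particular \eqref{prelooser}, \eqref{vieux}, and the applications of Lemma~\ref{lamb}, whose hypothesis \eqref{suff decay} requires $|x|^2|u|^2|\nabla\Phi_i| = O(|x|^{-1})\to 0$ using $\nabla\Phi_i = O(|x|^{-3})$ --- is justified, completing the proof. \qed
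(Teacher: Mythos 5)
Your treatment of $u$ and $\partial_t u$ is essentially the paper's: confine $\mathcal{S}_0$ and $\supp\omega$ in a large ball, check the flux condition \eqref{inc} via \eqref{chE3} and incompressibility, apply Lemma~\ref{decay pot}, and control the trace norms locally uniformly in time by the strong-solution regularity. That part is fine. The pressure, however, is where you diverge from the paper and where your argument has a genuine gap. You reduce to showing that the harmonic-at-infinity function $P=\tfrac12|u|^2+\pi-u\cdot u_{\mathcal S}$ has vanishing $1/|x|$ coefficient, but your justification does not establish this: the coefficient is (up to a constant) the $R$-independent flux $\int_{\partial B_R}\partial_n P\,\dd\sigma$, and to kill it you must show this flux is exactly zero, not merely that the terms in a divergence-theorem identity are ``controlled''. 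Moreover, your appeal to the Newton equations is off target: \eqref{chSolide1}--\eqref{chSolide2} involve $\int_{\partial\mathcal S_0}\pi\, n\,\dd\sigma$ and $\int_{\partial\mathcal S_0}\pi\,(x\wedge n)\,\dd\sigma$, i.e.\ $\pi$ itself against $n$, and give no information about $\int_{\partial\mathcal S_0}\partial_n\pi\,\dd\sigma$. (You also assert without proof that the harmonic remainder is bounded, which presupposes a global bound on $\pi$ that is not among your established facts.)

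The gap is repairable inside your framework: outside $\supp\omega$ one has $\nabla P=-\partial_t u$ by \eqref{looser}, so $\int_{\partial B_R}\partial_n P\,\dd\sigma=-\int_{\partial B_R}\partial_t u\cdot n\,\dd\sigma=-\int_{\partial\mathcal S_0}\partial_t u\cdot n\,\dd\sigma=-\frac{\mathrm{d}}{\mathrm{d}t}\int_{\partial\mathcal S_0}u_{\mathcal S}\cdot n\,\dd\sigma=0$, which is the missing vanishing-charge statement. But note that this computation makes the whole harmonic-expansion detour unnecessary, and it is exactly why the paper argues more directly: from \eqref{chE1} one reads off $\nabla\pi=-\partial_t u-(u-u_{\mathcal S})\cdot\nabla u-\Omega\wedge u=O(|x|^{-3})$, using $u=O(|x|^{-3})$, $\nabla u=O(|x|^{-4})$ and $u_{\mathcal S}=O(|x|)$; then the fundamental theorem of calculus along a ray gives the existence of $\lim_{|x|\to\infty}\pi$ with rate $|x|^{-2}$, and the Lipschitz bound $\lesssim |x|^{-3}$ on spheres transfers this to all directions. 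That route needs no boundedness of $\pi$, no expansion, and no charge argument, and the constants are locally uniform in time for the same reasons as in your first two steps.
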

\begin{proof}
We take $r$ large enough for $\mathcal{S}_0$ and $\supp\omega$ to be contained in $B_r(0)$. 
Then $u$ and $\de_t u$ both fulfill the assumptions of Lemma \ref{decay pot}. 
They are $\div$- and $\curl$-free by assumption and the condition \eqref{inc} holds 
for both because, due to the incompressibility, we have 
\begin{align*}
\int_{\de B_r(0)} u\cdot n\dd\sigma=-\int_{\de\mathcal{S}_0} u\cdot n\dd\sigma=0.
\end{align*}
Furthermore, on compact subintervals of $[0,T)$, the quantities $\norm{u}_{H^1(\de B_r(0))}$ 
and $\norm{\de_t u}_{H^1(\de B_r(0))}$ are uniformly controlled by the assumption 
that the solution is strong and elliptic regularity. This shows \eqref{decay u1} and \eqref{decay u2}. 
Regarding the statement for $\pi$, we observe first that 
\begin{align}
\nabla \pi(x) = -\de_t u-(u-u_\mathcal{S})\nabla u-\Omega(t)\wedge u=O(|x|^{-3})
  \text{  as $|x|\rightarrow \infty$}\label{est grad p}
\end{align}
by the previous estimates on $u$ and the definition \eqref{u_S} of $u_\mathcal{S}$. 
Hence, from the fundamental theorem of calculus, we see that 
$\lim_{x_1\rightarrow +\infty}\pi(x_1,0,0)$ exists and it holds that 
\begin{align*}
\left|\pi(x_1,0,0)-\lim_{r\rightarrow +\infty}\pi(x_1,0,0)\right| \lesssim |x_1|^{-2}.
\end{align*}
Furthermore, by \eqref{est grad p}, $\pi$ is Lipschitz on spheres $\de B_{r'}(0)$ with constant $\lesssim |r'|^{-3}$ and therefore it holds that 
\begin{align*}
\left| \pi(x) - \lim_{x_1\rightarrow +\infty} \pi(x_1,0,0) \right|
\leq \left|\pi(x)-\pi(|x|,0,0)\right|+\left|\pi(|x|,0,0)
 - \lim_{x_1\rightarrow +\infty}\pi(x_1,0,0)\right|
 \lesssim |x|^{-2}.
\end{align*}
   Again this is locally uniform in time because the estimate \eqref{est grad p} is.
\end{proof}
%
%
%
%
%
%
%
%
\section{Appendix. Well-posedness of the macroscopic and of the limit system}
\label{Sec:WP-limit}
In this section, we prove Theorem~\ref{start3}, following the methods of
\cite{InitialsBB} and \cite{GST}. \par
We will use the following three lemmas which are elementary consequences of Fa\`a di Bruno's 
formula and the fact that H\"older spaces are algebras, 
see \cite[Lemmas A.2, A.3 \& A.4]{InitialsBB} in the case of Sobolev spaces.
\begin{lem} \label{BBLemmaA.2}
Let $k\in\N_{\geq 1}$ and $\alpha \in (0,1)$, and let $\Omega$, $\Omega'$ 
be smooth bounded domains of $\R^3$. 
Let $F \in C^{k,\alpha}({\Omega}')$ and 
$G \in C^{k,\alpha}({\Omega})$ with $G({\Omega}) \subset {\Omega}'$. 
Then $F \circ G \in C^{k,\alpha}(\Omega)$ with, for some constant $C$ 
depending only on $\Omega$,  $\Omega'$ and $k$:
\begin{equation} \label{EstBBLemmaA.2}
\| F \circ G \|_{C^{k,\alpha}(\Omega)} \leq C \| F \|_{C^{k,\alpha}(\Omega')} 
  \Big( \|G\|_{C^{k,\alpha}(\Omega)}^{k} +1 \Big).
\end{equation}
\end{lem}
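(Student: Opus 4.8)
The plan is to reduce \eqref{EstBBLemmaA.2} to a term‑by‑term estimate via the multivariate Fa\`a di Bruno formula and the algebra property of H\"older spaces, exactly as in the Sobolev (Moser‑type) composition estimate of \cite[Lemma A.2]{InitialsBB}. Since $\norm{F\circ G}_{C^{k,\alpha}(\Omega)}\leq\sum_{|\beta|\leq k}\norm{\partial^\beta(F\circ G)}_{C^{0,\alpha}(\Omega)}$, it is enough to bound each summand (which, as the Fa\`a di Bruno expansion below makes clear, is a finite sum of products of $C^{0,\alpha}$ functions, so that in particular $F\circ G\in C^{k,\alpha}(\Omega)$). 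For $\beta=0$ one simply uses $\norm{F\circ G}_{L^\infty}\leq\norm{F}_{L^\infty}$ together with the elementary composition bound $[F\circ G]_{C^{0,\alpha}(\Omega)}\leq [F]_{C^{0,\alpha}(\Omega')}\norm{\nabla G}_{L^\infty(\Omega)}^\alpha$, valid because $G$ is Lipschitz (as $k\geq 1$); this already has the claimed form.

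For $1\leq|\beta|\leq k$, Fa\`a di Bruno writes $\partial^\beta(F\circ G)$ as a finite sum, with combinatorial coefficients depending only on $k$, of terms $(\partial^\mu F)\circ G\cdot\prod_{i=1}^m\partial^{\nu_i}G_{j_i}$, where $1\leq m=|\mu|\leq|\beta|$, each $|\nu_i|\geq 1$, and $\sum_i|\nu_i|=|\beta|\leq k$. I would estimate each such term in $C^{0,\alpha}(\Omega)$ by iterating the tame product inequality $[uv]_{C^{0,\alpha}}\leq\norm{u}_{L^\infty}[v]_{C^{0,\alpha}}+[u]_{C^{0,\alpha}}\norm{v}_{L^\infty}$, combined with: (i) the composition bound $\norm{(\partial^\mu F)\circ G}_{C^{0,\alpha}(\Omega)}\leq C\norm{\partial^\mu F}_{C^{0,\alpha}(\Omega')}\bigl(1+\norm{\nabla G}_{L^\infty(\Omega)}\bigr)\leq C\norm{F}_{C^{k,\alpha}(\Omega')}\bigl(1+\norm{\nabla G}_{L^\infty(\Omega)}\bigr)$; and (ii) the fact that when $|\nu_i|<k$ the function $\partial^{\nu_i}G\in C^{k-|\nu_i|,\alpha}(\Omega)$ is at least $C^1$, hence $[\partial^{\nu_i}G]_{C^{0,\alpha}(\Omega)}\lesssim\norm{G}_{C^{|\nu_i|+1}(\Omega)}$ on the bounded domain $\Omega$.

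After this step each Fa\`a di Bruno term is controlled by $C\norm{F}_{C^{k,\alpha}(\Omega')}$ times a product of intermediate norms $\norm{G}_{C^j(\Omega)}$ with $j\leq k$ (and a spurious $\alpha$‑power of $\norm{\nabla G}_{L^\infty}$). To bring the total degree down to $k$ as claimed, the key point is to invoke the Gagliardo--Nirenberg interpolation inequalities in H\"older spaces on the bounded domain $\Omega$, $\norm{G}_{C^j(\Omega)}\leq C\norm{G}_{C^0(\Omega)}^{1-j/(k+\alpha)}\norm{G}_{C^{k,\alpha}(\Omega)}^{j/(k+\alpha)}$ for $0\leq j\leq k$, together with the automatic bound $\norm{G}_{C^0(\Omega)}\leq\mathrm{diam}(\Omega')$ coming from $G(\Omega)\subset\Omega'$. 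A short bookkeeping computation then shows that every term is $\leq C\norm{F}_{C^{k,\alpha}(\Omega')}\bigl(\norm{G}_{C^{k,\alpha}(\Omega)}^k+1\bigr)$; summing the finitely many terms yields \eqref{EstBBLemmaA.2}.

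The conceptual content is routine, being the exact H\"older analogue of the cited Sobolev estimate, so the only step needing genuine care is this last interpolation bookkeeping: one has to check that, once the intermediate norms $\norm{G}_{C^j}$ are traded for fractional powers of $\norm{G}_{C^{k,\alpha}}$ against the bounded quantity $\norm{G}_{C^0(\Omega)}$, the exponent produced by each term never exceeds $k$. The borderline case is $k=1$, where $C^1$ is an interpolation endpoint and no gain is available; there, however, the only terms that occur carry a single derivative factor $\partial^\beta G$ with $|\beta|=1$, and the degree‑one bound holds directly. Alternatively — and this is presumably what the authors intend — one may simply transcribe the proof of \cite[Lemma A.2]{InitialsBB} line by line, replacing $H^s$ by $C^{k,\alpha}$ and the Sobolev product/composition lemmas by their H\"older counterparts throughout.
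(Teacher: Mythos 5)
Your proposal is correct and follows essentially the same route the paper takes: the paper gives no detailed argument, simply invoking Fa\`a di Bruno's formula and the algebra/tame product structure of H\"older spaces as the direct analogue of the cited Sobolev lemma, which is exactly what you carry out (with the interpolation against the bounded $C^0$-norm of $G$, available since $G(\Omega)\subset\Omega'$, supplying the bookkeeping that keeps the exponent at $k$). One small inaccuracy: $C^1$ is not an interpolation endpoint between $C^0$ and $C^{1,\alpha}$ (one interpolates with exponent $1/(1+\alpha)$), and the $k=1$ case is not quite ``direct'' — the factor $\|\nabla G\|_{L^\infty}^{1+\alpha}$ arising from the composition seminorm still needs that same interpolation with the bounded image — but this is handled by the machinery you already set up, so it does not affect the validity of the proof.
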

\begin{lem}\label{Holem}
Let $k,\alpha, \Omega,\Omega'$ be as above. Let $F\in C^{k,\alpha}(\Omega')$ and $G,G'\in C^{k-1,\alpha}(\Omega')$ with $G(\Omega),G'(\Omega)\subset \Omega'$. Then there is a constant $C$, depending only on $\Omega,\Omega'$ and $k$ such that \begin{align*}
\norm{F\circ G-F\circ G'}_{C^{k-1,\alpha}(\Omega)}\leq C\norm{F}_{C^{k,\alpha}}\norm{G-G'}_{C^{k-1,\alpha}}\left(1+\norm{G}_{C^{k-1,\alpha}(\Omega')}^k+\norm{G'}_{C^{k-1,\alpha}}^k\right).
\end{align*}
\end{lem}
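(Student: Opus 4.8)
The plan is to reduce the estimate to the composition bound of Lemma~\ref{BBLemmaA.2} together with the algebra property of $C^{k-1,\alpha}$. First I would extend $F$ to a function $\widetilde F\in C^{k,\alpha}(\R^3)$ with $\|\widetilde F\|_{C^{k,\alpha}(\R^3)}\leq C\|F\|_{C^{k,\alpha}(\Omega')}$, using a bounded linear H\"older extension operator (Whitney/Stein type). This step is what makes the argument clean: it removes the requirement that the interpolated maps $G_s:=G'+s(G-G')$ take their values in $\Omega'$ — a property that can fail when $\Omega'$ is not convex, and which I expect to be the only genuinely delicate point.

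Writing $\Delta:=G-G'$, the fundamental theorem of calculus along the segment from $G'(x)$ to $G(x)$ gives, pointwise on $\Omega$,
\begin{equation*}
F\circ G-F\circ G'=\widetilde F\circ G-\widetilde F\circ G'=H\cdot\Delta,\qquad H:=\int_0^1\nabla\widetilde F(G_s)\,\mathrm{d}s.
\end{equation*}
Next I would estimate $\|H\|_{C^{k-1,\alpha}(\Omega)}$. For each fixed $s$, since $\nabla\widetilde F\in C^{k-1,\alpha}(\R^3)$ and $G_s\in C^{k-1,\alpha}(\Omega)$, Lemma~\ref{BBLemmaA.2} applied with $k-1$ in place of $k$ gives $\|\nabla\widetilde F(G_s)\|_{C^{k-1,\alpha}(\Omega)}\leq C\|F\|_{C^{k,\alpha}}(1+\|G_s\|_{C^{k-1,\alpha}(\Omega)}^{k-1})$; the degenerate case $k=1$ is handled by hand, using only $\|\nabla\widetilde F\|_{L^\infty}$ and the $C^{0,\alpha}$ seminorm of $G_s$. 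Using that $\|G_s\|_{C^{k-1,\alpha}}\leq\|G\|_{C^{k-1,\alpha}}+\|G'\|_{C^{k-1,\alpha}}$ uniformly in $s$ and the elementary bound $t^{k-1}\leq 1+t^k$ on $[0,\infty)$, the right-hand side is $\leq C\|F\|_{C^{k,\alpha}}(1+\|G\|_{C^{k-1,\alpha}}^{k}+\|G'\|_{C^{k-1,\alpha}}^{k})$ uniformly in $s$; integrating the continuous, $C^{k-1,\alpha}(\Omega)$-valued map $s\mapsto\nabla\widetilde F(G_s)$ over $[0,1]$ transfers this bound to $\|H\|_{C^{k-1,\alpha}(\Omega)}$.

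Finally, since $C^{k-1,\alpha}(\Omega)$ is a Banach algebra (with an absolute constant in the product inequality), I would conclude
\begin{equation*}
\|F\circ G-F\circ G'\|_{C^{k-1,\alpha}(\Omega)}=\|H\cdot\Delta\|_{C^{k-1,\alpha}(\Omega)}\leq C\,\|H\|_{C^{k-1,\alpha}(\Omega)}\,\|\Delta\|_{C^{k-1,\alpha}(\Omega)},
\end{equation*}
which, combined with the bound on $\|H\|_{C^{k-1,\alpha}(\Omega)}$, is precisely the claimed inequality. Apart from the extension step flagged above, every ingredient is routine — Fa\`a di Bruno through Lemma~\ref{BBLemmaA.2}, and the algebra property — and mirrors the Sobolev-space version in \cite{InitialsBB}.
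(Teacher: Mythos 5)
For $k\geq 2$ your argument is sound, and it follows exactly the route the paper gestures at without spelling out (the paper gives no proof of Lemma~\ref{Holem}; it only asserts that it is an elementary consequence of Fa\`a di Bruno / the algebra property, citing the Sobolev analogue in \cite{InitialsBB}): extend $F$, write $F\circ G-F\circ G'=H\cdot(G-G')$ with $H=\int_0^1\nabla\widetilde F(G_s)\,\mathrm{d}s$, bound $H$ by the composition estimate of Lemma~\ref{BBLemmaA.2} at level $k-1$ (applied with a large ball containing the convex hull of $\Omega'$ as target domain, which is precisely what the extension buys), and finish with the algebra property. One small inaccuracy: your appeal to integrating the \emph{continuous} $C^{k-1,\alpha}(\Omega)$-valued map $s\mapsto\nabla\widetilde F(G_s)$ is shaky, since composition with a fixed $C^{k-1,\alpha}$ outer function need not be continuous in the top H\"older seminorm; but this is harmless, because you can simply differentiate under the integral sign and estimate difference quotients pointwise to get $\norm{H}_{C^{k-1,\alpha}(\Omega)}\leq\sup_{s}\norm{\nabla\widetilde F\circ G_s}_{C^{k-1,\alpha}(\Omega)}$ without any Bochner-type integration.

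The genuine gap is the parenthetical claim that the case $k=1$ is "handled by hand, using only $\norm{\nabla\widetilde F}_{L^\infty}$ and the $C^{0,\alpha}$ seminorm of $G_s$". This cannot work: when $F\in C^{1,\alpha}$ and $G_s$ is merely $C^{0,\alpha}$, the composition $\nabla\widetilde F\circ G_s$ is in general only $\alpha^2$-H\"older, so $H$ need not lie in $C^{0,\alpha}$ and the algebra step collapses; $\norm{\nabla\widetilde F}_{L^\infty}$ gives no H\"older control of $H$ at all. In fact the stated inequality is false at $k=1$: take (in one coordinate) $F(u)=|u_1|^{1+\alpha}$, $G(x)=(-|x_1|^{\alpha},0,0)$ and $G'=G+(d,0,0)$, so that $\norm{G-G'}_{C^{0,\alpha}}=d$; comparing the values of $F\circ G-F\circ G'$ at $x=0$ and $y=(d^{1/\alpha},0,0)$ gives a $C^{0,\alpha}$ quotient of size $2d^{1+\alpha}/d^{\,}\cdot d^{-\alpha(1/\alpha-1)}\!=2d^{\alpha}$, i.e.\ $\gtrsim d^{\alpha}\gg d$ for small $d$, while the right-hand side is $\lesssim d$. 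So you should simply restrict to $k\geq 2$ (equivalently, require the inner maps to be at least Lipschitz); this costs nothing, since in the paper Lemma~\ref{Holem} is only ever invoked with $k-1=\lambda+1\geq 1$.
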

For the next lemma we denote by $\mathrm{Diff}(\overline{\Omega})$ 
the group of $C^{1}$ diffeomorphisms of $\overline{\Omega}$.
\begin{lem} \label{BBLemma4}
Let $k,\alpha$ be as above. Let $\Omega$ a smooth bounded domain of $\R^3$, $F \in C^{k,\alpha}({\Omega})$ 
and $G \in \mathrm{Diff}(\overline{\Omega}) \cap C^{k,\alpha}({\Omega})$. 
Then for some constant $C$ depending only on $\Omega$, $k \in \N_{\geq 1} $, $\alpha \in (0,1)$ 
and $\| G\|_{C^{k,\alpha}({\Omega})}$, one has
\begin{equation} \label{EstBBLemma4}
\| \partial_{i} (F \circ G^{-1}) \circ G - \partial_{i} F \|_{C^{k-1,\alpha}({\Omega})}
\leq C \| G - \Id \|_{C^{k,\alpha}({\Omega})} \|F \|_{C^{k,\alpha}({\Omega})}.
\end{equation}
\end{lem}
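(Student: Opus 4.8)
\noindent\textbf{Proof plan for Lemma~\ref{BBLemma4}.} The plan is to reduce the assertion to a purely linear-algebraic estimate on $(DG)^{-1}-\Id$ by the chain rule, and then to control that quantity using the Banach-algebra property of $C^{k-1,\alpha}(\Omega)$ together with the composition bound of Lemma~\ref{BBLemmaA.2}. Write $H:=G^{-1}$, which is an element of $\mathrm{Diff}(\overline{\Omega})$. For each index $i$ the chain rule gives $\partial_i(F\circ H)=\sum_j(\partial_j F\circ H)\,\partial_i H_j$; composing with $G$ and using $H\circ G=\Id$ yields
\[
[\partial_i(F\circ G^{-1})]\circ G \;=\; \sum_{j}(\partial_j F)\,\big[(\partial_i H_j)\circ G\big].
\]
Differentiating $H\circ G=\Id$ gives $(DH)\circ G=(DG)^{-1}$, hence $(\partial_i H_j)\circ G=\big[(DG)^{-1}\big]_{ji}$, and therefore
\[
[\partial_i(F\circ G^{-1})]\circ G - \partial_i F \;=\; \sum_{j}(\partial_j F)\,\Big(\big[(DG)^{-1}\big]_{ji}-\delta_{ji}\Big).
\]
This key identity manifestly vanishes when $G=\Id$.

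Since $C^{k-1,\alpha}(\Omega)$ is a Banach algebra and $\|\partial_j F\|_{C^{k-1,\alpha}}\le\|F\|_{C^{k,\alpha}}$, the identity immediately gives
\[
\big\|[\partial_i(F\circ G^{-1})]\circ G - \partial_i F\big\|_{C^{k-1,\alpha}} \;\le\; C\,\|F\|_{C^{k,\alpha}}\,\big\|(DG)^{-1}-\Id\big\|_{C^{k-1,\alpha}},
\]
so everything reduces to proving $\|(DG)^{-1}-\Id\|_{C^{k-1,\alpha}}\le C\|G-\Id\|_{C^{k,\alpha}}$ with a constant of the stated type. For this I would use the matrix identity $(DG)^{-1}-\Id=-(DG)^{-1}(DG-\Id)$: here $DG-\Id=D(G-\Id)$ obeys $\|DG-\Id\|_{C^{k-1,\alpha}}\le\|G-\Id\|_{C^{k,\alpha}}$, so by the algebra property it only remains to bound $\|(DG)^{-1}\|_{C^{k-1,\alpha}}$ by a constant depending on $\|G\|_{C^{k,\alpha}}$ (and on $G$ through the diffeomorphism hypothesis). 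I invoke Cramer's rule $(DG)^{-1}=(\det DG)^{-1}\,\mathrm{adj}(DG)$: the entries of $\mathrm{adj}(DG)$ are polynomials in the entries of $DG\in C^{k-1,\alpha}$, so $\|\mathrm{adj}(DG)\|_{C^{k-1,\alpha}}\le C(\|G\|_{C^{k,\alpha}})$ by the algebra property, while, since $G\in\mathrm{Diff}(\overline{\Omega})$ and $\overline{\Omega}$ is compact, $\det DG$ is continuous and nowhere zero on $\overline{\Omega}$, so $m:=\inf_{\overline{\Omega}}|\det DG|>0$, and Lemma~\ref{BBLemmaA.2} applied with the smooth function $t\mapsto 1/t$ on $\{|t|\ge m/2\}$ gives $\|(\det DG)^{-1}\|_{C^{k-1,\alpha}}\le C(\|G\|_{C^{k,\alpha}},m)$. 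Multiplying these bounds and collecting everything yields \eqref{EstBBLemma4}.

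The one genuinely delicate point — the \emph{main obstacle} — is the dependence of the constant on the ellipticity quantity $m=\inf_{\overline{\Omega}}|\det DG|>0$ (equivalently on $\|DG^{-1}\|_{C^0}$): this is finite because $G$ is a diffeomorphism of the compact set $\overline{\Omega}$, but it is not controlled by $\|G\|_{C^{k,\alpha}}$ alone. This is harmless for the intended applications, where $\|G-\Id\|_{C^{k,\alpha}}$ is small, so that $\det DG$ is close to $1$ and $m$ is bounded below by a universal constant; with that understanding the stated dependence of $C$ is exactly as claimed. Apart from this, the argument is a routine bookkeeping exercise with Fa\`a di Bruno's formula and the algebra property, parallel to the Sobolev-space analogue in \cite[Lemma A.4]{InitialsBB}.
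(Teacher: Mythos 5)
Your argument is correct and is essentially the proof the paper has in mind: the paper does not prove Lemma~\ref{BBLemma4} but cites it as an elementary consequence of Fa\`a di Bruno's formula and the algebra property of H\"older spaces (the Sobolev analogue being \cite[Lemma A.4]{InitialsBB}), and your identity $\partial_i(F\circ G^{-1})\circ G-\partial_i F=\sum_j \partial_j F\,\bigl([(DG)^{-1}]_{ji}-\delta_{ji}\bigr)$ combined with the algebra and composition estimates is exactly that route. The caveat you flag is genuine: as literally stated the constant cannot depend on $\|G\|_{C^{k,\alpha}}$ alone, since bounding $(DG)^{-1}$ also requires a lower bound on $|\det DG|$ (equivalently control of $\|G^{-1}\|_{C^{1}}$); this is an imprecision of the statement rather than of your proof, and it is harmless where the lemma is applied, because in the fixed-point scheme $\|\eta-\Id\|_{C^{\lambda+1,r}}\le\tfrac12$ keeps $\det D\eta$ uniformly away from zero.
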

%
%
We now proceed to the proof of Theorem~\ref{start3}. \par
\begin{proof}[Proof of Theorem~\ref{start3}]
In the sequel, the various constants $C>0$ may change from line to line,
and can depend on  $$D_0 := \dist(\supp(\omega_0), \mathcal{C}_0)$$ and 
$R_0$, but are independent of $|p_0|$, $\|\omega_0\|_{C^{\lambda,r}}$, $C_\star$ 
and especially $\varepsilon \in [0,\varepsilon_0]$. \par
\ \par
\noindent
We rely on the Banach-Picard fixed-point theorem. \par
\ \par
\noindent
{$\bullet$\ }{\it Fixed-point operator.}
To shorten the writing, we will denote 
\begin{equation*}
\overline{\mathfrak{O}_0}:=\supp \omega_0.
\end{equation*}
We introduce the domains 
$\overline{\mathfrak{O}_1}$ and $\overline{\mathfrak{O}_2}$ as follows:
\begin{equation*}
\mathfrak{\mathfrak{O}_i} := \left\{ x \in \R^3 \ \Big/ 
  \ \dist(x, \supp \omega_0) \leq \frac{i}{3} \dist(\mathcal{C}_0, \overline{\mathfrak{O}_0})\right\}
  \ \text{ for } \ i=1,2.
\end{equation*}
Given $T>0$, we set
\begin{gather} 
\label{Eq:DefB}
{\mathcal B}:=\bigg\{ 
(p,\eta) \in C^{0}([0,T];\, \R^6 \times C^{\lambda+1,r}(\R^3;\R^{3})) \ \big/  \\
%
%
\label{Eq:DefB2}
\forall t \in [0,T], \ \forall x \in \R^3 \setminus \overline{\mathfrak{O}_2}, \ \ \eta(t,x) =x, \\
\label{Eq:DefB3}
\| p \|_{C^{0}([0,T];\, \R^6)} \leq C_\star (1 + |p_0| + \| \omega_0 \|_{C^{\lambda,r}}) , \\
\label{Eq:DefB4}
\text{ and }  \quad
 \| \eta - \Id \|_{C^{0}([0,T];\, C^{\lambda+1,r}(\overline{\mathfrak{O}_0}))} 
  \leq \min \left(\frac{1}{2}, \frac{1}{3} \dist\left(\overline{\mathfrak{O}_0}, \mathcal{C}_0\right) \right)
    \bigg\},
\end{gather}
where the constant $C_\star$ depends only on $D_0$ and $R_0$ and will be defined later. 
\par
Note in particular that for any $(p,\eta) \in \mathcal{B}$ and any $t \in [0,T]$,
$\eta(t,\cdot)$ is a $C^1$-diffeomorphism which moreover satisfies that
$\eta(t, \, \overline{\mathfrak{O}_0}) \subset \overline{\mathfrak{O}_1}$. \par
We endow ${\mathcal B}$ with the 
$C^{0}([0,T];\, \R^6) \times L^\infty([0,T];C^{\lambda+1,r}(\overline{\mathfrak{O}_2};\,\R^{3}))$-distance, for which it is complete.

Now we define ${\mathcal T}:{\mathcal B} \rightarrow {\mathcal B}$ as follows. 
Given $(p,\eta) \in {\mathcal B}$, we first introduce 
$\omega: [0,T] \times \R^3 \rightarrow \R^{3}$ by
\begin{equation} \label{DefOmega}
\omega(t,x) = (\nabla \eta)(t,\eta^{-1}(t,x)) \cdot \omega_{0}(\eta^{-1}(t,x)) 
\text{ for } x \in \R^3.
\end{equation}
Next, we define $u: [0,T] \times \R^3 \rightarrow \R^{3}$ by \eqref{Eq:UnifiedDecomp},
that is,
\begin{equation} \label{cheapBS}
\left. \begin{array}{c}
u =  \mu   H^*  + K_{ \R^3} [\omega] \ \text{ if } \ \varepsilon =0, \\
u =  \mu   H^\varepsilon  + K_{ \R^3} [\omega]  + \uref^\varepsilon[\omega] 
    + \sum_{i=1}^{6} p_i \nabla \Phi_i^\varepsilon
\ \text{ if } \ \varepsilon >0,
\end{array} \right. 
\end{equation}
where $\uref^\varepsilon$ is defined by \eqref{a1}-\eqref{a4}. \par

Next, we set 
\begin{equation*}
u_{\mathcal{S}} (t,x) := \ell(t) + \Omega (t) \wedge x, 
\ \text{ where } \ p =: ( \ell , \Omega ) \in \R^3 \times \R^3 . 
\end{equation*}
Now we introduce a cutoff function $\Lambda \in C^\infty(\R^3;[0,1])$ such that 
$\Lambda=1$ on $\overline{\mathfrak{O}_1}$ 
and $\Lambda=0$ on $\R^3 \setminus \overline{\mathfrak{O}_2}$. 
We note in passing that, classically constructing $\Lambda$ based on distances to sets 
and convolution with a compactly supported approximation of unity,
the $C^k$ norms of $\Lambda$ can be made dependent only on $k$ and 
$\dist(\mathcal{C}_0,\overline{\mathfrak{O}_0})$,  
but not on the particular geometry of $\overline{\mathfrak{O}_0}$. \ \par
Then we define the ``updated flow'' $\tilde{\eta}(t,x)$ as 
\begin{equation} \label{up-flot}
\tilde{\eta}(t,x) - x 
  = \int_{0}^{t} \Big[ \Lambda(\cdot) (u(s,\cdot) - u_{\mathcal{S}}(s,\cdot) ) \Big] 
  \circ {\eta} (s,x) \dd s.
\end{equation}
Note that, despite the fact that $u$ is merely defined in $\mathcal{F}_0^\varepsilon$
(for $\varepsilon>0$) or singular on $\mathcal{C}_0$ (for $\varepsilon=0$),
the support of $\Lambda$ makes $\Lambda(\cdot) (u(\cdot) - u_{\mathcal{S}}(\cdot) )$
regularly defined in $\R^3$. \par
%
%
%
Next we define the ``updated solid velocity'' $\tilde p$ as the solution of 
\begin{equation} \label{Eq:EDOtildeP}
{\mathcal M}_{g}  \tilde{p}\,'  + \langle  \Gamma_g , \tilde p , \tilde p \rangle
=  \mu B\tilde p + {\mathcal D} [ \tilde{p}, u, \omega ] ,
\end{equation}
with initial data $p_0$, using the same unified notation as in Section \ref{Sec:MainProof}. \par
Let us prove that this solution is globally defined in $[0,T]$. 
The following estimates are valid for $t \in [0,\tilde{T})$ for constants independent of $t$,
where $\tilde{T}$ is the maximal time of existence for $\tilde{p}$. 
First, by \eqref{Eq:DefB4} and \eqref{DefOmega}, we can estimate $\omega$ as
\begin{equation} \label{Eq:EstOmega0}
\| \omega(t,\cdot) \|_{C^{\lambda,r}(\R^3)} \leq C \| \omega_0 \|_{C^{\lambda,r}(\overline{\mathfrak{O}_0})} ,
\end{equation}
and see that 
\begin{equation} \label{Eq:SuppOmega}
\supp \omega(t,\cdot) \subset \overline{\mathfrak{O}_1} \text{ for all } t \text{ in } [0,T].
\end{equation}
By \eqref{cheapBS}, Propositions~\ref{est pot}, \ref{est ref} and \ref{est vor filament}, 
the elliptic regularity for $K_{\R^3}$ 
and the remoteness of $\overline{\mathfrak{O}_2}$ from $\mathcal{C}_0$, we can estimate $u$ as 
\begin{equation} \label{EstU0}
\| u(t,\cdot) \|_{C^{\lambda+1,r}(\overline{\mathfrak{O}_2})} 
 \leq C \left( 1 + |p(t)| + \|\omega(t,\cdot)\|_{C^{\lambda,r}( \overline{\mathfrak{O}_0} )} \right) 
\leq C  \left( 1 + |p_0| + \|\omega_{0}\|_{C^{\lambda,r}( \overline{\mathfrak{O}_0} )} \right),
\end{equation}
where we used \eqref{Eq:DefB3}. \par
%
%
Now, going back to \eqref{Eq:EDOtildeP}, on a time interval where $\tilde{p}$ is defined, 
we can perform an energy estimate.
Relying on the skew-symmetry of $B$ and on \eqref{AnnulationGamma2}, 
we observe that it suffices to consider the last term.
We write:
\begin{equation*} 
\frac{1}{2}\left|\frac{\mathrm{d}}{\mathrm{d}t}\tilde{p}\cdot(\mathcal{M}_g+\mathcal{M}_a)\tilde{p}\right|=\left| \tilde{p} \cdot {\mathcal D} [ \tilde{p}, u, \omega ] \right| 
\leq (1+R_0) |\overline{\mathfrak{O}_1}| \, (|\tilde{p}| + \|\omega(t,\cdot)\|_\infty) 
   \| u(t,\cdot) \|_{L^\infty(\overline{\mathfrak{O}_2})} |\tilde{p}|,
\end{equation*}
where $R_0$ appears due to the cases $i=4,5,6$ in  $\mathcal{D}_i$ and we can control the size of the supports with $R_0$ through \eqref{Eq:DefB4}. 
We emphasize that this is true for both the $\mathcal{D}$ in \eqref{defD} (case $\varepsilon>0$) 
and the $\mathcal{D}$ in \eqref{defD*} (case $\varepsilon=0$.) 
Using this estimate, the positive-definiteness of $\mathcal{M}_g$ and Gronwall's lemma, we see that $\tilde{p}$
is indeed defined globally (up to time $T$), and satisfies
\begin{equation} \label{Eq:EsttildeP}
\| \tilde{p} \|_{C([0,T];\, \R^6)}
\leq C \left( 1 + |p_0| + \| \omega_0 \|_{C^{\lambda,r}(\overline{\mathfrak{O}_0})} \right).
\end{equation}
The constant $C$ above depends only on $D_0$ and $R_0$ and gives the constant $C_\star$ 
in \eqref{Eq:DefB}. \par
Now, we define on the time interval $[0,T]$
\begin{equation} \label{DefT}
{\mathcal T}(p,\eta):=(\tilde p,\tilde{\eta}).
\end{equation}
{$\bullet$\ }{\it Proof of $\mathcal{T}(\mathcal{B}) \subset \mathcal{B}$.}
Let us prove that for proper $T>0$ of the form \eqref{Eq:EstTexistence},
${\mathcal T}$ maps ${\mathcal B}$ into $ {\mathcal B}$. 
That $\tilde{\eta}$ satisfies the relation \eqref{Eq:DefB2} comes from 
the support of $\Lambda$, 
and that $\tilde{p}$ satisfies \eqref{Eq:DefB3} comes from the above analysis.

Concerning \eqref{Eq:DefB4}, we first gather the previous estimates that are valid on $[0,T]$:
\begin{gather}
\label{Eq:EstOmega}
\| \omega \|_{L^{\infty}(0,T;\, C^{\lambda,r}(\R^3))} 
  \leq C \| \omega_0 \|_{C^{\lambda,r}(\overline{\mathfrak{O}_0})} , \\
\label{EstU}
\| u \|_{L^{\infty}(0,T;\, C^{\lambda+1,r}(\overline{\mathfrak{O}_2}))} 
\leq C  \left( 1 + |p_0| + \|\omega_{0}\|_{C^{\lambda,r}( \overline{\mathfrak{O}_0} )} \right).
\end{gather}
Also, for some geometric constant, one has
\begin{equation}
\| u_\mathcal{S} \|_{C^{\lambda+1,r}(\overline{\mathfrak{O}_2}) } \leq C \| {p} \|_{C^{0}([0,T];\,\R^{6})}.\label{us smooth}
\end{equation}
%
%
Now, by \eqref{up-flot} and Lemma \ref{BBLemmaA.2}, we see that 
\begin{align*}
& \|\tilde{\eta} - \Id \|_{ C^{0}([0,T];\, C^{\lambda+1,r}(\overline{\mathfrak{O}_2})) } \\
& \hspace{0.5cm}\leq C T
\left( 1 + \| {\eta} -\Id \|_{C^{0}([0,T]; \, C^{\lambda +1,r}(\overline{\mathfrak{O}_2})) }^{\lambda +1} \right) 
\Big( \| u \|_{L^{\infty}(0,T;\, C^{\lambda+1,r}(\overline{\mathfrak{O}_2}))} + \| p \|_{C^{0}([0,T];\R^{6})} \Big) \\
& \hspace{0.5cm}\leq C T
\left( 1 + \| {\eta} -\Id \|_{C^{0}([0,T]; \, C^{\lambda +1,r}(\overline{\mathfrak{O}_2})) }^{\lambda +1} \right) 
\Big( 1 + |p_0| + \|\omega_{0}\|_{C^{\lambda,r}( \overline{\mathfrak{O}_0} )} \Big) .
\end{align*}
Hence, if $T>0$ is such that
\begin{equation*}
CT \left(1 + \left(\frac{1}{2}\right)^{\lambda+1} \right) 
( 1 + |p_0| + \|\omega_{0}\|_{C^{\lambda,r}( \overline{\mathfrak{O}_0} )} ) \leq 1/2, 
\end{equation*}
then ${\mathcal T}$ maps ${\mathcal B}$ into itself. \par
\medskip
\noindent

{\bf $\bullet$\ }{\it Proof that $\mathcal{T}$ is a contraction.}
Now, let us prove that ${\mathcal T}$ is contractive for small $T>0$. 
Given $(p_1 ,\eta_{1})$ and $ (p_1 ,\eta_{2})$ in ${\mathcal B}$, 
we let $\omega_{1}$, $\omega_{2}$, $u_{1}$, $u_{2}$, etc. be the various objects 
associated to $\eta_{1}$ and $\eta_{2}$ in the construction of ${\mathcal T}$. 
We also define
\begin{equation} \label{DefMathcalU}
{\mathcal U}_{i}(t,x) = [\Lambda \, u_{i}] (t,{\eta}_{i}(t,x)) 
  - [\Lambda\,  u_{\mathcal{S},i}] (t,{\eta}_{i}(t,x)) 
\text{ for } (t,x) \in [0,T] \times \R^3 \text{ and } i=1,2,
\end{equation}
so that we have
\begin{equation} \label{DiffFlows}
\tilde{\eta}_{1}(t,x) - \tilde{\eta}_{2}(t,x) = \int_{0}^{t} 
  \left[{\mathcal U}_{1}(s,x) - {\mathcal U}_{2}(s,x)\right] \, \mathrm{d}s.
\end{equation}
%
%
We begin with an estimate of $\| \omega_1 - \omega_2\|_{L^1(\R^3)}$. 
We rely on \eqref{DefOmega}, and see that 
\begin{equation}
\| \omega_1{\circ\eta_1} - \omega_2{\circ\eta_2}\|_{C^{\lambda,r}(\R^3)} 
\lesssim \| \omega_0 \|_{C^{\lambda,r}(\R^3)} 
\| \eta_{1} - \eta_{2} \|_{C^{\lambda+1,r}(\overline{\mathfrak{O}_2})}.\label{Holest}
\end{equation}
Using \eqref{Eq:SuppOmega}, which is satisfied by both $\omega_1$ and $\omega_2$, and Lemma \ref{Holem}, 
we find for some geometric constant $C>0$:
\begin{equation} \label{Eq:DiffOmegaL1}
\| \omega_1 - \omega_2\|_{L^1(\R^3)} \leq C \| \omega_0 \|_{C^{\lambda,r}(\R^3)}  
                  \| \eta_{1} - \eta_{2} \|_{C^{\lambda+1,r}(\overline{\mathfrak{O}_2})}.
\end{equation}
Now, let us prove that for some geometric constant $C>0$, we have
\begin{multline*}
\mkern-18mu\| {\mathcal U}_{1} - {\mathcal U}_{2} \|
              _{L^{\infty}([0,T];\, C^{\lambda+1,r}( \overline{\mathfrak{O}_2} ))} \\
\leq C  \left( 1 + \|\omega_{0}\|_{C^{\lambda,r}( \overline{\mathfrak{O}_0} )} 
      + \| p \|_{C^{0}([0,T];\R^{6})} \right)
    \| \eta_{1} -\eta_{2}\|_{L^{\infty}([0,T];C^{\lambda+1,r}(\overline{\mathfrak{O}_2}))}+C\norm{p_1-p_2}_{C([0,T];\R^6)}\mkern-4mu.
\end{multline*}
For $t \in [0,T]$, we have, omitting the dependence on $t$ to simplify the notations, 
using the smoothness of $\Lambda$,
\begin{equation} \label{Eq:1EstU}
\| {\mathcal U}_{1} - {\mathcal U}_{2} \|_{C^{\lambda+1,r}(\overline{\mathfrak{O}_2})}
\leq
\| [\Lambda u_{1}] \circ \eta_{1} - [\Lambda u_{2}] \circ \eta_{2} 
                \|_{C^{\lambda+1,r}(\overline{\mathfrak{O}_2})} 
+ \| [\Lambda u_{\mathcal{S},1}] \circ \eta_{1} - [\Lambda u_{\mathcal{S},2}] \circ \eta_{2}   \|_{C^{\lambda+1,r}(\overline{\mathfrak{O}_2})}.
\end{equation}
For the second term, we simply estimate, using \eqref{Eq:EsttildeP} and Lemma \ref{Holem} as well as \eqref{us smooth}
\begin{align*}
 &\| [\Lambda u_{\mathcal{S},1}] \circ \eta_{1} - [\Lambda u_{\mathcal{S},2}] \circ \eta_{2} \|
                      _{C^{\lambda+1,r}(\overline{\mathfrak{O}_2})} \\
& \hspace{1cm} \leq 
\| [\Lambda u_{\mathcal{S},1}] \circ \eta_{1} - [\Lambda u_{\mathcal{S},2}] \circ \eta_{1} \|
                      _{C^{\lambda+1,r}(\overline{\mathfrak{O}_2})} 
+ \ \| [\Lambda u_{\mathcal{S},2}] \circ \eta_{1} - [\Lambda u_{\mathcal{S},2}] \circ \eta_{2} \|
                      _{C^{\lambda+1,r}(\overline{\mathfrak{O}_2})} \\
& \hspace{1cm} \leq C \Big[ |p_1 - p_2|
+\big( 1 + \| \omega_{0}\|_{C^{\lambda,r}(\overline{\mathfrak{O}_2})} 
+ \| p \|_{C^{0}([0,T];\, \R^{6})} \big) 
  \| \eta_{1} - \eta_{2} \|_{C^{\lambda+1,r}(\R^3)} \Big].
\end{align*}
For the first term on the right-hand side of \eqref{Eq:1EstU}, we rely on the 
decomposition \eqref{Eq:UnifiedDecomp}.
We write
\begin{eqnarray}
\nonumber 
\| [\Lambda u_{1}] \circ \eta_{1} - [\Lambda u_{2}] \circ \eta_{2} \|
_{C^{\lambda+1,r}(\overline{\mathfrak{O}_2})} 
&\leq&
|\mu|\, \| H^\varepsilon \circ \eta_{1} - H^\varepsilon \circ \eta_{2} \|
                    _{C^{\lambda+1,r}(\overline{\mathfrak{O}_2})} \\
\nonumber 
&+&
\| \left\{ \Lambda K_{\R^3}[\omega_{1}]\right\} \circ \eta_{1} 
  - \left\{ \Lambda K_{\R^3}[\omega_{2}]\right\} \circ \eta_{2} \|
                    _{C^{\lambda+1,r}(\overline{\mathfrak{O}_2})} \\
\nonumber 
&+&
\left\| \left\{ \Lambda \uref^\eps[\omega_{1}]\right\} \circ \eta_{1} 
  - \left\{ \Lambda \uref^\eps[\omega_{2}]\right\} \circ \eta_{2} \right\|
            _{C^{\lambda+1,r}(\overline{\mathfrak{O}_2})} \\
\label{Eq:MajLag}
&+&
\left\| \sum_{k=1}^{6} p^1_k (\Lambda \nabla \Phi_k^\varepsilon) \circ \eta_{1} 
  - \sum_{k=1}^{6} p^2_k (\Lambda \nabla \Phi_k^\varepsilon) \circ \eta_{2}  \right\|
                    _{C^{\lambda+1,r}(\overline{\mathfrak{O}_2})}
\end{eqnarray}
where $\uref^\eps$ and $\Phi_k^\eps$ are understood as $0$ if $\eps=0$.
%
%
%

Concerning the first term in the right-hand side of \eqref{Eq:MajLag}, 
we use the positive distance between $\mathcal{S}_0^\varepsilon$ and $\mathfrak{O}_2$ and
Proposition~\ref{est vor filament} to deduce
\begin{equation*}
\| H^\varepsilon \circ \eta_{1} - H^\varepsilon \circ \eta_{2} \|
                          _{C^{\lambda+1,r}(\overline{\mathfrak{O}_2})}
\lesssim \| \eta_{1} - \eta_{2} \|_{C^{\lambda+1,r}(\R^3)}.
\end{equation*}
Reasoning in the same way for the last term in the right-hand side of \eqref{Eq:MajLag},
and using Proposition~\ref{est pot}, we find
\begin{equation*}
\left\| \sum_{k=1}^{6} p^1_k \nabla \Phi_k^\varepsilon \circ \eta_{1} 
  - \sum_{k=1}^{6} p^2_k \nabla \Phi_k^\varepsilon \circ \eta_{2}  \right\|
                          _{C^{\lambda+1,r}(\overline{\mathfrak{O}_2})}
\lesssim |p_1 - p_2| + \| \eta_{1} - \eta_{2} \|_{C^{\lambda+1,r}(\R^3)}.
\end{equation*}
We now focus on the second term in \eqref{Eq:MajLag}.
Relying on \eqref{Eq:DefB4}, the smoothness of $\Lambda$, elliptic regularity estimates 
and Lemma~\ref{BBLemmaA.2}, we deduce:
\begin{align}
\nonumber
\| [\Lambda K_{\R^3}[\omega_{1}]] \circ \eta_1 & - [\Lambda K_{\R^3}[\omega_{2}]] \circ \eta_2 \|_{C^{\lambda+1,r}(\overline{\mathfrak{O}_2})} \\
\nonumber 
&\leq C \|  K_{\R^3}[\omega_{1}] \circ \eta_1 \circ \eta_2^{-1} - K_{\R^3}[\omega_{2}] \|_{C^{\lambda+1,r}(\overline{\mathfrak{O}_2})} \\
\label{nononumb}
&\leq
C \, \Big( \| \curl( K_{\R^3}[\omega_{1}] \circ \eta_{1} \circ \eta_{2}^{-1}) 
            - \curl ( K_{\R^3}[\omega_{2}]) \|_{C^{\lambda,r}(\overline{\mathfrak{O}_2})} \\
\nonumber 
& \  \hspace{2cm} 
+ \ \| \div( K_{\R^3}[\omega_{1}] \circ \eta_{1} \circ \eta_{2}^{-1}) 
      - \div ( K_{\R^3}[\omega_{2}]) \|_{C^{\lambda,r}(\overline{\mathfrak{O}_2})}  \Big) 
\nonumber
\end{align}
Concerning the first term in the right-hand side of \eqref{nononumb}, using \eqref{DefOmega}, \eqref{Eq:EstOmega}, \eqref{Holest},
Lemmas~\ref{BBLemmaA.2} and \ref{BBLemma4}, we see that 
\begin{align*}
\| \curl(K_{\R^3}[\omega_{1}] \circ & \,\eta_{1} \circ \eta_{2}^{-1}) 
        - \curl (K_{\R^3}[\omega_{2}]) \|_{C^{\lambda,r}(\overline{\mathfrak{O}_2})} \\
\leq & \ 
\| \curl \left( K_{\R^3}[\omega_{1}] \circ \eta_{1} \circ \eta_{2}^{-1} \right) 
- (\curl K_{\R^3}[\omega_{1}]) \circ \eta_{1} \circ \eta_{2}^{-1} 
                   \|_{C^{\lambda,r}(\overline{\mathfrak{O}_2})} \\
& \hspace{2cm} +
\|  (\curl K_{\R^3}[\omega_{1}]) \circ \eta_{1} \circ \eta_{2}^{-1} 
  - (\curl K_{\R^3}[\omega_{2}]) \|_{C^{\lambda,r}(\overline{\mathfrak{O}_2})} \\
\leq & \ 
C \, \| \eta_{1} \circ \eta_{2}^{-1}-\Id \|_{C^{\lambda+1,r}(\overline{\mathfrak{O}_2})} 
  \, \left\| K_{\R^3}[\omega_{1}](t) \right\|_{ C^{\lambda+1,r}(\overline{\mathfrak{O}_2})} \\
& \hspace{2cm} +
C \left\|  (\curl K_{\R^3}[\omega_{1}]) \circ \eta_{1} 
      - (\curl K_{\R^3}[\omega_{2}]) \circ \eta_{2} \right\|_{C^{\lambda,r}(\overline{\mathfrak{O}_2})} \\
\leq & \ 
C \left( \|\omega_{0} \|_{ C^{\lambda,r}( \overline{\mathfrak{O}_2}) )} 
      + \| \omega_{1}(t) \|_{ C^{\lambda,r}(\overline{\mathfrak{O}_2})} \right)
    \| \eta_{1} - \eta_{2} \|_{C^{\lambda+1,r}(\overline{\mathfrak{O}_2})} \\
\leq & \ 
C  \|\omega_{0} \|_{ C^{\lambda,r}( \overline{\mathfrak{O}_2}) )} 
    \| \eta_{1} - \eta_{2} \|_{C^{\lambda+1,r}(\overline{\mathfrak{O}_2})}.
\end{align*}
In the same way, the second term in the right-hand side of \eqref{nononumb} is treated as
\begin{align}
\nonumber 
\| \div(K_{\R^3}[\omega_{1}] \circ \eta_{1} \circ \eta_{2}^{-1}) & - \div (K_{\R^3}[\omega_{2}]) \|_{C^{\lambda,r}(\R^3)} \\
\nonumber 
& = \ \| \div(K_{\R^3}[\omega_{1}] \circ \eta_{1} \circ \eta_{2}^{-1}) 
              \|_{C^{\lambda,r}(\overline{\mathfrak{O}_2})} \\
\label{Est136}
& \leq \ C \, \| \eta_{1} - \eta_{2} \|_{C^{\lambda+1,r}(\overline{\mathfrak{O}_2})} 
    \| \omega_0 \|_{ C^{\lambda,r}(\overline{\mathfrak{O}_2})}.
\end{align}
It remains to estimate the third term in \eqref{Eq:MajLag}.
We write
\begin{align*}
 \| \left\{ \Lambda \uref[\omega_{1}]\right\} \circ \eta_{1} 
& - \left\{ \Lambda \uref[\omega_{2}]\right\} \circ \eta_{2} \|
                          _{C^{\lambda+1,r}(\overline{\mathfrak{O}_2})} \\
& \leq 
C \|  \uref[\omega_{1}] \circ \eta_{1} -  \uref[\omega_{2}] \circ \eta_{2} \|
                          _{C^{\lambda+1,r}(\overline{\mathfrak{O}_2})}
+ C \, \| \eta_{1} - \eta_{2} \|_{C^{\lambda+1,r}(\overline{\mathfrak{O}_2})} \\
& \leq C \|  \uref[\omega_{1} - \omega_2 ] \circ \eta_{1} \|
                        _{C^{\lambda+1,r}(\overline{\mathfrak{O}_2})}
+  C \|  \uref[\omega_{2}] \circ \eta_{1} -  \uref[\omega_{2}] \circ \eta_{2} \|
                         _{C^{\lambda+1,r}(\overline{\mathfrak{O}_2})} \\
& \ \ 
+ C \, \| \eta_{1} - \eta_{2} \|_{C^{\lambda+1,r}(\overline{\mathfrak{O}_2})} .
\end{align*}
Now we rely on the remoteness of $\mathfrak{O}_2$ from $\mathcal{S}_0^\varepsilon$
and on Proposition~\ref{est ref}, together with Lemma \ref{BBLemmaA.2} to deduce
\begin{align*}
\|  \uref[\omega_{1} - \omega_2 ] \circ \eta_{1} \|
        _{C^{\lambda+1,r}(\overline{\mathfrak{O}_2})}
  & \lesssim \| \eta_1 \| _{C^{\lambda+1,r}(\overline{\mathfrak{O}_2})}
  \, \| \omega_1 - \omega_2 \|_{L^1} \\
& \lesssim (1 + \|\omega_0\|_{C^{\lambda,r}(\overline{\mathfrak{O}_2}) })
   \| \eta_{1} - \eta_{2} \|_{C^{\lambda+1,r}(\overline{\mathfrak{O}_2})},
\end{align*}
where we used \eqref{Eq:DiffOmegaL1} and have in the same way, using the Proposition \eqref{est ref} with $m=\lambda+3$ and Lemma \ref{Holem}:
\begin{eqnarray*}
\|  \uref[\omega_{2}] \circ \eta_{1} -  \uref[\omega_{2}] \circ \eta_{2} \|
                    _{C^{\lambda+1,r}(\overline{\mathfrak{O}_2})} 
&\lesssim& \|  \uref[\omega_{2}] \|_{C^{\lambda+2,r}(\overline{\mathfrak{O}_2})}
 \| \eta_{1} - \eta_{2} \|_{C^{\lambda+1,r}(\overline{\mathfrak{O}_2})} \\
&\lesssim& \|  \omega_{2} \|_{L^1(\mathfrak{O}_2)}
   \| \eta_{1} - \eta_{2} \|_{C^{\lambda+1,r}(\overline{\mathfrak{O}_2})} \\
&\lesssim& \|\omega_0\|_{C^{\lambda,r}(\overline{\mathfrak{O}_2}) }
  \| \eta_{1} - \eta_{2} \|_{C^{\lambda+1,r}(\overline{\mathfrak{O}_2})}.
\end{eqnarray*}
Hence having completely estimated the right-hand side of \eqref{Eq:1EstU} we finally get
\begin{equation*}
\| {\mathcal U}_{1} - {\mathcal U}_{2} \|_{C^{\lambda+1,r}(\overline{\mathfrak{O}_2})}
\leq
C \left( 1 + |p_0| + \|\omega_{0}\|_{C^{\lambda,r}( \overline{\mathfrak{O}_0} )} \right)
\| \eta_{1} - \eta_{2} \|_{C^{\lambda+1,r}(\overline{\mathfrak{O}_2})}.
\end{equation*}
Going back to \eqref{DiffFlows}, it follows that for some small $T>0$ 
of the form \eqref{Eq:EstTexistence}, we have
\begin{equation} \label{Eq:ContractionU}
\| \tilde{\eta}_{1} - \tilde{\eta}_{2} \|_{C^{\lambda+1,r}(\overline{\mathfrak{O}_2})} 
\leq \frac{1}{4}
 \Big(\| \eta_{1} - \eta_{2} \|_{C^{\lambda+1,r}(\R^3)} + |p_1-p_2|\Big).
\end{equation}
Concerning $\tilde{p}_1$ and $\tilde{p}_2$ which satisfy \eqref{Eq:EDOtildeP} with
$(u_1, \omega_1)$ and $(u_2, \omega_2)$ respectively, we take the difference of the two equations
and use the fact that they both satisfy \eqref{Eq:EsttildeP}. 
We easily arrive at
\begin{equation} \label{tildePpreGronwall}
| {\mathcal M}_{g}  (\tilde{p}_1 - \tilde{p}_2)'| 
\leq C \left( 1 + \| u_{0}\|_{C^{\lambda+1,r}(\overline{\mathfrak{O}_2})} 
+ \| p \|_{C^{0}([0,T];\, \R^{6})} \right) 
  \big( |\tilde{p}_1 - \tilde{p}_2| + \| \omega_1 - \omega_2\|_{L^1(\R^3)}\big).
\end{equation}
Using \eqref{Eq:DiffOmegaL1} and the fact that $\tilde{p}_1$ and $\tilde{p}_2$ have 
the same initial data and Gronwall's lemma, 
we deduce that for some geometric constant $C'>0$, we have
\begin{equation*}
|\tilde{p}_1 - \tilde{p}_2| 
\leq C' T \left( 1 + \| u_{0}\|_{C^{\lambda+1,r}(\overline{\mathfrak{O}_2})} 
                  + \| p \|_{C^{0}([0,T];\, \R^{6})} \right).
\end{equation*}
With \eqref{Eq:ContractionU}, we deduce that for small enough $T>0$ 
of the form \eqref{Eq:EstTexistence},
the operator ${\mathcal T}$ is contractive. \par
\ \par
\noindent
$\bullet$ {\it Conclusion.}
To conclude, it remains to show that such a fixed-point is indeed a solution satisfying 
the requirements. Let $(p,\eta)$ be such a fixed point, and associate the various
quantities $u$, $\omega$, etc., defined together with $\mathcal{T}$. \par
First, we see that for all $ t \in [0,T]$, the flow map ${\eta}(t,\cdot)$ is 
a volume-preserving diffeomorphism from  $\overline{\mathfrak{O}_0}$ on its image in $\R^3$,
due to the fact that both $u$ and $u_{\mathcal{S}}$ are divergence-free, 
the fact that $\Lambda=1$ on $\overline{\mathfrak{O}_1}$, \eqref{Eq:SuppOmega} and 
the Liouville theorem.

Next it follows from \eqref{DefOmega}-- \eqref{up-flot}, 
the fact that $\Lambda=1$ on $\overline{\mathfrak{O}_1}$,
\eqref{Eq:SuppOmega} and the chain rule that $\omega$ satisfies
\eqref{omega-eqeps} (for $\varepsilon>0$) or \eqref{omega-eq*} (for $\varepsilon=0$).
Due to \eqref{Eq:EDOtildeP}, $p$ satisfies \eqref{eql1-WV} (resp. \eqref{eqeps-WV}).
Hence  a fixed point of ${\mathcal T}$ 
in ${\mathcal B}$ gives a solution to the vorticity equation.
Conversely, one can check that a solution gives a fixed point to $\mathcal{T}$. \par
Concerning the regularities, the regularity $\eta \in W^{1,\infty}(C^{\lambda+1,r} (\R^3))$
and the one of $\omega$ follow directly from the construction.
The regularity of  $u$ (for $\varepsilon>0$) and $u - \mu H$ (for $\varepsilon=0$)
is a consequence of the one of $\omega$ and of Schauder elliptic regularity 
estimates (cf.\ \cite[Ch.\ 6]{GilbargTrudinger}). 
Note that $p \in C^1 ( [0,T];\R^6 )$ follows from \eqref{3}.
%
%
%
%
%
%
The estimates \eqref{Eq:UniformEstimates}, given a fixed size $M>0$, on the time
interval $[0,\underline{T}]$, are consequences of \eqref{Eq:EstOmega}, \eqref{EstU0}, 
\eqref{Eq:EsttildeP} and \eqref{DefOmega}.
%

To show the estimate on the time derivatives in \eqref{Eq:UniformEstimates}, we show uniform estimates for $\norm{\de_t^l \omega^\eps}_{C^{\lambda-l,\lambda}}$ and $\de_t^l p^\eps$ by induction in $l\leq \lambda+1$, which then yields the boundedness of these norms by smooth approximation, since smooth solutions are easily seen to be $C^\infty$ in time. The base case $l=0$ is already covered by the previous arguments. \\
Regarding the induction step $l\rightarrow l+1$, we see from the fact that $B$ and $\mathcal{D}$ are polynomial in $p^\eps,u^\eps,\omega^\eps$ but have no other time dependence that \begin{align*}
\left|\frac{\mathrm{d}^{l+1}}{\mathrm{d}t^{l+1}}p^\eps\right|&=\left|\frac{\mathrm{d}^{l}}{\mathrm{d}t^l}\left((\mathcal{M}_g+\mathcal{M}_a^\eps)^{-1}\big(\langle \Gamma_a^\eps+\Gamma_g,p^\eps,p^\eps\rangle-\mu B p-\mathcal{D}[p,u^\eps,\omega^\eps]\big)\right)\right|\\
&\leq C(l,\,R_0,D_0) \left(1+\sum_{k=0}^l \left|\frac{\mathrm{d}^{k}}{\mathrm{d}t^{k+1}}p^\eps\right|+\norm{\de_t^k u^\eps}_{L^\infty(\supp\omega^\eps)}+\norm{\de_t^k \omega^\eps}_{L^\infty(\R^3)}\right)^3,
\end{align*}
where we have also used that $\mathcal{D}$ and $B$ are controlled through $R_0$, $D_0$ and the $L^\infty$-norms by elliptic regularity in the case of $B$ and by definition in case of $\mathcal{D}$.
We can further estimate, using the definition of $u$ and the estimates in the Propositions \ref{est pot}, \ref{est ref} and Corollary \eqref{Cor:DecayH}  
\begin{align}
\norm{\de_t^k u^\eps}_{L^\infty(\supp\omega^\eps)}&\leq \norm{\de_t^k u^\eps}_{C^{0,r}(\supp\omega^\eps)}\nonumber\\
&\lesssim \norm{H^\eps}_{C^{0,r}(\supp\omega^\eps)}+\norm{K_{\R^3}[\de_t^k\omega]}_{C^{0,r}(\supp\omega^\eps)}+\norm{\uref^\eps[\de_t^k\omega]}_{C^{0,r}(\supp\omega^\eps)}\nonumber\\&\quad+C(R_0,D_0)\left|\frac{\mathrm{d}^{k}}{\mathrm{d}t^k}p^\eps\right|\nonumber\\
&\leq C(R_0,D_0)\left(1+\left|\frac{\mathrm{d}^{k}}{\mathrm{d}t^k}p^\eps\right|+\norm{\de_t^k \omega^\eps}_{L^\infty(\R^3)}\right),\label{est dtu}
\end{align}
for $k\leq l$.
Similarly, we have from the definition \begin{align}
\norm{\de_t^k u_{\mathcal{S}}^\eps}_{C^m(\supp\omega^\eps)}\leq C(k,m,R_0,D_0)\left|\frac{\mathrm{d}^{k}}{\mathrm{d}t^k}p^\eps\right|\label{est dtus}
\end{align}
for every $m$.
Combining \eqref{est dtu} and \eqref{est dtus}, we conclude that \begin{align*}
\MoveEqLeft\norm{\de_t^{l+1}\omega^\eps}_{C^{\lambda-(l+1),r}(\R^3)}\leq \norm{\de_t^l\nabla\times ((u^\eps-u_{\mathcal{S}}^\eps)\times \omega^\eps) }_{C^{\lambda-(l+1),r}(\R^3)}\\
&=\norm{\de_t^l((u^\eps-u_{\mathcal{S}}^\eps)\times \omega^\eps)}_{C^{\lambda-l,r}(\R^3)}\leq C(l,R_0,D_0)\left(1+\sum_{k=0}^l\left|\frac{\mathrm{d}^{k}}{\mathrm{d}t^k}p^\eps\right|+\norm{\de_t^{k}\omega^\eps}_{C^{\lambda-l,r}(\R^3)}\right)^2.
\end{align*}
We hence see the statement about the time regularities in the theorem by strong induction in $l$.

\end{proof}
%
%
%
%

\bigskip \ \par \noindent {\bf Acknowledgements.}  
D.M. was supported by the European Research Council (ERC) under the European Union’s Horizon 2020 research and innovation programme through the grant agreement 862342. He further wishes to thank the University of Bordeaux for its hospitality and the MM PhD Outgoing Programme of the Universität Münster for funding the visit.

F.S. was supported by the project ANR-23-CE40-0014-01 BOURGEONS 
 sponsored by  the French National Research Agency (ANR)  and the project ANR-24-CE92-0028-01 SUSPENSIONS, jointly sponsored by the ANR and the German Research Foundation DFG.

%
\bigskip


  \bibliography{dfo}
    \bibliographystyle{plain}

\begin{flushleft}
\medskip

Olivier Glass, 
Universit\'e Paris-Dauphine PSL,
CEREMADE, UMR CNRS 7534, 
Place du Mar\'echal de Lattre de Tassigny, 75775 Paris Cedex 16, France \par\nopagebreak 
\noindent  \textit{E-mail address:} \texttt{glass@ceremade.dauphine.fr}
\end{flushleft}
\medskip

\begin{flushleft}

David Meyer, Instituto de Ciencias Matemáticas, Calle Nicol\'as Cabrera 13-15, 28049 Madrid, Spain \par\nopagebreak 

\noindent  \textit{E-mail address:} \texttt{david.meyer@icmat.es}

\end{flushleft}
\medskip

\begin{flushleft}
Franck Sueur, Department of Mathematics, 
Maison du nombre, 6 avenue de la Fonte, 
University of Luxembourg,  
L-4364 Esch-sur-Alzette, Luxembourg \par\nopagebreak 

\noindent  \textit{E-mail address:} \texttt{Franck.Sueur@uni.lu}

\end{flushleft}

\end{document}